\definecolor{note}{rgb}{0,0,1} 
\theoremstyle{plain}
\newtheorem{theorem}{Theorem}
\newtheorem{proposition}[theorem]{Proposition}
\newtheorem{lemma}[theorem]{Lemma}
\newtheorem{claim}[theorem]{Claim}
\newtheorem{corollary}[theorem]{Corollary}
\newtheorem{conjecture}[theorem]{Conjecture}
\newtheorem{question}[theorem]{Question}
\theoremstyle{definition}
\newtheorem{definition}[theorem]{Definition}
\newtheorem{constr}[theorem]{Construction}
\newtheorem{remark}[theorem]{Remark}
\numberwithin{equation}{section}
\numberwithin{theorem}{section}
\newcommand{\op}{\operatorname}
\def\C{\mathbb C}
\def\R{\mathbb R}
\def\Q{\mathbb Q}
\def\Z{\mathbb Z}
\def\cyl{\mathcal{C}_b}
\def\cylcomp{\overline{\mathcal{C}}_b}
\def\cylsm{\widehat{\mathcal{C}}_b}
\def\hurwitzaction{\widetilde{\mathcal{H}}^{\bm \mu, \bm \mu'}_{\kappa, \chi}}
\def\hurwitz{\mathcal{H}^{\bm \mu, \bm \mu'}_{\kappa, \chi}}
\def\hurwitzstandard{\accentset{\circ}{\mathcal{H}}^{\bm \mu, \bm \mu'}_{\kappa, \chi}}
\def\hurwitzstandardcomp{\accentset{\circ}{\overline{\mathcal{H}}}^{\bm \mu, \bm \mu'}_{\kappa, \chi}}
\def\hurwitzcomp{\overline{\mathcal{H}}^{\bm \mu, \bm \mu'}_{\kappa, \chi}}
\def\hurwitzsm{\widehat{\mathcal{H}}^{\bm \mu, \bm \mu'}_{\kappa, \chi}}
\def\hurwitzstandard{\accentset{\circ}{\mathcal{H}}^{\bm \mu, \bm \mu'}_{\kappa, \chi}}
\def\hurwitzstandardcomp{\accentset{\circ}{\overline{\mathcal{H}}}^{\bm \mu, \bm \mu'}_{\kappa, \chi}}
\def\hurwitzbr{\mathscr{H}^{\bm \mu, \bm \mu'}_{\kappa, \chi}}
\def\hurwitzbraction{\widetilde{\mathscr{H}}^{\bm \mu, \bm \mu'}_{\kappa, \chi}}
\def\hurwitzcompbr{\overline{\mathscr{H}}^{\bm \mu, \bm \mu'}_{\kappa, \chi}}
\def\hurwitzsmbr{\widehat{\mathscr{H}}^{\bm \mu, \bm \mu'}_{\kappa, \chi}}
\def\hurwitzactionbr{\widetilde{\mathscr{H}}^{\bm \mu, \bm \mu'}_{\kappa, \chi}}
\def\hurwitzactionunsym{\widetilde{\mathcal{H}}^{\sigma, \sigma'}_{\kappa, \chi}}
\def\hurwitzcompunsym{\overline{\mathcal{H}}^{\sigma, \sigma'}_{\kappa, \chi}}
\def\hurwitzsmunsym{\widehat{\mathcal{H}}^{\sigma, \sigma'}_{\kappa, \chi}}
\def\hurwitzsmbrunsym{\widehat{\mathscr{H}}^{\sigma, \sigma'}_{\kappa, \chi}}
\def\hurwitzbrunsym{\mathscr{H}^{\sigma, \sigma'}_{\kappa, \chi}}
\def\modulifloerbr{\mathscr{M}^\chi(\bm x, \bm x'; H^{\mathrm{tot}}, J)}
\def\modulifloerbraction{\widetilde{\mathscr{M}}^\chi(\bm x, \bm x'; H^{\mathrm{tot}}, J)}
\def\modulifloercompbr{\overline{\mathscr{M}}^\chi(\bm x, \bm x'; H^{\mathrm{tot}}, J)}
\def\modulifloerbrshort{\mathscr{M}^\chi(\bm x, \bm x')}
\def\modulifloercompbrshort{\overline{\mathscr{M}}^\chi(\bm x, \bm x')}
\def\modulifloerbrKR{\mathscr{M}^{\chi, \sharp}(\bm x, \bm x'; H^{\mathrm{tot}}, J)}
\def\modulifloerbrclosedKR{\mathscr{M}^{\chi', \sharp}}
\def\modulifloerbrunsym{\mathscr{M}_{\mathrm{unsym}}^\chi(\bm x, \bm x'; H^{\mathrm{tot}}, J)}
\def\modulifloerbractionunsym{\widetilde{\mathscr{M}}_{\mathrm{unsym}}^\chi(\bm x, \bm x'; H^{\mathrm{tot}}, J)}
\def\hurwitzhalf{\mathcal{H}^{\sigma, \sigma_0, \ge 0}_{\kappa, \chi}}
\def\hurwitzhalfsm{\widehat{\mathcal{H}}^{\sigma, \sigma_0, \ge 0}_{\kappa, \chi}}
\def\hurwitzhalfcomp{\overline{\mathcal{H}}^{\sigma, \sigma_0, \ge 0}_{\kappa, \chi}}
\def\hurwitzhalfbrsm{\widehat{\mathscr{H}}^{\sigma, \sigma_0, \ge 0}_{\kappa, \chi}}
\title{Heegaard Floer Symplectic homology and Viterbo's isomorphism theorem in the context of multiple particles}
\author{Roman Krutowski}
\address{University of California, Los Angeles, Los Angeles, CA 90095}
\email{romakrut@math.ucla.edu}\urladdr{}
\author{Tianyu Yuan}
\address{School of Mathematical Sciences, Eastern Institute of Technology, Ningbo, Zhejiang, 315200, China}
\email{tyyuan@eitech.edu.cn} \urladdr{}
\date{\today}
\subjclass[2010]{Primary 53D40; Secondary 57R17.}
\begin{document}
\pagenumbering{arabic}
\begin{abstract}
Given a Liouville manifold $M$, we introduce an invariant of $M$ that we call the \emph{Heegaard Floer symplectic cohomology} $SH^*_\kappa(M)$ for any $\kappa \ge 1$ that coincides with the symplectic cohomology for $\kappa=1$. Writing $\hat{M}$ for the completion of $M$, the differential counts pseudoholomorphic curves of arbitrary genus in $\R \times S^1 \times \hat{M}$ that are required to be branched $\kappa$-sheeted covers when projected to the  $\R \times S^1$-direction; this resembles the cylindrical reformulation of Heegaard Floer homology by Lipshitz. These cohomology groups provide a closed-string analogue of higher-dimensional Heegaard Floer homology introduced by Colin, Honda, and Tian.
  
When $\hat{M}=T^*Q$ with $Q$ an orientable manifold, we introduce a Morse-theoretic analogue of Heegaard Floer symplectic cohomology, which we call the \emph{free multiloop complex} of $Q$. When $Q$ has vanishing relative second Stiefel-Whitney class, we prove a generalized version of Viterbo's isomorphism theorem by showing that the cohomology groups $SH^*_\kappa(T^*Q)$ are isomorphic to the cohomology groups of the free multiloop complex of $Q$.
\end{abstract}
\maketitle
\tableofcontents

\section{Introduction}
Since its introduction in Viterbo's foundational paper \cite{viterbo1999functors}, symplectic cohomology has emerged as a powerful and useful invariant of Liouville domains. Let $(M,\lambda)$ be a Liouville domain, and $\hat{M}=M \cup_{\partial M} ([1, \infty) \times \partial M)$ be its completion. One way to define the symplectic cohomology of $M$ (or of $\hat{M}$) is to count pseudoholomorphic cylinders connecting closed orbits of the usual one-particle motion Hamiltonian system in the completion $\hat{M}$ according to \cite{seidel2006biased, abouzaidseidel2010}. In this paper, we construct an invariant of $M$ where, instead of one-particle motion, we consider the dynamics of multiple identical particles in $\hat{M}$. The closed orbits then can be regarded as loops in the unordered configuration space $\operatorname{UConf}_\kappa(\hat{M})$, where $\kappa \ge 1$ is the number of particles under consideration. 

Naturally, one would want to count pseudoholomorphic cylinders in the symmetric product $\operatorname{Sym}^\kappa(\hat{M})$ connecting orbits as above. The main obstacle is that, in general,
the symmetric product of a manifold is not a smooth manifold. One strategy is to study Floer theory of the symplectic orbifold $\operatorname{Sym}^\kappa(\hat{M})$ and it has been undertaken in \cite{maksmith2021}, based on the approach from \cite{chopoddar2014},  for $\kappa=2$ to obtain non-displaceability results for Lagrangian links in $S^2 \times S^2$. Instead, in the present work we study moduli spaces of curves in $\R \times S^1 \times \hat{M}$ mimicking Lipshitz's cylindrical reformulation \cite{lipshitz2006cylindrical} of the Heegaard Floer homology. This provides a closed string analogue of higher-dimensional Heegaard Floer homology (HDHF) introduced by Colin, Honda, and Tian in \cite{colin2020applications}. We recall that HDHF provides a model for the Lagrangian Floer homology of the symmetric product $\operatorname{Sym}^{\kappa}(M)$.

In this paper, we present two closely related versions of such a sought-after invariant which we refer to as the \emph{unsymmetrized version} $SH^*_{\kappa, \mathrm{unsym}}(M)$ of Heegaard Floer symplectic cohomology (HFSH) and the \emph{partially symmetrized version} $SH^*_{\kappa, \mathrm{psym}}(M)$ of HFSH. It is expected that these two versions have a clear algebraic relation between their homology, but we do not present it here. Ideally, one would want to ``fully symmetrize" and work with symplectic cohomology of the orbifold symplectic product as in \cite{MSS2025orbifold}, but we do not pursue this approach here.
 
\begin{theorem}[Theorems~\ref{theorem-d2},~\ref{theorem-linear-to-quadratic} and~\ref{theorem-d2-unsym}]\label{thm-main}
    The Heegaard Floer symplectic cohomology groups $SH^*_{\kappa, \mathrm{psym}}(M)$ and $SH^*_{\kappa, \mathrm{unsym}}(M)$ are well defined and are invariants of the Liouville domain $M$, independent of all the intrinsic choices of the Floer data required for its setup.
\end{theorem}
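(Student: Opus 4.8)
The plan is to follow the standard architecture for building a Floer-type invariant, adapted to the branched--cover setting. The statement decomposes into three parts, matching the three cited theorems: (a) the differential squares to zero, so that $SH^*_{\kappa,\mathrm{psym}}(M)$ and $SH^*_{\kappa,\mathrm{unsym}}(M)$ are genuinely defined (Theorems~\ref{theorem-d2} and~\ref{theorem-d2-unsym}); (b) the cohomology does not depend on the auxiliary data --- the total Hamiltonian $H^{\mathrm{tot}}$, the almost complex structure $J$, and the abstract perturbation data used to regularize the branched--cover moduli spaces --- which is seen via continuation maps; and (c) linear Hamiltonians of slope tending to infinity and quadratic Hamiltonians compute the same groups (Theorem~\ref{theorem-linear-to-quadratic}), so that the final definition is canonical.

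For (a) I would study the one-dimensional part of the weighted branched groupoid $\modulifloerbrshort$. Three inputs are needed. First, a compactness theorem: any sequence has a subsequence converging in the moduli of branched--cover buildings $\modulifloercompbrshort$, built on the compactified Hurwitz spaces $\hurwitzcomp$, where the admissible degenerations are Floer-type breaking of the curve in the $\hat M$--direction combined with controlled degeneration of the $\kappa$--sheeted cover of $\R\times S^1$ (collision and migration of branch points, splitting into sublevels). Second, transversality: for generic data every stratum is a weighted branched manifold of the expected dimension, so in the one-dimensional case only codimension-one strata appear in the boundary. Third, a gluing theorem identifying that codimension-one boundary with the once-broken configurations $\mathscr{M}^{\chi_1}(\bm x,\bm y)\times\mathscr{M}^{\chi_2}(\bm y,\bm x')$, summed over intermediate orbit tuples $\bm y$ and admissible splittings of $\chi$; the other possible degenerations (nodal degeneration of the base $\R\times S^1$, multiply covered or ghost components, branch points escaping to the ends, Reeb-level orbit breaking) must be shown to be of codimension at least two, or ruled out by the action/index bookkeeping together with genericity. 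With coherent orientations on the Cauchy--Riemann operators and the Hurwitz spaces, and the weights of the branched groupoids, the weighted signed count of the boundary of a compact one-manifold is zero, which is exactly $d^2=0$. The unsymmetrized statement (Theorem~\ref{theorem-d2-unsym}) is the same argument after deleting the symmetric-group quotient from the moduli spaces and orbit tuples.

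For (b), given two admissible data sets $\mathfrak D_0,\mathfrak D_1$ one defines a continuation morphism by counting rigid solutions of the equation set up with $s$--dependent data interpolating from $\mathfrak D_1$ near $s=+\infty$ to $\mathfrak D_0$ near $s=-\infty$, chosen monotone so that the $C^0$--bounds and the maximum principle on the Liouville end still hold and the branched--cover asymptotics are preserved. The compactness--transversality--gluing package of (a), applied to this moduli space, shows that its codimension-one boundary consists of (differential)$\circ$(continuation) and (continuation)$\circ$(differential) pieces, so the count is a chain map; a further one-parameter family of interpolating data and the two-term boundary of the corresponding compactified moduli space give a chain homotopy showing the composite $\mathfrak D_0\!\to\!\mathfrak D_1\!\to\!\mathfrak D_0$ is homotopic to the identity, and symmetrically. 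Hence all admissible choices yield canonically isomorphic cohomology. Part (c) is then a cofinality argument using that continuation maps compose up to homotopy, as in (b): the continuation maps among linear Hamiltonians of increasing slope form a directed system, and since quadratic Hamiltonians are cofinal among admissible Hamiltonians, the colimit of that system is identified with the cohomology computed from a single quadratic Hamiltonian.

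The hard part lies entirely in the geometric analysis behind (a) --- the regularization and gluing theory for branched--cover buildings. In contrast to ordinary symplectic cohomology, the domains are $\kappa$--sheeted branched covers of $\R\times S^1$, so transversality must be achieved for the coupled Cauchy--Riemann/Hurwitz problem, which is precisely why the weighted branched groupoid formalism and the abstract background perturbation data are introduced, and the codimension-one boundary of $\modulifloercompbrshort$ must be pinned down exactly: one must confirm that branch-point collisions and multiply covered or ghost components occur only in codimension two and hence contribute no spurious boundary strata. Once the moduli spaces are established as weighted branched manifolds with the expected codimension-one boundary, the remainder --- $d^2=0$, the chain-map property, the chain homotopies, and the cofinality step --- is formal and parallels the treatment in \cite{seidel2006biased, abouzaidseidel2010}.
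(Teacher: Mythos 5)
Your outline is correct in architecture, and for the $d^2=0$ part (Theorems~\ref{theorem-d2} and~\ref{theorem-d2-unsym}) it matches the paper: compactness into $\modulifloercompbrshort$ modeled on $\hurwitzcomp$, transversality for the coupled Cauchy--Riemann/Hurwitz problem in the wnb groupoid framework, identification of the codimension-one boundary with once-broken buildings, and a weighted signed count using the branching weights $1/N_{\bm\mu''}$ and coherent orientations. One caveat on your boundary analysis: ghost components are not simply ``ruled out by genericity'' in one-parameter families; the paper excludes them by an index-drop argument (via \cite{ekholm-shende2022}) when $n\le 2$, but for $n\ge 3$ it must pass to Kuranishi replacements $\modulifloerbrKR$ satisfying (KR1)--(KR3), and the same device reappears in the mixed moduli spaces later, so this is a genuine ingredient rather than a transversality afterthought.

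Where you diverge is the invariance step. You propose direct continuation maps between two arbitrary admissible data sets for the fixed quadratic-at-infinity Hamiltonian, followed by a one-parameter family of interpolations giving a chain homotopy (and implicitly a homotopy-of-homotopies to handle composition), with a separate cofinality argument identifying linear and quadratic models. The paper instead \emph{redefines} $SH^*_{\kappa,\mathrm{psym}}$ as the direct limit~\eqref{linear-ham-defn} over tuples of linear Hamiltonians ordered by slope, where well-definedness of continuation maps (Lemma~\ref{lemma-cont-independent}), their compatibility with composition (Lemma~\ref{composition-lemma}), and invariance under Liouville isomorphism follow the standard Seidel--Ritter pattern almost formally; it then proves Theorem~\ref{theorem-linear-to-quadratic} not by naive cofinality but by the action-truncation sandwich $SH^*_\kappa(\bm H^k)\to SH^*_\kappa(\bm H^{\mathrm{tot}};\mathcal{A}>c)\to SH^*_\kappa(\bm H^{k'})$, whose compositions are respectively a continuation map and an inclusion, so the two limits interleave. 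Both routes are viable; yours is closer to the classical Floer-homotopy package, while the paper's buys invariance of the Liouville domain (not merely independence of Floer and perturbation data) with less analysis in the branched-cover setting, since the only new moduli spaces needed are the continuation spaces $\mathscr{K}^\chi$ over $\hurwitzactionbr$ and no homotopy-of-homotopies construction on Hurwitz spaces is required.
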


The main obstacle for translating the HDHF setup to the closed string context is that unlike in \cite{colin2020applications} the tuples of orbits of cumulative time $\kappa$ may contain multiply covered orbits. This may lead to the multiply covered curves in the boundary of moduli spaces of solutions to an appropriate $\bar{\partial}_{J, H}$-equation. The tool we develop here to tackle this issue is a certain perturbation scheme on a branched manifold that we associate with Hurwitz spaces $\hurwitz$ serving as moduli spaces of branched $\kappa$-sheeted covers of the cylinder $\R \times S^1$. We point out that this setting simplifies numerous problems that arise in the context of higher-genus SFT \cite{EGH2000}. 

Let us now consider the case where $\hat{M}=T^*Q$. We recall that Abbondandolo-Schwarz \cite{abbondandolo2006floer} and Abouzaid \cite{abouzaid2011cotangent} show that for an orientable manifold $Q$ there is an isomorphism between the symplectic cohomology $SH^*_b(T^*Q)$ twisted by a certain background class and the homology of the free loop space of $Q$. Abbondandolo and Schwarz construct a chain map from the Morse complex associated with action functional $\mathcal{A}_L$ on the space $\Lambda^1(Q)$ of free loops in the class $W^{1,2}$ to the symplectic cochain complex. They prove that this chain map is a quasi-isomorphism, whereas Abouzaid constructs a chain map in the opposite direction and proves that it is the inverse of the former. 

In this paper, in an attempt to compute the unsymmetrized version of Heegaard Floer symplectic cohomology, we introduce a Morse-theoretic model which is intended to generalize the Morse complex $CM_*(\Lambda^1(Q))$. We consider the space $\Lambda^1_{[\kappa]}(Q)$ of \emph{free multiloops} of class $W^{1,2}$ which can be understood as free loops in $\operatorname{UConf}_{\kappa}(Q)$ of class $W^{1,2}$. The differential of this Morse complex $CM_*(\Lambda^1_{[\kappa]}(Q))$ counts not only gradient trajectories but also \emph{piecewise gradient trajectories} where the multiloops are allowed to undergo \emph{switchings} at finitely many moments. These switchings are meant to correspond to ramification points on the Heegaard Floer side. This model is parallel to the Morse model for HDHF in \cite{honda2025morse}.

For any orientable manifold $Q$ of dimension $n$ with vanishing second Stiefel-Whitney class $w_2(TQ)\in H^2(T^*Q; \mathbb{Z}/2)$ we then construct a map 
$$\mathcal{F} \colon SC_{\kappa, \mathrm{unsym}}^*(T^*Q) \to CM_{\kappa n-*}(\Lambda^1_{[\kappa]}(Q)),$$
which for $\kappa=1$ coincides with the chain map constructed by Abouzaid. It also extends the map constructed in \cite{honda2022higher} in the context of HDHF for $Q$ of dimension $2$ to higher dimensions. We then prove the following:
\begin{theorem}[Theorems~\ref{thm-f-isomorphism} and~\ref{thm-F-chain-map}]\label{main-theorem2}
    For $Q$ an orientable manifold with vanishing second Stiefel-Whitney class $w_2(TQ)~\in~H^2(T^*Q; \mathbb{Z}/2)$ the map $\mathcal{F}$ is a chain map and induces an isomorphism on the homology
    $$SH^*_{\kappa, \mathrm{unsym}}(T^*Q) \cong HM_{\kappa n-*}(\Lambda^1_{[\kappa]}(Q)).$$
\end{theorem}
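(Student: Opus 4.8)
The plan is to deduce the statement from its two constituent results cited in the theorem: Theorem~\ref{thm-F-chain-map}, that $\mathcal{F}$ intertwines the two differentials, and Theorem~\ref{thm-f-isomorphism}, that it descends to an isomorphism on homology. Both are established along the lines of the Abbondandolo--Schwarz \cite{abbondandolo2006floer} and Abouzaid \cite{abouzaid2011cotangent} proof of Viterbo's theorem, adapted to the multiparticle setting through the mixed moduli spaces $\mathcal{T}^{\sigma_0}_{\chi,\ell}(\bm x,\bm\gamma;\dots)$ that interpolate between a tuple $\bm x$ of Hamiltonian orbits of cumulative time $\kappa$ (the generators of $SC^*_{\kappa,\mathrm{unsym}}(T^*Q)$) and a piecewise gradient trajectory of the multiloop action functional $\mathcal{A}_L$ on $\Lambda^1_{[\kappa]}(Q)$ with $\ell$ switchings. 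For the chain map property, the first step is the standard boundary analysis: one shows that the compactified one-dimensional mixed moduli spaces are compact one-manifolds with boundary and identifies the codimension-one strata. There are three types: (i) a Floer-type breaking at the symplectic end, a branched-cover building over $\R\times S^1$ glued onto a shorter mixed solution, which assembles into $\mathcal{F}\circ d_{SC}$; (ii) a Morse-type breaking at the cotangent end, either an honest negative gradient trajectory or — and this is where the geometric meaning of switchings as ramification points enters — a switching trajectory, which assembles into $d_{CM}\circ\mathcal{F}$; and (iii) interior degenerations (a switching point of the half-tube escaping to $\pm\infty$, colliding ramification points, sphere or half-plane bubbling), which must be excluded by the branched perturbation scheme and the exactness of $T^*Q$, or shown to cancel in pairs.

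Underlying this boundary analysis are the usual compactness and gluing inputs, suitably upgraded. One needs the Abbondandolo--Schwarz-type a priori estimates bounding the Hamiltonian action of an orbit tuple above and below by the values of $\mathcal{A}_L$ at the asymptotic multiloops, which is available because the relevant Hamiltonian on $T^*Q$ can be taken quadratic at infinity (this is the content of Theorem~\ref{theorem-linear-to-quadratic}); the $C^0$-estimates confining solutions to a compact region of $\R\times S^1\times T^*Q$; the SFT-type compactness for the $\R\times S^1$-projection, which by the Hurwitz-space machinery produces branched-cover buildings rather than arbitrary nodal degenerations; and the gluing theorems, where the only genuinely new analytic ingredient is a gluing profile near a ramification point that matches a switching on the Morse side. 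The branched-manifold structure on the Hurwitz spaces $\hurwitz$ is exactly what supplies transversality for these mixed moduli spaces even when the orbit tuples contain multiply covered orbits — the feature absent from \cite{colin2020applications} — and the perturbation data must be chosen compatibly with the stratified compactification so that the gluing maps are well defined.

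For the isomorphism statement I would filter both complexes by action. The map $\mathcal{F}$ is action-decreasing, hence filtered; on the associated graded the count reduces to the ``short-trajectory'' moduli spaces localized near each critical manifold, where a local model identifies $\mathcal{F}$ with the identity on generators up to orientation signs. This is precisely the point at which the hypothesis $w_2(TQ)=0$ is used: it trivializes the obstruction to coherently orienting these moduli spaces over $\Z$, so that, in contrast to \cite{abbondandolo2006floer,abouzaid2011cotangent} where a background twist is required, no local system is needed and $\mathcal{F}$ is an isomorphism on the $E_1$-page. A spectral-sequence comparison — or the elementary fact that a filtered chain map which is an isomorphism on associated graded is a quasi-isomorphism — then yields $SH^*_{\kappa,\mathrm{unsym}}(T^*Q)\cong HM_{\kappa n-*}(\Lambda^1_{[\kappa]}(Q))$; the degree shift by $\kappa n$ is the cumulative Conley--Zehnder/Maslov correction, namely $\kappa$ copies of the shift by $n$ from Abouzaid's case. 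An alternative, closer to \cite{abouzaid2011cotangent}, is to construct the reverse map and show the two compositions are chain homotopic to the identity via continuation-type moduli spaces.

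The crux of the argument — and the place where it departs most from \cite{colin2020applications} and from the two-dimensional Morse models of \cite{honda2022higher,honda2025morse} — is the control of the mixed moduli spaces \emph{with switchings}: establishing transversality for them via the branched perturbation over Hurwitz space while keeping the perturbation data compatible with gluing, ruling out the bad interior degenerations in (iii) above (switching points running off to infinity, ramification points colliding, multiply covered plane bubbles), and matching the resulting boundary combinatorics exactly with the switched Morse differential on $\Lambda^1_{[\kappa]}(Q)$. The a priori and $C^0$-estimates are adaptations of well-understood arguments; the essential new analytic content lies entirely in reconciling the branched perturbation scheme with the stratified compactification of the mixed moduli spaces.
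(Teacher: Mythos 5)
Your treatment of the chain-map half is essentially the paper's: one compactifies the one-dimensional mixed moduli spaces, and the boundary strata of Floer-breaking and Morse-breaking type assemble into $\mathcal{F}\circ d$ and $\partial\circ\mathcal{F}$. Be aware, though, that the extra strata are not ``excluded or cancelled in pairs'' in a vague sense: the paper's key mechanism is that a ramification point hitting $\{s=0\}$ (Type I) and a switching coordinate hitting $s=0$, which forces a boundary hyperbolic node that is resolved by adding a branch point (Type II), describe the same geometric configuration counted in two ways; these two singular strata cancel only after weighting by $(\ell+1)\cdot\kappa!$ and $(-\chi)$ respectively, which is exactly why the combinatorial factors $\frac{1}{(-\chi)!\,\ell!\,(\kappa!)^\ell}$ appear in the definition of $\mathcal{F}$, and the Type II analysis requires a boundary-nodal gluing result (the rel-$C^\infty$ representability statement). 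Your proposal gestures at the right gluing profile but does not isolate this cancellation, which is the genuinely new content of the chain-map proof.

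For the isomorphism you take a genuinely different route, and it has a concrete gap. The paper does not filter by action: it restricts to $\hbar=0$, observes that both $\hbar=0$ complexes and $\mathcal{F}|_{\hbar=0}$ split by permutation type, that the relevant mixed moduli spaces with $\chi=\ell=0$ factor as products over the cycles of $\sigma$, so that $\mathcal{F}|_{\hbar=0}$ is an isomorphism by citing the $\kappa=1$ theorem of Abouzaid as a black box; a purely algebraic lemma (an order-by-order argument in $\hbar$ on the mapping cone) then upgrades this to a quasi-isomorphism of the full complexes. Your action-filtration argument faces two problems the paper's route avoids. First, the filtration is not honestly preserved: both the evaluation map $\mathscr{E}$ and the switching map are defined via the reparametrizations $\nu$ with $|f'|\le 1+c$, so the Lagrangian action of a multiloop can \emph{increase} by a factor $(1+c)$ at each switching; since the critical values are unbounded, no single small $c$ makes the switched differential and the higher-$\hbar$ part of $\mathcal{F}$ filtered maps, so the $E_1$-comparison as stated does not apply without an additional exhaustion argument. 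Second, your identification of $\mathcal{F}$ with the identity on the associated graded via short-trajectory local models amounts to reproving the $\kappa=1$ Abbondandolo--Schwarz/Abouzaid statement in the Heegaard Floer setting (and in the direction Floer $\to$ Morse, where the upper-triangularity argument is not the standard one), rather than reducing to it; together with convergence/completeness issues over $\Q\llbracket\hbar\rrbracket$ when $n=2$, this step needs substantially more justification than the proposal provides. The paper's $\hbar$-adic reduction plus the product decomposition over cycles is both shorter and avoids these difficulties.
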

In the situation of ordinary symplectic cohomology Abouzaid \cite{abouzaid2011cotangent, abouzaid2015symplectic} constructs an isomorphism of this kind when $w_2(TQ)\neq 0$ between the symplectic cohomology with twisted coefficients and the Morse homology of the free loop space. We hope that $\mathcal{F}$ as above also can be extended to such a more general setting. The first work which highlighted the necessity of twisted coefficients is that of Kragh \cite{kragh2018}.

There are numerous questions one can ask regarding HFSH and the Morse model above, and here we list just some of them. First, the original symplectic cohomology possesses various algebraic structures and we refer the reader to \cite{seidel2006biased, ritter2013, abouzaid2015symplectic} for a broad review. Therefore, it is natural to ask:
\begin{question}
    How much of various algebra structures including the BV-algebra structure, Viterbo restriction, TQFT-structure, etc., can be extended from symplectic cohomology to HFSH?
\end{question}

We expect most of the algebraic operations related to symplectic cohomology to have their analogues in the context of HFSH. We further hope that the corresponding structures will come in handy to show the generation criterion for HDHF wrapped Fukaya category $\mathcal{W}_\kappa(T^*Q)$, extending the result of Abouzaid \cite{abouzaid2010geometric}. More precisely, we pose the following: 

\begin{conjecture}
    Given a closed oriented manifold $Q$ with vanishing relative second Stiefel-Whitney class, the HDHF wrapped Fukaya category $\mathcal{W}_\kappa(T^*Q)$ is split-generated by any $\kappa$-tuple of disjoint cotangent fibers.
\end{conjecture}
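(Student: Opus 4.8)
\medskip
\noindent\emph{Proposed strategy for the conjecture.}
The expected strategy is to adapt Abouzaid's geometric generation criterion \cite{abouzaid2010geometric} to the $\kappa$-particle setting. Let $\mathcal{A}_\kappa \subset \mathcal{W}_\kappa(T^*Q)$ be the full $A_\infty$-subcategory whose objects are the chosen perturbed representatives of the $\kappa$-tuples of pairwise disjoint cotangent fibres; since $w_2(TQ)$ vanishes, these carry the orientation/spin data needed to orient all the relevant moduli spaces coherently. The first, and most laborious, step is to construct an open--closed map
\begin{equation*}
\mathcal{OC}\colon HH_*(\mathcal{A}_\kappa) \longrightarrow SH^{\kappa n-*}_{\kappa,\mathrm{unsym}}(T^*Q)
\end{equation*}
by counting rigid genus-zero curves into $\hat{M}=T^*Q$ whose domains are branched $\kappa$-sheeted covers of a disc carrying one interior cylindrical output (mapped to an HFSH orbit) and a cyclic family of boundary strip-like inputs (mapped to Floer chords between the chosen fibres), with Hamiltonian and almost-complex perturbation data chosen consistently over the weighted branched manifolds of Hurwitz covers $\hurwitz$ used to define HFSH. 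The content here is that the compactifications of these moduli spaces, after the branched perturbation scheme, have codimension-one boundary given by the Hochschild differential of $\mathcal{A}_\kappa$ on the source together with the HFSH differential on the target, making $\mathcal{OC}$ a chain map; this is the same coherence argument the present paper carries out for HFSH itself, now applied to a larger collection of domains.

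The second step is to prove that the unit $1 \in SH^0_{\kappa,\mathrm{unsym}}(T^*Q)$ lies in the image of $\mathcal{OC}$ restricted to the length-zero part of the Hochschild complex, i.e.\ to $\bigoplus_{L}\mathrm{hom}_{\mathcal{A}_\kappa}(L,L)$. Using the generalized Viterbo isomorphism $\mathcal{F}$ of Theorem~\ref{main-theorem2}, we may instead work on the Morse side, identifying $SH^0_{\kappa,\mathrm{unsym}}(T^*Q)\cong HM_{\kappa n}(\Lambda^1_{[\kappa]}(Q))$ and the unit with the fundamental class of the submanifold of constant multiloops, namely $\operatorname{UConf}_\kappa(Q)\hookrightarrow \Lambda^1_{[\kappa]}(Q)$; note that $\dim\operatorname{UConf}_\kappa(Q)=\kappa n$, consistent with the degree shift above. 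Exactly as when $\kappa=1$, where the endomorphism algebra of a cotangent fibre is $C_{-*}(\Omega_q Q)$ and $\mathcal{OC}$ of the class of the constant loop hits $[Q]$, one should first identify $\mathrm{End}_{\mathcal{A}_\kappa}$ of a $\kappa$-tuple of fibres over a fixed basepoint configuration with a chains-on-based-multiloops algebra --- extending the $n=2$ computation of \cite{honda2022higher} to all $n$ --- and then exhibit the explicit chain-level cycle whose image under the Morse-theoretic open--closed map is the class of $\operatorname{UConf}_\kappa(Q)$; this cycle is the evident diagonal/coproduct class of that algebra.

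Granting these two steps, the remaining step is purely formal: one invokes the open--closed generation criterion of \cite{abouzaid2010geometric} in the form valid for wrapped Fukaya categories (following Ganatra), according to which $\mathcal{OC}|_{\mathcal{A}_\kappa}$ hitting the unit of $SH^*$ implies that $\mathcal{A}_\kappa$ split-generates $\mathcal{W}_\kappa(T^*Q)$; since any $\kappa$-tuple of disjoint cotangent fibres is quasi-isomorphic in $\mathcal{W}_\kappa(T^*Q)$ to one of the chosen representatives, this is the statement of the conjecture. The main obstacle I anticipate is not any one of these steps in isolation but the structural package underlying the criterion: one needs $\mathcal{W}_\kappa(T^*Q)$ to be a homologically smooth, non-degenerate (weak Calabi--Yau) $A_\infty$-category, with $\mathcal{OC}$ compatible with the associated Mukai-type pairing, and all of this must be redeveloped inside the weighted-branched-manifold formalism of this paper --- where the genuinely new difficulty, already visible in the construction of HFSH, is maintaining transversality and the algebraic identities in the presence of multiply covered orbits and the multiply covered curves they force into the boundaries of the mixed (open/closed) moduli spaces. (An alternative route would bypass $\mathcal{OC}$ altogether, identifying $\mathcal{W}_\kappa(T^*Q)$ directly with a category of modules over a chains-on-based-multiloops algebra in the spirit of \cite{abouzaid2011cotangent}, but this appears to demand at least as much of the same analytic input.)
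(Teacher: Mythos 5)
You should be aware that the paper contains no proof of this statement: it is posed explicitly as a conjecture, and the surrounding text only expresses the hope that the algebraic structures on $SH^*_{\kappa}$ (whose very existence is raised as an open question there) ``will come in handy to show the generation criterion for the HDHF wrapped Fukaya category, extending the result of Abouzaid.'' So there is nothing in the paper to compare your argument against, and your proposal essentially spells out the route the authors themselves have in mind rather than an independent one.

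As a proof, the proposal has genuine gaps at every substantive step, and you are right to flag them, but they are larger than the phrasing suggests. First, the open--closed map $\mathcal{OC}$ into $SH^*_{\kappa,\mathrm{unsym}}$ does not exist in the literature: its construction requires extending the Hurwitz-space/wnb-groupoid perturbation scheme of this paper to branched covers of discs with both boundary strip-like ends and an interior cylindrical end, and re-proving compactness, transversality and orientation coherence there; this is comparable in scale to Sections 2 and 4 of the paper, not a routine adaptation. Second, the ``purely formal'' final step is not formal here: Abouzaid's and Ganatra's criteria presuppose a unit in $SH^0$, a module structure of $SH^*$ over itself and over Hochschild homology, and (for the Ganatra-style package) homological smoothness and nondegeneracy --- none of which have been established for $SH^*_{\kappa}$; indeed the paper lists the TQFT/BV structures and the relation of $HH_*(\mathcal{W}_\kappa)$ to $SH^*_\kappa$ as open questions. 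There are also coefficient issues ($\hbar$-adic completions, $\Q[\hbar]$ versus $\Q\llbracket\hbar\rrbracket$) that affect whether a split-generation statement even passes through the usual homological algebra. Third, the identification of the endomorphism algebra of a $\kappa$-tuple of cotangent fibres with a chains-on-based-multiloops algebra is only known for $\operatorname{dim}Q=2$ (the cited HDHF computation); extending it to all $n$, and exhibiting the cycle mapping to the class of $\operatorname{UConf}_\kappa(Q)$ under a Morse-theoretic open--closed map, are themselves open problems. In short: your outline is a reasonable research plan consistent with the authors' stated intentions, but each of its three steps is an unproven theorem, so it should not be presented as a proof of the conjecture.
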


Furthermore, based on the work of Ganatra \cite{ganatra2012symplectic} we expect a close relationship between Hochschild homology $HH_*(\mathcal{W}_\kappa(\hat{M}))$ of the wrapped HDHF Fukaya category and HFSH and we state it as the following: 

\begin{question}
    What is the relationship between the Hochschild homology $HH_*(\mathcal{W}_\kappa(\hat{M}))$ and the Heegaard Floer symplectic cohomology $SH^*_\kappa(M)$ for $M$ a Liouville domain?
\end{question}
{\bf Structure of the paper.}

Section 2 is devoted to the setup of Heegaard Floer symplectic cohomology. We first introduce basic definitions and notation in Sections~\ref{section-liouville-manifold}-\ref{section-almost-complex} in the context of the partially symmetrized Heegaard Floer symplectic cohomology. In Sections~\ref{section-moduli-cylinders}-\ref{hurwitz-spaces} we introduce Hurwitz spaces, which serve as model spaces for the Floer data required to define HFSH. Our treatment is based on the long lineage of work devoted to the study of Hurwitz schemes, for instance, \cite{ACV2003, deopurkar2014, harris-morrison2006, HM1982Hurwitz, ionel2002recursive, prufer2017}. 
We adapt this circle of ideas to our particular problem in the SFT setting. Furthermore, we introduce a branched manifold structure on the Hurwitz spaces in the sense of \cite{mcduff2006} in Sections~\ref{branched-manifolds}-\ref{branched-structure} and assign consistent Floer data to Hurwitz spaces.

In Sections~\ref{section-pseudo-holomophic-curves}-\ref{section-differential} we show that the above data indeed provide a cochain complex $SC^*_{\kappa, \mathrm{psym}}(\hat{M}; H, J)$, where $(H,J)$ denotes some Floer data adapted to a quadratic at infinity Hamiltonian function $H_0$ and a time-dependent almost complex structure $J_t$. Following the original approach in \cite{seidel2006biased} we give an alternative definition of the partially symmetrized HFSH via tuples of linear Hamiltonian functions in Section~\ref{section-linear-hamiltonians}. We also show that this definition via the direct limit over all tuples of linear Hamiltonians coincides with the definition given by means of a quadratic at infinity $H_0$ as above. The approach in terms of linear Hamiltonians is known to provide a simple way of showing the invariance of symplectic cohomology, and we adopt it to show that the partially symmetrized HFSH also gives an invariant of a Liouville domain $M$.

In Section 3 we introduce the free multiloop complex associated with an oriented manifold $Q$. This section is mostly devoted to showing that multiloops with a differential given by counting Morse flow lines with switchings form a cochain complex. The methods we use rely greatly on the approach of Abbondandolo-Schwarz in \cite{abbondandolo2010floer} and Abbondandolo-Majer in \cite{abbondandolomajer2006lectures}.

In Section 4 we prove Theorem~\ref{main-theorem2}. The chain map is constructed by means of moduli spaces of pseudoholomorphic curves associated with branched $\kappa$-sheeted covers of the half-cylinder $\R_{\ge 0} \times S^1$ extending the ideas in \cite{abbondandolo2006floer, abouzaid2011cotangent, abouzaid2015symplectic} to the Heegaard Floer context.

\vskip0.1in
 \noindent \textit{Acknowledgements}.
RK and TY thank Ko Honda for his guidance and emotional support. RK also thanks Sheel Ganatra, Robert Lipshitz, Cheuk Yu Mak, Ivan Smith, Mohan Swaminathan, Gleb Terentiuk and Bogdan Zavyalov for stimulating discussions. RK would also like to thank the organizers of the Symplectic Paradise summer school for creating an inspiring atmosphere that influenced the work on this project.
TY thanks Yin Tian for helpful discussions. We thank the anonymous referee for numerous valuable comments and corrections.
TY is supported by China Postdoctoral Science Foundation 2023T160002 and 2023M740106.

\section{The Floer complex}
\label{section-floer}

\subsection{The Liouville manifold}\label{section-liouville-manifold}

Let $(M,\lambda)$ be a $2n$-dimensional Liouville domain, i.e., $M$ is a compact exact symplectic manifold with boundary, $\lambda$ is a primitive of a symplectic form, and the Liouville vector field $V$ (satisfying $L_V d\lambda = d\lambda$) points outwards along the boundary $\partial M$.  

The Liouville form $\lambda$ restricts to a contact form $\alpha\coloneqq \lambda|_{\partial M}$ on $\partial M$. 
We then complete $M$ to $\hat{M}$ by attaching the symplectization end $([1,\infty)_r\times \partial M, r\alpha)$ along $\partial M$.
Denote the Reeb vector field of $(\partial M,\alpha)$ by $R$.
We require that
\begin{itemize}
    \item all Reeb orbits of $\alpha$ are non-degenerate.
\end{itemize}

To define an integral grading for the Floer complex, we further require that 
\begin{itemize}
    \item $2c_1(M)\in H^2(M,\mathbb{Z})$ vanishes.
\end{itemize}

\subsection{Tuples of Hamiltonian orbits}
\label{section-hamiltonian-partsym}

Here, we introduce the cochain complex for the partially symmetrized version of Heegaard Floer symplectic cohomology. The cochains to be introduced correspond to tuples of Hamiltonian orbits of some function of cumulative time $\kappa$. Describing them in this context requires working separately with each partition of $\kappa$. That is why we would have to reserve significantly more Hamiltonians than $\kappa$, which is enough for the unsymmetrized version; see Section~\ref{section-hamiltonian-unsym}.

Let $\bm \mu =(\mu_1, \ldots, \mu_{l(\bm \mu)})$ satisfying $\mu_1 \ge \mu_2 \ge \dots \ge \mu_{l(\bm \mu)}$ be a partition of $\kappa$,  where $l(\bm{\mu})$ denotes the \emph{length} of $\bm{\mu}$, i.e., the number of elements in the partition. Following standard notation, we set $\bm \mu! \coloneqq \mu_1 \cdots \mu_{l(\bm \mu)}$. We put 
\begin{gather}
     m_i(\bm \mu) =\#\{j\colon \mu_j=i\}, \label{partition-constants1} \\
    R_{\bm \mu}=\prod_im_i(\bm \mu)!, \label{partition-constants2} \\
    N_{\bm \mu}=\bm \mu! \cdot R_{\bm \mu} \label{partition-constants3}.   
\end{gather}
For the $i$-th element $\mu_i$ of the partition $\bm \mu$ we assign an additional ordering given by
$\op{ord}_i(\bm \mu)= i-\sum_{\mu_i < k\le \mu_1} m_k(\bm \mu)$. This ordering is meant to record the encountering of each particular value in the partition $\bm \mu$, e.g., for $\bm \mu=(2,2,1,1,1)$ we have $\op{ord}_4(\bm \mu)=2$ and therefore $\mu_4$ is the second term in this partition of value $\mu_4=1$. 

Let $H_0\colon S^1_t\times \hat{M}\to\mathbb{R}$ be a time-independent Hamiltonian function on $\hat{M}$ which satisfies $H_0=r^2/2$ on $[R,\infty)_r\times \partial M$ for some $R \ge 1$. We denote the class of such functions by $\mathcal{H}(\hat{M}) \subset C^{\infty}(S^1 \times \hat{M})$.
We choose $F_0 \colon \mathbb{R}/\mathbb{Z} \times \hat{M} \to \R$ to be a smooth nonnegative function satisfying:
\begin{enumerate}[(F1)]
    \item \label{function-perturbation-properties1} the functions $F_0$ and $\lambda(X_{F_0})$ are uniformly bounded in absolute value;
    \item \label{function-perturbation-properties2} all time-$1$ periodic orbits of $X_{H_0^{\mathrm{tot}}}$, the Hamiltonian vector field of the function $H_0^{\mathrm{tot}}=H_0+F_0$, are non-degenerate.
\end{enumerate}
The choice of such $F_0$ for a given $H_0 \in \mathcal{H}(M)$ exists by \cite[Lemma 5.1]{abouzaid2010geometric}.  We also recall that for such $F_0$ all time $1$ orbits of $H_0^{\operatorname{tot}}$ are isolated and, therefore, there are only finitely many of them in any compact subset of $\hat{M}$. As we are interested in tuples of Hamiltonian orbits of $H^{\mathrm{tot}}_0$ of cumulative time $\kappa$ we now introduce further perturbations to ensure that such tuples are non-degenerate.
That is, we choose a collection of functions $F_{m,j}$ for $1 \le m \le \kappa$, $1 \le j \le \lfloor\frac{\kappa}{m}\rfloor$ satisfying
\begin{enumerate}[(H1)]
    \item \label{hamiltonian-properties1} $F_{m,j}\colon\R/m\Z \times \hat{M} \longrightarrow \R$ is non-negative such that $|F_{m,j}-F_0|$ is absolutely bounded by a small constant and vanishes away from the locus of time-$m$ orbits of $H^{\mathrm{tot}}_0$, note that here we assume $F_0$ to be $1$-periodic and $F_{m,j}$ only $m$-periodic;
    \item \label{hamiltonian-properties2} all time-$m$ orbits of $H^{\mathrm{tot}}_{m,j}=H_0+F_{m,j}$ are non-degenerate, and each such orbit lies in a small neighborhood of the corresponding time-$m$ orbits of $H^{\mathrm{tot}}_0$;
    \item \label{hamiltonian-properties3} Hamiltonian time-$m$ orbits for $H^{\mathrm{tot}}_{m,j}$'s for all pairs $(m,j)$ as above are pairwise disjoint.
\end{enumerate}
Notice that existence of $F_{m,j}$'s satisfying~\ref{hamiltonian-properties1} and~\ref{hamiltonian-properties-unsym-2} can be shown as in the proof of \cite[Lemma 5.1]{abouzaid2010geometric}, with correspondence in~\ref{hamiltonian-properties-unsym-2} shown as follows: even if the initial time-$m$ orbit is degenerate, a local perturbation of the function produces new non-degenerate Hamiltonian orbits within a small neighborhood (see, e.g., \cite[Chapter 5]{audin-damian2014} for a thorough discussion). The last property is again easy to achieve using the finiteness of the number of orbits in any compact region and their non-degeneracy.  We also point out that possible degeneracy of time $k$ orbits for $H_0^{\operatorname{tot}}$ (even given non-degeneracy of time-$1$ orbits) implies that correspondence in~\ref{hamiltonian-properties2} does not have to be bijective.
 
 We denote the collections of Hamiltonians 
 $ H^{\mathrm{tot}}=\{H^{\mathrm{tot}}_{m,j}\}$ satisfying all of the above properties by ${\mathcal{ \bm H}}_{S^1}^\kappa(\hat{M})$. 

Let us note that, due to rotational symmetry, a non-multiply covered time-$m$ orbit of $H_0^{\mathrm{tot}}$ corresponds to $m$ orbits of $H_0^{\mathrm{tot}}$ with the same image. Therefore, these $m$ orbits produce $m$ distinct orbits of $H^{\mathrm{tot}}_{m,j}$ for any $j$, $1 \le j \le \lfloor \frac{\kappa}{m} \rfloor$.  See Figure~\ref{figure-H^2} for an illustration of this phenomenon with $m=2$.
\begin{figure}
    \centering
    \includegraphics[width=7cm]{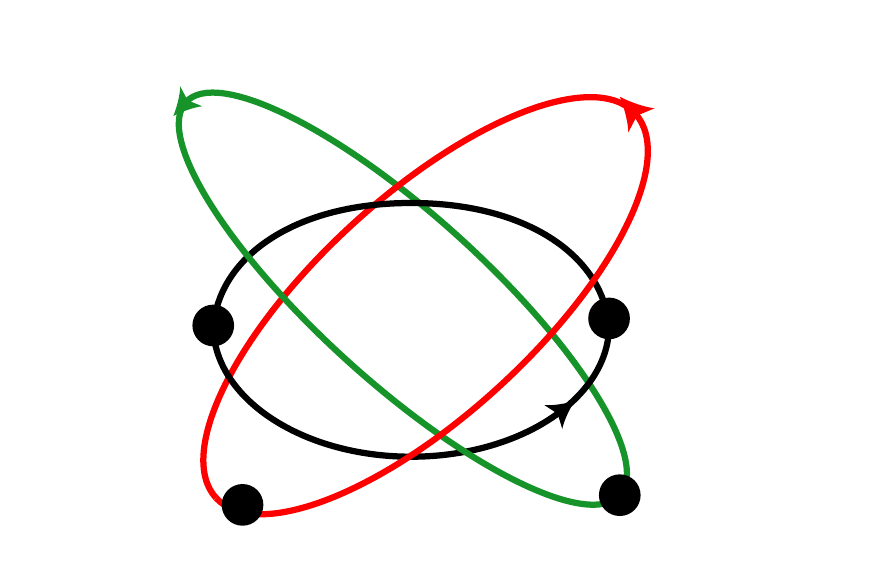}
    \caption{The black curve denotes a time-$2$ orbit of $H_0^{\mathrm{tot}}$. The green and red curves denote the two nearby orbits of $H^{\mathrm{tot}}_{2,1}$. The dots correspond to the starting points of these orbits.}
    \label{figure-H^2}
\end{figure}
More generally, assume we are given a tuple of Hamiltonian orbits of $H^{\mathrm{tot}}_0$ of cumulative time $\kappa$ with no multiply covered orbits among them, and let the periods of these orbits form a partition $\bm \mu=(\mu_1, \ldots, \mu_{l(\bm \mu)})$ of $\kappa$ with $\mu_1 \ge \mu_2 \ge \dots \ge \mu_{l(\bm \mu)}$; we use the notation $l(\bm \mu)$ to denote the length of the partition. There are $R_{\bm \mu}$ ways to order these orbits preserving the time-ordering. Given such an order on the orbits $x_1, \ldots, x_{l(\bm \mu)}$, there are $\bm \mu!$ distinct tuples of orbits of the form $x_1', \ldots, x_{l(\bm \mu)}'$, where each $x_i'$ is a Hamiltonian orbit of $H^{\mathrm{tot}}_{\mu_i, \op{ord}_i(\bm \mu)}$. All these $\bm \mu!$ tuples are in the small neighborhood of the initial tuple $x_1, \ldots, x_{l(\bm \mu)}$.
Hence, in total near the initial tuple, there are $N_{\bm \mu}$ new ``desymmetrized" tuples with disjoint images, since there are $R_{\bm \mu}$ orderings and a total of $\bm \mu!$ rotations. 

We set $\mathcal{P}_{\mathrm{psym}}$ to be
\begin{align}
    \label{eq-paths}
    \mathcal{P}_{\mathrm{psym}}\coloneqq\left\{\bm x=(x_1,\ldots,x_{l(\bm \mu)})\left|
        \begin{array}{ll}
            \text{$x_i\colon[0,\mu_i]\to\hat{M}$ is the integral flow of $\phi^t_{H^{\mathrm{tot}}_{\mu_i, \op{ord}_i(\bm{\mu})}}$,}\\
            \text{$x_i(0)=x_i( \mu_i)$, $\bm \mu$ is a partition of $\kappa$.}
        \end{array}
    \right.
    \right\}
\end{align}
\begin{figure}
    \centering
    \includegraphics[width=12cm]{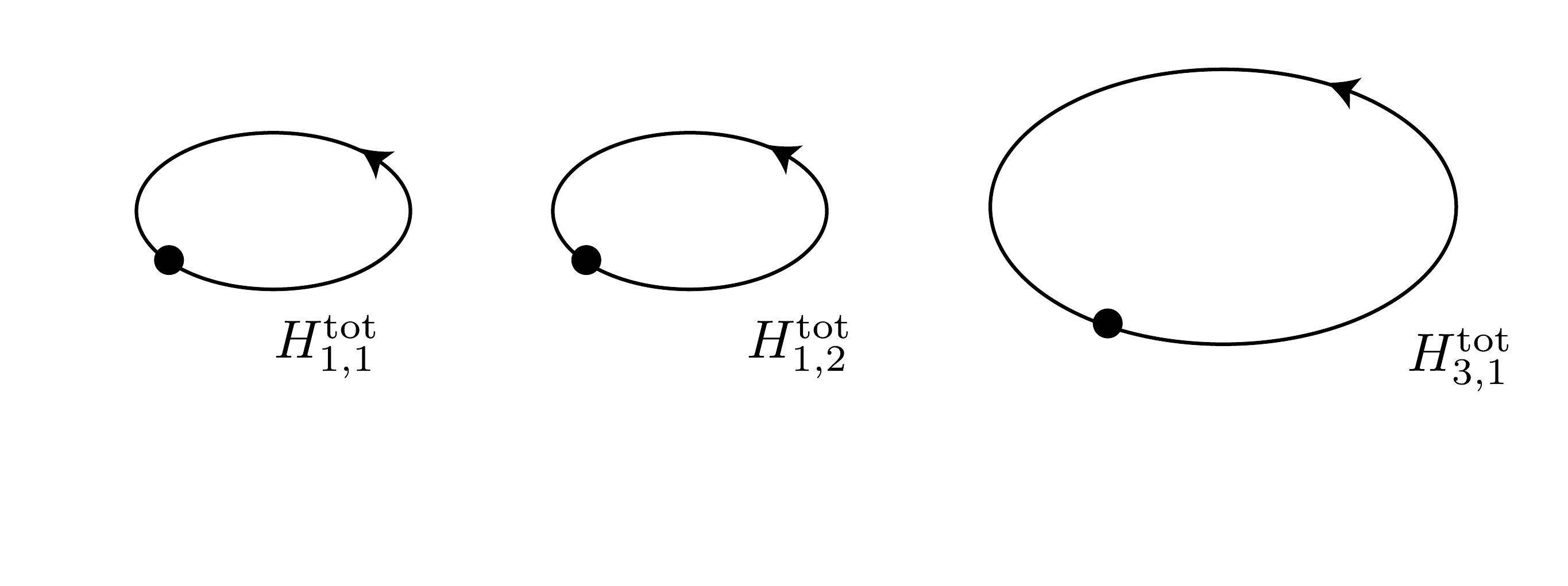}
    \caption{A sample tuple of Hamiltonian orbits in $\mathcal{P}_{\mathrm{psym}}$ for $\kappa=5$.}
    \label{fig-orbit-tuple-psym}
\end{figure}
To an orbit $x_{\mu_i}$ we associate a differential operator
\begin{equation}\label{loop-operator}
    D_{x_{\mu_i}} \colon W^{1,2}(\C, \C^{n}) \to L^2 (\C, \C^n),
\end{equation}
 following \cite[Section C.6]{abouzaid2010geometric}. Using this operator, one defines the \emph{orientation line} of $x_{\mu_i}$
 \begin{equation}
     o_{x_{\mu_i}} = |\operatorname{det}(D_{x_{\mu_i}})|,
 \end{equation}
i.e., $o_{x_{\mu_i}}$ is a rank-$1$ graded free abelian group with two generators representing two possible orientations of $\operatorname{det}(D_{x_{\mu_i}})$ and the relation that their sum is zero; see Section~\ref{section-orientation-lines} for the notations that we use here. The grading of $o_{x_{\mu_i}}$ is equal to the degree $|x_{\mu_i}|$ of the orbit $x_{\mu_i}$, which is equal to $n-CZ(x_{\mu_i})$, where the latter term is the Conley-Zehnder index.
 \begin{definition}\label{orientation-line}
     The \emph{orientation line} $o_{\bm x}$ of $\bm x \in \mathcal{P}_\mathrm{psym}$ is given as follows:
     \begin{equation}\label{orientation-eq}
         o_{\bm x} = o_{x_{\mu_1}} \otimes \dots \otimes o_{x_{\mu_{l(\bm \mu)}}}.
     \end{equation}
 \end{definition}
  We observe that the grading of $o_{\bm x}$ is equal to $|\bm x| \coloneqq |x_{\mu_1}|+\dots+|x_{\mu_{l(\mu)}}|$, i.e.
 \begin{equation}\label{grading-partsym}
    |\bm x|=\operatorname{deg}(o_{\bm x})=\kappa n-CZ(x_1)-\dots-CZ(x_{l(\bm \mu( \bm x))}).
\end{equation}

 \begin{remark}
    Note that for $\kappa=1$ the grading above agrees with the convention in \cite{abouzaid2010geometric}.
\end{remark}

Regarding $o_{\bm x}$ as a $\Z$-module we set
    \begin{equation}\label{orientation-rational-eq}
        o_{\bm x}^{\Q} = \Q \otimes_{\Z}o_{\bm x}.
   \end{equation}
We then set 
\begin{equation}  
V_{\kappa, \mathrm{psym}} = \bigoplus_{\bm x \in \mathcal{P}_{\mathrm{psym}}}o_{\bm x}^{\Q}.
\end{equation}
Each $\bm x \in \mathcal{P}_{\mathrm{psym}}$ comes with a grading that we discuss in Section~\ref{section-almost-complex}. Hence the above is a graded vector space with 
\begin{equation}
    V^i_{\kappa, \mathrm{psym}}=\bigoplus_{|\bm x|=i}o_{\bm x}^{\Q}.
\end{equation}
Consider a formal variable $\hbar$ with grading $|\hbar|=2-n$. We then set $SC^*_{\kappa, \mathrm{psym}}(\hat{M})$ to be a graded vector subspace in $V_{\kappa, \mathrm{psym}}\llbracket\hbar\rrbracket$ with $m$-th graded summand given by:
\begin{equation}
SC_{\kappa, \mathrm{psym}}^m(\hat{M}) \coloneqq \sum_{i+(2-n)j=m}V^i_{\kappa, \mathrm{psym}}\hbar^j = \sum_{i+(2-n)j=m}V^i_{\kappa, \mathrm{psym}}\bigl[(n-2)j\bigr],
\end{equation}
where in the last term we used the shift convention from Section~\ref{section-orientation-lines}, and the summation sign means direct product. We notice that $SC^*_{\kappa, \operatorname{psym}}(\hat{M})$ is a graded $\mathbb{Q}[h]$-module (but it is not a $\mathbb{Q}\llbracket\hbar\rrbracket$-module). In particular, multiplication by $\hbar$ induces a $(2-n)$-degree shift on $SC^*_{\kappa, \mathrm{psym}}(\hat{M})$.

For each tuple of Hamiltonian orbits $\bm x=(x_1, \ldots, x_{l(\bm \mu)}) \in \mathcal{P}_{\mathrm{psym}}$, we define the action of $\bm x$ by
\begin{equation}\label{action-psym}
    \mathcal{A}(\bm x)=\sum^{l(\bm \mu)}_{i=1}\mathcal{A}(x_i),
\end{equation}
where the action of the orbit $x_i$ is given by the expression
\begin{equation}
    \mathcal{A}(x_i)=\int^{\mu_i}_0 x_i^*\lambda-\int^{\mu_i}_0 H^{\mathrm{tot}}_{\mu_i, \op{ord}_i{\bm \mu}}(t,x_i(t))dt.
\end{equation}

We will further denote by $\bm \mu(\bm x) \vdash \kappa$ the partition associated with $\bm x$.
A tuple of Hamiltonian orbits $\bm x$ can be also viewed as a loop in $\mathrm{Conf}_\kappa(\hat{M})$ by considering a collection of $\kappa$ paths, where each path takes the form $\gamma \colon [0,1] \rightarrow \hat{M}$ with $$\gamma (t) \equiv x_i(k+t),\, t \in[0,1]$$ for some $x_i \in \bm x$ and $k \in \Z$ such that $0 \le k < \mu_i$. 

\begin{remark}
   The main difference between $\mathcal{P}_{\mathrm{unsym}}$ and $\mathcal{P}_{\mathrm{psym}}$ which we introduce in Section~\ref{section-hamiltonian-unsym}, is that an unperturbed loop in $\mathrm{Conf}_\kappa(\hat{M})$ (given by $\kappa$ time-$1$ flows of $H_0$) provides $\kappa !$ elements of $\mathcal{P}_{\mathrm{unsym}}$ and only $N_{\bm \mu}$ (see~\eqref{partition-constants2} and~\eqref{partition-constants3}) elements of $\mathcal{P}_{\mathrm{psym}}$, where $\bm \mu !$ stands for a number of ways to choose a starting point on each of the orbits and $R_{\bm \mu}$ is responsible for ordering the orbits. 
   
   In the toy example with a tuple consisting of Hamiltonian orbits of $H_0$ of respective times $\bm \mu=(2,1,1)$ we have $\bm \mu !=2$, as there are two ways to pick the starting point on the time-$2$ orbit, and $N_{\bm \mu}=2$ as there are two ways to give an order on the $2$ time-$1$ orbits. Therefore, in this example $\#\mathcal{P}_{\mathrm{psym}}=4$ and $\#\mathcal{P}_{\mathrm{unsym}}=24$.
\end{remark}

\subsection{Almost complex structures and gradings}\label{section-almost-complex}
Denote the contact hyperplane field $\operatorname{Ker}\lambda|_{\partial M}\subset T(\partial M)$ by $\xi$. 
Then we can write
\begin{equation}
    T([1,\infty)\times\partial M)=\mathbb{R}^2\oplus\xi,
\end{equation}
where $\mathbb{R}^2$ is spanned by $\partial_r$ and the Reeb vector field $R$.

Let $\mathcal{J}(\hat{M})$ be the set of almost complex structures on $\hat{M}$ tamed by $d\lambda$ which are of \emph{rescaled contact type} on the symplectization end:
\begin{equation}
    -\frac{1}{r}\lambda\circ J=dr,
\end{equation}
\noindent that is,
\begin{align}
    \left\{
        \begin{array}{ll}
            J\colon \xi\to\xi,\\
            J(R)=\partial_r,
        \end{array}
    \right.
\end{align}
on $[1,\infty)\times\partial M$ for $J\in\mathcal{J}(\hat{M})$. From now on, we fix a generic family of almost complex structures of rescaled contact type 
\begin{equation}\label{fixed-almost-complex}
    J_t \colon [0,1] \to \mathcal{J}(\hat{M}).
\end{equation}
 In addition to this, we fix a generic almost complex structure of rescaled contact type $J^0 \in \mathcal{J}(\hat{M})$.
Given any almost complex structure in $\mathcal{J}(\hat{M})$, we can define a grading on $SC_\kappa(\hat{M})$. 
Since $2c_1(M)=0$, the bicanonical bundle $(\Lambda_{\mathbb{C}}^2 T^*\hat{M})^{\otimes 2}$ is trivial. One can then define the usual Conley-Zehnder indices of Hamiltonian orbits.

\subsection{Moduli space of marked cylinders}\label{section-moduli-cylinders}
\begin{definition}\label{definition-stable-cylinder}
    A \emph{stable cylinder} is a tuple $\bm C =(C, j, B, D)$, where
    \begin{itemize}
        \item $(C, j)$ is Riemann surface consisting of the \emph{main component} which is a cylinder $C_0= \R_s \times S^1_t$ with $j|_{C_0}$ a standard complex structure and a collection of projective planes (i.e., $\C \mathbb{P}^1$'s up to biholomorphism) with $j$ restricting to the standard complex structure;
        \item $B$ is the finite ordered tuple comprised of \emph{marked points} $q_1, \ldots, q_{|B|}$ on $C$;
        \item $D$ is the set consisting of unordered nonintersecting pairs of points $\{d, d'\} \subset C$ which are disjoint from $B$ and belong to different components, where any such pair represents a node;
    \item each component is \emph{stable};
    \item the nodal Riemann surface $\bm C^D \coloneqq C/\sim_D$, where $d\sim_D d'$ if $\{d,d'\} \in D$, is connected and the genus of $\bm C^D$ equals $0$.
    \end{itemize}
\end{definition}

We denote by $\widetilde{\mathcal{C}}_b$ the set of all stable cylinders with $b$ marked points. The collection $\bm \beta \colon \bm C \to \bm C'$ of biholomorphisms $\{\beta_i\}$ between components of these two stable cylinders is said to be an \emph{equivalence}, if it is an isomorphism between $\bm C$ and $\bm C'$ as nodal marked curves which sends the main component $C_0$ to the main component $C_0'$ under some $\beta_0$, and $\beta_0$ is a vertical parallel translation, i.e., $\beta_0(s,t)=(s+s_0,t)$ for some $s_0 \in \R$.
We denote by $\mathcal{C}_{b}$~\label{stable-cylinders-equivalence} the quotient of $\widetilde{\mathcal{C}}_b$ under such equivalences.

One may regard the main component $\R_s \times S^1_t$ as a sphere $\mathbb{P}^1$ punctured at $2$ marked points $q_{+\infty}= \infty \in \mathbb{P}^1$ and $q_{-\infty}=0\in\mathbb{P}^1$. Compactifying the main component yields a genus $0$ curve with two marked points $q_{-\infty}$ and $q_{+\infty}$.
From this perspective the quotient of $\widetilde{\mathcal{C}}_b$ by all automorphisms may be regarded as a part of Deligne-Mumford compactification $\overline{\mathcal{M}}_{0,b+2}$ of the moduli space of genus zero curves with $k+2$ marked points  $(q_{+\infty}, q_{-\infty}, q_1, \ldots, q_{b})$. Namely, classes of stable cylinders up to automorphisms are in bijection with classes of genus zero nodal curves with $q_{+ \infty}$ and $q_{-\infty}$ staying in the same rational component. Notice that we have a natural map $\mathcal{C}_b \to \overline{\mathcal{M}}_{0,b+2}$ which surjects onto the portion described above.
 
For the main component of a given stable cylinder $\bm C$ as above to be stable, it has to contain at least one special point (a marked point or a node). A cylinder $\R \times S^1$ with no special points would be further called a \emph{trivial cylinder}. 

We also note that one may identify $S^1 \times \{\pm \infty\}$ with the corresponding space of tangent directions
\begin{equation}\label{tangent-directions-at-infinity}
    S_{q_{\pm \infty}}\mathbb{P}^1 = T_{q_{\pm \infty}}\mathbb{P}^1/ \R.
\end{equation}

\begin{definition}
    A \emph{cylindrical building} of height $n$ is a Riemann surface $$\bm C=(\overrightarrow{C}=(C_1, \ldots, C_n), j, B, D=D_r \cup D_s),$$ 
    where each $C_i$ is a stable cylinder and $B$ is an ordered set of marked points obtained as a union of marked points of each $C_i$. The set of \emph{regular nodes} $D_r$ is an unordered collection of nodes of all the $C_i$ and the set of \emph{special nodes} $D_s$ is a set of all the pairs $\{0_i, \infty_{i+1}\}$, where $0_i$ is a point $0$ of the main component of $C_i$ and $\infty_{i+1}$ is a point $\infty$ of the main component of $C_{i+1}$.
\end{definition}
Notice that by smoothing all the nodes (together with special ones) of the height $n$ cylindrical building, one obtains a genus $0$ connected Riemann surface. 

We denote the set of all cylindrical buildings of height $n$ with $b$ marked points by $\widetilde{\mathcal{C}}_{b,n}$. In the special case of $n=1$ we have $\widetilde{\mathcal{C}}_{b,1}=\widetilde{\mathcal{C}}_b$. 
\begin{definition}
   We say that a cylindrical building $\bm C$ of height $n$ is \emph{smooth} if the set $D_r$ is empty; equivalently, each $C_i$ consists only of the main component. 
\end{definition}
We denote the set of smooth cylindrical buildings of height $n$ by $\widetilde{\mathcal{C}}^{\text{sm}}_{b, n}$. 
On $\widetilde{\mathcal{C}}^{\text{sm}}_{b, n}$ there is a natural $\R^n$-action given by vertical translations on the main components of the $C_i$'s. Let us denote the quotient by
   \begin{equation}\label{vertical-automorphism-quotient}
    \widehat{\mathcal{C}}_{b, n} = \widetilde{\mathcal{C}}^{\text{sm}}_{b, n}/\R^n.
\end{equation}
\noindent Similarly, we denote by $\mathcal{C}_{b,n}$ the quotient of $\widetilde{\mathcal{C}}_{b,n}$ by the above-mentioned $\R^n$-action on the main components and further action by equivalences on each stable cylinder in the building.

For any cylindrical building $\bm C$ of height $n$ we denote by $\text{Aut}_v(\bm C)$ the group of \emph{vertical automorphisms} that acts by vertical translations on the main components and is the $\R^n$ by which we have been taking quotients above, e.g. in~\eqref{vertical-automorphism-quotient}.
\begin{remark}
    We stress that we are not going to quotient out the $S^1$-action on the main components in our considerations. 
\end{remark}

By taking the disjoint union of all $\mathcal{C}_{b,n}$ one obtains a moduli space 

\begin{equation}\label{eq: moduli-cylindrical-buildings}
    \cylcomp = \bigsqcup\limits_{n \in \mathbb{Z}_{>0}} \mathcal{C}_{b, n}.
\end{equation}
It has a subspace $\cylsm \subset \cylcomp$ serving as a moduli space of smooth cylindrical buildings
    \begin{equation}\label{eq: smooth-buildings-moduli}
       \widehat{\mathcal{C}}_b=\bigsqcup_{n \in \mathbb{Z}_{>0}} \, \widehat{\mathcal{C}}_{b, n}.
   \end{equation}
   
   \begin{remark}\label{rmk-smooth-cylindrical}
      The space $\widehat{\mathcal{C}}_b$ is the portion of the full compactification $\cylcomp$ of our main interest, as for the actual moduli space of pseuholomorphic curves all other forms of degeneration do not appear generically; see Section~\ref{section-compactness}. We use hat-notation to describe the portion of a ``presumable" compactification consisting of curves with multiple \emph{smooth} (i.e., without nodes) levels.
   \end{remark}
\begin{remark}
Notice that for a given convergent sequence $\bm C_k \in \cylsm$ of smooth cylindrical buildings such that no two marked points ``collide" it holds that $\lim_{k \rightarrow \infty} \bm C_k \in \cylsm$.
\end{remark}

\begin{remark}\label{moduli-of-rational-curves-remark}
    The set of \emph{cylindrical buildings} up to all the automorphisms forms the full Deligne-Mumford compactification $\overline{\mathcal{M}}_{0, k+2}$. We notice that the $t$-rotations in $S^1$-directions of the main components and possible permutations of $C_i$'s are among the automorphisms that are not quotiented out in the definition of $\mathcal{C}_{b,n}$.
    The height of a building is just the distance between the leaves corresponding to $q_{+\infty}$ and $q_{-\infty}$ in the tree associated with the building regarded as a nodal rational curve.
\end{remark}
We set the topology on these spaces following \cite{BEHWZ2003sft}. One can show that

\begin{lemma}
    The space $\cylcomp$ is a compact metric space and serves as the compactification of the space $\cyl$, i.e., it coincides with the closure of $\cyl$ viewed as a subspace of $\cylcomp$. In particular, every sequence of smooth cylinders with $b$ marked points has a subsequence that converges to a cylindrical building.
\end{lemma}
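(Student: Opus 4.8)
The plan is to follow the standard SFT compactness architecture of \cite{BEHWZ2003sft}, adapted to the purely topological (i.e. no pseudoholomorphic curve) setting of marked cylinders. The statement has two parts: compactness of $\cylcomp$ as a metric space, and the identification of $\cylcomp$ with the closure of $\cyl$. I would treat these in order, with the bulk of the work in the first part.

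First I would fix the metric/topology on $\cylcomp$. Following \cite{BEHWZ2003sft}, a sequence $\bm C_k$ of cylindrical buildings converges to $\bm C$ if, after choosing representatives, there are diffeomorphisms away from the nodes and special nodes that converge in $C^\infty_{\mathrm{loc}}$, marked points converge, and the conformal structures near nodes degenerate in the standard way (long necks). The key compactness input is then the following: given any sequence of stable cylinders $\bm C_k \in \widetilde{\mathcal C}_b$ (height one), pass to a subsequence so that the underlying genus-zero nodal curves converge in the Deligne--Mumford sense in $\overline{\mathcal M}_{0,b+2}$ (this is classical and is exactly where the stability hypotheses on components are used). The only extra data to track beyond Deligne--Mumford convergence is the behaviour of the main component: because we do \emph{not} quotient by the $S^1$-rotation on the main component but \emph{do} quotient by vertical $\R$-translation, the main component can ``degenerate'' in two ways as $k\to\infty$: (i) a marked point or a bubble tree on the main component runs off to $s\to \pm\infty$, which in the limit produces an additional level of the building (the neck between $q_{\pm\infty}$ and that marked point becomes infinitely long), or (ii) the main component develops an interior node, producing a bubble $\C\mathbb P^1$. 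Sorting a convergent subsequence into these cases, and iterating finitely many times (the total number of components is bounded by $b+2$ via stability, so the height of the limit building is bounded and the process terminates), yields a limit which is a cylindrical building of some height $n$; the height is controlled because, as noted in Remark~\ref{moduli-of-rational-curves-remark}, it equals the distance between the leaves $q_{+\infty}$ and $q_{-\infty}$ in the tree of the limiting nodal rational curve, which has at most $b+2$ leaves.

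The hard part will be bookkeeping the $\R^n$-translation quotient correctly while extracting the subsequential limit: one must choose, level by level, a vertical recentering so that on each prospective level some special point stays at a bounded height, and then verify that the resulting family of recentered curves converges to a \emph{stable} building with no trivial cylinders sandwiched between levels. This is precisely the ``rescaling and soft rescaling'' argument of \cite{BEHWZ2003sft}; in our setting it is genuinely simpler than the general SFT case because there are no energy bounds to establish (the ``curves'' here carry no area), no bubbling of holomorphic spheres to rule out beyond the purely complex-analytic Deligne--Mumford degeneration, and the target $\R\times S^1$ direction is just the cylinder itself. Concretely, I would (a) recenter $\bm C_k$ so that the first marked point (or, if $b=0$, any chosen node) sits at height $0$ on its level; (b) extract a Deligne--Mumford limit of the underlying nodal curves; (c) read off from which special points escaped to $\pm\infty$ the decomposition into levels $C_1,\dots,C_n$; (d) on each level separately, recenter and extract a limit main component together with its bubble trees, discarding any emergent trivial cylinder by absorbing it into the neck; (e) check stability of each resulting level. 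Diagonalizing over the finitely many levels gives the convergent subsequence.

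For the second part, I would show $\cyl$ is dense in its closure inside $\cylcomp$ and that this closure is all of $\cylcomp$. Denseness of $\cyl$ in $\cylcomp$ follows by an explicit gluing/smoothing construction: given a cylindrical building $\bm C$ of height $n$ with node set $D_r\cup D_s$, smoothing all nodes with gluing parameters $\to 0$ produces a family in $\cyl$ converging to $\bm C$ in the topology above; this is the inverse of the neck-stretching degeneration and is standard. Since $\cylcomp$ is compact (first part) and contains $\cyl$ as a dense subset, $\cylcomp$ is the closure of $\cyl$. The final sentence of the lemma, that every sequence of smooth cylinders has a subsequence converging to a cylindrical building, is an immediate consequence of compactness of $\cylcomp$ together with $\cyl \subset \cylcomp$. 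I expect the main obstacle, as indicated above, to be step (d)--(e): ruling out trivial-cylinder levels in the limit and verifying that each extracted level is stable, i.e. that marked points and nodes do not all escape simultaneously leaving an unstable (or empty) main component — this is handled by the careful choice of recentering in step (a) and (d), exactly as in the SFT literature.
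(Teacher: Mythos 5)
Your proposal is correct and follows essentially the same route as the paper: the paper's (sketched) proof also extracts a subsequence by sorting the marked points into clusters whose mutual $s$-separations diverge (following the Hutchings--Taubes style argument it cites), with each cluster giving a level of the limit building and colliding marked points producing bubbles, which is the same neck-stretching/recentering mechanism you organize via the Deligne--Mumford limit and steps (a)--(e). Your additional treatment of density of $\cyl$ via gluing is left implicit in the paper but is consistent with its statement.
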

\begin{proof}[Sketch of the proof]
    For simplicity, assume we are given a sequence of smooth cylinders $\bm C_n \in \cyl$. Following \cite[Section 2]{HT2007gluing} one can pick a subsequence $\bm C_{n_k}$ with marked points denoted by $q_1^k, \ldots, q_b^k$  and a partition $\bm \lambda \vdash [b]$ of the set $[b]=\{1, \ldots, b\}$ (in particular each $\lambda_i$ is a subset of $[b]$), such that
    \begin{itemize}
        \item there is a constant $D$ satisfying $\forall j, j' \in \lambda_i \colon \text{dist}(q^k_j, q_{j'}^k)<D$ for any term $\lambda_i$ of $\bm \lambda$;
        \item if $j$ and $j'$ do not belong to the same element of $\bm \lambda$, then $\text{dist}(q_j^k, q_{j'}^k)>k$.
    \end{itemize}

    In other words, one may divide marked points into ``clusters" such that the $s$-coordinate difference between points of different clusters goes to $\infty$. Moreover, the elements of $\bm \lambda$ can be ordered by their height (i.e., the $s$-coordinates of elements of one cluster are bigger than the $s$-coordinates of the points in another cluster). If no marked points ``collide" then the limit of such a sequence is a smooth height $\ell(\bm \lambda)$ cylindrical building. Otherwise, bubbles appear and the limit is a stable height $\ell(\bm \lambda)$ cylindrical building.
\end{proof}

For further usage, we note that the boundary $\partial \cylcomp$ can be covered by images of the natural maps
\begin{equation}\label{bubble}
    \bigsqcup_{b_1+b_2=b} \overline{\mathcal{C}}_{b_1+1} \times \overline{\mathcal{M}}_{0,b_2+1} \to \partial \cylcomp,
\end{equation}
\begin{equation}\label{newlevel}
    \bigsqcup_{b_1+b_2=b} \overline{\mathcal{C}}_{b_1} \times \overline{\mathcal{C}}_{b_2} \to \partial \cylcomp,
\end{equation}
where the space $\overline{\mathcal{M}}_{b_2+1}$ in \eqref{bubble} refers to bubbling on the main component of a cylinder and \eqref{newlevel} describes the appearance of a new level in the building. Recall that in the map~\eqref{bubble} one identifies the last marked point on the first component and the first marked point on the bubble component, and one additionally needs a choice of length $2$ partition of $[b]$.
\begin{remark}
   Despite forcing out levels that could be trivial cylinders for elements of $\cylcomp$ later on, we would consider cylindrical buildings with a finite number of trivial cylinders inserted between some of the levels. A sequence of cylindrical buildings that converges to an element of $\cylcomp$ also converges to the one obtained by adding some trivial cylinder levels in between the levels of this element.
\end{remark}

To shed more light on the relation between the moduli space of cylindrical buildings $\cylcomp$ and $\overline{\mathcal{M}}_{0, b+2}$ (see Remark~\ref{moduli-of-rational-curves-remark}), we note that there is an $U(1)$-principal bundle
\begin{equation}
    U(1)\to \widehat{\mathcal{C}}_{b,1} \to \mathcal{M}_{0, b+2},
\end{equation}
where the $U(1)$ acts by counterclockwise rotations on a given cylinder. Denote by $\mathcal{M}_{0, (b_1+2, b_2+2)}$ the subset of $\partial \overline{\mathcal{M}}_{0, b+2}$ consisting of nodal curves with $2$ components $S_1$, $S_2$, such that $q_{+\infty} \in S_1, q_{-\infty} \in S_2$ and $b_1+b_2=b$. Then there is a principal bundle 
\begin{equation}\label{s1-bundle-boundary}
    U(1)^2 \to \widehat{\mathcal{C}}_{b,2} \to \mathcal{M}_{0, (b_1+2, b_2+2)},
\end{equation}
and more generically there is $U(1)^n$-principal bundle 
\begin{equation}
    U(1)^n \to \widehat{\mathcal{C}}_{b,n} \to \mathcal{M}_{0, (b_1+2, b_2+2, \ldots, b_n+2)}
\end{equation}
where the last space is defined similarly. 

We note that $\mathcal{M}_{0, (b_1+2, b_2+2)}$ is the image of the natural map
\begin{equation}
    \mathcal{M}_{0, b_1+2} \times \mathcal{M}_{0, b_2+2} \to \overline{\mathcal{M}}_{0, b+2}
\end{equation}
given by gluing two curves $(C_1; q_{+\infty}, q_1, \ldots, q_{b_1}, q_{-\infty})$ and $(C_2; q'_{+\infty}, q'_1, \ldots, q'_{b_2}, q'_{-\infty})$  at the node $\{q_{-\infty}, q'_{+\infty}\}$.

We denote by $L_{q_i}$ the line bundle over $\overline{\mathcal{M}}_{0, b+2}$ with fiber $T_{q_i}^*C$ over $$(C; q_{+\infty}, q_1, \ldots, q_{b_1}, q_{-\infty}) \in \overline{\mathcal{M}}_{0, b+2}.$$

We now recall facts from deformation theory following \cite[Section 1]{ionel2002recursive}; we refer the reader to the related descriptions of a neighborhood of a boundary stratum given in \cite[Section 8]{HK2014}, \cite[Section 4]{IP2004}, \cite[Section 2]{Siebert1997}. 
Namely,  the local coordinates in the normal direction near $\mathcal{M}_{0, (b_1+2, b_2+2)}$ 
are given by 
\begin{equation}\label{deformation-line}
    \zeta \in L^*_{q_{-\infty}} \otimes L^*_{q'_{+\infty}},
\end{equation}
which is a line bundle over $\mathcal{M}_{0, (b_1+2, b_2+2)}$ with fiber $T_{q_{-\infty}}C_1 \otimes T_{q'_{+\infty}}C_2$.
It turns out that the line bundle given in~\eqref{deformation-line} is essentially the normal complex line bundle to $\mathcal{M}_{0, (b_1+2, b_2+2)} \subset \overline{\mathcal{M}}_{0, b+2}$. According to the discussion which follows after \cite[Equation (1.18)]{ionel2002recursive}, in local coordinates $(w_1, w_2)$ on $C_1$ and $C_2$ near the node 
$$w_1w_2=0$$ 
the curve associated with $\zeta$ is locally a solution of
\begin{equation}\label{node-resolution}
    w_1w_2=\zeta.
\end{equation}

Analogously, we may define a complex line bundle
\begin{equation}\label{deformation-line-cyl}
     \hat{L}^*_{q_{-\infty}} \otimes \hat{L}^*_{q'_{+\infty}}
\end{equation}
on $\widehat{\mathcal{C}}_{b,2}$ with fiber $T_{q_{-\infty}}C_1 \otimes T_{q'_{+\infty}}C_2$ at a point $((C_1; q_{+\infty}, \ldots, q_{-\infty}), (C_2; q'_{+\infty}, \ldots, q'_{-\infty}))$. More explicitly, over $\widetilde{\mathcal{C}}_{b,2}$ both bundles with fibers $ T_{q_{-\infty}}C_1$ and $T_{q'_{+\infty}}C_2$ are trivializable (by the means of global coordinates on cylinders) and $\R^2$-equivariant. Thus, these vector bundles induce quotient vector bundles on $\widehat{\mathcal{C}}_{b,2}$ which we denote by $\hat{L}^*_{q_{-\infty}}$ and $\hat{L}^*_{q'_{+\infty}}$. We additionally note that positive projectivizations of real vector bundles $\hat{L}^*_{q_{-\infty}}$ and $\hat{L}^*_{q'_{+\infty}}$ are trivial. Hence, for~\eqref{deformation-line-cyl} viewed as a real $2$-dimensional bundle, there is a global choice of angle coordinate $\varphi$.

We now describe a normal neighborhood of $\widehat{\mathcal{C}}_{b,2} \subset \widehat{\mathcal{C}}_b$ similar to~\eqref{deformation-line}. First, we pick a global section of $\widetilde{\mathcal{C}}_{b,2} \to \widehat{\mathcal{C}}_{b,2}$ as follows. Given a representative 
$$((C_1; q_{+\infty}, \ldots, q_{-\infty}), (C_2; q'_{+\infty}, \ldots, q'_{-\infty}))$$ of a point in $\widehat{\mathcal{C}}_{b,2}$ we assign to it another representative of the same point $(\widetilde{C}_1, \widetilde{C}_2)$ obtained by translating $C_1$ vertically by a negative of the minimum of the $s$-coordinates of its marked points and translating $C_2$ by a negative of the maximum of the $s$-coordinates of its marked points (to make such section smooth one should pick smooth approximations of $min$ and $max$ differing by not more than a constant). The point $(\widetilde{C}_1, \widetilde{C}_2)$ will then always have all marked points of $\widetilde{C}_1$ above $s=0$ and all marked points of $\widetilde{C}_2$ below $s=0$.

Now pick an $R \in (0, +\infty]$ and glue $(\widetilde{C}_1, \widetilde{C}_2)$ via the standard procedure. Namely, set 
$$\widetilde{C}_1^R=\widetilde{C}_1 \setminus (-\infty, -R] \times S^1,$$
$$\widetilde{C}_2^R=\widetilde{C}_2 \setminus [R, +\infty) \times S^1,$$
$$\widetilde{C}^R = \widetilde{C}_1^R \sqcup \widetilde{C}_2^R/\sim,$$
where the identification $\sim$ is given by solving
\begin{equation}\label{nodal-resolution-cyl}
e^{s_1+2\pi \sqrt{-1}t_1} \cdot e^{-s_2-2\pi \sqrt{-1}t_2}=e^{-R}    
\end{equation}
for $-R\le s_1 \le 0$ and for $R=+\infty$ we have $\widetilde{C}^R = \widetilde{C}_1^R \sqcup \widetilde{C}_2^R$.

Hence, we just constructed a diffeomorphism of a neighborhood of $\widehat{C}_{b,2} \subset \widehat{C}_b$ and 
\begin{equation}\label{collar-nbhd}
    \widehat{C}_{b,2} \times [0, +\infty].
\end{equation}

Let us notice that given a parameter $R$ as above, we can assign a section 
\begin{equation}\label{gluing-section-hurwitz}
\tilde{\zeta} \in \hat{L}^*_{q_{-\infty}} \otimes \hat{L}^*_{q'_{+\infty}}
\end{equation}
with $\varphi( \tilde{\zeta})=0$ (recall $\varphi$ is the angular coordinate) and, moreover, $\tilde{\zeta}=e^{-R}$ in the local trivialization associated with $(\widetilde{C}_1, \widetilde{C}_2)$ as in~\eqref{nodal-resolution-cyl}. Hence, in local coordinates on cylinders, the equation~\eqref{nodal-resolution-cyl} is given via
\begin{equation}
    w_1w_2=\tilde{\zeta},
\end{equation}
which is the analogue of~\eqref{node-resolution}; here we use identifications $w_1=e^{s_1+2\pi \sqrt{-1}t_1}, w_2=e^{-s_2-2\pi \sqrt{-1}t_2}$. 
We point out that unlike in the case of moduli space of curves $\overline{\mathcal{M}}_{0, b+2}$ the only sections of $\hat{L}^*_{q_{-\infty}} \otimes \hat{L}^*_{q'_{+\infty}}$ that give a family of curves close to breaking are those with $\varphi=0$.

More generally, there is a similar description of a neighborhood of $\widehat{C}_{b,n}$ diffeomorphic to
\begin{equation}\label{codim-n-stratum}
    \widehat{C}_{b,n} \times [0, +\infty]^{n-1}.
\end{equation}

Given any curve $C$ and a point $q \in C$ we introduce the notation
\begin{equation}\label{eq-angular-circle}
    \mathcal{S}^1_{q} \bm C=(T_qC\setminus \{0\}) / \R_{>0}.
\end{equation}
For a nodal curve $\bm C$ given by two components $C_1$ and $C_2$ at the node $\{q_{-\infty}, q_{+\infty}'\}$ as above we additionally introduce
\begin{equation}\label{eq-angular-circle-node}
    \mathcal{S}^1_{q_{-\infty}, q_{+\infty}'}C=\bigl(T_{q_{-\infty}}C_1 \otimes T_{q_{+\infty}'}C_2\setminus \{0\} \bigr)/ \R_{>0}.
\end{equation}
The circle $\mathcal{S}^1_{q}C$ can then be regarded as the set of angular coordinates at point $q \in C$. There is a natural $S^1$-bundle $\mathcal{S}^1_q$ over $\mathcal{C}_b$ with such a fiber on $C \in \mathcal{C}_b$. Similarly, there is a line bundle $\mathcal{S}^1_{q_{-\infty}, q_{+\infty}'}$ over $\widehat{C}_{b,2}$ with a fiber $\mathcal{S}^1_{q_{-\infty}, q_{+\infty}'} \bm C$. 

We now note that $\mathcal{S}^1_{q_{-\infty}, q_{+\infty}'}$ is a trivial $S^1$ bundle, as it is essentially the projectivization of the trivial bundle~\eqref{deformation-line-cyl}. Using unit-circle coordinates, we will denote the $0$-angular coordinate as in the discussion above by 
\begin{equation}
  1 \in \mathcal{S}^1_{q_{-\infty}, q_{+\infty}'} \bm C.  
\end{equation}
For future use, we also introduce the following notation for the projectivization of the tensor power of the above line bundle by setting:

\begin{equation}\label{eq-angular-circle-node-power}
    (\mathcal{S}^1_{q_{-\infty}, q_{+\infty}'})^{\otimes k}C=\bigl((T_{q_{-\infty}}C_1 \otimes T_{q_{+\infty}'}C_2)^{\otimes k}\setminus \{0\} \bigr)/ \R_{>0}.
\end{equation}

\subsection{Hurwitz type spaces}\label{hurwitz-spaces}
In this subsection, we describe the moduli spaces of certain branched covers of a cylinder and their compactifications, closely following \cite{HM1982Hurwitz}. In particular, we describe the smooth structure near the boundary of the compactification of the Hurwitz space in the case of multiple-level smooth degenerations; see Theorem~\ref{hurwitz-level-boundary-neighborhood} for the precise statement.

Pick two partitions $\bm \mu, \bm \mu' \vdash \kappa$. We recall that  that $\mu_{i} \ge \mu_{i+1}$ for $i=1, \dots, l(\bm \mu)-1$.
\vspace{-0.1in}
\begin{definition}\label{def-hurwitz-space}
    The \emph{Hurwitz space} $\hurwitzaction$ is the space of isomorphism classes of \emph{branched covers}, where a branched cover
    \begin{equation*}
        (S, \pi \colon S~\longrightarrow~\R ~\times~S^1, \bm p^+, \bm a^+, \bm p^-, \bm a^-, B)
    \end{equation*}
    is the following collection of data: 
    \begin{enumerate}
        \item A Riemann surface $S$ without boundary of Euler characteristic $\chi$ punctured at positive ordered punctures $\bm p^+=\{p^+_1, \ldots, p^+_{l(\bm \mu)}\}$ and negative ordered punctures $\bm p^-=\{p^-_1, \ldots, p^-_{l(\bm \mu ')}\}$. Denote by $\overline{S}$ the compactification of $S$ such that $S= \overline{S} \setminus (\bm p^+ \cup \bm p^-)$. 
        \item For each positive puncture $p_i^+$ (resp. negative puncture $p_i^-$) an \emph{asymptotic marker} $a_i^+ \in \mathcal{S}^1_{p_i^+}S \coloneqq (T_{p_i^+}\overline{S} \setminus \{0\})/\R_{>0}$ (resp. an asymptotic marker $a_i^- \in \mathcal{S}^1_{p_i^-}S$).
        \item A holomorphic degree $\kappa$ branched cover $\pi \colon S \longrightarrow \R \times S^1$ with simple branched points away from $\pm \infty$. At $+\infty$ (resp. at $- \infty$) the branching profile is given by $\bm \mu$ (resp. by $\bm \mu '$), i.e., a point $p_i^+$ (resp. a point $p_i^-$) is a ramification point of multiplicity $\mu_i$ (resp. multiplicity $\mu_i'$) of an extended map $\overline{\pi} \colon \overline{S} \longrightarrow S^2$. Here $\R \times S^1= S^2\setminus \{+\infty, -\infty\}$. We additionally require that the map induced by $\pi$ from $\mathcal{S}^1_{p_i^+}\overline{S}$ to $\mathcal{S}^1_{+ \infty} S^2$ sends $a_i^+$ to the marker corresponding to $\R \times \{0\}$ which is regarded as $1 \in \mathcal{S}^1_{+ \infty} S^2$  together with similar requirements for negative punctures of $S$.
        \item An ordered tuple $B$ of points $q_1, \ldots, q_b$ in $\R \times S^1$ of simple branching, where $b= - \chi$ according to the Riemann-Hurwitz theorem.
    \end{enumerate}

    Two branched covers $(S, \pi, \bm p^\pm, \bm a^\pm, B)$ and $(S', \pi', \bm p'^\pm, \bm a'^\pm, B')$ are said to be \emph{isomorphic} if there exists a biholomorphism $\alpha \colon \overline{S} \rightarrow \overline{S'}$ such that $\alpha(p^\pm_i)=p_i'^\pm$, $\alpha(a^\pm_i)=a_i'^\pm$, and $\pi' \circ \alpha = \pi$, and it is required that $B$ and $B'$are equal as ordered subsets of $\R \times S^1$. Isomorphic branched covers represent the same point of $\hurwitzaction$.
\end{definition}
\begin{figure}[H]
    \centering
    \includegraphics[width=9cm]{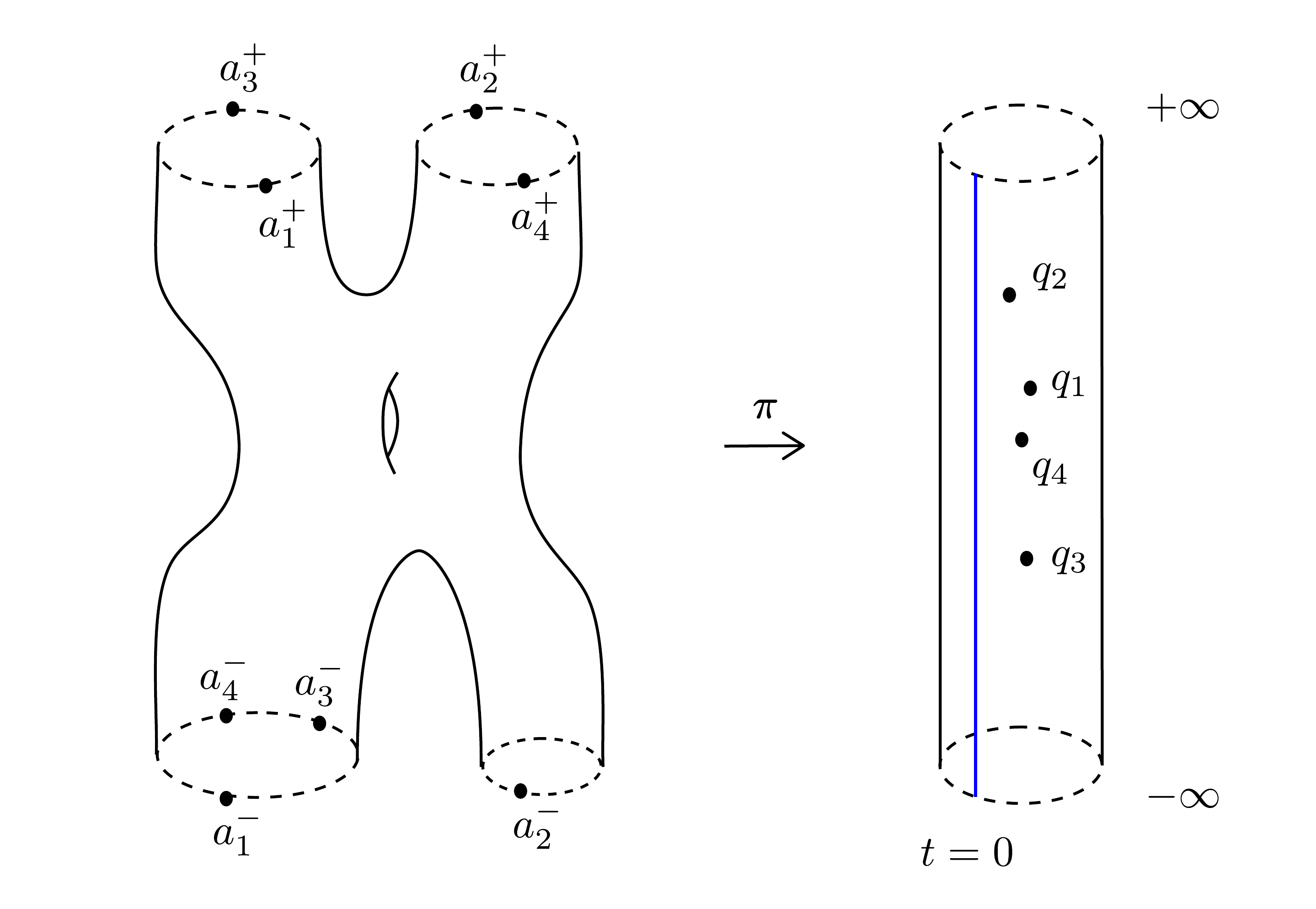}
    \caption{A sample element of $\mathcal{H}^{(2,2), (3,1)}_{4, -4}$.}
    \label{fig-hurwitz-elem}
\end{figure}
\begin{remark}
    Notice that given a geometric branched cover $\pi \colon S \rightarrow \R \times S^1$ for each puncture $p_i^+$ (resp. $p_i^-$), there are only $\mu_i$ (resp. $\mu_i'$) possible choices for the marker $a_i^+$ (resp. $a_i^-$).
\end{remark}

Recall that $\text{Aut}_v(\R \times S^1) \cong \R$ is a group of vertical translations on the standard cylinder.
This group induces an $\R$-action on $\hurwitzaction$ by translating the target of a given branched cover. We denote by $\hurwitz$ the quotient space by this action.

\begin{remark}\label{rmk: chi=0 case}
    We also consider the space unbranched $\kappa$-fold covers, i.e. the case when $\chi=0$ and $\bm \mu=\bm \mu'$. In this case, to obtain $\hurwitzaction$ we additionally take a quotient by $S_1$-action on the target, and $\hurwitz$ is a further quotient by $\R$-action. The space $\hurwitz$ consists of finitely many points.
\end{remark}

\begin{remark}\label{hurwitz-scheme-remark}
    Note that to get the standard Hurwitz scheme $\hurwitzstandard$  as defined in \cite{fulton1969} or in the analytic setting in \cite{ionel2002recursive} one has to drop the asymptotic marker data and take a quotient by the action of the whole group $\text{Aut}(\R \times S^1)$ containing  $\text{Aut}_v(\R \times S^1)$ as its subgroup.
\end{remark}

\begin{definition}\label{cover-building}
    A \emph{branched cover building}
     $$\left(\bm C, S=S_1\cup \ldots \cup S_n, \pi, (\bm v^1, \ldots, \bm v^{n-1}), (\bm m^1, \ldots, \bm m^{n-1})\right),$$
     with Euler characteristic $\chi$ and profile $(\bm \mu, \bm \mu')$ consists of the following data:
    \begin{enumerate}
        \item A cylindrical building $\bm C=[(\overrightarrow{C}=(C_1, \ldots, C_n), j, (q_1, \ldots, q_b), D) )] \in \cylcomp$.
        \item A punctured nodal Riemann surface $S$ decomposed as $S=S_1 \cup \ldots \cup S_n$ with punctures at two distinct \emph{ordered tuples} of points $\bm p^+, \bm p^-$, a marked collection of \emph{special nodes} 
        $$v^1_1, \ldots, v^1_{\ell_1}, v^2_1, \ldots, v^2_{\ell_2}, v^3_{1}, \ldots, v^{n-1}_1, \ldots, v^{n-1}_{\ell_{n-1}}$$
        and maybe some other nodes. We require that $S_i \cap S_{i+1}=\{v^i_1, \ldots, v^i_{l_i}\}= \bm v^i$;  writing each node as $v^i_j=(v^i_{j,+}, v^i_{j,-})$, we use the convention that $v^i_{j,+} \in S_{i+1}$ and $v^i_{j,-} \in S_i$.
        We regard each tuple $\bm v^i$ as an \emph{unordered collection} of nodes, and the lower indices above are used here to simplify the notation.
          \item An \emph{admissible cover} $\pi \colon S \longrightarrow \bm C$ , i.e., a branched cover that is simply branched at $q_1, \ldots, q_b$, has a branching profile $\bm \mu$ at $+\infty_1$ (by which we mean $+\infty$ of the $C_1$ level of $\bm C$) and a branching profile $\bm \mu'$ at $-\infty_n$ and is possibly branched at nodes.
        It is required that $\pi^{-1}(C_i)=S_i$ and that the preimage of a node $\{-\infty_i, +\infty_{i+1}\}$ under $\pi$ is given by $S_i \cap S_{i+1}$. The covering $\pi$ has the same degree when restricted to each of the $S_i$. 
        \item For each positive puncture $p_i^+$ (resp. negative puncture $p_i^-$) an \emph{asymptotic marker} $a_i^+ \in \mathcal{S}^1_{p_i^+}S = (T_{p_i^+}\overline{S} \setminus \{0\})/\R_{>0}$ (resp. an asymptotic marker $a_i^- \in \mathcal{S}^1_{p_i^-}S$), satisfying that $\pi_*(a_i^+)=1 \in \mathcal{S}^1_{+\infty}\overline{C_1}$ (resp. $\pi_*(a_i^-)=1 \in \mathcal{S}^1_{-\infty}\overline{C_n}$). 
        \item For each of the nodes $v^i_j$ as above, there is an assigned \emph{matching marker} given by the choice of  $m^i_j \in \mathcal{S}^1_{v^i_j}S$   such that it is mapped to 
        $$1 \in \mathcal{S}^1_{q_{-\infty}^i, q_{+\infty}^{i+1}} \bm C = (T_{q_{-\infty}^i}C_i \otimes T_{q_{+\infty}^{i+1}}C_{i+1} \setminus 0)/ \R_{>0}$$
        under the following composition (see~\eqref{eq-angular-circle-node} and~\eqref{eq-angular-circle-node-power} for the notation that we use here)
        \[\begin{tikzcd}
	\mathcal{S}^1_{v^i_j}S && (\mathcal{S}^1_{v^i_j})^{\otimes e_{\pi}(v^i_j)}S \\
	&& \mathcal{S}^1_{q_{-\infty}^i, q_{+\infty}^{i+1}} \bm C.
	\arrow[from=1-1, to=2-3]
	\arrow["{\xi \mapsto \xi^{e_{\pi}(v^i_j)}}", from=1-1, to=1-3]
	\arrow["{\pi_*}", from=1-3, to=2-3]
\end{tikzcd}\]
Here $e_\pi(v^i_j)$ is the ramification index of $\pi$ at the node $v^i_j$ and the map $\pi_*$ is induced by natural linear maps associated with $\pi$:
\begin{gather}\label{eq: sheaves-map}
(T_{v_{j,-}^{i}}S_{i})^{\otimes e_{\pi}(v^i_j)} \to T_{q_{-\infty}^i}C_i,\\
(T_{v_{j,+}^i}S_i)^{\otimes e_{\pi}(v^i_j)} \to  T_{q_{+\infty}^{i+1}}C_{i+1}. \nonumber
\end{gather}
In local coordinates, where $\pi(z)=\lambda_mz^m+o(z^m)$, the map~\eqref{eq: sheaves-map} is given by multiplication by $\lambda_m \in \C$ (clearly, this is independent of the choice of coordinates). 

We refer the reader to~\eqref{eq-angular-circle},~\eqref{eq-angular-circle-node} and~\eqref{eq-angular-circle-node-power} for the notation we use here. We point out that the horizontal arrow in the diagram above is nothing but an $e_{\pi}(v^i_j)$-sheeted cover of a circle by another circle, and that $\pi_*$ is a diffeomorphism. We will denote the tuple of matching markers corresponding to the tuple of nodes $\bm v^i$ ``connecting" $S_i$ and $S_{i+1}$ by $\bm m^i$.    
    \end{enumerate}

    Two branched cover buildings $$\left(\bm C, S=S_1\cup \ldots \cup S_n, \pi, (\bm v^1, \ldots, \bm v^{n-1}), (\bm m^1, \ldots, \bm m^{n-1})\right),$$
    $$\left(\bm C', S'=S_1'\cup \ldots \cup S_n', \pi', (\bm v'^1, \ldots, \bm v'^{n-1}), (\bm m'^1, \ldots, \bm m'^{n-1})\right)$$
    are \emph{isomorphic} if there exists a pair $(\bm \alpha \colon S \rightarrow S', \bm \beta)$ with $\bm \alpha$ consisting of biholomorphisms $\alpha_i \colon S_i \rightarrow S_i'$ and $\bm \beta \colon  \bm C \to \bm C'$ (see Section~\ref{section-moduli-cylinders}) is an isomorphism of cylindrical buildings:
\begin{enumerate}
    \item $\alpha_1(p^+_i)=p_i'^+$,  $\alpha_n(p^-_i)=p_i'^-$;
    \item $\alpha_1(a^+_i)=a_i'^+$,  $\alpha_n(a^-_i)=a_n'^-$;
    \item $\bm \alpha (\bm v^i)= \bm v'^i$ as unordered tuples; given some orderings on $\bm v^i$ and $\bm v'^i$ we denote by $j(\bm \alpha)$ the index satisfying $\bm \alpha(v^i_j)=v'^i_{j(\bm \alpha)}$;
    \item for the orderings as above $\bm \alpha^i_*(m^i_j)=m'^i_{j(\bm \alpha)}$, where $\alpha^i_*$ is the induced diffeomorphism
$$\alpha^i_* \colon \mathcal{S}^1_{v^i_j}S \to \mathcal{S}^1_{v'^i_{j(\bm \alpha)}}S';$$
    \item $\bm \beta(q_j)=q_j'$ and $\bm \beta \circ \pi' \circ \bm \alpha = \pi$.
\end{enumerate}
\end{definition}

We stress that in the definition above, the tuples $\bm v^i=\{v^i_1, \ldots, v^i_{l_i} \}$ are unordered (i.e., reordering these collections in the definition above does not change the class of a branched cover building).
 \begin{remark}
     A matching marker as above can be interpreted as a choice of gluing between two cylindrical ends corresponding to a given node.
 \end{remark}
 
 We denote by $\hurwitzcomp$ the set of all branched cover buildings up to isomorphism. \label{hurwitz-compactification} 
 \begin{definition}\label{defn-hurwitz-smooth}
      Let $\hurwitzsm$ be the subset of $\hurwitzcomp$ consisting of \emph{branched cover buildings with smooth levels}, by which we mean those branched cover buildings 
       $$\left(\bm C, S=S_1\cup \ldots \cup S_n, \pi, (\bm v^1, \ldots, \bm v^{n-1}), (\bm m^1, \ldots, \bm m^{n-1})\right)$$
       for which the only nodes of $S$ are the special nodes connecting different levels, i.e., those nodes which are contained in the tuples $(\bm v^1, \ldots, \bm v^{n-1})$.
 \end{definition}

\begin{remark}
    As per Remark~\ref{rmk-smooth-cylindrical}, the space of branched cover buildings with smooth levels $\hurwitzsm$ is of main interest to us as we will be able to show that it is enough to consider only such types of domains for moduli spaces of pseudo-holomorphic curves involved in the definition of Heegaard Floer symplectic cohomology; see Theorem~\ref{theorem-cpt-moduli} for details.
\end{remark}
 We now discuss the relationship between $\hurwitzcomp$ and Harris-Mumford compactification via admissible covers $\hurwitzstandardcomp$, continuing the discussion in Remarks~\ref{moduli-of-rational-curves-remark} and~\ref{hurwitz-scheme-remark}. In particular,  we give a more precise description of the role of the matching data. 

As mentioned in Remark~\ref{hurwitz-scheme-remark} there is a fiber bundle
\begin{equation}
\hurwitz \to \hurwitzstandard.    
\end{equation}
We claim that this map extends to
\begin{equation}
    \hurwitzcomp \to \hurwitzstandardcomp,
\end{equation}
and fits into the diagram
\[\begin{tikzcd}
	\hurwitzcomp & \hurwitzstandardcomp \\
	\overline{\mathcal{C}}_{-\chi} & \overline{\mathcal{M}}_{0,-\chi+2}.
	\arrow[from=1-1, to=1-2]
	\arrow[from=1-2, to=2-2]
	\arrow[from=1-1, to=2-1]
	\arrow[from=2-1, to=2-2]
\end{tikzcd}\]
As in the case of the Hurwitz scheme, there is a map
\begin{equation}\label{to-domain}
    \hurwitzcomp \to \overline{\mathcal{M}}_{\chi, l(\bm \mu)+l(\bm \mu')},
\end{equation}
defined by taking the domain of a branched cover. Here $\overline{\mathcal{M}}_{\chi, l(\bm \mu)+l(\bm \mu')}$ is the moduli space of nodal curves of Euler characteristic $\chi$ with $l(\bm \mu)$ positive and $l(\bm \mu')$ negative  marked points (recall that $l(\bm \mu)$ refers to the length of the partition $\bm \mu$). We may then pull-back the universal family of curves over $\overline{\mathcal{M}}_{\chi, l(\bm \mu)+l(\bm \mu')}$ to obtain a universal family of curves 
\begin{equation}\label{eq: universal-curve-hurwitz}
    \mathscr{C}^{\bm\mu, \bm\mu'}_{\kappa, \chi} \to \hurwitzcomp.
\end{equation}

Given two nodal curves $C_1$ and $C_2$ with $n_1+k$ and $n_2+k$ marked points, one may produce a new curve $C$ by taking the disjoint union of $C_1$ and $C_2$ and identifying the last $k$ marked points of $C_1$ and the first $k$ marked points of $C_2$. This map also requires a datum of partition of $[n_1+n_2]=\{1, \ldots, n_1+n_2\}$ into two subsets of cardinalities $n_1$ and $n_2$. This datum allows one to order marked points on the glued curve. This construction induces the \emph{attaching map}
\begin{equation}\label{attaching-map}
    \overline{\mathcal{M}}_{\chi_1, n_1+k} \times \overline{\mathcal{M}}_{\chi_2, n_2+k} \longrightarrow \overline{\mathcal{M}}_{\chi_1+\chi_2, n_1+n_2}.
\end{equation}
The attaching map gives rise to the description of boundary strata for both $\hurwitzcomp$ and $\hurwitzstandardcomp$, which arises from the following diagram (we draw the diagram only for $\hurwitzcomp$)
\begin{equation}\label{boundary-map-diagram}
\begin{tikzcd}
	{\overline{\mathcal{H}}^{\bm \mu, \bm \mu''}_{\kappa, \chi_1} \times \overline{\mathcal{H}}^{\bm \mu'', \bm \mu'}_{\kappa, \chi_2} } & \hurwitzcomp \\
	 \overline{\mathcal{M}}_{\chi_1, l(\bm \mu)+l(\bm \mu'')} \times \overline{\mathcal{M}}_{\chi_2, l(\bm \mu')+l(\bm \mu'')} & \overline{\mathcal{M}}_{\chi,l(\bm \mu)+l(\bm \mu')}.
	\arrow[from=1-1, to=1-2]
	\arrow[from=1-2, to=2-2]
	\arrow[from=1-1, to=2-1]
	\arrow[from=2-1, to=2-2]
\end{tikzcd}
\end{equation}
The maps induced by the attaching map fit into the following commutative diagram
\[\begin{tikzcd}
	{\overline{\mathcal{H}}^{\bm \mu, \bm \mu''}_{\kappa, \chi_1} \times \overline{\mathcal{H}}^{\bm \mu'', \bm \mu'}_{\kappa, \chi_2} } & \hurwitzcomp \\
	\accentset{\circ}{\overline{\mathcal{H}}}^{\bm \mu, \bm \mu''}_{\kappa, \chi_1} \times \accentset{\circ}{\overline{\mathcal{H}}}^{\bm \mu'', \bm \mu'}_{\kappa, \chi_2} 
 & \hurwitzstandardcomp.
	\arrow[from=1-1, to=1-2]
	\arrow[from=1-2, to=2-2]
	\arrow[from=1-1, to=2-1]
	\arrow[from=2-1, to=2-2]
\end{tikzcd}\]
Note that to define a map given by the arrow in the top row of the diagram above, one has to assign matching data based on asymptotic markers at punctures corresponding to $\bm \mu''$-profiles, unlike the arrow in the bottom row, where all this data is not present.

\begin{definition}\label{def-hurwitz-2-level}
    We define 
\begin{equation}\label{eq-hurwitz-2-level}
    {\mathcal{H}^{\bm \mu, \bm \mu''}_{\kappa, \chi_1} \times_{\bm m} \mathcal{H}^{\bm \mu'', \bm \mu'}_{\kappa, \chi_2} } \subset \partial \hurwitzcomp
\end{equation}
to be the part of $\hurwitzsm$ consisting of branched cover buildings with smooth levels of height $2$ $$\left(\bm C=(C_1, C_2), S=S_1 \cup S_2, \pi, \bm v=\{v_1^1, \ldots, v^1_{l(\bm \mu'')}\}, \bm m=\{m_1^1, \ldots, m_{l(\bm \mu'')}^1\}\right)$$ 
with:
\begin{itemize}
    \item $\chi(S_1)=\chi_1$, $\chi(S_2)=\chi_2$ and $\chi=\chi_1+\chi_2$;
    \item ramification indices of $\pi$ at nodes $\bm v$ constituting a partition $\bm \mu''$ which appears in the notation~\eqref{eq-hurwitz-2-level}.
\end{itemize}

\end{definition}

We point out that an element of $\mathcal{H}^{\bm \mu, \bm \mu''}_{\kappa, \chi_1} \times_{\bm m} \mathcal{H}^{\bm \mu'', \bm \mu'}_{\kappa, \chi_2}$, unlike an element of the direct product $\mathcal{H}^{\bm \mu, \bm \mu''}_{\kappa, \chi_1} \times \mathcal{H}^{\bm \mu'', \bm \mu'}_{\kappa, \chi_2}$, possesses a choice of asymptotic markers at nodes corresponding to cylindrical ends with assigned partition $\bm \mu''$, i.e. to the nodes $\bm v$. On the contrary, for this boundary stratum, there is a natural ``boundary forgetful" covering
\begin{equation}\label{hurwitz-boundary-cover}
    bf^1 \colon \mathcal{H}^{\bm \mu, \bm \mu''}_{\kappa, \chi_1} \times \mathcal{H}^{\bm \mu'', \bm \mu'}_{\kappa, \chi_2} \to \mathcal{H}^{\bm \mu, \bm \mu''}_{\kappa, \chi_1} \times_{\bm m} \mathcal{H}^{\bm \mu'', \bm \mu'}_{\kappa, \chi_2}
\end{equation}
given by matching the negative asymptotic markers of a given element of $\mathcal{H}^{\bm \mu, \bm \mu''}_{\kappa, \chi_1}$ with corresponding (via the order on punctures) positive asymptotic markers of $\mathcal{H}^{\bm \mu, \bm \mu''}_{\kappa, \chi_2}$ and forgetting these two sets of asymptotic markers.
Explicitly, given a pair 
\begin{equation}\label{eq-pair-hurwitz}
    \left((S_1, \pi_1, \bm p^{\pm}_1, \bm a^{\pm}_1, B_1), (S_2, \pi_2, \bm p^{\pm}_2, \bm a^{\pm}_2, B_2) \right) \in \mathcal{H}^{\bm \mu, \bm \mu''}_{\kappa, \chi_1} \times \mathcal{H}^{\bm \mu'', \bm \mu'}_{\kappa, \chi_2},
\end{equation}
we set $\bm C$ to be simply a union of two cylinders $C_1$ and $C_2$ with one special node. Further, we set $S$ to be the union of $S_1$ and $S_2$ glued along pairs of punctures $v^1_i=\{p^-_{1i}, p^+_{2i}\}$. To each such pair, we associate a matching marker
\begin{equation}\label{eq-asymptotic-to-matching}
    m^1_i=a^-_{1i} \otimes a^+_{2i} \in \mathcal{S}^1_{\{p^-_{1i}, p^+_{2i}\}}S=(T_{p_{1i}^-}\overline{S_1} \otimes T_{p_{2i}^+}\overline{S_2} \setminus \{0\})/ \R.
\end{equation}
This marker evidently satisfies (5) in Definition~\ref{cover-building} as both $a_{1i}^-$ and $a_{2i}^+$ satisfy the property (3) in the same definition.
Then the map~\eqref{hurwitz-boundary-cover} sends a pair as in~\eqref{eq-pair-hurwitz} to $$(\bm C=(C_1, C_2), S=S_1 \cup S_2, \pi= \pi_1 \cup_{\bm v} \pi_2, \bm v =\{v^1_1, \ldots, v^1_{l(\bm \mu'')}\}, \bm m=\{m^1_1, \ldots, m^1_{l(\bm \mu'')}\}).$$

Essentially, the subspace $\mathcal{H}^{\bm \mu, \bm \mu''}_{\kappa, \chi_1} \times_{\bm m} \mathcal{H}^{\bm \mu'', \bm \mu'}_{\kappa, \chi_2} \subset \hurwitzcomp$ is the image of $\mathcal{H}^{\bm \mu, \bm \mu''}_{\kappa, \chi_1} \times \mathcal{H}^{\bm \mu'', \bm \mu'}_{\kappa, \chi_2}$ under the top row map in the diagram~\eqref{boundary-map-diagram}. It is evident that this forgetful map is an $N_{\bm \mu''}$-sheeted covering (see~\eqref{partition-constants3}). The reason for this is that for any matching $m_i^1$ there are $\mu''_i$ pairs of asymptotic markers that are sent to $m^1_i$ under~\eqref{eq-asymptotic-to-matching} and there are $R_{\bm \mu''}$ ways to give an order on the nodes of an element of $\mathcal{H}^{\bm \mu, \bm \mu''}_{\kappa, \chi_1} \times_{\bm m} \mathcal{H}^{\bm \mu'', \bm \mu'}_{\kappa, \chi_2}$.

Similarly, for the space of branched cover buildings with smooth levels of height $n$
$$\mathcal{H}^{\bm \mu, \bm \mu_1''}_{\kappa, \chi_1} \times_{\bm m^1} \dots \times_{\bm m^{n-1}} \mathcal{H}^{\bm \mu_n'', \bm \mu'}_{\kappa, \chi_n}, $$
defined in a similar fashion to Definition~\ref{def-hurwitz-2-level}, there is forgetful covering
\begin{equation}
    bf^{n-1} \colon \mathcal{H}^{\bm \mu, \bm \mu_1''}_{\kappa, \chi_1} \times \dots \times \mathcal{H}^{\bm \mu_n'', \bm \mu'}_{\kappa, \chi_n} \to \mathcal{H}^{\bm \mu, \bm \mu_1''}_{\kappa, \chi_1} \times_{\bm m^1} \dots \times_{\bm m^{n-1}} \mathcal{H}^{\bm \mu_n'', \bm \mu'}_{\kappa, \chi_n} 
\end{equation}
of degree $N_{\bm \mu_1''} \cdots N_{\bm \mu_n''}$.

We now give an interpretation of the matchings in terms of local deformations following \cite{ionel2002recursive}. In particular, we claim that the neighborhood of the image of $\mathcal{H}^{\bm \mu, \bm \mu''}_{\kappa, \chi_1} \times \mathcal{H}^{\bm \mu'', \bm \mu'}_{\kappa, \chi_2} $ is smooth.

We pick some element 
\begin{equation}\label{point-in-boundary-hurwitz}
    \left(\left(\pi_1 \colon S_1 \to C_1 \right),\left(\pi_2 \colon S_2 \to C_2\right)\right) \in \mathcal{H}_{\kappa, \chi_1}^{\bm \mu, \bm \mu^{\prime \prime}} \times_{\bm m} \mathcal{H}_{\kappa, \chi_2}^{\bm \mu^{\prime \prime},\bm \mu^{\prime}}.
\end{equation}

Recall that the normal bundle to the boundary stratum containing $(S_1, S_2) \in \overline{\mathcal{M}}_{\chi,  l(\bm\mu)+  l(\bm\mu')}$ given by
\begin{equation}\label{normal-bundle-higher-genus}
    \bigoplus_{i=1}^{  l(\bm\mu'')} L^*_{p_{1,i}^-} \otimes L^*_{p_{2,i}^+}
\end{equation}
at $(S_1, S_2)$ has fiber 
\begin{equation}
    \bigoplus_{i=1}^{  l(\bm\mu'')} T_{p_{1,i}^-}S_1 \otimes T_{p_{2,i}^+}S_2.
\end{equation}
A section of $ L^*_{q_{-\infty}^1} \otimes L^*_{q_{+\infty}^2}$ given by $\zeta \in  T_{q_{-\infty}^1}C_1 \otimes T_{q_{+\infty}^2}C_2$  sets a restriction on sections of~\eqref{normal-bundle-higher-genus} which give rise to the deformation of the point~\eqref{point-in-boundary-hurwitz} regarded as a point of $\hurwitzstandardcomp$ as shown in \cite{ionel2002recursive}. More specifically, such a section given by $$(\xi_1, \ldots, \xi_{  l(\bm\mu'')}) \in \bigoplus_{i=1}^{  l(\bm\mu'')} T_{p_{1,i}^-}S_1 \otimes T_{p_{2,i}^+}S_2$$ 
has to satisfy (see \cite[Equation (1.21)]{ionel2002recursive}
\begin{equation}\label{deformation-hurwitz-standard}
    \zeta=\lambda_1\xi_1^{\mu''_1}=\dots=\lambda_{  l(\bm\mu'')}\xi^{\mu''_{l(\bm\mu'')}}_{  l(\bm\mu'')},  
\end{equation}
where $\lambda_1, \ldots, \lambda_{l(\bm \mu'')}\in \C$ are the values determined by $\pi_1$ and $\pi_2$ as in Definition~\ref{cover-building} (5).
We now recall that in the context of $\hurwitzcomp$ the non-zero section $\tilde{\zeta}$ as in~\eqref{gluing-section-hurwitz} giving rise to local perturbation of $(C_1,C_2) \in \mathcal{C}_{b,2}$ has to satisfy $\varphi(\tilde{\zeta})=0$. We then claim that a section of~\eqref{normal-bundle-higher-genus} produces a point of $\hurwitz$ near~\eqref{point-in-boundary-hurwitz} whenever 
\begin{equation}\label{deformation-hurwitz}
    \tilde{\zeta}=a_1\xi_1^{\mu''_1}=\dots=a_{  l(\bm\mu'')}\xi^{\mu''_{l(\bm\mu'')}}_{  l(\bm\mu'')}.
\end{equation}

At last, we claim that given a matching marker $m^1_j$ as in Definition~\ref{def-hurwitz-2-level} there is a unique choice of $\xi_j$ satisfying~\eqref{deformation-hurwitz} with $[\xi_j]=m^1_j$ in $\mathcal{S}^1_{v^i_j}S$.

Hence, the coordinate chart~\eqref{collar-nbhd} (and, moreover, the chart~\eqref{codim-n-stratum}) near the boundary of the space of cylindrical buildings gives lifts to coordinate charts near the corresponding portions of the boundary of $\hurwitzcomp$.

Notice that $\mathcal{H}^{\bm \mu, \bm \mu''}_{\kappa, \chi_1} \times_{\bm m} \mathcal{H}^{\bm \mu'', \bm \mu'}_{\kappa, \chi_2}$ is a smooth manifold, as there are no non-trivial automorphisms of a curve fixing a marked point and an asymptotic marker at this point.

We conclude this discussion with the following theorem and refer the reader to \cite[Sections 3 and 4]{mochizuki1995}, \cite[Theorem 4]{HM1982Hurwitz}, \cite[Section 1]{ionel2002recursive}, \cite[Theorem 6.1.1 and Section 8]{HV2010augmented}  for further details.

\begin{theorem}\label{hurwitz-level-boundary-neighborhood}
    The space $\hurwitzcomp$ is a compact metric space and serves as a compactification of the space $\hurwitz$, i.e., it coincides with the closure of $\hurwitz$ viewed as a subspace of $\hurwitzcomp$. Moreover, there is a smooth normal neighborhood near the space of height $2$ buildings with smooth levels
    $$\mathcal{H}^{\bm \mu, \bm \mu''}_{\kappa, \chi_1} \times_{\bm m} \mathcal{H}^{\bm \mu'', \bm \mu'}_{\kappa, \chi_2} \subset \hurwitzcomp$$
    diffeomorphic to 
    \begin{equation}\label{hurwitz-collar-nbhd-codim1}
        \mathcal{H}^{\bm \mu, \bm \mu''}_{\kappa, \chi_1} \times_{\bm m} \mathcal{H}^{\bm \mu'', \bm \mu'}_{\kappa, \chi_2} \times [0, +\infty].
    \end{equation}
    More generally, there is a normal neighborhood of 
    $$\mathcal{H}^{\bm \mu, \bm \mu_1''}_{\kappa, \chi_1} \times_{\bm m^1} \dots \times_{\bm m^{n-1}} \mathcal{H}^{\bm \mu_n'', \bm \mu'}_{\kappa, \chi_n} \subset \hurwitzcomp$$ diffeomorphic to
    \begin{equation}\label{hurwitz-collar-nbhd}
        \mathcal{H}^{\bm \mu, \bm \mu_1''}_{\kappa, \chi_1} \times_{\bm m^1} \dots \times_{\bm m^{n-1}} \mathcal{H}^{\bm \mu_n'', \bm \mu'}_{\kappa, \chi_n}  \times [0,+\infty]^{n-1}.
    \end{equation}
\end{theorem}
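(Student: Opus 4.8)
The plan is to obtain both assertions by combining (a) the classical compactness of spaces of admissible covers with the observation that the extra asymptotic and matching marker data live in compact circle bundles, and (b) the deformation-theoretic coordinates set up in the paragraphs preceding the theorem, which relate the SFT-style collar \eqref{collar-nbhd}--\eqref{codim-n-stratum} on $\overline{\mathcal{C}}_{-\chi}$ to the plumbing coordinates of \cite{ionel2002recursive} on $\overline{\mathcal{M}}_{\chi, l(\bm \mu)+l(\bm \mu')}$. For \textbf{compactness}, start with a sequence $(S_k,\pi_k,\bm p_k^\pm,\bm a_k^\pm,B_k)$ representing points of $\hurwitz$. All targets are the fixed cylinder, so applying the cylindrical-buildings compactness lemma to the marked cylinders $(\R\times S^1, B_k)$ (equivalently, passing to a subsequence so that the $b=-\chi$ simple branch points cluster into levels) produces a limiting cylindrical building $\bm C_\infty\in\cylcomp$. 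Since $\pi_k$ has fixed degree $\kappa$, a fixed number of simple branch points, and prescribed profiles at $\pm\infty$, the associated monodromy representations lie in a finite set and stabilize along a subsequence; the theory of admissible covers (\cite[Theorem 4]{HM1982Hurwitz}, \cite{mochizuki1995}, \cite{ACV2003}, \cite{HV2010augmented}) then yields a limiting admissible cover $\pi_\infty\colon S_\infty\to\bm C_\infty$. The asymptotic markers $\bm a_k^\pm$ lie in compact circle bundles and converge along a further subsequence, and the matching markers at the newly created special nodes are pinned down up to the finite ambiguity already recorded by the coverings $bf^{n-1}$, so a subsequence converges to a well-defined branched cover building. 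Metrizability is inherited from $\cylcomp$ and the finite-dimensional Hurwitz strata, exactly as in \cite{BEHWZ2003sft}.

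For the \textbf{collar neighborhood}, fix a point $((\pi_1\colon S_1\to C_1),(\pi_2\colon S_2\to C_2))$ of the stratum $\mathcal{H}^{\bm \mu, \bm \mu''}_{\kappa, \chi_1} \times_{\bm m} \mathcal{H}^{\bm \mu'', \bm \mu'}_{\kappa, \chi_2}$. The collar \eqref{collar-nbhd} of $\widehat{\mathcal{C}}_{b,2}\subset\widehat{\mathcal{C}}_b$ supplies, for each $R\in(0,+\infty]$, the glued target $\widetilde{C}^R$ coming from the section $\tilde\zeta=e^{-R}$ with $\varphi(\tilde\zeta)=0$ via \eqref{nodal-resolution-cyl}. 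By the discussion culminating in \eqref{deformation-hurwitz}, the matching markers $m^1_j$ together with the constants $a_j$ determined by $\pi$ at the nodes $v^1_j$ select, for this value of $\tilde\zeta$, a \emph{unique} tuple of domain gluing parameters $(\xi_1,\dots,\xi_{l(\bm\mu'')})$ with $[\xi_j]=m^1_j$ and $\tilde\zeta=a_j\xi_j^{\mu''_j}$. Plumbing $S_1$ and $S_2$ at $v^1_j$ with parameter $\xi_j$ and using the compatible local models $w_1w_2=\tilde\zeta$ downstairs and $z_1z_2=\xi_j$ upstairs — compatible precisely because $\pi(z)=\lambda_j z^{\mu''_j}+o(z^{\mu''_j})$ — produces a genuinely holomorphic degree-$\kappa$ branched cover $\pi^R\colon S^R\to\widetilde C^R$ with the same simple branch points, the same profiles at $\pm\infty$, and unchanged asymptotic markers. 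This defines
\[
\Phi\colon \mathcal{H}^{\bm \mu, \bm \mu''}_{\kappa, \chi_1} \times_{\bm m} \mathcal{H}^{\bm \mu'', \bm \mu'}_{\kappa, \chi_2}\times[0,+\infty]\longrightarrow\hurwitzcomp,\qquad (\,\cdot\,,R)\mapsto[\pi^R],
\]
restricting on $R=+\infty$ to the inclusion of the stratum. One checks $\Phi$ is a homeomorphism onto a neighborhood: the point of the stratum is recovered from the two ends of a glued cover and $R$ from the conformal length of its neck, giving injectivity; that $\Phi$ hits an entire neighborhood (i.e.\ no other degeneration occurs near this stratum) follows from the compactness just proved together with the fact that $\Phi$ already sweeps out a set of the full expected dimension of $\hurwitz$, equivalently from lifting the étale-local description of $\hurwitzstandardcomp\to\overline{\mathcal{M}}_{0,-\chi+2}$ through the circle-bundle data. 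Smoothness across $R=+\infty$ uses the $\varphi=0$ normalization and the identity $\tilde\zeta=e^{-R}$: under these the SFT collar coordinate on $\cylcomp$ is identified with the algebraic plumbing coordinate of \cite{ionel2002recursive}, and the power maps $\xi_j\mapsto a_j\xi_j^{\mu''_j}$ are smooth away from the origin and extend continuously over it. The higher-codimension statement \eqref{hurwitz-collar-nbhd} follows by iterating this construction over the $n-1$ tuples of special nodes using the collar \eqref{codim-n-stratum}, with an independent gluing parameter at each level.

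Finally, for the \textbf{closure} claim: since $\Phi(\,\cdot\,,R)$ with $R<\infty$ is an honest branched cover, every smooth-level building lies in $\overline{\hurwitz}$, and a building with additional (non-special) nodes inside a level is reached by further plumbing those nodes through the classical Harris--Mumford deformation of admissible covers; hence $\hurwitzcomp\subseteq\overline{\hurwitz}$, and the reverse inclusion is exactly the compactness statement. I expect the \textbf{main obstacle} to be the smoothness of $\Phi$ at $R=+\infty$ — reconciling the analytic, SFT-type neck-stretching coordinate used to topologize $\cylcomp$ with the algebro-geometric plumbing coordinate governing deformations of admissible covers, and verifying that the marker-to-$\xi_j$ correspondence is compatible with both descriptions. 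Everything else is either classical (compactness of admissible covers) or already established in the paragraphs preceding the theorem (the collar of $\widehat{\mathcal{C}}_b$, and the existence and uniqueness of $\xi_j$ determined by $m^1_j$).
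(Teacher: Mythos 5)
Your proposal is correct and follows essentially the same route the paper takes: the paper's own treatment is exactly the preceding discussion (matching markers selecting the unique gluing parameters $\xi_j$ with $\tilde\zeta=a_j\xi_j^{\mu_j''}$, lifting the collar charts \eqref{collar-nbhd} and \eqref{codim-n-stratum} on cylindrical buildings to $\hurwitzcomp$), with compactness and the analytic details deferred to Harris--Mumford, Mochizuki, Ionel, and Hubbard--Koch-type references, just as you do.
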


\begin{remark}
    We note that the whole $\hurwitzcomp$ is not a manifold with corners since there may be non-special nodes corresponding to bubbles on the cylindrical side.
    Nevertheless, the choice of matching data allows us to get a smooth structure on the preimage of smooth cylinders $\widetilde{C}_{-\chi}$. Recall that in the algebraic setting to get a smooth stack serving as compactification of $\hurwitzstandard$, one should add \emph{twisted covers} as introduced in \cite[Definition 2.1.4]{ACV2003} and more generally in \cite{deopurkar2014}. Working with $\overline{\mathcal{C}}_{-\chi}$ instead of $\overline{\mathcal{M}}_{0, \chi+2}$ allows us to avoid introducing twisted covers.
\end{remark}

\begin{remark}\label{remark-bubble-components}
     We also point out that each stratum of the boundary of $\hurwitzcomp$ which has bubbles on the cylindrical side is of codimension at least $2$, whereas strata that we describe above are of codimension $1$. This is the main reason we do not introduce matchings or twisted covers to the compactification (although we could) to make the whole space into an orbifold (famously, the stack of admissible covers is not smooth, see e.g. \cite[Section 4.2]{ACV2003}).
\end{remark}
\smallskip
We will refer to $[0, +\infty]^{n-1}$ part in~\eqref{hurwitz-collar-nbhd} as \emph{neck coordinates}.
For further use, we also discuss orientations on Hurwitz spaces. Any space $\hurwitzaction$ is naturally oriented as it is a cover space over $\text{Conf}_{-\chi}(\C \setminus \{0, \infty\})$ which is a complex manifold. The choice of vertical direction $\partial_s$  leads to a further natural choice of orientation on $\hurwitz$. We fix such choices on all spaces $\hurwitz$.

We then claim that boundary orientation on the image of ${\mathcal{H}}^{\bm \mu, \bm \mu''}_{\kappa, \chi_1} \times \mathcal{H}^{\bm \mu'', \bm \mu'}_{\kappa, \chi_2}$ under the map described in~(\ref{boundary-map-diagram}) coincides with the product orientation. In other words, there is a natural isomorphism
\begin{equation}\label{orientation-boundary}
    |v| \otimes |{\mathcal{H}}^{\bm \mu, \bm \mu''}_{\kappa, \chi_1}| \otimes |\mathcal{H}^{\bm \mu'', \bm \mu'}_{\kappa, \chi_2}| \cong |\hurwitz|
\end{equation}
where $v$ is an outward pointing vector (corresponding to the normal direction described in~\eqref{hurwitz-collar-nbhd-codim1}).  This follows immediately since the reordering of marked points on the target cylinder does not affect the orientation, as they can be regarded locally as providing complex coordinates.  Here we use conventions discussed in Section~\ref{section-orientation-lines}.

\begin{remark}
We point out that assuming an order on the set of branched points as in the Definition~\ref{def-hurwitz-space} allows us to get a description of the portion of the boundary of $\hurwitz$ which is of interest to us. Nevertheless, in practice, it leads to ``overcounting" geometric curves connecting tuples of Hamiltonian orbits. We ignore this disadvantage since Heegaard Floer symplectic cohomology is eventually defined over $\Q$. To eliminate this nuance, one could use the approach of \cite{mochizuki1995} towards compactification of the Hurwitz scheme with unordered branched points.
\end{remark}

\subsection{Review of branched manifolds}\label{branched-manifolds}
We start by reviewing the general framework of \emph{weighted nonsingular branched groupoids} from \cite{mcduff2006}.

Denote the spaces of objects and morphisms of a 
small topological category $\mathscr{X}$ by the letters $X_0$ and $X_1$, respectively.  The source and 
target maps are denoted via  $s,t\colon X_1 \to X_0$, and the identity map is given by 
$\operatorname{id}\colon X_0\to X_1$, $x \mapsto \text{id}_x$.
The composition map is set as
$$
m\colon {X_1}\;_s\!\times_t X_1\;\to \;X_1,\quad (f,g)\mapsto f \circ g.
$$
The space of morphisms from $x$ to $y$ is denoted by $\text{Hom}(x,y)$.
 \begin{definition}     
A \emph{nonsingular (oriented) sse (smooth, stable, \'etale) groupoid with corners} $\mathscr{X}$ is a small topological category such that the following conditions hold:
 \begin{itemize}
\item {\bf (Groupoid)}   All morphisms are invertible, i.e., there is a structure map $\iota \colon X_1\to X_1$ that takes each $f\in X_1$ to its inverse $f^{-1}$.
\item
{\bf (Smooth)}  The spaces $X_0$  of objects and $X_1$ of morphisms are (oriented) manifolds with corners, and all structure maps ($s,t,m,\iota, \text{id}$) are smooth (and orientation-preserving). 
\item
{\bf (Proper)} The map $s \times t \colon X_1 \to X_0 \times X_0$ is proper.

\item {\bf (\'Etale)} All structure maps are local diffeomorphisms.
 \item {\bf (Nonsingular)}  For each $x\in X_0$ the set of self-morphisms $\text{Hom}(x,x) \eqqcolon G_x$ is trivial  (\emph{stability} is a weaker condition which only requires finiteness of these groups).
 \end{itemize}
 \end{definition} 

We denote by $\mathscr{X}^{\vee}$ the quotient space of a small topological category groupoid $\mathscr{X}$ given by $X_0/\sim $, where $x \sim x'$ if $\text{Hom}(x, x')$ is non-empty. 
Denote by $\pi$ the projection $X_0 \to \mathscr{X}^{\vee}$.

 Further, we will simply call $\mathscr{X}$ a \emph{nonsingular sse groupoid} if $X_0$ and $X_1$ are manifolds (without boundary or corners).

Let us recall \cite[Lemma 3.1]{mcduff2006} that any topological space $Y$ admits a maximal Hausdorff quotient $(f, Y_H)$, i.e., a projection $f \colon Y \to Y_H$ to a Hausdorff space $Y_H$, such that any other continuous surjection $f' \colon Y \to Y'$ with $Y'$ Hausdorff factors through $f$.

For a given sse groupoid with boundary $\mathscr{B}$ denote by $\pi_H^{\vee}$ the projection $\mathscr{B}^{\vee} \to \mathscr{B}^{\vee}_H$
and by $\pi_H$ the projection $B_0\to \mathscr{B}^{\vee}_H$.  Furthermore,  $U_H = \pi_H(U) $ denotes the image 
of $U\subset B_0$ in $\mathscr{B}^{\vee}_H$.

\begin{definition}\label{defn-wnb-groupoid}
A  \emph{weighted nonsingular branched groupoid with corners} (or \emph{wnbc groupoid} for short) is a pair $(\mathscr{B},\Lambda)$ consisting of an  oriented,  nonsingular sse groupoid with boundary together with a weighting function  
$\Lambda \colon \mathscr{B}^{\vee}_H\to (0,\infty)$, which satisfies the following compatibility conditions.
For each $p\in \mathscr{B}^{\vee}_H$ there is an open neighborhood $N$ of $p$ in $\mathscr{B}^{\vee}_H$, a collection  $U_1,\ldots,U_\ell$ of  
 disjoint open subsets of $\pi_H^{-1}(N)\subset B_0$ (called \emph{local branches}) and a set of positive weights $m_1,\ldots,m_\ell$ such that: 

\begin{itemize}
\item  $\pi^{\vee \, -1}_H(N) = U_1^{\vee}\cup\ldots \cup U_\ell^{\vee} \subset \mathscr{B}^{\vee}$, where $U_i^{\vee}$ are the spaces associated with the restricted groupoid structure to $U_i$;

\item for each $i=1,\ldots,\ell$ the projection $\pi_H\colon U_i\to 
(U_i^{\vee})_H$  is a homeomorphism onto a relatively closed subset of $N$;

\item for all $q\in N$, $\Lambda(q)$ is the sum of the weights of the local
branches whose image contains $q$:
\begin{equation}\label{eq: local-weights}
\Lambda(q) = \underset{i \colon q\in \,(U_i^{\vee})_H}\sum m_i.
\end{equation}
\end{itemize}
\end{definition}

We call $(\mathscr{B}, \Lambda)$ a \emph{wnb groupoid} if $\mathscr{B}$ is a nonsingular sse groupoid (i.e., with no boundary).

Later, we also make use of the notion of an \emph{orbibundle} over a wnb groupoid $\rho \colon \mathscr{E} \to \mathscr{B}$. This can be thought of as a vector bundle $\rho_0 \colon E_0 \to B_0$ with $E_1$ representing identifications between fibers $E_x$ and $E_y$ for any $f \in \text{Hom}(x, y)$; see \cite{mcduff2006} for details.  A set $\mathcal{S}(\mathscr{E})$ of sections of such a bundle consists of  smooth functors $\sigma \colon \mathscr{B} \to \mathscr{E}$. Notice that this space can be endowed with $C^k$- or Fr\'{e}chet topology assuming that $B_0$ and $B_1$ have finitely many components, see~\cite[Section 4.2]{mcduff2006}.
A section $\sigma$ is said to be transverse to the zero section if $\sigma(B_0)$ is \emph{transverse to the zero set} of $E_0$, and for each $x \in \sigma^{-1}(0)$ it holds that $d(ev_x) \colon T_\sigma \mathcal{S}(\mathscr{E}) \to E_x$ is surjective, where $ev_x$ is map which takes given section to its value at point $x$, see~\cite[Definition 4.9]{mcduff2006}. According to \cite[Lemma 4.10]{mcduff2006} this notion of transversality guarantees that generic section is transverse for ``sufficiently nice" wnb groupoid $\mathscr{B}$. 
Notice that a structure of a wnb groupoid on $\mathscr{B}$ induces a structure of a wnb groupoid in the zero set $\mathscr{Z}(\sigma)$ for transversal $\sigma$.

\subsection{Branched structure on Hurwitz spaces.}\label{branched-structure} 
In this section, we define a wnbc groupoid $\hurwitzsmbr$ by means of an inductive procedure. The motivation behind the construction to follow is discussed in Section~\ref{section-floer-data}.
\begin{figure}
    \centering
    \subfloat{{\includegraphics[width=15cm]{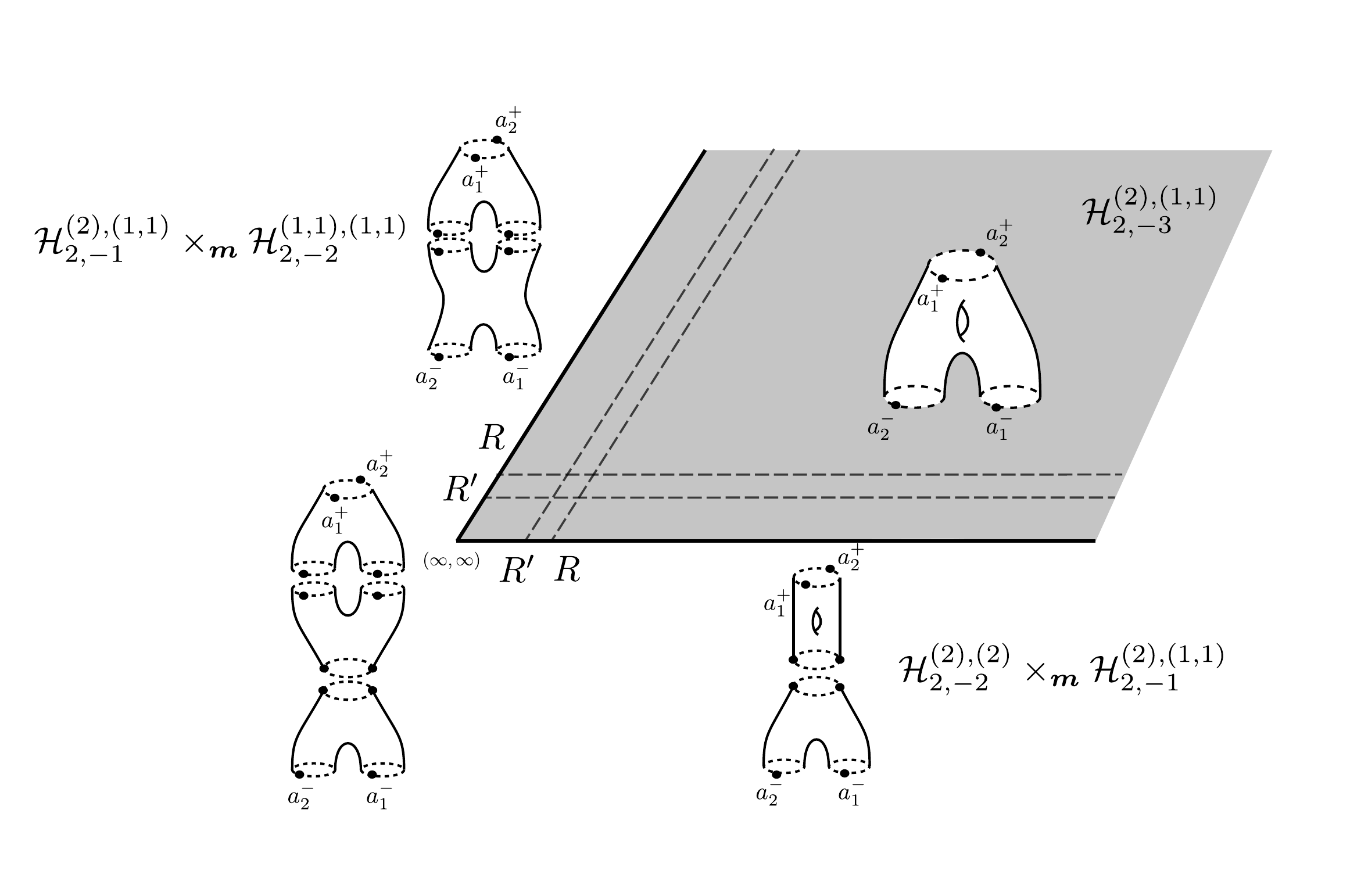} }}
    \caption{Here is a toy picture of $\overline{\mathcal{H}}^{(2), (1,1)}_{2, -3}$, i.e., we have $\kappa=2, \, \chi=-3, \,\bm \mu=(2), \, \bm \mu'=(1,1)$, and the interior  $\mathcal{H}^{(2), (1,1)}_{2, -3}$ is in gray. The black horizontal ray corresponds to a portion of the boundary associated with $\bm \mu''= (2)$; the vertical ray corresponds to $\bm \mu ''=(1,1)$. We also regard it as a cartoon for Step 0 in our construction.} 
    \label{branched-structure-example}
\end{figure}
\vskip 0.3in
{\bf Step $k=0$.} We start with $(\mathscr{B}, \Lambda)$, where $B_0=\hurwitz$, $B_1$ consists only of $\text{id}_x$ for $x \in B_0$ and $\Lambda \equiv 1$. Here, the convention is that $B_0$ denotes the objects of $\mathscr{B}$ and $B_1$ denotes its morphisms. We then modify $(\mathscr{B}, \Lambda)$ in the next steps.

\vskip 0.3in
{\bf Step $k=1$: adding buildings of height $2$.}

Recall that by Theorem~\ref{hurwitz-level-boundary-neighborhood} for any codimension-$1$ portion of the boundary $\partial \hurwitzsm$  consisting of smooth buildings of height $2$ of the form
$\mathcal{H}^{\bm \mu, \bm \mu''}_{\kappa, \chi_1} \times_{\bm m} \mathcal{H}^{\bm \mu'', \bm \mu'}_{\kappa, \chi_2}$ (for any pair $\chi_1, \chi_2$ such that $\chi_1+\chi_2=\chi$) there is a neighborhood of the form
\begin{equation}\label{boundary-neighborhood-2-level}
    U^{(\bm \mu'', \chi'')}_r\coloneqq\mathcal{H}^{\bm \mu, \bm \mu''}_{\kappa, \chi''} \times_{\bm m} \mathcal{H}^{\bm \mu'', \bm \mu'}_{\kappa, \chi-\chi''} \times (r; +\infty]
\end{equation}
for any $r \ge 0$. We also use the notation $\overline{U}_r^{\bm \mu''}$ to denote $\mathcal{H}^{\bm \mu, \bm \mu''}_{\kappa, \chi''} \times_{\bm m} \mathcal{H}^{\bm \mu'', \bm \mu'}_{\kappa, \chi-\chi''} \times [r; +\infty]$. Similarly, for boundary strata consisting of buildings of height $k$ there are neighborhoods
\begin{gather}\label{boundary-l-level-neighborhood}
    U^{(\bm \mu_1, \chi_1), \dots, (\bm\mu_{l-1}, \chi_{l-1})}_r\coloneqq\mathcal{H}^{\bm \mu, \bm \mu_1}_{\kappa, \chi_1} \times_{\bm m^1} \dots \times_{\bm m^{l-1}} \mathcal{H}^{\bm \mu_{l-1}, \bm \mu'}_{\kappa, \chi_l}  \times (r,+\infty]^{l-1},  \\ \, \text{with }\chi_1+\dots+\chi_l=\chi \nonumber
\end{gather}
for any $r \ge 0$, and we will also use notation $\overline{U}^{\bm \mu_1, \dots, \bm \mu_{l-1}}_r$, which amounts for allowing neck coordinates to be equal to $r$. We fix a large $R \gg 0$ and some $R' > R$.
We introduce a thickened forgetful covering
\begin{equation}
    \overline{bf}^1 \colon \mathcal{H}^{\bm \mu, \bm \mu''}_{\kappa, \chi''} \times \mathcal{H}^{\bm \mu'', \bm \mu'}_{\kappa, \chi-\chi''} \times (R; +\infty] \to \mathcal{H}^{\bm \mu, \bm \mu''}_{\kappa, \chi''} \times_{\bm m} \mathcal{H}^{\bm \mu'', \bm \mu'}_{\kappa, \chi-\chi''} \times (R; +\infty],
\end{equation}
which is equal to $(bf^1, \operatorname{Id})$. More generally, for $l \ge -\chi$ there are thickened forgetful covers
\begin{equation}
    \overline{bf}^{l-1}\colon \mathcal{H}^{\bm \mu, \bm \mu_1}_{\kappa, \chi_1} \times \dots \times \mathcal{H}^{\bm \mu_{l-1}, \bm \mu'}_{\kappa, \chi_{l}} \times (R; +\infty]^{l-1} \to \mathcal{H}^{\bm \mu, \bm \mu_1}_{\kappa, \chi_1} \times_{\bm m^1} \dots \times_{\bm m^{l-1}} \mathcal{H}^{\bm \mu_{l-1}, \bm \mu'}_{\kappa, \chi_{l}} \times (R; +\infty]^{l-1}.
\end{equation}
Set 
\begin{equation}\label{eq: big-interior}
    U_0= \hurwitz \setminus \Big(\bigcup_{0\ge\chi''\ge\chi, \, \bm \mu'' \vdash \kappa} \overline{U}^{(\bm \mu'', \chi'')}_{R'} \cup \bigcup_{-\chi \ge l \ge 2, \, 0\ge \chi_1+\dots+\chi_l\ge\chi, \, \bm \mu_1, \dots, \bm \mu_{l} \vdash \kappa} \overline{U}^{(\bm \mu_1, \chi_1), \ldots, (\bm\mu_{l}, \chi_{l})}_{R}\Big). 
\end{equation}
In addition, we define
\begin{equation}\label{eq: codim-1-nbhd-step-1}
\accentset{\circ}{U}^{(\bm \mu'', \chi'')}_R\coloneqq \overline{U}^{(\bm \mu'', \chi'')}_R \setminus\bigcup_{-\chi > l \ge 2, \, \chi_1+\dots+\chi_l\ge\chi, \, \bm \mu_1, \dots, \bm \mu_{l} \vdash \kappa} \overline{U}^{(\bm \mu_1, \chi_1), \ldots, (\bm\mu_{l}, \chi_{l})}_{R},
\end{equation}
and more generally 
\begin{equation}\label{eq: codim-2-nbhd-step-2}
\accentset{\circ}{U}^{(\bm \mu_1, \chi_1), \ldots, (\bm \mu_{l}, \chi_l)}_{R}\coloneqq \overline{U}^{(\bm \mu_1,\chi_1), \ldots, (\bm \mu_{l}, \chi_{l})}_R \setminus\bigcup_{-\chi > l' > l, \, \chi_1+\dots+\chi_{l'}\ge\chi, \, \bm \mu_1', \dots, \bm \mu_{l'}' \vdash \kappa} \overline{U}^{(\bm \mu_1,\chi_1), \ldots, (\bm\mu_{l'}, \chi_{l'})}_{R}.
\end{equation}
We update $(\mathscr{B}, \Lambda)$ by putting
\begin{equation}
    B_0^{\text{new}}= U_0 \sqcup \bigsqcup_{\chi_1+\chi_2=\chi, \, \bm \mu'' \vdash \kappa} (\overline{bf}^1)^{-1}(\accentset{\circ}{U}^{(\bm \mu'', \chi'')}_R).
\end{equation}

We then set the manifold $B_1^{\text{new}}$ to record the following identifications corresponding to the thickened forgetful maps. Namely given a point $x \in \mathcal{H}^{\bm \mu, \bm \mu''}_{\kappa, \chi''} \times \mathcal{H}^{\bm \mu'', \bm \mu'}_{\kappa, \chi-\chi''} \times (R; R') \subset B^{\operatorname{new}}_0$ we then set $\operatorname{Hom}(x, \overline{bf}^1(x))$ and $\operatorname{Hom}(\overline{bf}^1(x), x)$ each to consist of one arrow, and these arrows are meant to be inverses of one another, where we regard $\overline{bf}^1(x)$ as a point of $U_0$. We then complete $B_1^{\text{new}}$ by adding all the necessary arrows to make it into a groupoid. Note that in particular for any two points $x,y$ as above with $\overline{bf}^1(x)=\overline{bf}^1(y)$ the above implies that there is a unique arrow in $\operatorname{Hom}(x,y)$. We recall the notation introduced prior to Definition~\ref{defn-wnb-groupoid} for maps associated with groupoid $\mathscr{B}^{\operatorname{new}}$, and use it in the following discussion.

The weight function $\Lambda^{\text{new}}$ is set to be equal to $1$ on the closure of $\pi_H(U_0)$ and equal to $\frac{1}{N_{\bm \mu''}}$ on the image of $$(\overline{bf}^1)^{-1}(\accentset{\circ}{U}^{(\bm \mu'', \chi'')}_R) \cap \mathcal{H}^{\bm \mu, \bm \mu''}_{\kappa, \chi''} \times \mathcal{H}^{\bm \mu'', \bm \mu'}_{\kappa, \chi-\chi''} \times (R'; +\infty] \subset \mathcal{H}^{\bm \mu, \bm \mu''}_{\kappa, \chi''} \times\mathcal{H}^{\bm \mu'', \bm \mu'}_{\kappa, \chi-\chi''} \times (R; +\infty]$$ under $\pi_H$ for any $0\ge \chi''\ge\chi$ and $\bm \mu'' \vdash \kappa$. We note that these images are pairwise disjoint and all of them are disjoint from the closure of $\pi_H(U_0)$, since we have ``thrown away" the neighborhoods of buildings consisting of more than $2$ levels. It is straightforward to see that the updated pair $(\mathscr{B}^{\text{new}}, \Lambda^{\text{new}})$ is a wnbc groupoid, as one only needs to check the requirement of Definition~\ref{defn-wnb-groupoid} in the only nontrivial case of a point $q \in \mathscr{B}_H^{\operatorname{new}\vee}$ which has its neck coordinate equal to $R'$. According to our setup there is unique $\bm \mu''$ such that $q \in \pi_H((\overline{bf}^1)^{-1}(\accentset{\circ}{U}^{\bm \mu''}_R))$, and we also see that $\Lambda(q)=1$. We then notice that $q$ has exactly $N_{\bm \mu''}$ preimages under $\pi_H$, and for a small enough neighborhood $V$ of $q$ there are $N_{\bm \mu''}$ disjoint local branches $V_1, \ldots, V_{N_{\bm \mu''}} \subset (\overline{bf}^1)^{-1}(\accentset{\circ}{U}^{\bm \mu''}_R)$. Setting all the weights to be equal to $\frac{1}{N_{\bm \mu''}}$, we see that condition~\eqref{eq: local-weights} is satisfied.

\vskip 0.2in
{\bf Step $k=2$: adding buildings of height 3.}
To make our construction more illustrative, we describe in detail how we proceed to adding neighborhoods of codimension $2$ strata before describing the general inductive step.

First, we define (compare it with~\eqref{eq: big-interior})
\begin{gather}\label{eq: intermediary-subset-codim-1}
    \accentset{\circ}{U}^{(\bm \mu'', \chi'')}_{R, R'}\coloneqq \overline{U}^{(\bm \mu'', \chi'')}_{R} \setminus \\ \Big(\bigcup_{\chi_1+\chi_2 \ge \chi, \bm\mu_1, \bm\mu_2 \vdash \kappa} \overline{U}^{(\bm \mu_1,\chi_1),(\bm\mu_{2}, \chi_{2})}_{R'} \cup 
    \bigcup_{-\chi \ge l \ge 3,\, \chi_1+\dots+\chi_{l}\ge\chi, \, \bm \mu_1, \ldots, \bm \mu_{l} \vdash \kappa} \overline{U}^{(\bm \mu_1,\chi_1), \ldots,  (\bm\mu_{l}, \chi_{l})}_{R} \Big). \nonumber
\end{gather}

Denote by $\mathscr{B}^{\operatorname{old}}$ the groupoid that we obtained at the previous stage. Now we will obtain the new groupoid from it in two steps. First, we replace all sets $(\overline{bf}^1)^{-1}(\accentset{\circ}{U}_R^{\bm \mu''})$ with $(\overline{bf}^1)^{-1}(\accentset{\circ}{U}^{\bm \mu''}_{R, R'})$ and extending morphisms in a way similar to how they were defined at the Step $k=1$. Notice that we add morphisms between points with the same image in $\mathcal{H}^{\mu, \mu'}_{\kappa, \chi}$ under maps $\overline{bf}^1$. Denote the obtained topological space at the level of objects by $U_0$.
We observe that forgetful covering map $\overline{bf}^2$ factors through projections
$$\overline{bf}^2_1 \colon \mathcal{H}^{\bm \mu, \bm \mu_1}_{\kappa, \chi_1} \times \mathcal{H}^{\bm \mu_1, \bm \mu_2}_{\kappa, \chi_2} \times \mathcal{H}^{\bm \mu_2, \bm \mu'}_{\kappa, \chi_3}\times (R; +\infty]^2 \to \mathcal{H}^{\bm \mu, \bm \mu_1}_{\kappa, \chi_1} \times_{\bm m_1} \mathcal{H}^{\bm \mu_1, \bm \mu_2}_{\kappa, \chi_2} \times \mathcal{H}^{\bm \mu_2, \bm \mu'}_{\kappa, \chi_3}\times (R; +\infty]^2,$$
$$\overline{bf}^2_2 \colon \mathcal{H}^{\bm \mu, \bm \mu_1}_{\kappa, \chi_1} \times \mathcal{H}^{\bm \mu_1, \bm \mu_2}_{\kappa, \chi_2} \times \mathcal{H}^{\bm \mu_2, \bm \mu'}_{\kappa, \chi_3}\times (R; +\infty]^2 \to \mathcal{H}^{\bm \mu, \bm \mu_1}_{\kappa, \chi_1} \times \mathcal{H}^{\bm \mu_1, \bm \mu_2}_{\kappa, \chi_2} \times_{\bm m_2} \mathcal{H}^{\bm \mu_2, \bm \mu'}_{\kappa, \chi_3}\times (R; +\infty]^2.$$
Notice that $(\overline{bf}^1)^{-1}(\accentset{\circ}{U}_{R, R'}^{(\bm \mu_i, \chi_i)} \cap U_R^{{(\bm\mu_1, \chi_1), (\bm\mu_2, \chi_2)}})$ can be regarded as a subset of codomain of $\overline{bf}^2_{3-i}$ for $i=1,2$. By analogy with~\eqref{eq: codim-1-nbhd-step-1} we have

We set 
\begin{equation}
    B_0^{\operatorname{new}}=U_0 \sqcup \bigsqcup_{\chi_1+\chi_2 \ge \chi, \, \bm \mu_1, \bm \mu_2 \vdash \kappa} (\overline{bf}^2)^{-1}(\accentset{\circ}{U}^{(\bm \mu_1,\chi_1), (\bm \mu_2, \chi_2)}_R).
\end{equation}
We then add those morphisms that identify points $x, y \in (\overline{bf}^2)^{-1}(\accentset{\circ}{U}^{(\bm \mu_1,\chi_1), (\bm \mu_2, \chi_2)}_R)$ satisfying $\overline{bf}^2_i(x)=\overline{bf}_i^2(y)$ with their image in $(\overline{bf}^1)^{-1}(\accentset{\circ}{U}^{(\bm \mu_{3-i}, \chi_{3-i})}_{R, R'})$ for $i=1,2$, and add all the morphisms required to make it into a groupoid. To define the weight function we first extend $\Lambda^{\operatorname{old}}$ to $\overline{\pi_H(U_0)}$ in the natural way, and then set it to be equal to $\frac{1}{N_{\bm \mu_1}N_{\bm \mu_2}}$ at those points of $\pi_H((\overline{bf}^2)^{-1}(\accentset{\circ}{U}_R^{(\bm \mu_1,\chi_1), (\bm \mu_2, \chi_2)}))$ where it has not been determined above. 

Verification that the above construction yields a wnbc groupoid is straightforward, but tedious, and we only do it for a point $q$ in the image under $\pi_H$ of a point in $(\overline{bf}^2)^{-1}(\accentset{\circ}{U}^{(\bm \mu_1,\chi_1), (\bm \mu_2, \chi_2)}_R)$ with its neck coordinates both equal to $R'$. First, notice that $\Lambda(q)=1$, because it belongs to $\overline{\pi_H(U_0)}$. The local branches are all $N_{\bm \mu_1}\cdot N_{\bm \mu_2}$ preimages under $\pi_H$ of an open neighborhood of this point inside $(\overline{bf}^2)^{-1}(\accentset{\circ}{U}_R^{(\bm \mu_1,\chi_1), (\bm \mu_2, \chi_2)})$.

\vskip 0.2in
{\bf Inductive step $k$: adding buildings of height $k+1$.} 

First, we introduce the space
\begin{gather}\label{eq: intermediary-subset-codim-(k-1)}
    \accentset{\circ}{U}^{(\bm \mu_1, \chi_1), \ldots, (\bm \mu_{k-1}, \chi_{k-1})}_{R, R'}\coloneqq \overline{U}^{(\bm \mu_1, \chi_1), \ldots, (\bm \mu_{k-1}, \chi_{k-1})}_R \setminus \\ \Big( \bigcup_{ \, \chi_1'+\dots+\chi_{k}' \ge \chi, \, \bm \mu_1', \dots, \bm \mu_{k}' \vdash \kappa} \overline{U}^{(\bm \mu_1', \chi_1'), \ldots, (\bm\mu_{k}', \chi_{k}')}_{R'} \cup \bigcup_{-\chi \ge l >k ,\, \chi_1'+\dots+\chi_{l}' \ge \chi, \, \bm \mu_1', \dots, \bm \mu_{l}' \vdash \kappa} \overline{U}^{(\bm \mu_1', \chi_1'), \ldots, (\bm\mu_{l}', \chi_{l}')}_{R}\Big). \nonumber
\end{gather}
As in the case $k=2$ we first enlarge $B^{\operatorname{old}}_0$ that we obtained at the previous step by replacing sets $(\overline{bf}^{k-1})^{-1}(\accentset{\circ}{U}^{(\bm \mu_1, \chi_1), \ldots, (\bm \mu_{k-1}, \chi_{k-1})}_{R})$ with $(\overline{bf}^{k-1})^{-1}(\accentset{\circ}{U}^{(\bm \mu_1, \chi_1), \ldots, (\bm \mu_{k-1}, \chi_{k-1})}_{R, R'})$, and extending morphisms in the natural way. We denote by $U_0$ the space we obtain at the level of objects.

Fix $i$ such that $1 \le i \le k$, $\bm \mu'' \vdash \kappa$, and a pair $\chi_i', \chi_i''$ such that $\chi_i'+\chi_i''=\chi_i$. We notice that $\overline{bf}^k$ factors through cover 
\begin{gather}\label{eq-inductive-building-subset}
    \overline{bf}^{k-1}_i \colon  \mathcal{H}^{\bm \mu, \bm \mu_1}_{\kappa, \chi_1} \times \dots \times \mathcal{H}^{\bm \mu_{k-1}, \bm \mu'}_{\kappa, \chi_{k}} \times (R; +\infty]^{k-1} \to \\\mathcal{H}^{\bm \mu, \bm \mu_1}_{\kappa, \chi_1} \times \dots \times (\mathcal{H}^{\bm \mu_{i-1}, \bm \mu_i}_{\kappa, \chi_i'} \times_{\bm m} \mathcal{H}^{\bm \mu_i, \bm \mu_{i+1}}_{\kappa, \chi_i''}) \times \dots \times \mathcal{H}^{\bm \mu_{k-1}, \bm \mu'}_{\kappa, \chi_{k}} \times (R; +\infty)^{i-1} \times (R'; +\infty] \times (R; +\infty]^{k-i} .\nonumber
    \end{gather}

We put
\begin{equation}\label{eq-inductive-objects-update}
    B_0^{\text{new}}=U_0 \sqcup \bigsqcup_{\substack{\chi_1+\dots+\chi_{k}\ge\chi, \\ \bm \mu_1, \ldots, \bm \mu_k \vdash \kappa}} (\overline{bf}^k)^{-1}(\accentset{\circ}{U}^{(\bm \mu_1,\chi_1), \ldots, (\bm \mu_{k}, \chi_{k})}_R).
\end{equation}

To obtain $B_1^{\operatorname{new}}$ from $B_1^{\operatorname{old}}$ we add morphisms that identify points $x, y \in (\overline{bf}^k)^{-1}(\accentset{\circ}{U}^{(\bm \mu_1,\chi_1), \ldots, (\bm \mu_k, \chi_k)}_R)$ satisfying $\overline{bf}^k_i(x)=\overline{bf}_i^k(y)$ with their image in $(\overline{bf}^{k-1})^{-1}(\accentset{\circ}{U}^{(\bm \mu_{1}, \chi_{1}), \ldots, (\bm \mu_{i-1}, \chi_{i-1}), (\bm \mu_{i+1}, \chi_{i+1}), \ldots, (\bm \mu_k, \chi_k)}_{R, R'})$ for $i=1,\ldots, k$, and add all the morphisms required to make it into a groupoid.

At last, $\Lambda^{\text{new}}$ is set to coincide with $\Lambda^{\text{old}}$ at the points of $U_0$ and is set to be equal to $\frac{1}{N_{\bm \mu''_1} \cdots N_{\bm \mu''_k}}$ on the portion of the image under $\pi_H$ of $(\overline{bf}^k)^{-1}(\accentset{\circ}{U}^{(\bm \mu_1,\chi_1), \ldots, (\bm \mu_{k}, \chi_{k})}_R)$
disjoint from $\overline{\pi_H(U_0)} \subset \mathscr{B}_H^{\vee}$.

The verification that the resulting $(\mathscr{B}, \Lambda)$ is indeed a wnbc groupoid is omitted and left to the reader as an exercise.

\begin{figure}
    \centering
    \subfloat{{\includegraphics[width=13cm]{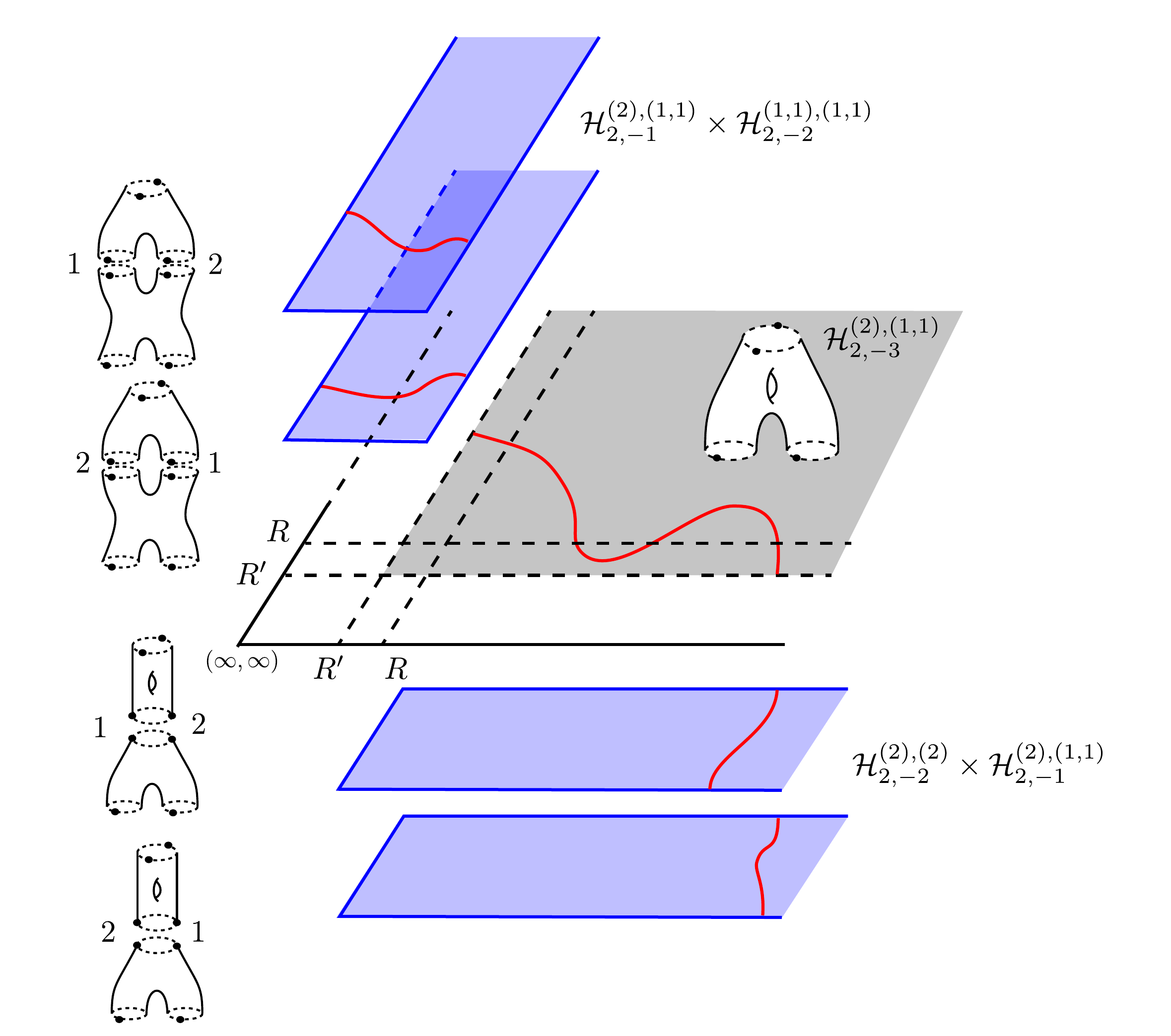} }}
    \caption{Step 1 of the construction of the wnbc groupoid $\widehat{\mathscr{H}}^{(2), (1,1)}_{2, -3}$ is shown here.  For $\bm \mu'' =(1,1)$ we have $N_{\bm \mu''}=2$ and it counts the number of ways to order a set of two orbits. For $\bm \mu'' =(2)$ we have $N_{\bm \mu''}=2$ and it counts the number of ways to choose an asymptotic marker on a time-$2$ orbit. Therefore, the construction provides four boundary neighborhoods given in blue. Notice that the projection of these blue regions is disjoint from a neighborhood strata of $2$-level buildings. The weight function takes the value $\Lambda= \frac{1}{2}$ in each of these regions. 
    The red curves depict $\mathscr{M}^{-3}(\bm x, \bm x'; H^{\mathrm{tot}}, J)$ for some $\bm x, \bm x' \in \mathcal{P}_{\mathrm{psym}}$.}
    \label{Step1}
\end{figure}
\vskip 0.3in
The wnbc groupoid obtained after the termination of the process above (after $-\chi-1$ steps) is further denoted by $(\hurwitzsmbr, \Lambda_{\hurwitzsmbr})$\label{hurwitz-smooth-branched}. We denote the interior wnb groupoid (obtained by throwing out the boundary at the level of $B_0$) by $\hurwitzbr$\label{hurwitz-smooth-branched-open}.

There is a natural projection
\begin{equation}\label{projection-from-branched}
    \mathscr{BF} \colon (\hurwitzsmbr)^{\vee} \to \hurwitzsm
\end{equation}
induced by forgetful maps. That is, for any $x \in (\hurwitzsmbr)^{\vee}$ we may pick a maximal height $k$ such that $x$ belongs to a portion of 
$$\mathcal{H}^{\bm \mu, \bm \mu_1}_{\kappa, \chi_1} \times \dots \times \mathcal{H}^{\bm \mu_k, \bm \mu'}_{\kappa, \chi_{k+1}} \times (R'; +\infty]^{k}$$
 in our inductive construction and we then send $x$ to $\overline{bf}^{k}(x)$. These locally defined maps clearly patch into a continuous map $\mathscr{BF}$. We point out that for a point $x$ as above, it holds that 
 \begin{equation}
     \Lambda_{\hurwitzsmbr}(\pi_H^{\vee}(x))=\frac{1}{N_{\bm \mu_1}\cdots N_{\bm \mu_k}}.
 \end{equation}

We will use the notation $\hurwitzbraction$ to denote the branched structure on $\hurwitzaction$ induced by $\hurwitzbr$.

\begin{remark}\label{branching-choices}
    The reasoning for this construction is provided in the next section. Roughly speaking, for breaking of profile $\bm \mu''$ one might want to have not only matching data and a non-decreasing order of entries of $\bm \mu''$ but a choice of actual markers (there are $\bm \mu''!$ choices) and a total order on partition $\bm \mu''$ (there are $R_{\bm \mu''}$ such orders). Note that our construction achieves that boundary points of $\hurwitzsm$ have the right number of preimages under $\mathscr{BF}$ to achieve this.
\end{remark}

\begin{remark}
    We point out that we may define $\hurwitzcompbr$ by extending this branched structure to the whole of $\hurwitzcomp$. Notice that one does not get a wnbc groupoid this way, since the compactification is not smooth, but the weighting conditions of Definition~\ref{defn-wnb-groupoid} are satisfied. 
\end{remark}

\subsection{Floer data}
\label{section-floer-data}

For each $\hurwitz$ and all elements $(S, \pi, \bm p^{\pm}, \bm a^{\pm}, B) \in \hurwitzaction$ we choose a \emph{cylindrical ends data} on $S$ such that $\bm p^+$ is the set of \emph{inputs} and $\bm p^-$ is the set of \emph{outputs}, i.e., a collection of holomorphic embeddings $\{\delta^i_+\}_{i=1}^{i=l(\bm \mu)}$ and  $\{\delta^i_-\}_{i=1}^{i=l(\bm \mu')}$ satisfying compatibility conditions that vary smoothly:

\begin{itemize}
    \item $\delta_+^i \colon \R_{\ge R_i^+} \times \R/\mu_i\Z \to S, \, \lim \limits_{s \to +\infty} \delta_+^i(s, \cdot)=p_i^+, \, \lim \limits_{s \to +\infty} -\frac{d}{ds}\delta_+^i(s, 0)=a_i^+;$
    \item \emph{compatibility with $\pi$: }$\pi (\delta_+^i(s,t))=(s,t)$, where~$$\pi \circ \delta_+^i \colon \R_{\ge R_i^+} \times \R/\mu_i\Z \to \R \times \R/\Z;$$
    \item $\delta_-^i \colon \R_{\le -R_i^-} \times \R/\mu_i'\Z \to S, \, \lim \limits_{s \to -\infty} \delta_-^i(s, \cdot)=p_i^-, \, \lim \limits_{s \to -\infty} -\frac{d}{ds}\delta_-^i(s, 0)=a_i^-$;
    \item \emph{compatibility with $\pi$: }$\pi (\delta_-^i(s,t))=(s,t)$, where~$$\pi \circ \delta_-^i \colon \R_{\le -R_i^-} \times \R/\mu_i'\Z \to \R \times \R/\Z.$$
\end{itemize}
In particular, the constants $R_i^+, R_i^-$ vary smoothly across $\hurwitzaction$.
It is required that the above choices are invariant under the $\R$-action on $\hurwitzaction$, which provides a choice of cylindrical ends data on $\hurwitz$.

A choice of cylindrical ends data on $(\hurwitzbr)^{\vee}_H$ is similar to the above; one has to choose such data on objects $\left(\hurwitzbr\right)_0$ making sure that it coincides for the pairs of points identified under morphisms $\left(\hurwitzbr\right)_1$. In particular, any cylindrical ends data on $\hurwitz$ provides cylindrical ends data on $\hurwitzbr$.

Now, assume there are choices of cylindrical ends data on $\hurwitzbr$ for all $\chi'$ with $\chi'> \chi$. Then we say that a choice of cylindrical end data is \emph{consistent} if for any (see~\eqref{boundary-l-level-neighborhood})
\begin{equation}\label{eq-point-near-bdry-branched}
([S=S_1 \cup \ldots \cup S_{k+1}, \pi], R_1, \ldots, R_k) \in U^{(\bm \mu_1, \chi_1), \dots, (\bm\mu_{k}, \chi_{k})}_R \subset \hurwitz 
\end{equation}
with $R_1, \ldots, R_k \gg 0$ the cylindrical ends coincide with the ones induced from positive (resp. negative) ends of $S_1$ (resp. $S_{k+1}$).

Furthermore, for a point of $[S, \pi] \in \hurwitzsm$ represented by~\eqref{eq-point-near-bdry-branched} with $R_1, \ldots, R_k > R'$ 
we may assign \emph{finite cylinders} data. That is, the domain $S'$ comes with $l(\bm \mu''_j)$ necks of length $2R_j$ induced by the gluing procedure described in~\eqref{nodal-resolution-cyl} and~\eqref{deformation-hurwitz} for each $j$ with $1 \le j \le k$. 

Moreover, since $[S, \pi]$ above is the element of a product of Hurwitz spaces, the cylindrical end data induces trivializations of portions of the necks as above. Namely, let $p^{i,-}_j$ be the $j$-th negative puncture of $S_i$ and $p^{i+1,+}_j$ be the $j$-th positive puncture of $S_{i+1}$. Both of the punctures come with associated cylindrical ends $\delta^{j}_{i, -} \colon \R_{\ge R^{i, -}_j} \times \R/ \mu''_{ij}\Z \to S'$ and $\delta^{j}_{i, +} \colon \R_{\le R^{i, +}_j} \times \R/ \mu''_{ij}\Z \to S'$. Then, under the gluing procedure dictated by the matching marker as in~\eqref{eq-asymptotic-to-matching}, a finite portion of $\delta^{j}_{i, -}$ is glued to a finite portion of $\delta^{j}_{i, +}$. This gives rise to a \emph{finite cylinder}
\begin{itemize}
    \item $\delta^j_{i,r} \colon [c_r, d_r] \times \R/ \mu''_{ij} \Z \to S$, which satisfies compatibility with $\pi'$ up to a vertical translation.
\end{itemize}

We then say that a choice of finite cylinders and cylindrical ends on $\hurwitzsmbr$ is a \emph{consistent cylindrical data} if for all points $[S, \pi] \in (\hurwitzsmbr)^{\vee}_H$ represented by points of the form~\eqref{eq-point-near-bdry-branched} the finite cylinders and cylindrical ends are induced from the boundary by the procedure above, and these choices vary smoothly over $\left(\hurwitzbr\right)_0$. Here we assume that finite cylinders $\delta^\cdot_{i,r}$ only show up for $R_i \ge R'$ (as in Section~\ref{branched-manifolds}) and for $R_i=R'$ they have length $0$.
We claim that such data exists and can be obtained inductively for any $\hurwitzsmbr$. Moreover, we claim that this data can be obtained from finite cylinder data. 
\begin{remark}
    We point out that only for in the branched manifold setting of $\hurwitzsmbr$ we can associate finite cylinders near the boundary consistently with with cylindrical ends of points in the boundary of $\hurwitzsmbr$. This would not be possible for $\hurwitzsm$ as boundary components do not come naturally with cylindrical ends for all levels. See also Remark~\ref{branching-choices}.
\end{remark}
\begin{definition}\label{floer-datum} A \emph{Floer datum}  on $[S, \pi] \in \hurwitzsmbr$ is a collection $(H_{(S,\pi)}, F_{(S, \pi)}, J_{(S, \pi)})$ where (note that we make here the same choices for all $(S, \pi)$ representing the same point in $\left(\hurwitzsmbr\right)^{\vee}_H$):
\begin{enumerate}[(FD1)]
    \item \label{def-floer-datum1} $H_{(S, \pi)} \colon S \to \mathcal{H}(\hat{M})$ is a domain-dependent Hamiltonian function, required to coincide with $H_0$ on the cylindrical ends and finite cylinders,  additionally it vanishes on the $\pi$-preimages of $\varepsilon$-neighbrohoods of branched points for a fixed small $\varepsilon>0$;
    \item \label{def-floer-datum2} $F_{(S, \pi)} \colon S \to C^\infty (\hat{M})$ is a domain-dependent function on $S$ such that
    \begin{itemize}
        \item  for each cylinder, i.e., cylindrical end $\delta^j_{i, \pm}$ or finite cylinder $\delta^j_{i,r}$, the function $F_{(S, \pi)}$ coincides with $F_{\mu''_{ij},\op{ord}_j(\bm \mu''_i)}$ away from a small collar region (for a more precise description of collar regions and behavior of $F_{(S, \pi)}$ see \cite[Section 4]{ganatra2012symplectic});
        \item $F_{(S, \pi)}$ is weakly monotonic, i.e. $\partial_s F_{(S, \pi)} \le 0$ on finite cylinders and cylindrical ends;
        \item $F_{(S, \pi)}$ is locally constant away from finite cylinders and cylindrical ends;
    \end{itemize}
    
    \item \label{def-floer-datum3} $J_{(S, \pi)}\colon S \to\mathcal{J}_{\hat{M}}$ is a domain-dependent almost complex structure coinciding with $J_t$ as in~(\ref{fixed-almost-complex}) on each cylindrical end and on each finite cylinder. It is required that $J \equiv J^0$ when restricted to the preimages under $\pi$ of $\varepsilon$-neighborhoods of branched points for some fixed small $\varepsilon >0$.
\end{enumerate}
\end{definition}

Given some Floer data as above we introduce the function $H^{\mathrm{tot}}_{(S, \pi)}=H_{(S, \pi)}+F_{(S, \pi)}$. We point out that $H^{\mathrm{tot}}_{(S, \pi)}$ coincides with $H^{\mathrm{tot}}_{\mu_i, \op{ord}_i(\bm \mu)}$ at the positive cylindrical end corresponding to $p_i^+$, with $H^{\mathrm{tot}}_{\mu'_i, \op{ord}_i(\bm \mu')}$ at the negative cylindrical end corresponding to $p_i^-$ and with $H^{\mathrm{tot}}_{\mu''_{ij},\op{ord}_j(\bm \mu''_i)}$ on each finite cylinder $\delta^j_{i,r}$.

We further associate a $1$-form $K_{(S, \pi)} \in \Omega^1(S, C^\infty(\hat{M}))$ of the form 
\begin{equation}\label{hamiltonian-form}
K_{(S,\pi)}=H^{\mathrm{tot}}_{(S, \pi)}\cdot \pi^*(dt).
\end{equation}
The $1$-form $K_{(S,\pi)}$ determines a vector-field-valued $1$-form $Y_{(S,\pi)} \in \Omega^1(S, C^\infty (T\hat{M}))$, such that $Y_{(S, \pi)}( \xi)$ for $\xi \in TS$ is the Hamiltonian vector field
of $K_{(S, \pi)}(\xi)$.

We denote by $X_{H^{\mathrm{tot}}_{(S, \pi)}}$ the domain-dependent Hamiltonian vector field associated with $H^{\mathrm{tot}}_{(S, \pi)}$. For $K_{(S, \pi)}$ as above the $1$-form $Y_{(S, \pi)}$ is given by 
\begin{equation}\label{vector-form}
Y_{(S, \pi)}=X_{H^{\mathrm{tot}}_{(S, \pi)}} \pi^*(dt).
\end{equation}

Following  \cite[Section 9i]{seidel2008fukaya} and \cite[Section 4]{ganatra2012symplectic} we introduce the notion of \emph{consistent Floer data} on $\hurwitzsmbr$. That is, a choice of Floer datum for all $\bm \mu, \bm \mu', \chi$ and for all $[S, \pi] \in \hurwitzsmbr$ is said to be consistent if it varies smoothly over each $\hurwitzbr$ and for $[S, \pi] \in \hurwitzbr$ it coincides with the one induced from the boundary for the points in the preimages of $\mathscr{BF}$ in the charts of the form~\eqref{boundary-l-level-neighborhood}.

\begin{remark}\label{data-on-compactification}
    One may extend Floer data to the full compactification $\hurwitzcompbr$ by requiring that it is equal to a pair $(0, J^0)$ at each bubble component. It is a continuous extension according to conditions~\eqref{def-floer-datum1} and~\eqref{def-floer-datum3}.
\end{remark}

Consider a universal family $\mathscr{S}^{\bm \mu, \bm \mu'}_{\kappa, \chi} \to \hurwitzbr$ (see \cite[Theorem 1.53]{harris-morrison2006} for the algebraic analog) whose fiber over a point $[S, \pi] \in \left(\hurwitzbr\right)_0$ is the domain $S$ of the cover $\pi$.
Then a choice of consistent Floer data on $\hurwitzsmbr$ provides a tuple
$$H \in C^\infty(\mathscr{S}^{\bm \mu, \bm \mu'}_{\kappa, \chi}, \mathcal{H}(\hat{M})), \, F \in C^\infty(\mathscr{S}^{\bm \mu, \bm \mu'}_{\kappa, \chi}, C^{\infty}_b(\hat{M})), \, J \in C^\infty(\mathscr{S}^{\bm \mu, \bm \mu'}_{\kappa, \chi}, \mathcal{J}(\hat{M})).$$
We also have a $1$-form $K$ associated with $H^{\mathrm{tot}}=H+F$:
$$ K \in \Omega^1_{\mathscr{S}^{\bm \mu, \bm \mu'}_{\kappa, \chi} / \hurwitzbr}(\mathscr{S}^{\bm \mu, \bm \mu'}_{\kappa, \chi}, C^\infty(\hat{M})).$$

A vector-field-valued $1$-form $Y$ is assigned to each such $K$ (see~(\ref{vector-form})).
Consistency allows one to extend this tuple to an analogous tuple $(\widehat{H}, \widehat{F}, \widehat{J})$ for$~\widehat{\mathscr{S}}^{\bm \mu, \bm \mu'}_{\kappa, \chi}~\to~\hurwitzsmbr$. 

Given consistent Floer data, one may assign a domain-dependent almost complex structure $\underline{J}^{H}$ on $\R \times S^1 \times \hat{M}$ as follows: 
\begin{align}
    \left\{
        \begin{array}{ll}
           \underline{J}^H \text{ preserves $\{s\} \times \{t\} \times T\hat{M}$ and } J|_{ T\hat{M}} = J_{(S, \pi), z} \text{ for }z\in S, \\
            \underline{J}^H_{(S, \pi), \, z}(\partial_s)=\partial_t+X_{H^{\mathrm{tot}}_{(S, \pi)}(z)}\text{ for $z \in S$}.
        \end{array}
    \right.
\end{align}

Observe that $\underline{J}^H_{(S, \pi), \, z}$ is \emph{adjusted} in the sense of \cite{BEHWZ2003sft} to the stable Hamiltonian structure on $S^1 \times \hat{M}$ given by 
\begin{equation}
(\omega^{H^{\mathrm{tot}}_{(S, \pi)}(z)}=\omega+dH^{\mathrm{tot}}_{(S, \pi)}(z) \wedge dt, \lambda^{H^{\mathrm{tot}}_{(s, \pi)}(z)}=dt),
\end{equation}
with the symplectic vector bundle and the Reeb vector field given by
\begin{equation}
    \xi^{H^{\mathrm{tot}}_{(S, \pi)}(z)}=T\hat{M} \, \, \, \text{ and } \, \, \, R^{H^{\mathrm{tot}}_{(S, \pi)}(z)}=\partial_t+X_{H^{\mathrm{tot}}_{(S, \pi)}(z)}.
\end{equation}

\subsection{Pseudo-holomorphic curves}\label{section-pseudo-holomophic-curves}
Let us pick two tuples of Hamiltonian orbits $\bm x, \bm x' \in \mathcal{P}_{\mathrm{psym}}$ as in Section~\ref{section-hamiltonian-partsym} and consistent Floer data on $\hurwitzsmbr$ as in Section~\ref{branched-manifolds} and set $\bm \mu = \bm \mu (\bm x)$ and $\bm \mu' = \bm \mu (\bm x')$. 

Consider the set of pairs 
\begin{equation}\label{pairs}
\Big( [(S, \pi, \bm p^{\pm}, \bm a^{\pm}, B)] \in \left(\hurwitzbr \right)_0, \, u=(\pi', v)\colon S \to \R \times S^1 \times \hat{M}\Big) 
\end{equation}
\noindent such that
\begin{equation}\label{pairs-CR}
\begin{dcases}
  [S, \pi']= [S, \pi];\\
(dv-Y_{(S, \pi)})^{0,1}=0, \text{ with respect to } J_{(S,\pi)};  \\
\lim_{s \to +\infty } v\circ \delta_i^+(s, \cdot) = x_i(\cdot) \text{ for }i=1, \ldots, l(\bm \mu); \\
\lim_{s \to -\infty } v\circ \delta_i^-(s, \cdot) = x_i'(\cdot) \text{ for }i=1, \ldots, l(\bm \mu').
\end{dcases}
\end{equation}
We say that the pairs
$$\Big( [(S_1, \pi_1, \bm p^{\pm}, \bm a^{\pm}, B)], \, u=(\pi'_1, v_1) \Big) \text{ and } \Big( [(S_2, \pi_2, \bm p^{\pm}, \bm a^{\pm}, B)], \, u=(\pi'_2, v_2) \Big)$$
\noindent are \emph{equivalent} if $[S_1, \pi_1]=[S_2, \pi_2]$ and $v_1=v_2$.

\begin{lemma}\label{sft-to-hamiltonian}
    A curve $u=(\pi, v) \colon S \to \R \times S^1 \times \hat{M}$ with $[S, \pi] \in \left( \hurwitzbr \right)_0$ is $\underline{J}^H_{(S, \pi)}$-holomorphic precisely when $v \colon S \to \hat{M}$ satisfies
    \begin{equation}\label{hamiltonian-cr}
        (dv-Y_{(S, \pi)})^{0,1}=0.
 \end{equation}
\end{lemma}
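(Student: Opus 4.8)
The plan is to verify the equivalence pointwise by unwinding the definition of $\underline{J}^H_{(S,\pi)}$ given just above, using the splitting $T(\R \times S^1 \times \hat M) = T(\R \times S^1) \oplus T\hat M$ together with the fact that for $[S,\pi]\in(\hurwitzbr)_0$ the projection $\pi \colon S \to \R \times S^1$ is holomorphic by the very definition of a Hurwitz space. Holomorphicity of $\pi$ is the only structural input; the rest is the bookkeeping of the graph (``Gromov trick'') construction.

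First I would record, as a consequence of $(\underline{J}^H_{(S,\pi),z})^2 = -\mathrm{id}$ applied to $\partial_s$, the identity $\underline{J}^H_{(S,\pi),z}(\partial_t) = -\partial_s - J_{(S,\pi),z}\big(X_{H^{\mathrm{tot}}_{(S,\pi)}(z)}\big)$. Combined with $\underline{J}^H_{(S,\pi),z}(\partial_s)=\partial_t+X_{H^{\mathrm{tot}}_{(S,\pi)}(z)}$ and with the fact that $\underline{J}^H$ restricts to $J_{(S,\pi),z}$ on $T\hat M$, this yields the action of $\underline{J}^H_{(S,\pi),z}$ on an arbitrary vector $a\partial_s + b\partial_t + w$ with $w \in T\hat M$:
\begin{equation*}
\underline{J}^H_{(S,\pi),z}(a\partial_s + b\partial_t + w) = -b\,\partial_s + a\,\partial_t + \Big( a\,X_{H^{\mathrm{tot}}_{(S,\pi)}(z)} - b\,J_{(S,\pi),z}X_{H^{\mathrm{tot}}_{(S,\pi)}(z)} + J_{(S,\pi),z}w\Big).
\end{equation*}

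Next, writing $u = (\pi, v)$ so that $du(\xi) = (d\pi(\xi), dv(\xi))$ for $\xi \in T_zS$, and setting $d\pi(\xi) = a\partial_s + b\partial_t$, I would split the equation $du\circ j = \underline{J}^H_{(S,\pi)}\circ du$ into its $T(\R\times S^1)$- and $T\hat M$-components. The $T(\R\times S^1)$-component reads $d\pi(j\xi) = -b\,\partial_s + a\,\partial_t$, which is precisely holomorphicity of $\pi$ and hence is automatically satisfied; therefore the full equation is equivalent to its $T\hat M$-component
\begin{equation*}
dv(j\xi) = a\,X_{H^{\mathrm{tot}}_{(S,\pi)}} - b\,J_{(S,\pi)}X_{H^{\mathrm{tot}}_{(S,\pi)}} + J_{(S,\pi)}\,dv(\xi).
\end{equation*}

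Finally I would expand the condition $(dv - Y_{(S,\pi)})^{0,1}=0$, i.e.\ $(dv - Y_{(S,\pi)})\circ j = J_{(S,\pi)}\circ(dv - Y_{(S,\pi)})$. Since $Y_{(S,\pi)} = X_{H^{\mathrm{tot}}_{(S,\pi)}}\,\pi^*(dt)$ with $\pi^*(dt)(\xi) = dt(d\pi(\xi)) = b$ and $\pi^*(dt)(j\xi) = dt(d\pi(j\xi)) = a$ (again using that $d\pi$ intertwines $j$ with the standard structure on the cylinder), this condition unwinds to exactly the displayed $T\hat M$-component equation above, so the two conditions coincide and the lemma follows. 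There is no genuine obstacle: the only points demanding care are the sign in the formula for $\underline{J}^H_{(S,\pi)}(\partial_t)$ and the insertion of the Hamiltonian term through $\pi^*(dt)$ rather than through $dt$ on the source — which is exactly the place where the branched cover $\pi$ enters the computation, and it is what makes the $T(\R\times S^1)$-component trivial rather than an extra constraint.
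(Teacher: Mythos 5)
Your argument is correct: the pointwise linear-algebra identity for $\underline{J}^H_{(S,\pi),z}$ (including the sign in $\underline{J}^H(\partial_t)=-\partial_s-J X_{H^{\mathrm{tot}}}$), the splitting of $du\circ j=\underline{J}^H\circ du$ into cylinder and $\hat{M}$ components, the observation that the cylinder component is exactly holomorphicity of $\pi$ (which holds by definition of the Hurwitz space, branch points causing no issue since the verification is pointwise), and the unwinding of $(dv-Y_{(S,\pi)})^{0,1}=0$ using $\pi^*(dt)(\xi)=b$, $\pi^*(dt)(j\xi)=a$ all check out, and the two remaining equations coincide. The only difference from the paper is that the paper does not carry out this computation at all: it disposes of the lemma in one line by citing Fabert's Proposition 2.2, which is precisely the graph-construction statement you have reproved. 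So your route is a self-contained and slightly more explicit version of the cited result, adapted to the domain-dependent Hamiltonian $H^{\mathrm{tot}}_{(S,\pi)}$ and the branched cover $\pi$; what the citation buys is brevity, while your computation makes visible exactly where holomorphicity of $\pi$ and the form $\pi^*(dt)$ enter, which is the content the paper implicitly relies on when it later switches freely between the SFT and Hamiltonian-Floer viewpoints.
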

\begin{proof}
    This is a restatement of \cite[Proposition 2.2]{fabert2009}.
\end{proof}

\begin{remark}
    The consequence of Lemma~\ref{sft-to-hamiltonian} is that one may apply both the SFT technology and the Hamiltonian Floer approach depending on the circumstances.
\end{remark}
We define $\modulifloerbr$~\label{moduli-floer-branched}
as a weighted branched groupoid with $\modulifloerbr_0$ given by the equivalence classes of pairs as in~\eqref{pairs-CR} (i.e., objects of this groupoid are equivalence classes of pairs of a branched cover and a pseudo-holomorphic curve up to $s$-translation). We write $\modulifloerbraction$ for the space of pairs as above, not equivalence classes.

The manifold $\modulifloerbr_1$ of morphisms and the weight function come from the wnbc groupoid structure on $\hurwitzbr$ as explained in the discussion below and in Section~\ref{branched-manifolds}. In particular, we have that $\operatorname{Mor}(([S, \pi], v), ([S', \pi'], v'))$ is non empty if $\operatorname{Mor}([S, \pi], [S', \pi'])$ is non empty and $v=v'$.

For a small open $V \subset \left(\hurwitzbr \right)^{\vee}_H$ and the corresponding groupoid $\mathscr{V}=\hurwitzbr|_{\pi_H^{\vee-1}(V)}$ one may associate a trivial Banach fiber orbibundle $\mathscr{B}|_{\mathscr{V}} \to \mathscr{V}$ whose fiber over $$[(S, \pi, \bm p^{\pm}, \bm a^{\pm}, B)]~\in~\left(\hurwitzbr \right)_0$$ is the space $\mathscr{B}_{(S, \pi)}$ of maps in $W^{1,p}_\text{loc}(S, \hat{M})$ converging at infinity on cylindrical ends associated with $\bm p^\pm$ to some pair $(\bm x, \bm x')$.
The groupoid $\mathscr{B}|_{\mathscr{V}}$ inherits a wnb groupoid structure from ${\mathscr{V}}$. Further, there is a vector orbibundle $\mathscr{E}|_{\mathscr{V}} \to \mathscr{B}|_{\mathscr{V}}$ with fibers $\mathscr{E}_{[S, \pi]} \to \mathscr{B}_{[S, \pi]}$ given by $L^p(S, \Omega^{0,1}_S\otimes v^*T\hat{M})$, for $v\in \mathscr{B}_{[S, \pi]}$.
Then one may interpret $(dv-Y_{(S, \pi)})^{0,1}$ as a section $\sigma$ of this vector orbibundle and the zero-set of this section is the union of $\modulifloerbraction_\mathscr{V}$  for all pairs $(\bm x, \bm x')$. The derivative of this section at $((S, \pi), u)$ is a \emph{linearized operator}:
\begin{equation}\label{eq-linearized-operator}
    D_{[S, \pi], u} \colon (T\mathscr{B}|_{\mathscr{V}})_{[S, \pi], u}=T_{[S, \pi]}\hurwitzbr \times T_v\mathscr{B}_{[S, \pi]} \longrightarrow (\mathscr{E}_{[S, \pi]})_u.
\end{equation}

Then $\modulifloerbraction$ is \emph{transversely cut out} if for any such $U_0$ the section $\sigma$ is transverse in the sense of Section~\ref{branched-manifolds}. This is equivalent to the surjectivity of $D_{[S, \pi], u}$ for any point in the zero set of $\sigma$. We refer the reader to \cite[Remark 9.4]{seidel2008fukaya} for the reasoning behind considering the local orbibundle $\mathscr{B}|_{\mathscr{U}}$ instead of the global one $\mathscr{B}_{\hurwitzbr}$ (which would not be a Banach manifold).

\begin{remark}\label{rmk: regulkarity-of-u}
    We note that surjectivity of the linearized $\bar{\partial}$-operator with parameters given by~\eqref{eq-linearized-operator} is equivalent to surjectivety of linearized $\bar{\partial}$-operator associated with curve $u$, i.e. $u$ is regular whenever the operator~\eqref{eq-linearized-operator} is surjective.
\end{remark}

\begin{theorem}\label{transversality-theorem}
    For a generic Floer data on $\hurwitzbr$ the moduli space $\modulifloerbraction$ is a wnb groupoid.
\end{theorem}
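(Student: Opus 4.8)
The plan is to deduce the statement from the standard machinery of transversality for Fredholm sections of orbibundles over weighted nonsingular branched groupoids developed in \cite{mcduff2006}. Recall from Section~\ref{section-pseudo-holomophic-curves} that, over a chart $\mathscr V \subset (\hurwitzbr)^{\vee}_H$ and for fixed asymptotics $\bm x,\bm x'$, the moduli space $\modulifloerbraction$ is the zero locus of a section $\sigma$ of the Banach vector orbibundle $\mathscr E|_{\mathscr V}\to\mathscr B|_{\mathscr V}$ whose linearization is $D_{[S,\pi],u}$ from~\eqref{eq-linearized-operator}; this operator is the sum of the finite-dimensional $T_{[S,\pi]}\hurwitzbr$ with a Cauchy--Riemann type operator on $v^*T\hat{M}$ asymptotic to the non-degenerate orbits of the $H^{\mathrm{tot}}_{m,j}$ (by (H1)--(H3)), hence Fredholm. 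Since the asymptotic markers of Definition~\ref{def-hurwitz-space} eliminate all automorphisms, $\hurwitzbr$ is a genuinely \emph{nonsingular} sse groupoid with finitely many components, so by \cite[Lemma 4.10]{mcduff2006} a comeager set of data makes $\sigma$ transverse in the sense of \cite[Definition 4.9]{mcduff2006}; the remark at the end of Section~\ref{branched-manifolds} then endows the transverse zero set $\modulifloerbraction$ with a wnb groupoid structure, with weight function pulled back from the weighting of $\hurwitzbraction$. Thus it suffices to exhibit consistent Floer data for which $\sigma$ is everywhere transverse.

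This is proved by the usual universal moduli space argument, organized by an outer induction on $-\chi$ (so that consistency, in the sense of Section~\ref{section-floer-data}, against the already-chosen data on $\hurwitzbr$ for $\chi'>\chi$ is respected) and an inner induction on the codimension of the boundary strata of $\hurwitzsmbr$. Fix $\bm x,\bm x'$ and a chart $\mathscr V$, and let $\mathfrak P$ be the separable Banach manifold of perturbations of the pair $(H_{(S,\pi)},J_{(S,\pi)})$ supported in the \emph{free region} $S^{\mathrm{free}}$ of each domain --- the complement of the cylindrical ends $\delta^{\pm}_i$, the finite cylinders $\delta^j_{i,r}$, and the $\pi$-preimages of the $\varepsilon$-neighborhoods of the branch points, on which (FD1)--(FD3) impose no constraint beyond membership in $\mathcal{H}(\hat{M})$, resp.\ $\mathcal{J}(\hat{M})$ --- topologized with a Floer $C^{\varepsilon}$-norm. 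Forming the universal moduli space of triples $((H,J),[S,\pi],u)$ solving~\eqref{pairs-CR}, one must check that the extended linearization is surjective at every solution.

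The crux is this surjectivity. Let $0\neq\eta\in L^q(S,\Omega^{0,1}_S\otimes v^*T\hat{M})$ be a cokernel vector, so $\eta$ solves the elliptic formal-adjoint equation, and by Aronszajn unique continuation $\{\eta\neq 0\}$ is open and dense in $S$ (and $\eta\to 0$ at the punctures). If $v$ is non-constant on the component containing a chosen free point, then by unique continuation for~\eqref{hamiltonian-cr} the critical set of $v$ there is nowhere dense, so we may pick $z_0\in S^{\mathrm{free}}$, away from branch points, with $dv(z_0)\neq 0$ and $\eta(z_0)\neq 0$; a standard pointwise computation (cf.\ \cite{seidel2008fukaya,ganatra2012symplectic}) yields an infinitesimal variation of $J_{(S,\pi)}$ supported near $z_0$ whose image $\tfrac12\,\delta J\,dv\circ j$ pairs non-trivially with $\eta$, a contradiction. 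If instead $v\equiv c$ is constant on that component --- which by (H3) can only happen when the relevant partition is $(\kappa)$, with $c$ a non-degenerate critical point --- one instead varies $H_{(S,\pi)}$: near a free non-branch point $z_0$ with $\eta(z_0)\neq 0$, a variation $\delta H$ with $X_{\delta H}(c)$ arbitrary changes $\sigma$ by $-(X_{\delta H}(c)\otimes\pi^*dt)^{0,1}$, and since $(\pi^*dt)^{0,1}(z_0)\neq 0$ these range over the whole fiber at $z_0$, again forcing $\eta(z_0)=0$. Either way $\eta$ vanishes on $S^{\mathrm{free}}$, hence $\eta\equiv 0$ by unique continuation. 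The point worth stressing is that $H_{(S,\pi)}$ and $J_{(S,\pi)}$ are genuinely \emph{domain-dependent}, so perturbations localized near $z_0\in S$ alter the equation only near $z_0$ and no somewhere-injectivity of $u$ is ever invoked; this is precisely why the multiply covered curves that forced the branched-manifold construction cause no trouble in the interior, their effect being confined to the boundary/compactness analysis of $\modulifloercompbrshort$.

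Finally, Sard--Smale applied to the (Fredholm) projection from the universal moduli space to $\mathfrak P$ gives a comeager set of data making $\sigma|_{\mathscr V}$ transverse; intersecting over the finitely many charts covering $\hurwitzbr$, the countably many pairs $(\bm x,\bm x')$, and the countably many $\chi$, and carrying out the choices inductively as described so as to preserve consistency --- at each stage perturbing only in the portion of $S^{\mathrm{free}}$ not already pinned down by data on lower-codimension strata --- produces a single consistent Floer datum that is regular throughout. For this datum $\sigma$ is transverse, so $\modulifloerbraction$ is a wnb groupoid. I expect the most laborious part to be the bookkeeping in this last step: verifying that after the data has been fixed on all cylindrical ends, finite cylinders, branch-point neighborhoods, and lower strata there still remains, on each domain arising in $\hurwitzbr$, a nonempty open piece of $S^{\mathrm{free}}$ in which the perturbations above can be performed --- which is where the domain-dependence of the Floer data and the fact that the free region has nonempty interior are used.
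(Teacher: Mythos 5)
Your overall strategy for $\chi<0$ is the same as the paper's: view $\modulifloerbraction$ as the zero set of a section of a Banach orbibundle over charts of $\hurwitzbr$, run a universal moduli space/Sard--Smale argument with perturbations of $(H_{(S,\pi)},J_{(S,\pi)})$ supported away from the cylindrical ends and finite cylinders, and kill a cokernel element $\eta$ by pairing against localized variations and invoking unique continuation. That part is fine, and your observation that domain-dependence of the data removes any need for somewhere-injectivity \emph{on the region where you are allowed to perturb} matches the mechanism in the paper.

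The gap is in the sentence ``Either way $\eta$ vanishes on $S^{\mathrm{free}}$, hence $\eta\equiv 0$ by unique continuation.'' Unique continuation propagates vanishing only within a connected component, and the domains $S$ occurring in $\hurwitz$ need not be connected: for $\kappa\ge 2$ a branched cover of Euler characteristic $\chi<0$ can (and does) contain components that are unbranched cylinders, and on such a component the Floer datum is pinned to $(H^{\mathrm{tot}}_{m,j},J_t)$ by (FD1)--(FD3), so $S^{\mathrm{free}}$ does not meet it at all. Your case analysis (``non-constant on the component containing a chosen free point'' vs.\ ``constant on that component'') silently assumes every component carries a free point, which fails exactly there; neither of your perturbations is available, and $\eta$ restricted to such a component is untouched by your argument. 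The same problem occurs for the $\chi=0$ moduli spaces, which are part of the statement (they give the $d_0$-term of the differential) and for which your perturbation space $\mathfrak P$ is empty of useful directions. The paper closes this by a separate base case: it first chooses the background tuple $\bm H^{\mathrm{tot}}=\{H^{\mathrm{tot}}_{m,j}\}$ (i.e., the perturbations $F_{m,j}$ of $F_0$) and $J_t$ generically so that every Floer cylinder for these fixed Hamiltonians is regular --- this is the $\chi=0$ case, using the classical time-dependent transversality results and the splitting of the linearized operator over disjoint cylinders --- and then, in the $\chi<0$ step, disposes of the cylindrical components of $S$ by appealing to that choice, reserving the localized $(\delta H,\delta J)$ perturbations for the non-cylindrical components. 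To repair your proof you would need to add this genericity of the background data $(F_{m,j},J_t)$ as a preliminary step (or otherwise argue surjectivity of the linearization on the pinned cylinder components), since no perturbation supported in $S^{\mathrm{free}}$ can supply it.
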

\begin{proof}
    {\bf Case $\chi=0$.} First, consider the special case when $\chi=0$ (see Remark~\ref{rmk: chi=0 case}). The moduli space $\hurwitz$ is nonempty only if $\bm \mu = \bm \mu'$ and consists of a finite number of point,s each corresponding to a collection of $l(\bm \mu)$ cylinders $C_1, \ldots, C_{l(\bm \mu)}$. According to the classical transversality results in symplectic homology, there is a comeagre set of perturbations of $F_0$ which guarantee transversality for each of these cylinders alone (see, for instance, \cite[Section 8.6]{audin-damian2014}). Since, for a collection of disjoint cylinders, the linear differential operator assigned to~(\ref{hamiltonian-cr}) splits, there is a comeagre set of perturbations of $l(\bm \mu)$ copies of $F_0$ (considered on different circles $\R /\mu_i\Z \cong S^1$).
    Varying over all $\bm \mu \vdash \kappa$, one may pick a collection of Hamiltonians in ${\mathcal{ \bm H}}_{S^1}^\kappa(\hat{M})$ such that transversality is achieved for each cylinder alone for all Hurwitz spaces as above. Notice that different permutations may have shared desired Hamiltonian function assigned to them (e.g., $H^{\mathrm{tot}}_{1,1}$), but the transversality as above still can be achieved since the intersection of comeagre sets is comeagre.
    
    {\bf Case $\chi<0$.}
     We start by picking a family $\Omega_{[S, \pi]} \subset \text{Int}(S)$ of open neighborhoods smoothly depending on $[(S, \pi, \bm p^\pm, \bm a^\pm, B)] \in (\hurwitzsmbr)^{\vee}$, such that each such $\Omega_{[(S, \pi)]}$ is disjoint from cylindrical ends $\delta^i_\pm$ (in case of cylinder components that means the whole component) and finite cylinders $\delta^j_{i,r}$. We require that $\Omega_{[S, \pi]}$ intersects non-trivially with every non-cylindrical component of $S$. 

    We assume that we are already given a consistent Floer data $(H, F, J)$ such that its restriction to $\partial \left(\hurwitzsmbr\right)_0$ guarantees regularity. Let $\mathscr{F}^{\bm \mu, \bm \mu'}_{\kappa, \chi}$ denote the groupoid of Floer data on $\hurwitzsmbr$ with fixed $F$.  Following \cite[Section 9k]{seidel2008fukaya} we 
    now will pick a perturbation $(\delta H, \delta J) \in \mathscr{T} = T\mathscr{F}^{\bm \mu, \bm \mu'}_{\kappa, \chi}$ such that for perturbed Floer data the regularity holds for the interior of $\hurwitzsmbr$ as well (note that we are not going to perturb $F$). With $\delta H$ there is an associated perturbation $\delta K$ of the $1$-form $K$.  We require that $(\delta K, \delta J)$ vanishes away from $\Omega$.

    To each point $([S, \pi], u=(\pi', v)) \in \modulifloerbr_0$ there is an associated universal linearized operator $D_{\mathscr{T},[(S, \pi)], u}$ which differs from $D_{[(S, \pi)]), u}$ by additional terms responsible for varying Floer data (note that $D_{\mathscr{T},[(S, \pi)], u}$ only depends on $v$-part of $u$). In a local trivialization, it is given by
\begin{gather}
    D_{\mathscr{T}, [(S, \pi)], u} \colon \mathscr{T} \times (T\mathscr{B}|_{\mathscr{U}})_{[S, \pi], u} \to (\mathscr{E}_{[S, \pi]})_u,\\
(\delta K,\delta J,Z,X ) \mapsto (\delta Y)^{0,1}+\delta J \circ \frac{1}{2}(dv-Y)\circ j_S+D_{([S, \pi], u)}(Z, X). \nonumber
\end{gather}

Since $u$ is somewhere injective by assumptions ~\ref{hamiltonian-properties2} and~\ref{hamiltonian-properties3} on $H^{\mathrm{tot}}_{m,j}$'s, the Sard-Smale theorem then would imply regularity if one shows the surjectivity of the above map. 
The image of $D_{\mathscr{T}, [(S, \pi)], u}$ is closed since the last summand in the above is Fredholm. If $\eta$ is an element of the cokernel it must be smooth on $\text{int}(S)$ and satisfy $D^*_{([S, \pi], u)}\eta=0$. Additionally,

$$\int_S \langle (\delta Y)^{0,1}, \eta \rangle=0$$
for all $\delta Y$. This implies that $\eta$ has to vanish on all of $\Omega_{[S, \pi]}$ (for computation see \cite[Lemma 4.3]{fabert2009}).  
By the unique continuation theorem, it has to vanish on all non-cylindrical components of $S$, and cylindrical components are dealt with in the previous case. Notice that for this argument to work it suffices to consider only the perturbations for which $\delta Y (\partial_s)=0$ in an appropriate local chart.

The transversality of the section then implies that there is a wnb groupoid structure on the zero set as pointed out in Section~\ref{branched-manifolds}.
\end{proof}

\begin{remark}
    One could adopt the proof in \cite{cielebak-mohnke2007} to show the transversality via the SFT approach. This could be used to extend our invariants to more generic stable Hamiltonian structures where modeling the general problem on Hurwitz spaces instead of Deligne-Mumford moduli spaces is still applicable, i.e., in the presence of the additional $S^1$-direction (e.g., for mapping tori). We do not pursue this venture here.  
\end{remark}

\begin{lemma}\label{index-count}
    The dimension of $\modulifloerbr$ for generic $(H, F, J)$ is given by
    \begin{equation}
        \text{dim}_{\R}\modulifloerbr=\operatorname{ind}(u)=(n-2)\chi+|\bm x'|-|\bm x|-1.
    \end{equation}
\end{lemma}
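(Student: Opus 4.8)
The plan is to identify $\dim_{\R}\modulifloerbr$ with the Fredholm index of the linearized operator $D_{[S,\pi],u}$ of~\eqref{eq-linearized-operator}, and then to evaluate that index by splitting off a finite-dimensional ``Hurwitz'' factor from a Cauchy--Riemann operator on a fixed domain. By Theorem~\ref{transversality-theorem}, for generic Floer data the section whose zero set is $\modulifloerbraction$ is transverse, so $\modulifloerbr$ is a wnb groupoid and its local dimension at a point $([S,\pi],u)$ equals $\operatorname{ind}D_{[S,\pi],u}$. The domain of $D_{[S,\pi],u}$ is $T_{[S,\pi]}\hurwitzbr\times T_v\mathscr{B}_{[S,\pi]}$; restricting to the second summand, with the domain curve held fixed, gives the linearized operator $D_v$ of the Floer-type equation~\eqref{hamiltonian-cr} of Lemma~\ref{sft-to-hamiltonian}, which is Fredholm, whereas the first summand is finite-dimensional. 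Hence
\[
 \operatorname{ind}D_{[S,\pi],u} \;=\; \dim_{\R}\hurwitzbr \;+\; \operatorname{ind}_{\R}D_v .
\]

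For the first term, recall from the discussion of orientations on Hurwitz spaces that $\hurwitzaction$ is a finite covering of the configuration space of $b=-\chi$ branch points in $\R\times S^1$, so $\dim_{\R}\hurwitzaction=-2\chi$; quotienting by the free $\R$-action of vertical translations gives $\dim_{\R}\hurwitz=-2\chi-1$, and the wnbc modification of Section~\ref{branched-structure} leaves the local dimension unchanged, so $\dim_{\R}\hurwitzbr=-2\chi-1$.

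For the second term, by Lemma~\ref{sft-to-hamiltonian} the map $v$ solves the Floer equation~\eqref{hamiltonian-cr} on $S$, so $D_v$ is a Cauchy--Riemann operator on the rank-$n$ complex bundle $v^{*}T\hat M$ over $\overline S$, with $l(\bm\mu)$ positive punctures asymptotic to the (nondegenerate, by~\ref{hamiltonian-properties2}) orbits of $\bm x$ and $l(\bm\mu')$ negative punctures asymptotic to those of $\bm x'$. Riemann--Hurwitz for the degree-$\kappa$ cover with $-\chi$ simple branch points and ramification profiles $\bm\mu,\bm\mu'$ at the punctures gives $\chi(\overline S)=\chi+l(\bm\mu)+l(\bm\mu')$, and the standard Fredholm index formula for such operators reads
\[
 \operatorname{ind}_{\R}D_v \;=\; n\,\chi(\overline S)\;-\;\sum_{i=1}^{l(\bm\mu)}\!\bigl(n-CZ(x_i)\bigr)\;-\;\sum_{i=1}^{l(\bm\mu')}\!\bigl(n+CZ(x'_i)\bigr),
\]
the relative first-Chern-number term having been dropped because $v^{*}\bigl(2c_1(T\hat M)\bigr)=0$, which is the hypothesis $2c_1(M)=0$ that also makes the Conley--Zehnder indices and the grading~\eqref{grading-partsym} well defined. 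Substituting $\chi(\overline S)$, the $n\,l(\bm\mu)$ and $n\,l(\bm\mu')$ terms cancel, and using $\sum_i CZ(x_i)=\kappa n-|\bm x|$ and $\sum_i CZ(x'_i)=\kappa n-|\bm x'|$ from~\eqref{grading-partsym} one obtains $\operatorname{ind}_{\R}D_v=n\chi+|\bm x'|-|\bm x|$. Adding the two contributions gives $\operatorname{ind}(u)=(n-2)\chi+|\bm x'|-|\bm x|-1$, as claimed. (In the exceptional case $\chi=0$ of Remark~\ref{rmk: chi=0 case}, $\hurwitz$ is a finite set and the extra $-1$ instead comes from dividing the Floer curves by the $\R$-translation, giving the same answer $|\bm x'|-|\bm x|-1$; for $\kappa=1$ this recovers the usual index of symplectic cohomology.)

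I expect the only genuinely delicate point to be the asymptotic bookkeeping in the index formula for $D_v$: one must be careful about the normalization of the Conley--Zehnder contribution of each orbit $x_i$ of period $\mu_i$ (with respect to its $\mu_i$-periodic parametrization) so that the total matches the $\kappa n$ appearing in~\eqref{grading-partsym}, and one uses $2c_1(M)=0$ to see that the index is independent of the homotopy class of $v$. By contrast, the reduction $\dim\modulifloerbr=\operatorname{ind}D_{[S,\pi],u}$, the splitting off of the finite-dimensional factor $\hurwitzbr$, and the count $\dim_{\R}\hurwitzbr=-2\chi-1$ are routine.
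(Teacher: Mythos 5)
Your proof is correct and takes essentially the same route as the paper: identify the dimension with the Fredholm index of the parametrized operator, split it as $\dim_{\R}\hurwitz=-2\chi-1$ plus the fixed-domain index $\operatorname{ind}(v)=n\chi+|\bm x'|-|\bm x|$, and add. The only difference is that the paper cites standard SFT index results for $\operatorname{ind}(v)$ while you derive it explicitly via Riemann--Hurwitz and Conley--Zehnder bookkeeping, which is consistent with the paper's grading conventions.
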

\begin{proof}
    Let $([(S ,\pi)], u)$ be a representative of an element of $\modulifloerbr_0$.  For $v$ and fixed $S$ one may apply standard results \cite{bourgeois2002} to get
\begin{equation}
\text{ind}(v)=n\chi+|\bm x'|-|\bm x|.
\end{equation}
 Adding the dimension of the Hurwitz space $\text{dim}_{\R}\hurwitz=-2\chi-1$ to $\text{ind} (v)$ gives the result. We point out that this quantity is clearly equal to the Fredholm index of the curve $u$.
\end{proof}

\begin{lemma}\label{orientation-lemma}
    Assuming that $\modulifloerbr$ is a wnb groupoid, there is a canonical isomorphism 
    \begin{equation}\label{eq-orientation-floer}
       \big|  \modulifloerbr \big| \otimes |\R \partial_s| \otimes o_{\bm x}  \cong \big| \hurwitzbraction \big| \otimes o_{\bm x'} \otimes \zeta^{n\chi}.
    \end{equation}
    \begin{proof}
        We refer the reader to Section~\ref{section-orientation-lines} for the notations in~\eqref{eq-orientation-floer}, e.g., the definition of $\zeta$. The proof follows immediately from \cite[Lemma C.4]{abouzaid2010geometric}. 
    \end{proof}
\end{lemma}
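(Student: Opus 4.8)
The plan is to reduce the statement to a fiberwise computation over $\hurwitzbraction$ together with the standard determinant-line formalism for linearized Cauchy--Riemann operators. First I would recall the setup of Section~\ref{section-orientation-lines}: to each orbit $x_i$ (resp.\ $x_i'$) we have the operator $D_{x_i}$ of~\eqref{loop-operator} with orientation line $o_{x_i}$, and $o_{\bm x}$, $o_{\bm x'}$ are the tensor products of these over the entries of the partitions $\bm\mu(\bm x)$, $\bm\mu(\bm x')$. The factor $\zeta^{n\chi}$ records the complex orientation contribution coming from the $n\chi$ complex-linear pieces of the index, exactly as in the plumbing construction of~\cite[Appendix~C]{abouzaid2010geometric}. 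The key structural input is the splitting~\eqref{eq-linearized-operator}: along $\modulifloerbraction$ the linearized operator $D_{[S,\pi],u}$ sits in a short exact sequence whose subobject is $T_v\mathscr{B}_{[S,\pi]}$ (the curve direction, with fixed domain) and whose quotient is $T_{[S,\pi]}\hurwitzbraction$ (the Hurwitz direction). This immediately gives a canonical isomorphism of determinant lines
\[
\big|\det D_{[S,\pi],u}\big|\;\cong\;\big|\det D^v_{[S,\pi],u}\big|\otimes\big|\hurwitzbraction\big|,
\]
so it remains to compute $|\det D^v|$ for the $\bar\partial$-operator along a fixed domain $S$.

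The next step is to deform the domain-dependent operator $D^v$ to a standard, geometrically split model. Here I would use the cylindrical-ends data of Section~\ref{section-floer-data}: gluing/linearity arguments as in~\cite[Appendix~C]{abouzaid2010geometric} let us deform $D^v$ (through Fredholm operators, hence without changing $|\det|$) to an operator which near each positive end $p_i^+$ restricts to a model operator determining $o_{x_i}$, near each negative end $p_i^-$ restricts to a model determining $o_{x_i'}^\vee$, and on the remaining (compact, genus-zero-type) part of $S$ restricts to a complex-linear operator on a trivialized bundle contributing $\zeta$ to the power equal to its complex index. Since $2c_1(M)=0$ and the bicanonical bundle is trivialized (Section~\ref{section-almost-complex}), the relevant complex index over $S$ is $n\chi$ by Riemann--Roch/index additivity, so the compact part contributes $\zeta^{n\chi}$; the asymptotic markers and the requirement $\pi_*(a_i^\pm)=1$ ensure the trivializations on the ends are the canonical ones, so no correction terms appear. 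Assembling, $|\det D^v|\otimes o_{\bm x}\cong o_{\bm x'}\otimes\zeta^{n\chi}$, which combined with the splitting above and the identification $\big|\modulifloerbr\big|\otimes|\R\partial_s|\cong\big|\det D_{[S,\pi],u}\big|$ (the $\R\partial_s$ accounting for the translation quotient, cf.\ Lemma~\ref{index-count}) yields~\eqref{eq-orientation-floer}.

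The main obstacle I anticipate is not the index bookkeeping but \emph{canonicity and consistency of the gluing sign conventions}: one must check that the deformation of $D^v$ to the split model, the ordering of the tensor factors in $o_{\bm x}$ and $o_{\bm x'}$ dictated by $\op{ord}_i(\bm\mu)$, and the orientation of $\hurwitzbraction$ fixed at the end of Section~\ref{hurwitz-spaces} (via the complex structure on $\operatorname{Conf}_{-\chi}$ together with the choice of $\partial_s$) all fit together so that the isomorphism is independent of the representative $((S,\pi),u)$ and compatible with the wnb groupoid morphisms --- i.e.\ that it descends from $\modulifloerbraction$ to $\modulifloerbr$ and is preserved by the identifications in $\modulifloerbr_1$. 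This is precisely the content that~\cite[Lemma~C.4]{abouzaid2010geometric} is designed to handle in the $\kappa=1$ case, and the only genuinely new point is that here the domain $S$ varies in a branched family; but because the curve-direction operator $D^v$ and the Hurwitz-direction tangent space split off cleanly in~\eqref{eq-linearized-operator}, the sign analysis is fiberwise over $\hurwitzbraction$ and the argument of loc.\ cit.\ applies verbatim on each branch, the weights playing no role at the level of orientation lines.
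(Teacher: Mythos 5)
Your proposal is correct and follows essentially the same route as the paper, which simply invokes \cite[Lemma C.4]{abouzaid2010geometric}: you split off the Hurwitz-direction tangent space from the parametrized linearized operator~\eqref{eq-linearized-operator}, reduce to the fiberwise determinant line, and apply Abouzaid's gluing/deformation argument with the Euler-characteristic correction $\zeta^{n\chi}$. The paper leaves all of these details implicit, so your elaboration (including the $|\R\partial_s|$ factor for the translation quotient and the consistency across branches of the wnb groupoid) is a faithful expansion of the intended argument rather than a different proof.
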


Now, in the case where $|\bm x'|-|\bm x|+(n-2)\chi=1$, by permuting factors, we get an isomorphism
\begin{equation}\label{eq-hurwitz-orline}
    |\R\partial_s| \otimes o_{\bm x} \cong o_{\bm x'}\bigl[-n\chi\bigr] \otimes \big| \hurwitzbraction \big| \cong o_{\bm x'}\bigl[(2-n)\chi\bigr],
\end{equation}
where we note that there is no change of sign since $\hurwitzbraction$ is of even dimension. The last isomorphism in the row above comes from choosing the standard orientation on $\hurwitzaction$ coming from the complex manifold structure on it. Hence, choosing the standard orientation on $\R$, the above induces:
\begin{equation}\label{orientation-map-floer}
    \partial_ u \colon o_{\bm x} \to o_{\bm x'}\bigl[(2-n)\chi\bigr].
\end{equation}
We extend this map to any generator $[\bm x]\hbar^{j}$ of $SC^*_{\kappa, \mathrm{psym}}(\hat{M})$ by tensoring the above isomorphism with $\zeta^{(2-n)j}$ and multiplying with $(-1)^{(2-n)j}$:
\begin{equation}\label{orientation-map}
    \partial_u \colon \zeta^{(2-n)j }\otimes o_{\bm x} \to \zeta^{(2-n)j }\otimes o_{\bm x'}\bigl[(2-n)\chi\bigr] \cong o_{\bm x'}\bigl[(2-n)(\chi-j)\bigr] .
\end{equation}
The sign appears from the Koszul sign corresponding to the following chain of isomorphisms:
\begin{gather}
    |\R\partial_s| \otimes \zeta^{(2-n)j} \otimes o_{\bm x} \cong (-1)^{(2-n)j} \zeta^{(2-n)j} \otimes |\R\partial_s| \otimes o_{\bm x} \cong \zeta^{(2-n)j }\otimes o_{\bm x'}\bigl[(2-n)\chi\bigr] \cong \\
    \cong o_{\bm x'}\bigl[(2-n)(\chi-j)\bigr]. \nonumber
\end{gather}

\bigskip
\emph{Ghost bubbles and Kuranishi replacements.} For $n \ge 3$, there is a technical difficulty as in $1$-parameter families of curves ghost bubbles may occur as discussed in \cite{colin2020applications}. That is, a sequence of curves $\{u_i\}_{i=1}^{i=\infty}$, in $\modulifloerbr$, after passing to a subsequence, can limit to 
$$(u_\infty= u_{\infty,1}\cup u_{\infty,2}),$$ 
where:
\begin{itemize}
\item[(i)] the {\em main part} $u_{\infty,1}$ is the union of components that are not ghost bubbles; each component of the $u_{\infty,1}$ is somewhere injective; and 
\item[(ii)] $u_{\infty,2}$ is a union of \emph{ghost bubbles}, i.e., locally constant maps where the domain is a possibly disconnected compact Riemann surface and $u_{\infty,2}$ maps to points on $\operatorname{Im}(u_{\infty,1})$.
\end{itemize} 

To overcome this issue, we excise the portion $\mathcal{B}$ of $\modulifloerbr$ that is close to breaking into $u_\infty= u_{\infty,1}\cup u_{\infty,2}$ with $u_{\infty,2}\not=\varnothing$ and replace it with a Kuranishi model $\mathcal{B}'$ of a neighborhood of all possible $u_{\infty,1}\cup u_{\infty,2}$ satisfying (i) and (ii).  

This provides us with the \emph{Kuranishi replacements} as in \cite{colin2020applications} $\modulifloerbrKR$, $\modulifloerbrclosedKR$, and for all $\chi$, $\chi'$, $\bm x$, $\bm x'$, such that:
\begin{enumerate}[(KR1)]
\item \label{Kuranishi-replacements1} $\modulifloerbrKR$ are transversely cut out branched manifolds; 
\item \label{Kuranishi-replacements2} $\modulifloerbrclosedKR$ is a transversely cut out branched manifold that consists of perturbed holomorphic maps from connected closed Riemann surfaces which are homologous to a constant map (see e. g. \cite[Section 3.5]{Pardon2019contact}); and
\item \label{Kuranishi-replacements3} all the strata of $\partial \modulifloerbrKR$ are finite fiber products of moduli spaces of the form $\mathscr{M}^{\chi_i, \sharp}(\bm x_i, \bm x_{i+1}; H^{\mathrm{tot}}, J) \times \mathscr{M}^{\chi_j,\sharp}$.
\end{enumerate}

From now on, for $n\ge 3$ we pass to the necessary Kuranishi replacements for $\modulifloerbr$ and omit $\sharp$ from the notation. This, in particular, affects the discussion to follow in Section~\ref{section-differential}.

For a thorough discussion on Kuranishi replacements, we refer the reader to the works of Fukaya-Oh-Ohta-Ono \cite{FOOO,FOOO2,FOOO3} and McDuff-Wehrheim \cite{MW,McD, mcduffwehrheim2024}. Also see the approach of Hofer-Wysocki-Zehnder \cite{HWZ17} and Pardon~\cite{Pardon2016virtual, Pardon2019contact}.

\subsection{Compactness}
\label{section-compactness}

We first show that
\begin{lemma}
\label{lemma-cpt-finite}
    For a generic choice of Floer data $(H, F, J)$, given $\bm x \in \mathcal{P}_{\mathrm{psym}}$ the groupoid $\modulifloerbr$ is empty for all but finitely many tuples $\bm x' \in \mathcal{P}_{\mathrm{psym}}$.
\end{lemma}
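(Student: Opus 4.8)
The plan is to combine an a priori energy estimate with two features of the quadratic-at-infinity set-up: orbits below a given action threshold lie in a fixed compact region of $\hat M$, and the Conley--Zehnder index is bounded below over all the Hamiltonians $H^{\mathrm{tot}}_{m,j}$ with $m\le\kappa$. First I would fix $\bm x$, take any $u=(\pi',v)\in\modulifloerbraction$ with negative asymptotics $\bm x'$, and record the energy identity
\[
0\le E(v)=\int_S\tfrac12\,\bigl|dv-Y_{(S,\pi)}\bigr|_{J_{(S,\pi)}}^2=\mathcal{A}(\bm x)-\mathcal{A}(\bm x')+\int_S v^*\Omega_{K},
\]
where $\Omega_K$ is the curvature of the Hamiltonian $1$-form $K_{(S,\pi)}=H^{\mathrm{tot}}_{(S,\pi)}\,\pi^*(dt)$; positivity of $E(v)$ is legitimate since $\underline{J}^H_{(S,\pi)}$ is adjusted to a stable Hamiltonian structure in the sense of \cite{BEHWZ2003sft}, and the identity is the usual Stokes-type computation (cf.\ \cite{abouzaidseidel2010}, \cite[Section 4]{ganatra2012symplectic}).

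Next I would bound the curvature term. Because $K_{(S,\pi)}$ is a multiple of $\pi^*(dt)$, in local coordinates $(s,t)=\pi(z)$ away from branch points one has $\Omega_K=(\partial_s H^{\mathrm{tot}}_{(S,\pi)})\,\pi^*(ds\wedge dt)$. On cylindrical ends and finite cylinders $H_{(S,\pi)}=H_0$ is $s$-independent while $\partial_s F_{(S,\pi)}\le0$ by~\ref{def-floer-datum2}, so these regions contribute non-positively; $F_{(S,\pi)}$ is locally constant elsewhere, and $H_{(S,\pi)}$ agrees with $H_0$ outside the $\pi$-preimages of the $\varepsilon$-balls about the $-\chi$ branch points, so the remaining contribution is supported near finitely many branch points and is bounded by $C_0\,(-\chi)$ with $C_0$ depending only on the fixed Floer data. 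Hence $\mathcal{A}(\bm x')\le\mathcal{A}(\bm x)+C_0\,(-\chi)$. To control $-\chi$, I would use that for generic data $\modulifloerbr$ is a branched manifold (Theorem~\ref{transversality-theorem}; replacing it by its Kuranishi model when $n\ge3$, which does not change the dimension) of dimension $(n-2)\chi+|\bm x'|-|\bm x|-1$ by Lemma~\ref{index-count}, so nonemptiness forces $(n-2)(-\chi)\le|\bm x'|-|\bm x|-1$. Since $H_0$ is convex at infinity, the Conley--Zehnder index of any orbit of any $H^{\mathrm{tot}}_{m,j}$ with $m\le\kappa$ is bounded below, whence $|\bm x'|=\kappa n-\sum_i CZ(x'_i)\le D$ for a constant $D$ depending only on $\kappa$ and the data. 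For $n\ge3$ this bounds $-\chi$; for $n=2$ it instead pins $|\bm x'|$ to the finite window $[\,|\bm x|+1,\;D\,]$; and for $n=1$ the same window-bound on $|\bm x'|$ feeds, through the displayed energy inequality, into a bound on $-\chi$ (alternatively, restrict from the outset to the rigid moduli spaces entering the differential, for which $-\chi=|\bm x|+1-|\bm x'|$ is then controlled). In every case $-\chi$ lies in a finite set, so $\mathcal{A}(\bm x')\le\mathcal{A}(\bm x)+C$ for some $C$ independent of $\bm x'$.

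Finally I would conclude. Since $F_{m,j}$ and $\lambda(X_{F_{m,j}})$ are uniformly bounded (cf.~\ref{function-perturbation-properties1},~\ref{hamiltonian-properties1}), each orbit satisfies $\mathcal{A}(x'_i)\ge-C_1$, so $\mathcal{A}(x'_i)\le\mathcal{A}(\bm x)+C+(\kappa-1)C_1$; as the unperturbed action of a time-$m$ orbit at radius $r$ grows like $\tfrac{m}{2}r^2$, every component $x'_i$ lies in a fixed compact subset of $\hat M$. Each $H^{\mathrm{tot}}_{m,j}$ ($1\le m\le\kappa$, $1\le j\le\lfloor\kappa/m\rfloor$) has, by nondegeneracy, only finitely many time-$m$ orbits there, and $\bm x'$ is an ordered tuple of at most $\kappa$ of them; hence only finitely many $\bm x'$ can occur. (For $n\le2$ one runs the same final step using that a bound on $|x'_i|$ forces a bound on $CZ(x'_i)$, hence on the radius, hence finitely many choices.) The main technical point, and the part requiring care, is the curvature bookkeeping in the second paragraph: one must arrange the Floer data on $\hurwitzsmbr$ — including the finite-cylinder pieces and the branched-groupoid structure — so that $\Omega_K$ is genuinely concentrated in uniformly-controlled regions, one package per branch point; here the monotonicity~\ref{def-floer-datum2} together with the consistency of the Floer data near $\partial\hurwitzsmbr$ is what makes it work.
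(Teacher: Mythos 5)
Your overall scheme --- an action bound forcing all orbits of $\bm x'$ into a fixed compact region, then finiteness from non-degeneracy --- is the same as the paper's, but the step that is supposed to produce the action bound has a genuine gap. You estimate the curvature term by asserting that the non-cylindrical contribution is concentrated near the $-\chi$ branch points and bounded by $C_0(-\chi)$ with $C_0$ ``depending only on the fixed Floer data''. That is not available here: the curvature density is the domain-derivative of $K_{(S,\pi)}=H^{\mathrm{tot}}_{(S,\pi)}\pi^*(dt)$ evaluated along $v$, and by~\ref{def-floer-datum1} the datum equals $H_0$ (which is $r^2/2$ at infinity) on the cylindrical pieces but vanishes on the $\pi$-preimages of the $\varepsilon$-neighborhoods of the branch points, so in the intermediate regions of the domain it interpolates between $H_0$ and $0$. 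There the integrand is of size comparable to $r^2(v(z))$, hence the bound on $\int_S v^*\Omega_K$ depends on how far out in $\hat M$ the curve travels --- which is exactly what the lemma is trying to control. As written, the argument is circular; you flag this as ``the part requiring care'' but do not supply the mechanism. The paper does not run a naive Stokes estimate at all: it applies the integrated maximum principle for this kind of Floer data, \cite[Theorem A.1]{ganatra2012symplectic} (following Abouzaid--Seidel), together with \cite[Lemma A.1]{ganatra2012symplectic}, which says that a tuple $\bm x'$ all of whose orbits lie in $[R,+\infty)\times\partial M$ has very negative action as $R\to\infty$; combining the two shows $\modulifloerbr$ is empty for every such $\bm x'$, uniformly in $\chi$ and without any pointwise curvature bound at infinity, and the remaining candidates lie in a fixed compact region where non-degeneracy gives finiteness, exactly as in \cite[Lemma 5.2]{abouzaid2010geometric}.

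A secondary issue is your detour bounding $-\chi$. It is unnecessary, since $\chi$ is fixed in the notation $\modulifloerbr$ and the maximum-principle argument is uniform in $\chi$ in any case; and its key premise --- a uniform lower bound on the Conley--Zehnder indices of all orbits of the $H^{\mathrm{tot}}_{m,j}$ deduced from convexity of $H_0$ at infinity --- is unjustified for a general Liouville domain (it is a special feature of settings like cotangent bundles, and the paper never uses it). Once the action bound is obtained by the correct route, your final step (finitely many non-degenerate orbits in a compact region, hence finitely many tuples) agrees with the paper.
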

\begin{proof}
    For a representative $([(S, \pi)], u=(\pi, v))$ of an element of $\modulifloerbr$, the curve $v$ is a solution of perturbed Cauchy-Riemann equation~\eqref{hamiltonian-cr}. Therefore, the general framework of the proof given in \cite[Lemma 5.2]{abouzaid2010geometric} can be applied. That is, according to \cite[Lemma A.1]{ganatra2012symplectic} for $R$ large enough the tuple $\bm x'$ with all its orbits lying in $[R, +\infty) \times \partial M$ has (very) negative action $\mathcal{A}(\bm x')$. Hence, one can apply \cite[Theorem A.1]{ganatra2012symplectic} for $v$ connecting $\bm x$ to any such $\bm x'$ to conclude that $\modulifloerbr$ is empty. There are only finitely many tuples of orbits $\bm x'$ inside of any compact subset $\hat{M}$ since all orbits are non-degenerate.
\end{proof}

\begin{theorem}
\label{theorem-cpt-moduli}
    For a generic consistent choice of $(H, F, J)$, given $(n-2)\chi+|\bm x'|-|\bm x|\leq2$, the wnb groupoid $\modulifloerbr$ admits a compactification $\modulifloercompbr$. When $(n-2)\chi+|\bm x'|-|\bm x|=2$, the boundary takes the form
    $$
       \bigsqcup_{\substack{\chi_1+\chi_2=\chi, \\ (n-2)\chi_1+|\bm x''|-|\bm x|= (n-2)\chi_2+|\bm x'|-|\bm x''|=1}} \mathscr{M}^{\chi_1}(\bm x, \bm x''; H^{\mathrm{tot}}, J) \times \mathscr{M}^{\chi_2}(\bm x'', \bm x'; H^{\mathrm{tot}}, J).
    $$
\end{theorem}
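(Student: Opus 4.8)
The plan is to follow the standard SFT/Floer-theoretic compactness scheme, adapted to the branched-manifold setting, in four stages: (1) a priori energy bounds, (2) Gromov–SFT compactness for the domain-dependent $\bar\partial$-equation, (3) identification of the limiting objects using the structure of $\hurwitzcompbr$, and (4) a dimension count to rule out all codimension-$1$ degenerations except level-breaking when $(n-2)\chi+|\bm x'|-|\bm x|=2$. First I would establish the energy bound: for a representative $([(S,\pi)],u=(\pi',v))$ of an element of $\modulifloerbr$, the geometric energy of $v$ is controlled by the action difference $\mathcal{A}(\bm x)-\mathcal{A}(\bm x')$ together with the (uniformly bounded, by \ref{function-perturbation-properties1} and the weak monotonicity in \ref{def-floer-datum2}) curvature term of the Hamiltonian-valued $1$-form $K_{(S,\pi)}$; this is the usual computation as in \cite[Appendix A]{ganatra2012symplectic}, and combined with Lemma~\ref{lemma-cpt-finite} it confines all curves under consideration to a fixed compact region of $\hat M$ and bounds their energy uniformly.

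Next I would run the compactness argument on the domain side and the target side simultaneously. On the domain side, a sequence $[S_k,\pi_k]\in(\hurwitzbr)_0$ has, by Theorem~\ref{hurwitz-level-boundary-neighborhood} (equivalently the compactness of $\hurwitzcompbr$), a subsequence converging in $\hurwitzcompbr$ to a branched cover building with smooth levels (possibly with bubbles, to be excluded below) together with a point in the neck coordinates $[0,+\infty]^{n-1}$; the weight function passes to the limit since $\Lambda_{\hurwitzsmbr}$ is locally constant near the boundary strata. On the target side, we apply the SFT compactness theorem of \cite{BEHWZ2003sft} to the sequence $u_k$, which is legitimate because, by Lemma~\ref{sft-to-hamiltonian}, each $u_k$ is $\underline{J}^H_{(S_k,\pi_k)}$-holomorphic for an almost complex structure adjusted to the stable Hamiltonian structure on $S^1\times\hat M$; the limit is a holomorphic building whose levels fiber over the levels of the limiting cylindrical building, and whose asymptotics at the cylindrical ends are the Hamiltonian orbits comprising $\bm x$, $\bm x'$ and the intermediate tuples $\bm x''$ attached along the special nodes with partition $\bm\mu''$. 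This is exactly the content that the Floer datum was set up (via consistency in Section~\ref{section-floer-data}) to make compatible across the boundary, so the broken configuration is literally an element of a fiber product $\mathscr{M}^{\chi_1}(\bm x,\bm x'';H^{\mathrm{tot}},J)\times\mathscr{M}^{\chi_2}(\bm x'',\bm x';H^{\mathrm{tot}},J)$ (or its multi-level analogue). Thus $\modulifloercompbr$ is defined as the closure of $\modulifloerbr$ inside this space of broken configurations, and it is compact.

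The final and most delicate stage is the dimension count that identifies $\partial\modulifloercompbr$. By Lemma~\ref{index-count} applied to each factor, a $k$-fold broken configuration connecting $\bm x$ to $\bm x'$ through $\bm x^{(1)},\dots,\bm x^{(k-1)}$ has total expected dimension
\begin{equation}
\sum_{i=1}^{k}\bigl((n-2)\chi_i+|\bm x^{(i)}|-|\bm x^{(i-1)}|-1\bigr)=(n-2)\chi+|\bm x'|-|\bm x|-k,
\end{equation}
where $\bm x^{(0)}=\bm x$, $\bm x^{(k)}=\bm x'$ and $\sum\chi_i=\chi$. For $(n-2)\chi+|\bm x'|-|\bm x|=2$ a $2$-fold break ($k=2$) has dimension $0$ and contributes to the codimension-$1$ boundary precisely when each factor has index $1$, i.e. $(n-2)\chi_1+|\bm x''|-|\bm x|=(n-2)\chi_2+|\bm x'|-|\bm x''|=1$; breaks into $k\ge 3$ levels have negative expected dimension and hence are empty for generic Floer data, as are configurations involving bubbling on the cylindrical side (codimension $\ge 2$ by Remark~\ref{remark-bubble-components}) and, for $n\ge 3$, ghost-bubble degenerations, which are handled by passing to the Kuranishi replacements $\modulifloerbrKR$ whose boundary strata are fiber products of the stated form by (KR\ref{Kuranishi-replacements3}). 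I also need to observe that semistable degenerations of the domain (sphere bubbles on the target factor $\hat M$) carry nonnegative energy and are excluded by the a priori bound together with the standard index estimate, so they do not appear in the $1$-parameter family. The main obstacle is precisely bookkeeping this degeneration analysis on a \emph{branched} manifold: one must check that the weighting function behaves correctly under gluing so that the algebraic count matches, and that the gluing map from the fiber product back into $\modulifloerbr$ is an orientation-preserving local homeomorphism onto a collar — this uses the normal-neighborhood description \eqref{hurwitz-collar-nbhd-codim1} of $\hurwitzcompbr$ together with the linear gluing of the $\bar\partial$-operator and the orientation isomorphism of Lemma~\ref{orientation-lemma}.
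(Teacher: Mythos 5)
Your overall scheme --- $C^0$/energy confinement via \cite[Theorem A.1]{ganatra2012symplectic}, SFT compactness applied to the $\underline{J}^H$-holomorphic curves (via Lemma~\ref{sft-to-hamiltonian}), and an index count reducing the codimension-one boundary to two-level breaking with index-$1$ factors --- is essentially the paper's. The genuine gap is in how you dispose of degenerations in which the \emph{target} cylindrical building develops sphere bubbles. Since $\hat M$ is exact, the components of the limit curve lying over such a bubble tree are forced to be constant, so your claim that these ``carry nonnegative energy and are excluded by the a priori bound'' cannot work: they have zero energy and survive every energy estimate. The real issue is transversality. These ghost components are branched covers of spheres and may have positive genus, in which case the constant map has cokernel $H^{0,1}(\Sigma)\otimes T\hat M\neq 0$ and is never regular; hence ``negative expected dimension for generic data'' does not by itself exclude them from the limit set, and the codimension-$\ge 2$ statement of Remark~\ref{remark-bubble-components} only produces the desired index drop once one knows the limiting configuration is regularly cut out --- which is exactly what fails here without further input.

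The paper fills this in with a case analysis you omit: for $n=1$ the non-ghost part of the limit loses at least two interior branch points, so its (transversally achievable) index is negative; for $n=2$, where $\chi$ no longer contributes to the index, it invokes the ghost-bubble dimension-drop theorem of Ekholm--Shende (or Doan--Walpuski) to force a drop of at least $n=2$; and for $n\ge 3$ (or alternatively also $n=2$) it passes to the Kuranishi replacements, where regularity of all components holds by construction and the codimension-$2$ stratification argument then applies. You invoke Kuranishi replacements only for $n\ge 3$, leaving $n=1,2$ unaddressed. A smaller omission: in the branched setting the boundary identification also includes determining the weights of the broken strata ($\tfrac{1}{N_{\bm\mu(\bm x'')}}$ when $\chi_1,\chi_2<0$, and $1$ otherwise, fixed by taking $R$ large in the construction of $\hurwitzsmbr$), which you defer as bookkeeping but which is part of what the compactified wnb groupoid structure asserts.
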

\begin{proof}
    For the entirety of the proof put $\bm \mu = \bm\mu (\bm x)$ and $\bm \mu' = \bm \mu( \bm x')$. Define $\modulifloercompbr$ to be a groupoid with $\modulifloercompbr_0$ consisting of pairs 
    $$\Big([(\bm C, S=S_1\cup \ldots \cup S_k, \pi, \{v^i_j\}, \{m^i_j\})], \bm u =\{ u_1 \colon S_1 \to \R \times S^1 \times \hat{M}, \ldots, u_k \colon S_k \to  \R \times S^1 \times \hat{M}\} \Big),$$
    where the former is a representative of an element of $\left(\hurwitzsmbr\right)_0$ with possibly some $S_i$'s being  disjoint unions of cylinders and each $u_i \in \mathscr{M}^{\chi(S_i)}(\bm x_{i-1}, \bm x_{i}; H^{\mathrm{tot}}, J)_0$ where $\bm x_0 =\bm x$, $\bm x_n = \bm x' $ and the rest of $\bm x_i$ are such that $$(n-2)\chi+|\bm x'|-|\bm x|=\op{ind}(u_1)+\dots+\op{ind}(u_k),$$ 
    $$\chi=\chi(S_1)+\dots+\chi(S_n)=\chi_1+\dots+\chi_k.$$ 
    The condition $(n-2)\chi+|\bm x'|-|\bm x|\leq2$ suggests that actually $k \le 2$ since for generic $(K, J)$ there are no index $0$ curves.
    We further denote such a pair as $\left([(\bm C, S, \pi)], \bm u \right)$. The morphisms $\modulifloercompbr_1$ and the weight function $\Lambda$ are induced from that on $\hurwitzsmbr$ (one can naturally extend it to those branched cover buildings where cylindrical levels are allowed) as in the case of $\modulifloerbr$; see Theorem~\ref{transversality-theorem}. We also refer to Figure~\ref{Step1}.
    
    We show that it is indeed a compact closure of $\modulifloerbr$. By this, we mean that \begin{equation}\label{branched-compactification}
        \overline{\modulifloerbr}=\modulifloercompbr.
    \end{equation}
    Consider a sequence $([(S_i, \pi_i)], u_i) \in \modulifloerbr$. First, we notice that by \cite[Theorem A.1]{ganatra2012symplectic} there exists a constant $C$ depending only on $\bm x, \bm x'$ and the choice of $F_0$ and $F_{m,j}$ as in~\ref{hamiltonian-properties2}, such that any curve $u$ in $\left(\modulifloerbr\right)_0$ has $\pi_{\hat{M}}(u)$ contained in $\hat{M} \setminus (C; +\infty] \times \partial M$. Hence, the elements of the sequence $u_i$ as above have their projections to $\hat{M}$ contained in a compact region of $\hat{M}$. Additionally, curves $u_i$ are $\underline{J}^H$-holomorphic as shown in Lemma~\ref{sft-to-hamiltonian}. Therefore, the SFT compactness theorem of \cite{BEHWZ2003sft} can be applied.
    For the compatibility of the SFT compactness with domain-dependent almost complex structures, see \cite[Section 5]{cielebak-mohnke2007} and \cite[Section 3]{fabert2009}.
    Let $u_\infty$ be an SFT limit of the above sequence. The domain of $u_\infty$ must be an element of $[(\bm C_\infty, S_\infty, \pi_\infty)] \in \hurwitzcomp$ (maybe with some additional inserted unbranched covers of cylinders or spheres). The curve $u_\infty$ is then pseudo-holomorphic with respect to the almost complex structure described in Remark~\ref{data-on-compactification}. 
    
    First, we show that $(\bm C_\infty, S_\infty, \pi_\infty)$ is a smooth branched cover building. We notice that $u_\infty$ restricted to any component of $S_\infty$, which is a branched cover of some stable tree of spheres in $\bm C_\infty$ must be constant since $\hat{M}$ is an exact symplectic manifold.
    We then consider the restriction $u_\infty^{\text{res}}$ of $u_\infty$ to non-ghost components. We note that the ghost components impose matching conditions at the preimages under $\pi_\infty$ of the nodes connecting the main cylinder to the mentioned tree. 
    
    If $n = 1$ the index of $u_\infty^{\text{res}}$ is at least $2$ less than the index of $u_i$ by Lemma~\ref{index-count} since there have to be at least two branching points in the target (on the $\R \times S^1$-side) of the ghost components, hence $u^{\mathrm{res}}_\infty$ has negative index and does not exist.
    
    If $n=2$, the Euler characteristic does not affect the index of $u_\infty^{\text{res}}$. We may apply \cite[Theorem 1.5]{ekholm-shende2022} (see also \cite[Theorem 1.1]{doan-walpuski2019}) to conclude that the presence of ghost components forces the expected dimension of $u_\infty^{\text{res}}$ to be at least $n=2$ less than the index of $u_i$. Since $u_\infty^{\text{res}}$ is transversely cut out, this implies that no such ghosts could exist.

    Alternatively, if one uses Kuranishi replacements in this case, given that components (of the normalization) of $u_\infty$ are regular, the total index of $v_\infty$ is equal to the index of $v_i$, but the boundary strata of $\hurwitzcompbr$ corresponding to sphere bubbling on the target cylinder are all of codimension at least $2$, hence the index of $u_\infty$ is again not greater than $\operatorname{ind}(u_i)-2$. This argument also goes through for $n \ge 3$. In this case, one may observe that the index of any closed component in $u_\infty$ is negative, therefore, such curves do not occur.
    
    Therefore, $(\bm C_\infty, S_\infty, \pi_\infty) \in \hurwitzsmbr$ with possibly some levels consisting only of cylinders (i.e., has Euler characteristic zero). By consistency of the Floer data $(H, F, J)$, the restriction of $u_\infty$ to each of the levels of $S_\infty$ is regular. If $\operatorname{ind}(u_i)=(n-2)\chi+|\bm x'|-|\bm x|=1$ then we have that the limit is a building of height $1$. This and regularity imply that for fixed $\bm x$ and $\bm x''$ the space $\modulifloerbr$ consists of finitely many points.  Moreover, the groupoid $$\modulifloercompbr=\modulifloerbr$$ is a manifold consisting of a finite number of points with $\Lambda$ taking the value of $1$ on all of these points. This can be obtained by choosing $R$ big enough in the construction of $\hurwitzsmbr$ (see Section~\ref{branched-manifolds}). 
    
    If $\operatorname{ind}(u_i)=2$, it follows that if there is more than $1$ level in the limit, the curve $(\bm C_\infty, S_\infty, \pi_\infty)$ is a height $k=2$ branched cover building $(u_\infty^1, u_\infty^2)$.
    
    Let $u^1_\infty$ connects $\bm x$ with $\bm x''$, and $u^2_\infty$ connects $\bm x''$ with $\bm x'$ and none of $u^1_{\infty}$, $u^2_{\infty}$ has domain with $\chi_i=0$. By Lemma~\ref{lemma-cpt-finite} there are only finitely many $\bm x'' \in \mathcal{P}_{\mathrm{psym}}$ for which such a pair $(u_\infty^1, u_\infty^2)$ may exist. And as we already saw, for fixed $\bm x''$ there are only finitely many such pairs. 
    
    Then we may conclude that for $(n-2)\chi+|\bm x'|-|\bm x|=2$ the boundary $$\partial \modulifloercompbr^{\vee}_H$$  is given by the union of images of
    \begin{equation}\label{strata}
        \mathscr{M}^{\chi_1}(\bm x, \bm x''; H^{\mathrm{tot}}, J) \times \mathscr{M}^{\chi_2}(\bm x'', \bm x'; H^{\mathrm{tot}}, J)
    \end{equation}
in $\modulifloercompbr^{\vee}_H$ where $$\chi_1+\chi_2=\chi,$$ 
    $$(n-2)\chi_1+|\bm x''|-|\bm x|=(n-2)\chi_2+|\bm x'|-|\bm x''|=1.$$
    The value of $\Lambda$ on a stratum~(\ref{strata}) equals $\frac{1}{N_{\bm \mu(\bm x'')}}$ if $\chi_1, \chi_2 <0$. Otherwise, it is equal to $1$ since such a case corresponds to one of the levels being a collection of cylinders.
    \vskip-.05in
\end{proof}
From now on, we will assume that $(H, F, J)$ are generic enough to satisfy all the results above and drop them from the notations for respective moduli groupoids.
\subsection{The differential}\label{section-differential}

We now define the differential on $SC_{\kappa, \mathrm{psym}}^*(\hat{M})$ by
\begin{gather}
\label{eq-diff}
    d\colon SC_{\kappa, \mathrm{psym}}^*(\hat{M})\to SC_{\kappa, \mathrm{psym}}^{*+1}(\hat{M}),\\
    d[\bm x]=\sum_{\substack{\bm x',\chi \colon |\bm x'|-|\bm x|=1, \\ u \in \mathscr{M}^{\chi=0}(\bm x; \bm x')}} \partial_u([\bm x])+
    \sum_{\substack{\bm x',\chi>0 \colon (n-2)\chi+|\bm x'|-|\bm x|=1, \\ u \in \modulifloerbrshort}}\frac{\partial_u([\bm x])}{N_{\bm \mu(\bm x')}}\hbar^{-\chi}  ,\nonumber
\end{gather}
where $[\bm x] \in o_{\bm x}^{\Q}$ is a generator.
One can write $d$ in the form 
$$d=d_0+d_1\hbar+d_2\hbar^2+\dots,$$
where $d_i$ is a count of curves of Euler characteristic $-i$. The relation $d^2=0$ is equivalent to an infinite system of equations on variables $d_i$'s starting with $d_0^2=0$. We summarize the discussion above via the following construction.

\begin{constr}\label{series-complex}
    Given a cochain complex $(C^*, d_0)$ over $\mathbb{Q}$ and a sequence of maps $d_i \colon C^* \to C^{*+1-(n-2)i}$ for $i \ge 1$ satisfying the relations 
    \begin{equation}\label{series-differential-relation}
        d_id_0+d_{i-1}d_1+\dots+d_1d_{i-1}+d_0d_i=0,
    \end{equation}
    we denote by $(\widetilde{C}^*, d)$ a cochain complex with
    $$\widetilde{C}^m=\sum_{i-(n-2)j=m}C^i\hbar^j, \text{ where } |\hbar|=2-n,$$
    which can be regarded as a subspace in $C^*\llbracket\hbar\rrbracket$. The differential $d$ is given by the formula
    $$d \coloneqq d_0+d_1\hbar+d_2\hbar^2+\dots.$$
    Clearly the relations~(\ref{series-differential-relation}) imply $d^2=0$. We also notice that one may obtain $(C^*, d_0)$ by setting $\hbar=0$ and we simply write this as $(\widetilde{C}^*|_{\hbar=0}, d|_{\hbar=0})=(C^*, d_0)$.
\end{constr}

\begin{remark}
    Analogously, given a chain complex $(C_*, \partial_0)$ and maps $\partial_1, \partial_2, \ldots$ one similarly constructs a chain complex $(\widetilde{C}_*, \partial)$.
\end{remark}
Clearly, $SC^*_{\kappa,\mathrm{psym}}(\hat{M}, d)$ is an instance of the above construction, and we are left to prove that relations~(\ref{series-differential-relation}) are satisfied.
\begin{theorem}
\label{theorem-d2}
    The differential $d$ makes $SC^*_{\kappa, \mathrm{psym}}(\hat{M})$ into a chain complex.
\end{theorem}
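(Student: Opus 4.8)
The plan is to verify the relations~\eqref{series-differential-relation} for the maps $d_i$ defined by counting curves of Euler characteristic $-i$, which by Construction~\ref{series-complex} is equivalent to $d^2 = 0$. By Lemma~\ref{index-count}, a curve contributing to $d_i$ connecting $\bm x$ to $\bm x'$ has $(n-2)i + |\bm x'| - |\bm x| = 1$ (with sign conventions: $\chi = -i$), so for fixed total index increment, the composite $d_i d_0 + d_{i-1}d_1 + \dots + d_0 d_i$ is exactly the signed count of the ends of the $1$-dimensional moduli spaces $\modulifloercompbr$ with $(n-2)\chi + |\bm x'| - |\bm x| = 2$ and $\chi = -i$. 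So the strategy is: (1) invoke Theorem~\ref{theorem-cpt-moduli} to identify the boundary $\partial\modulifloercompbr^\vee_H$ with the disjoint union of the fiber products $\mathscr{M}^{\chi_1}(\bm x, \bm x''; H^{\mathrm{tot}}, J) \times \mathscr{M}^{\chi_2}(\bm x'', \bm x'; H^{\mathrm{tot}}, J)$ over $\chi_1 + \chi_2 = \chi$ and the appropriate index conditions; (2) check that the weighted count of this boundary, using the weight function $\Lambda$, vanishes; (3) match this against the algebraic expression $\sum_{i_1+i_2 = i} d_{i_2} d_{i_1}$ appearing in~\eqref{series-differential-relation}, taking care of the $\hbar$-bookkeeping and the factors $1/N_{\bm\mu(\bm x'')}$ in the definition~\eqref{eq-diff} of $d$.

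The key technical steps, in order: First I would recall that for a compact $1$-dimensional weighted branched manifold, the total signed weighted count of its boundary points is zero — this is the branched-manifold analogue of the usual "count of boundary of a $1$-manifold is zero," and follows from the wnb groupoid structure on $\modulifloercompbr$ together with the compactness from Theorem~\ref{theorem-cpt-moduli}; one should cite~\cite{mcduff2006} for the precise statement that $\Lambda$-weighted counts of $0$-dimensional strata behave additively under the branched cobordism given by a $1$-dimensional $\modulifloercompbr$. Second, I would compute the weight $\Lambda$ on a boundary stratum~\eqref{strata}: by the last paragraph of the proof of Theorem~\ref{theorem-cpt-moduli}, when $\chi_1, \chi_2 < 0$ this weight is $\tfrac{1}{N_{\bm\mu(\bm x'')}}$, and when one of $\chi_1, \chi_2$ equals $0$ it is $1$. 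Third, I would use the orientation isomorphism~\eqref{orientation-map} (equivalently Lemma~\ref{orientation-lemma} and the induced maps~\eqref{orientation-map-floer},~\eqref{orientation-map}) to see that the contribution of a broken configuration $(u_1, u_2)$ to the boundary count equals, up to the correct Koszul sign already built into~\eqref{orientation-map}, the composite $\partial_{u_2} \circ \partial_{u_1}$ acting on orientation lines; the sign bookkeeping is exactly the one that makes $d_{i_2} d_{i_1}$ appear with the correct sign in~\eqref{series-differential-relation}. Putting these together: the vanishing of the weighted boundary count of $\modulifloercompbr$ for all $\bm x, \bm x'$ with $(n-2)\chi + |\bm x'| - |\bm x| = 2$ translates precisely into $\sum_{i_1 + i_2 = i}(\text{weighted count}) = 0$, and the weights $1/N_{\bm\mu(\bm x'')}$ match those in~\eqref{eq-diff}, yielding~\eqref{series-differential-relation}. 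For $n \ge 3$ one works instead with the Kuranishi replacements $\modulifloerbrKR$, and uses property~(KR\ref{Kuranishi-replacements3}) that the boundary strata are fiber products of these replacements, plus the fact (KR\ref{Kuranishi-replacements1}) that they are transversely cut out branched manifolds, so the same counting argument applies.

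The main obstacle I expect is the careful treatment of the case where one of $\chi_1$, $\chi_2$ is zero — i.e., breaking off a collection of honest cylinders (a trivial-in-the-base but possibly multiply-covered level). Here one must (a) ensure the weight is genuinely $1$ rather than something involving $N_{\bm\mu}$, which requires the $R \gg 0$ choice in the construction of $\hurwitzsmbr$ so that the relevant boundary neighborhoods in the branched structure have weight $1$ near those strata, and (b) match the count of such $\chi_i = 0$ configurations against the $d_0$-part of the differential (the first sum in~\eqref{eq-diff}, with no $\hbar$ and no $1/N$ factor), so that the term $d_i d_0 + d_0 d_i$ in~\eqref{series-differential-relation} receives exactly the right contributions. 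A secondary subtlety is confirming that no configuration with $k \ge 3$ levels, or with a genuinely nodal (non-smooth) level, appears in the boundary of a $1$-dimensional $\modulifloercompbr$ — this is handled by Theorem~\ref{theorem-cpt-moduli} (which restricts to $k \le 2$ in the relevant index range, since generically there are no index-$0$ curves) and by the codimension $\ge 2$ statement for bubbling on the cylindrical side in Remark~\ref{remark-bubble-components}. Once these points are pinned down, the identity $d^2 = 0$ follows formally via Construction~\ref{series-complex}.

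\begin{proof}[Proof sketch]
    By Construction~\ref{series-complex} it suffices to verify the relations~\eqref{series-differential-relation} for the maps $d_i$ counting curves of Euler characteristic $-i$. Fix $\bm x, \bm x' \in \mathcal{P}_{\mathrm{psym}}$ and $\chi$ with $(n-2)\chi + |\bm x'| - |\bm x| = 2$; write $i = -\chi$. By Theorem~\ref{theorem-cpt-moduli} the wnb groupoid $\modulifloercompbr$ is a compact $1$-dimensional weighted branched manifold whose boundary is the union over $\chi_1 + \chi_2 = \chi$, $(n-2)\chi_1 + |\bm x''| - |\bm x| = (n-2)\chi_2 + |\bm x'| - |\bm x''| = 1$, of the strata $\mathscr{M}^{\chi_1}(\bm x, \bm x''; H^{\mathrm{tot}}, J) \times \mathscr{M}^{\chi_2}(\bm x'', \bm x'; H^{\mathrm{tot}}, J)$. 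The $\Lambda$-weighted signed count of the boundary $0$-dimensional strata of a compact $1$-dimensional wnb manifold vanishes (\cite{mcduff2006}), and by the computation in Theorem~\ref{theorem-cpt-moduli} the weight on each such stratum is $\tfrac{1}{N_{\bm\mu(\bm x'')}}$ when $\chi_1, \chi_2 < 0$ and $1$ when one of $\chi_1, \chi_2$ vanishes. Summing over $\bm x''$ and using the orientation maps~\eqref{orientation-map} (which already carry the correct Koszul signs) to identify the contribution of a broken configuration $(u_1, u_2)$ with the composite $\partial_{u_2} \circ \partial_{u_1}$, the vanishing of these weighted boundary counts is exactly the coefficient of $\hbar^i$ in the relation~\eqref{series-differential-relation}. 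For $n \ge 3$ the same argument applies verbatim with $\modulifloerbr$ replaced by the Kuranishi replacement $\modulifloerbrKR$, using properties~(KR\ref{Kuranishi-replacements1}) and~(KR\ref{Kuranishi-replacements3}). Hence $d^2 = 0$.
\end{proof}
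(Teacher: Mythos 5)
Your proposal is correct and follows essentially the same route as the paper: identify the boundary of the $1$-dimensional compactified moduli spaces via Theorem~\ref{theorem-cpt-moduli}, use the vanishing of the $\Lambda$-weighted signed boundary count of a compact $1$-dimensional weighted branched manifold (which the paper phrases concretely as a Kirchhoff-junction-rule argument on the oriented weighted graph $\modulifloercompbrshort^{\vee}_H$, while you cite the general statement from \cite{mcduff2006}), and match weights $\tfrac{1}{N_{\bm\mu(\bm x'')}}$ and orientation signs against the definition~\eqref{eq-diff} of $d$, with Kuranishi replacements handling $n\ge 3$. The only cosmetic difference is that the paper makes the sign comparison explicit by identifying $\partial_{u_2}\circ\partial_{u_1}$ with the map induced by the outward-pointing vector $\partial_s u_2$, which is the content of your third step.
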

\begin{proof}
    The right-hand side of (\ref{eq-diff}) has non-zero terms for only finitely many $\bm x'$ due to Lemma \ref{lemma-cpt-finite}.
    Moreover, $\modulifloerbrshort$ is finite by Lemma~\ref{theorem-cpt-moduli}.
    
    By Theorem~\ref{theorem-cpt-moduli}, given $(n-2)\chi+|\bm x'|-|\bm x|=2$, the boundary of the compactification $\modulifloerbrshort$ consists of a finite number of broken curves as in~(\ref{strata}). 

Then for generators $[\bm x] \in o_{\bm x}^{\Q}$ and $\hbar^{-\chi}[\bm x'] \in o_{\bm x}^{\Q}$  in the count $\langle d^2 [\bm x],  \hbar^{-\chi} [\bm x'] \rangle$ each such height $2$ curve $([S_1, \pi_1], u_1), ([S_2, \pi_2], u_2)$ contributes 
\begin{gather}\label{boundary-differential}
\frac{1}{N_{\bm \mu(\bm x'')}}\cdot \frac{1}{N_{\bm \mu(\bm x')}} \cdot (\partial_{u_2} \circ \partial_{u_1}), \text{ if } \chi_1, \chi_2<0, \\
\frac{1}{N_{\bm \mu(\bm x')}} \cdot (\partial_{u_2} \circ \partial_{u_1}), \text{ otherwise.} \nonumber
\end{gather}

The Hausdorff quotient $\modulifloercompbrshort^{\vee}_H$ can be given a consistent choice of orientation on the smooth part and regarded as an oriented weighted graph satisfying the Kirchhoff junction rule at each interior vertex, where the interior vertices correspond to the branching locus and the exterior vertices correspond to points on the boundary $\partial \modulifloercompbrshort^{\vee}_H$ (see Figure~\ref{fig: branched-1dim} for an illustration).
Each exterior vertex then induces a map $o_{\bm x}^{\Q} \to o_{\bm x'}^{\Q}[(n-2)\chi)]$ by applying~\eqref{eq-orientation-floer} and using the trivialization $\modulifloercompbrshort^{\vee}_H$ induced by the outward pointing vector. Then the signed count of such maps is equal to $0$.

We then claim that for any pair $([S_1, \pi_1], u_1)$ and $([S_2, \pi_2], u_2)$ representing a point of $\partial \modulifloercompbrshort^{\vee}_H$ the composition map on orientation lines
\begin{equation}
    \partial_{u_2}  \circ \partial_{u_1} \colon o_{\bm x}^{\Q} \to o_{\bm x'}^{\Q}
\end{equation}
coincides with the one induced by the outward pointing vector at this pair to $\modulifloercompbrshort^{\vee}_H$. More explicitly, applying Lemma~\ref{orientation-lemma}, we see that we need to compare two isomorphisms
\begin{equation}
    o_{\bm x''}[(2-n)\chi]\cong \big|  \modulifloercompbrshort \big| \otimes |\R \partial_su| \otimes o_{\bm x},
\end{equation}
\begin{equation}
    o_{\bm x''}\bigl[(2-n)\chi\bigr]\cong |\R \partial_su_2| \otimes o_{\bm x'}\bigl[(2-n)\chi_1\bigr] \cong |\R \partial_su_2| \otimes |\R \partial_su_1| \otimes o_{\bm x},
\end{equation}
where $((S, \pi),u)$ is some curve near the boundary. The claim then follows since $\partial_su_2$ is an outward pointing vector.

\begin{figure}[h]
    \centering
    \includegraphics[width=12cm]{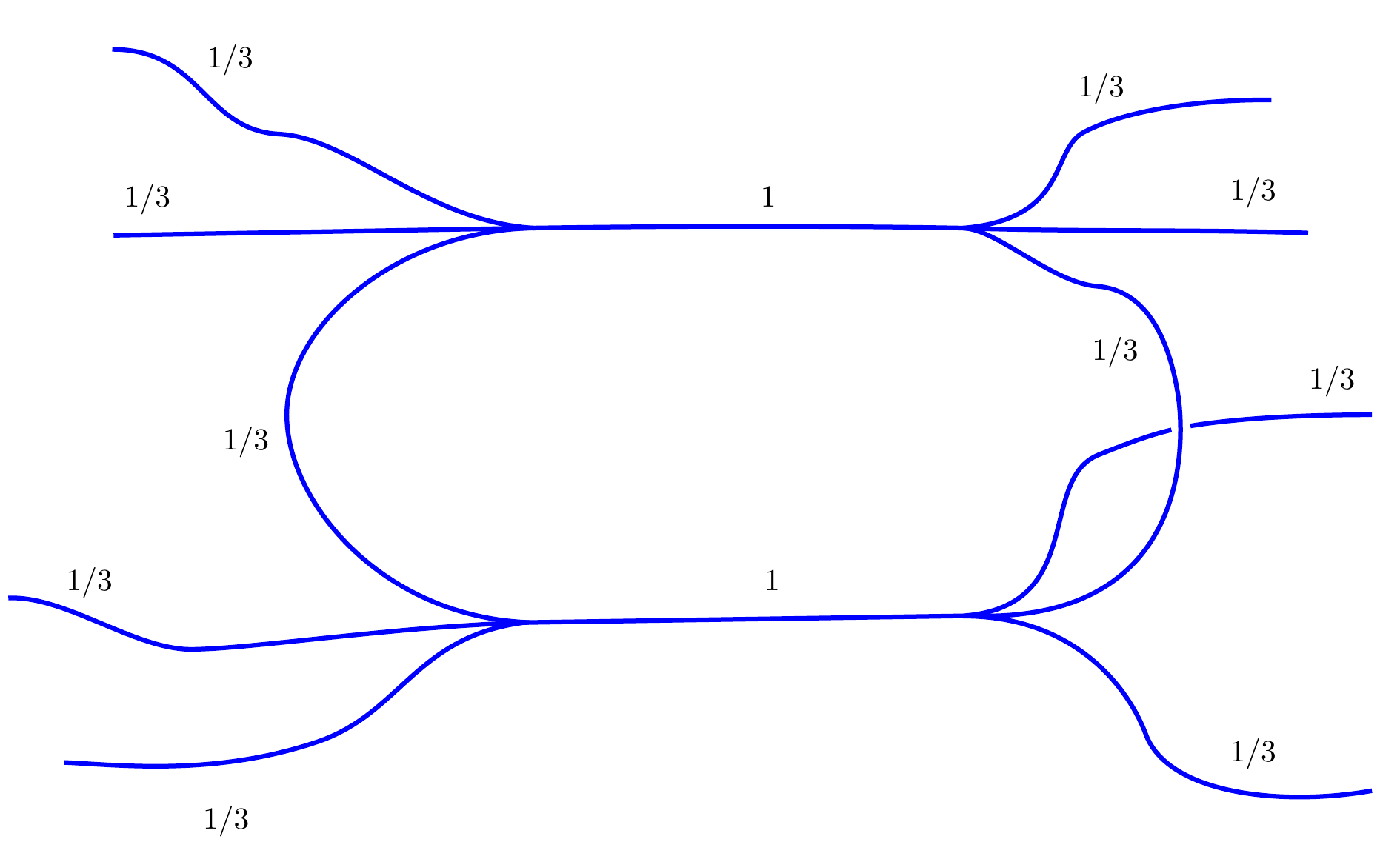}
    \caption{Here we show the maximal Hausdorff quotient with a weight function for a $1$-dimensional orientable branched manifold that one potentially can get as the moduli space $\modulifloercompbrshort$.}
    \label{fig: branched-1dim}
\end{figure}

Therefore the sum of contributions~\eqref{boundary-differential} corresponding to a given component of $\modulifloercompbrshort^{\vee}_H$ equals $0$, providing that $d^2|_{o_{\bm x}}=0$. We leave it to the reader to verify that $d^2|_{o_{\bm x}[(n-2)j]}=0$ for any $j>0$.
\end{proof}

We denote the homology of $(SC^*_{\kappa, \mathrm{psym}}(\hat{M}),d)$ by $SH^*_{\kappa, \mathrm{psym}}(M)$ or by $SH^*_{\kappa, \mathrm{psym}}(\hat{M})$.
We also write $SC^*_{\kappa, \mathrm{psym}}(\hat{M};\bm H,J_t)$  to indicate the specific choice of $\bm H=(H^{\mathrm{tot}}_{1,1}, \ldots, H^{\mathrm{tot}}_{\kappa, 1})$ as in~\ref{hamiltonian-properties1}-\ref{hamiltonian-properties3} and $J_t$ as in Section~\ref{section-almost-complex}.

\subsection{Linear Hamiltonians.}\label{section-linear-hamiltonians}
In this section, we give an alternative definition of the partially symmetrized version of Heegaard Floer symplectic cohomology via collections of linear Hamiltonians. We show that the two definitions coincide. The advantage of the linear Hamiltonian perspective is that the invariance under choices of Floer data is evident.

Since we assume everywhere in this section that we are working in the partially symmetrized context, the corresponding subscript would sometimes be dropped from the notation.

\begin{definition}
    We say that a collection $\bm H= \{H_j^m\}$ of Hamiltonian functions 
    \begin{equation*}
        H_j^m~\colon~\R/m \Z~\times~\hat{M} \to \R
    \end{equation*}
    for $j \in \Z, 1\le j \le \lfloor\frac{\kappa}{m} \rfloor$, is \emph{linear of slope $b$} if each of these functions is equal to $b \cdot r$ on the portion $[b, \infty) \times \partial M$ of the symplectization end $[1, \infty) \times \partial M$.
\end{definition}
\begin{claim}
    For almost any $b \in \R_{>0}$ all tuples of orbits in $\mathcal{P}_{\mathrm{psym}}(\bm H)$ for a tuple $\bm H$ of linear Hamiltonians of slope $b$ are contained in $M\cup[1,b)\times\partial M$.
\end{claim}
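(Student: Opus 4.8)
The plan is to reduce the statement to the standard maximum-principle argument for linear Hamiltonians on the symplectization end, adapted to tuples. First I would observe that a tuple $\bm x = (x_1, \ldots, x_{l(\bm \mu)}) \in \mathcal{P}_{\mathrm{psym}}(\bm H)$ lies in $M \cup [1,b)\times\partial M$ if and only if each individual orbit $x_i$ does, so it suffices to prove the statement for a single periodic orbit of one of the Hamiltonians $H^m_j$ of slope $b$. Each $x_i$ is a time-$\mu_i$ periodic orbit of the Hamiltonian $H^{\mathrm{tot}}_{\mu_i, \op{ord}_i(\bm\mu)}$, which on the end $[b,\infty)_r \times \partial M$ agrees with the linear function $b\cdot r$ (up to the bounded perturbation $F_{\mu_i, j}$, which by (H1) vanishes away from the time-$\mu_i$ orbits of $H_0^{\mathrm{tot}}$ and in particular contributes nothing to the dynamics on the conical end once we are in the linear regime).

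The key computation is the usual one: on $[1,\infty)_r \times \partial M$ with $\lambda = r\alpha$, the Hamiltonian vector field of a function $h(r)$ is $X_h = h'(r) R$, where $R$ is the Reeb field of $\alpha$; hence its time-$m$ periodic orbits on the region $\{r = \mathrm{const}\}$ correspond to Reeb orbits of period $m\cdot h'(r)$. For $h(r) = b\cdot r$ on $[b,\infty)$ we get $h'(r) = b$, so a time-$m$ orbit lying entirely in $r \ge b$ would project to a closed Reeb orbit of period $mb$ on $(\partial M, \alpha)$. Now the second step: the set of periods of closed Reeb orbits of $\alpha$ is the \emph{action spectrum} $\operatorname{Spec}(\partial M, \alpha) \subset \R_{>0}$, which is a closed measure-zero subset of $\R$ (here the non-degeneracy hypothesis on Reeb orbits is used — it makes the spectrum discrete, hence certainly of measure zero). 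Therefore, for all $b$ outside the countable set $\{ \sigma/m : \sigma \in \operatorname{Spec}(\partial M,\alpha),\ 1 \le m \le \kappa\}$ — which has measure zero, so ``almost any $b$'' — there is no time-$m$ periodic orbit of slope $b$ lying in the region $\{r \ge b\}$ for any $m \le \kappa$.

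It remains to rule out orbits that enter $[b,\infty)_r\times\partial M$ without being entirely contained in a level set. For this I would invoke the standard maximum principle / no-escape lemma for $d\lambda$-convexity: since $H^{\mathrm{tot}}_{\mu_i, j}$ is radial and convex (indeed linear) on the end, the function $r\circ x_i(t)$ cannot have an interior local maximum in the region $r > b$ unless it is constant there — this is exactly the computation that along an orbit $\frac{d}{dt}(r\circ x_i) = dr(X_h) = 0$ in the linear regime, combined with the fact that orbits of $H_0$ itself near $\partial M$ (before the slope-$b$ region) are controlled by the nondegeneracy of Reeb orbits of $\alpha$ as in \cite[Lemma 5.1]{abouzaid2010geometric}. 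Concretely, an orbit that reaches $\{r \ge b\}$ must either stay on a single level (handled above) or reach the locus $r = b$ where the Hamiltonian transitions; in either case the relevant set of ``bad'' slopes $b$ is contained in a countable union of measure-zero sets indexed by the finitely many periods $m \le \kappa$. The main obstacle — and the only place requiring care — is bookkeeping the perturbations: one must check that the functions $F_{m,j}$ from (H1), while only $m$-periodic and a priori supported near the time-$m$ orbits of $H_0^{\mathrm{tot}}$, can be taken to have support in a compact region of $\hat M$ (which follows since $F_0$ and $\lambda(X_{F_0})$ are uniformly bounded by (F1) and the perturbations are $C^0$-small relative to $F_0$), so that on the genuinely linear part of the end the dynamics is exactly that of $b\cdot r$. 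Once this is in place, the union over $1 \le m \le \kappa$ of the exceptional period sets is still countable, hence measure zero, giving the claim for almost every $b$.
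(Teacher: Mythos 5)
Your argument is essentially the paper's proof: the paper simply takes $b$ so that the relevant multiples $mb$, $1\le m\le\kappa$, avoid the period spectrum of Reeb orbits of $(\partial M,\alpha)$, and your computation that any orbit meeting the linear region $\{r\ge b\}$ must lie on a level set and hence correspond to a closed Reeb orbit of period $\mu_i b$ is exactly the intended justification. One small simplification: in this section the Hamiltonians $H^m_j$ are by definition equal to $b\cdot r$ on $[b,\infty)\times\partial M$, so the bookkeeping of the perturbations $F_{m,j}$ from the quadratic-at-infinity setting is unnecessary.
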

\begin{proof}
    It follows by considering $b \in \R_{>0}$ which are not equal to periods of any of the Reeb orbits in $\partial M$.
\end{proof}

\begin{claim}
    For a generic collection of linear Hamiltonians $\bm H=\{H_j^m\}$, all time-$m$ orbits of each $H_j^m$ are non-degenerate. Moreover, it can be assured that these orbits are disjoint.
\end{claim}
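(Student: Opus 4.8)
The plan is to establish the two assertions — non-degeneracy of time-$m$ orbits of each $H_j^m$, and disjointness of all these orbit sets as $(m,j)$ varies — by a standard Sard-Smale/Baire category argument, exactly parallel to the construction of the functions $F_{m,j}$ in Section~\ref{section-hamiltonian-partsym} (properties~\ref{hamiltonian-properties1}--\ref{hamiltonian-properties3}) and the cited \cite[Lemma 5.1]{abouzaid2010geometric}. The only new feature here is that the Hamiltonians are linear at infinity rather than quadratic, but away from the symplectization end the argument is insensitive to this.

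\textbf{Step 1: Localizing the orbits.} First I would invoke the preceding Claim: for generic slope $b$ (avoiding the discrete set of Reeb periods), every time-$m$ periodic orbit of a linear Hamiltonian $H_j^m$ of slope $b$ lies in the compact region $M\cup[1,b)\times\partial M$. Hence all orbits under consideration are confined to a fixed compact set $K\subset\hat M$, and it suffices to achieve non-degeneracy and disjointness there; the behavior on $[b,\infty)\times\partial M$ plays no role because there are no orbits there.

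\textbf{Step 2: Non-degeneracy via a Sard-Smale argument.} For each fixed pair $(m,j)$, consider the Banach manifold of perturbations of $H_j^m$ supported in $K$ (or in a neighborhood of the locus of time-$m$ orbits, to respect property~\ref{hamiltonian-properties1}-type constraints). The universal moduli space of pairs (perturbation, time-$m$ orbit) is cut out transversally — this is the classical computation that the linearization of the time-$m$ return map can be made nonsingular by a $C^\infty$-small perturbation of the Hamiltonian, see e.g. \cite[Chapter 5]{audin-damian2014} or \cite[Lemma 5.1]{abouzaid2010geometric}. By the Sard-Smale theorem the set of perturbations for which all time-$m$ orbits of $H_j^m$ are non-degenerate is comeager. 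Since there are only finitely many pairs $(m,j)$ with $1\le m\le\kappa$, $1\le j\le\lfloor\kappa/m\rfloor$, the intersection of these finitely many comeager sets is again comeager, so a generic collection $\bm H=\{H_j^m\}$ has all time-$m$ orbits of every $H_j^m$ non-degenerate.

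\textbf{Step 3: Disjointness.} Given non-degeneracy, each $H_j^m$ has only finitely many time-$m$ orbits, all isolated in $K$. Disjointness of the orbit images across different $(m,j)$ is then an open and dense condition: a further generic (arbitrarily $C^\infty$-small) perturbation of the collection, applied one pair at a time, moves the finitely many orbits of $H_j^m$ off the finitely many orbits of the previously-fixed Hamiltonians without destroying non-degeneracy (non-degeneracy being an open condition). Iterating over the finite index set $(m,j)$ yields the claim. The main obstacle — really the only point requiring care — is Step 2: verifying that transversality for the \emph{linear}-at-infinity Hamiltonians is not obstructed by the noncompactness of $\hat M$, which is handled precisely by Step 1's confinement of all orbits to the compact set $K$, so that the standard compact-manifold transversality machinery applies verbatim. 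I would simply remark that the argument is identical to that of \cite[Lemma 5.1]{abouzaid2010geometric} together with \cite[Chapter 5]{audin-damian2014}, and omit the routine details.
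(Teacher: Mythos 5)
Your argument is correct and is essentially the paper's: the paper simply cites \cite[Lemma 1.2.13]{abouzaid2015symplectic}, whose content is exactly the standard confinement-plus-Sard--Smale genericity argument (non-degeneracy for each of the finitely many pairs $(m,j)$, then an open-dense further perturbation for disjointness) that you spell out. The only implicit point worth keeping in mind is that the perturbations are taken among time-dependent ($\R/m\Z$-periodic) Hamiltonians supported in the compact region, which is what both preserves linearity at infinity and makes the transversality argument go through.
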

\begin{proof}
    See \cite[Lemma 1.2.13]{abouzaid2015symplectic}.
\end{proof}

Combining the two claims above, we conclude that for a generic tuple $\bm H$ properties \ref{hamiltonian-properties2} and \ref{hamiltonian-properties3} are satisfied, where instead of $H^{\operatorname{tot}}_{m,j}$ one considers $H^m_j$.

For a collection of linear Hamiltonians, one may also define a chain complex $$(SC_{\kappa, \mathrm{psym}}^*(M; \bm H, J_t), d)$$ by choosing a consistent Floer data on $\hurwitzsmbr$ adapted to $\bm H$. The proofs of the regularity and compactness of the associated moduli spaces are similar to those of Theorems~\ref{transversality-theorem} and~\ref{theorem-cpt-moduli} and are less technical.

We define a preorder on collections of linear Hamiltonians as follows:

\begin{definition}
     Given tuples $\bm H^+$ and $\bm H^-$ of linear Hamiltonians of slopes $b^+$ and $b^-$, respectively, we say that 
    \begin{equation}\label{preorder}
        \bm H^+ \preceq \bm H^-
    \end{equation}
    if $b^+ \le b^-$.
\end{definition}

Given two collections $\bm H^+ \preceq \bm H^-$ we construct the continuation map
\begin{equation}\label{continuation-map}
    \mathfrak{c} \colon SC_{\kappa, \mathrm{psym}}^*(M; \bm H^+, K^+, J^+) \to SC_{\kappa,\mathrm{psym}}^*(M; \bm H^-, K^-, J^-)
\end{equation}
where $(K^+, J^+)$ is some consistent Floer data compatible with $\bm H^+$ and $(K^-, J^-)$ is compatible with $\bm H^-$.

First of all, we set $H_j^m(s, t)$ to satisfy
    \begin{gather}
    H_j^m(s, t)=b_sr \text{, on the symplectization end for } r \ge b^-, \\
    H_j^m(s, t)=(H_j^m)^+ \text{ for } s \gg 0 \text{ and } H_j^m(s, t)=(H_j^m)^- \text{ for } s \ll 0,
\end{gather}
where $b_s$ is a value interpolating between $b^+$ and $b^-$ and can be regarded as a smooth function $\R  \to [b^+, b^-]$.
Similarly, we set $J_t^s$ to be $s$-dependent almost complex structure on $\hat{M}$ coinciding with $J_t^\pm$ near $\pm \infty$.

These choices essentially fix Floer data on $\hurwitzaction$ for $\chi=0$. Notice that this data is no longer $\R$-invariant, hence such data does not induce a choice of data on $\hurwitz$.

\begin{definition}
    
We say that a choice of Floer data $(\tilde{K}, \tilde{J})$ for all $\hurwitzaction$ is a \emph{continuation Floer data} for the pair $(\bm H^+, J^+)$ and $(\bm H^-, J^-)$ if it satisfies:
\begin{enumerate}\label{interpolation data}
    \item for $\chi=0$ it coincides with Floer data induced by $H^{\mathrm{tot}}_{m,j}(s, \cdot)$, $J_t^s$ as above;
    \item $H_{(S, \pi)}(z)=b_sr$ on the symplectization end, where $z \in S$ and $\pi_\R(z)=s$ for any $(S, \pi) \in \hurwitzaction$; 
    \item $\tilde{K}_{(S, \pi)}(z)=K_{(S, \pi)}^+(z)$ for $z \in S$ such that $\pi_\R(z) \ge 1$, similarly $\tilde{K}_{(S, \pi)}(z)=K_{(S, \pi)}^-(z)$ for $z \in S$ such that $\pi_\R(z) \le -1$, for any $(S, \pi) \in \hurwitzaction$;
    \item $\tilde{J}_{(S, \pi)}(z)=J^+(z)$ for $z \in S$ such that $\pi_\R(z) \ge 1$; similarly $\tilde{J}_{(S, \pi)}(z)=J_{(S, \pi)}^-(z)$ for $z \in S$ such that $\pi_\R(z) \le -1$, for any $(S, \pi) \in \hurwitzaction$.
\end{enumerate}
\end{definition}

We notice that since $\hurwitz$ is a quotient of $\hurwitzaction$, the wnb groupoid $\hurwitzbr$ may be adapted to introduce a branched structure $\hurwitzactionbr$ associated with $\hurwitzaction$.

Given tuples of orbits $\bm x^\pm \in \mathcal{P}_{\mathrm{psym}}(\bm H^\pm)$, and a choice of continuation Floer data $(\tilde{K}, \tilde{J})$ we define a \emph{continuation moduli space} 
$\mathscr{K}^\chi(\bm x^+, \bm x^-; \tilde{K}, \tilde{J})$ to be the groupoid with elements of 
$\Big(\mathscr{K}^\chi(\tilde{K}; \tilde{J}; \bm x^+; \bm x^-)\Big)_0$ consisting of pairs:

\begin{equation}\label{pairs-continuation}
\Big( (S, \pi, \bm p^{\pm}, \bm a^{\pm}, B) \in \left(\hurwitzactionbr\right)_0, \, u=(\pi, v)\colon S \to \R \times S^1 \times \hat{M}\Big) 
\end{equation}
\noindent such that
\begin{equation}
\begin{dcases}
(dv-\tilde{Y})^{0,1}=0 \text{ with respect to } \tilde{J}_{(S,\pi)};  \\
\lim_{s \to +\infty } v\circ \epsilon_i^+(s, \cdot) = x_i^+(\cdot); \\
\lim_{s \to -\infty } v\circ \epsilon_i^-(s, \cdot) = x_i^-(\cdot).
\end{dcases}
\end{equation}

The morphisms $\left( \mathscr{K}^\chi(\bm x^+; \bm x^-; \tilde{K}; \tilde{J}) \right)_1$ and the weight function are induced from $\hurwitzaction$; see the discussion in Section~\ref{section-pseudo-holomophic-curves}. 

\begin{proposition}
    For a generic choice of continuation Floer data $(\tilde{K}, \tilde{J})$ the moduli spaces $\mathscr{K}^\chi(\bm x^+, \bm x^-; \tilde{K}, \tilde{J})$ are wnb groupoids of dimension 
    \begin{equation}
        (n-2)\chi+|\bm x^-|-|\bm x^+|.
    \end{equation}
\end{proposition}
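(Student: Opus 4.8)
The plan is to follow closely the strategy used for the ``fixed Hamiltonian'' case in Theorem~\ref{transversality-theorem} and Lemma~\ref{index-count}, making the necessary adjustments for the fact that the continuation data is no longer $\R$-invariant. First I would note that the groupoid structure and the weight function on $\mathscr{K}^\chi(\bm x^+, \bm x^-; \tilde{K}, \tilde{J})$ are inherited directly from the wnbc groupoid $\hurwitzactionbr$ (rather than from $\hurwitzbr$), so the content of the statement is the transversality assertion plus the index count; once transversality holds, the induced branched structure on the zero set gives the wnb groupoid structure exactly as in the last paragraph of the proof of Theorem~\ref{transversality-theorem}.

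For the transversality step, I would first dispose of the case $\chi=0$: here $\hurwitzaction$ is a finite set of points, each corresponding to a disjoint collection of cylinders $\R_s \times \R/\mu_i\Z$, and the linearized operator splits as a direct sum over these cylinders. For each cylinder the relevant equation is the standard Floer continuation equation for the $s$-dependent pair $(H^m_j(s,\cdot), J^s_t)$, and transversality for a comeagre set of choices is the classical result (e.g.~\cite[Chapter 11]{audin-damian2014}); taking the intersection of these comeagre sets over all finitely many cylinders and all $\bm \mu \vdash \kappa$ handles this case. For $\chi<0$ I would run the Seidel-type argument verbatim: pick a family of open subsets $\Omega_{[S,\pi]} \subset \operatorname{Int}(S)$ disjoint from the cylindrical ends $\epsilon^\pm_i$, allow perturbations $(\delta \tilde K, \delta \tilde J)$ supported in $\Omega$, and write down the universal linearized operator. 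Since $u$ is somewhere injective on non-cylindrical components (using~\ref{hamiltonian-properties2},~\ref{hamiltonian-properties3} for $H^m_j$, which we have already checked survive for generic linear tuples), an element $\eta$ of the cokernel that pairs to zero against all $(\delta \tilde Y)^{0,1}$ must vanish on $\Omega_{[S,\pi]}$, hence on all non-cylindrical components by unique continuation; the cylindrical components are covered by the $\chi=0$ case. The Sard--Smale theorem then gives a comeagre set of continuation Floer data for which $\mathscr{K}^\chi$ is cut out transversely. One subtlety worth a sentence is that we want to perturb only $\tilde K$ and $\tilde J$ away from the regions $\{\pi_\R(z) \ge 1\}$ and $\{\pi_\R(z)\le -1\}$ where the data is pinned to $(K^\pm, J^\pm)$, so that the perturbed data remains continuation Floer data in the sense of the definition preceding the statement; since $\chi<0$ forces at least one branch point, the locus $\Omega$ can be taken inside $\{-1 < \pi_\R(z) < 1\}$, or more simply we note that $(K^\pm,J^\pm)$ were themselves chosen generically so that no further perturbation is needed near $\pm\infty$.

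For the dimension count I would invoke the standard index formula for the perturbed Cauchy--Riemann operator on a fixed domain, as in \cite{bourgeois2002}: for $v$ with fixed domain $S$ one gets $\operatorname{ind}(v) = n\chi + |\bm x^-| - |\bm x^+|$, exactly as in the proof of Lemma~\ref{index-count}, except that there is no $-1$ from quotienting by the $\R$-translation (we work with $\hurwitzaction$, whose dimension is $-2\chi$, not with $\hurwitz$, whose dimension is $-2\chi-1$). Adding $\dim_\R \hurwitzaction = -2\chi$ to $\operatorname{ind}(v)$ gives $(n-2)\chi + |\bm x^-| - |\bm x^+|$, which is the claimed dimension. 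For the $\chi=0$ unbranched case one checks directly that the cylinders contribute $|\bm x^-|-|\bm x^+|$, consistent with the formula. I expect the main obstacle to be purely bookkeeping: verifying that the perturbation class $(\delta \tilde K, \delta \tilde J)$ is genuinely admissible, i.e.~compatible with the consistency and interpolation conditions (1)--(4) defining continuation Floer data, and that generic choices within this constrained class still achieve transversality on the interior of $\hurwitzactionbr$ while the pre-fixed data on $\partial \hurwitzactionbr$ already guarantees regularity there. This is the same delicacy that appears in Theorem~\ref{transversality-theorem}, and the resolution is the same: the consistency conditions only constrain the data near the boundary and near $\pm\infty$, leaving a Fr\'echet-open set of perturbations supported in $\Omega$, over which the Sard--Smale argument applies.
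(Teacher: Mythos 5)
Your proposal is correct and follows essentially the same route as the paper, whose entire proof is a pointer to the argument of Theorem~\ref{transversality-theorem}: you reproduce that perturbation/Sard--Smale scheme (with the $\chi=0$ cylinder case handled classically) and correctly adjust the index count by using $\dim_{\R}\hurwitzaction=-2\chi$ in place of $\dim_{\R}\hurwitz=-2\chi-1$, since the continuation data is not $\R$-invariant. No substantive differences from the paper's intended argument.
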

\begin{proof}
    We refer the reader to the proof of Theorem~\ref{transversality-theorem} for details.
\end{proof}

We now define the continuation map $\mathfrak{c}$ by counting curves in the moduli space above, i.e., given $\bm x^+ \in \mathcal{P}_{\mathrm{psym}}(\bm H^+)$, we set
\begin{equation}\label{eq-contmap}
    \mathfrak{c}([\bm x^+]) = \sum_{\substack{ \bm x^- \in \mathcal{P}_{\mathrm{psym}}( \bm H^-), \, u \in \mathscr{K}^\chi(\tilde{K}; \tilde{J}; \bm x^-; \bm x^+)_H \\ (n-2)\chi+|\bm x^-|-|\bm x^+|=0, \, \chi <0}}
      \frac{\mathfrak{c}_u([\bm x^+])}{N_{\bm \mu(\bm x^-)}},
\end{equation}
where $[\bm x^+] \in o_{\bm x^+}^{\Q}$ is some generator and $\mathfrak{c}_u$ is the morphism on orientation lines associated with $u$, as in~\eqref{eq-hurwitz-orline}:
\begin{equation}
    \mathfrak{c}_u \colon o_{\bm x^+} \rightarrow o_{\bm x^-}[(2-n)\chi].
\end{equation}

We point out that one should pass to Kuranishi replacements for $n\ge 3$ whenever necessary, as in Section~\ref{section-pseudo-holomophic-curves}.
\begin{lemma}
    The map $\mathfrak{c}$ is a chain map.
\end{lemma}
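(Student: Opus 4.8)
The plan is to prove that $\mathfrak{c}$ commutes with the differentials on $SC^*_{\kappa,\mathrm{psym}}(M;\bm H^+,K^+,J^+)$ and $SC^*_{\kappa,\mathrm{psym}}(M;\bm H^-,K^-,J^-)$ by the standard argument: analyze the boundary of the one-dimensional continuation moduli spaces $\mathscr{K}^\chi(\bm x^+,\bm x^-;\tilde K,\tilde J)$ and show that the contributions organize into $d^- \circ \mathfrak{c} - \mathfrak{c}\circ d^+$. First I would invoke the compactness result — the analogue of Theorem~\ref{theorem-cpt-moduli} applied to the continuation moduli spaces, whose proof is referenced as being parallel — to conclude that when $(n-2)\chi+|\bm x^-|-|\bm x^+|=1$ the groupoid $\mathscr{K}^\chi(\bm x^+,\bm x^-;\tilde K,\tilde J)$ admits a compactification $\overline{\mathscr{K}}^\chi$ whose codimension-$1$ boundary strata are fiber products of two pieces: either a Floer trajectory of $\bm H^+$ followed by a continuation curve, or a continuation curve followed by a Floer trajectory of $\bm H^-$, i.e. the images of
\begin{equation}\label{cont-boundary}
\mathscr{M}^{\chi_1}(\bm x^+,\bm x'';H^{\mathrm{tot},+},J^+)\times\mathscr{K}^{\chi_2}(\bm x'',\bm x^-;\tilde K,\tilde J)\quad\text{and}\quad \mathscr{K}^{\chi_1}(\bm x^+,\bm x'';\tilde K,\tilde J)\times\mathscr{M}^{\chi_2}(\bm x'',\bm x^-;H^{\mathrm{tot},-},J^-),
\end{equation}
with $\chi_1+\chi_2=\chi$ and the indices adding appropriately. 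As in Theorem~\ref{theorem-cpt-moduli}, no level can have Euler characteristic zero contributing an extra factor and ghost bubbling is excluded (or handled via Kuranishi replacement for $n\ge 3$); the key geometric input distinguishing the two families in~\eqref{cont-boundary} is the $s$-translation behavior: breaking off toward $s\to+\infty$ recovers an $\R$-invariant Floer curve for $\bm H^+$, while breaking toward $s\to-\infty$ recovers one for $\bm H^-$, since the continuation Floer datum is $s$-independent and equal to $(\bm H^\pm,J^\pm)$ on the appropriate half-cylinders.

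Next I would handle the weights and signs. Each boundary stratum of the first type in~\eqref{cont-boundary} carries weight $\frac{1}{N_{\bm\mu(\bm x'')}}\cdot\frac{1}{N_{\bm\mu(\bm x^-)}}$ (with the usual adjustment to $\frac{1}{N_{\bm\mu(\bm x^-)}}$ if $\chi_1=0$), and similarly for the second type, matching exactly the coefficients appearing in the compositions $\mathfrak{c}\circ d^+$ and $d^-\circ\mathfrak{c}$ from~\eqref{eq-contmap} and~\eqref{eq-diff}. For the signs, I would argue as in the proof of Theorem~\ref{theorem-d2}: the Hausdorff quotient $\overline{\mathscr{K}}^\chi(\bm x^+,\bm x^-)^\vee_H$ is a compact oriented weighted $1$-manifold-with-boundary (an oriented weighted graph satisfying Kirchhoff's rule at interior branch vertices), and summing the maps on orientation lines $o_{\bm x^+}\to o_{\bm x^-}[(2-n)\chi]$ induced by outward-pointing vectors at the exterior vertices yields zero. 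The composition $\mathfrak{c}_{u_2}\circ\mathfrak{c}_{u_1}$ (resp. $\mathfrak{c}_{u_2}\circ\partial_{u_1}$, $\partial_{u_2}\circ\mathfrak{c}_{u_1}$) attached to a boundary point agrees with the map induced by the outward normal, by the same permutation-of-factors computation using Lemma~\ref{orientation-lemma} and its continuation analogue (the relevant Hurwitz spaces being even-dimensional, no extra sign intervenes). Collecting terms over all $\chi$ and packaging with the $\hbar$-powers (weight $\hbar^{-\chi}$, with the Koszul sign from $|\hbar|=2-n$ exactly as in~\eqref{orientation-map}) gives $d^-\circ\mathfrak{c}=\mathfrak{c}\circ d^+$ as maps on $SC^*_{\kappa,\mathrm{psym}}$.

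The main obstacle is establishing the precise structure of $\partial\overline{\mathscr{K}}^\chi$ in~\eqref{cont-boundary} — in particular ruling out the appearance of extra boundary phenomena from the branched structure $\hurwitzactionbr$ and from non-compactness of the $s$-independent target data, and, for $n\ge 3$, carrying out the Kuranishi replacement for the continuation moduli spaces so that (KR1)–(KR3) hold with $\mathscr{M}$ replaced by $\mathscr{K}$ in the appropriate slot. Once the compactness-and-gluing package for $\mathscr{K}^\chi$ is in place (which the paper has essentially reduced to the already-proven statements for $\modulifloerbr$), the sign and weight bookkeeping is a routine repetition of the argument in Theorem~\ref{theorem-d2}. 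I would therefore devote most of the write-up to carefully stating the continuation analogue of Theorem~\ref{theorem-cpt-moduli} and then observing that the $d^2=0$-style cancellation argument applies verbatim, with the two types of strata in~\eqref{cont-boundary} producing the two terms $\mathfrak{c}\circ d^+$ and $d^-\circ\mathfrak{c}$ respectively.
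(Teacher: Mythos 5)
Your proposal follows essentially the same route as the paper: compactify the one-dimensional continuation moduli spaces with boundary strata of the two types (Floer trajectory for $\bm H^+$ followed by a continuation curve, and a continuation curve followed by a Floer trajectory for $\bm H^-$), match the weights $\tfrac{1}{N_{\bm \mu(\cdot)}}$ and $\hbar$-powers, and run the orientation/sign cancellation as in Theorem~\ref{theorem-d2}. The paper's proof is just a terser version of this (noting the compactness is in fact easier here since all orbits of linear Hamiltonians lie in a compact region), so your write-up is correct and consistent with it.
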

\begin{proof}
  We claim that $\mathscr{K}^\chi(\bm x^-, \bm x^+; \tilde{K}, \tilde{J})$ with $(n-2)\chi+|\bm x^-|-|\bm x^+|=1$ admits a compactification  (in the sense of~(\ref{branched-compactification})) with boundary $\partial |\mathscr{K}^\chi(\bm x^+; \bm x^-; \tilde{K}; \tilde{J})|_H$ covered by images of
      \begin{equation}\label{eq-continuation-boundary1}
       \bigsqcup_{\chi_1, \chi_2; \, \bm x^-_0 \in SC^*_\kappa(M, \bm H^-)} \mathscr{K}^{\chi_1}(\bm x^+,\,  \bm x^-_0; \tilde{K}, \tilde{J} ) \times \mathscr{M}^{\chi_2}(\bm x^-_0, \bm x^-; K^-, J^- ),
      \end{equation}
      \noindent where $$\chi_1+\chi_2=\chi,$$  
      $$(n-2)\chi_1+|\bm x^-_0|-|\bm x^+|=(n-2)\chi_2+|\bm x^-|-|\bm x^-_0|-1=0;$$
      and 
      \begin{equation}\label{eq-continuation-boundary2}
               \bigsqcup_{\chi_1, \chi_2; \, \bm x^+_0 \in SC^*_\kappa(M, \bm H^+) }\mathscr{M}^{\chi_1}(\bm x^+, \bm x^+_0; K^+, J^+ ) \times \mathscr{K}^{\chi_2}( \bm x^+_0; \bm x^-; \tilde{K}, \tilde{J}),
      \end{equation}
       \noindent where $$\chi_1+\chi_2=\chi,$$  
      $$(n-2)\chi_1+|\bm x^+_0|-|\bm x^+|-1=(n-2)\chi_2+|\bm x^-|-|\bm x^+_0|=0.$$
      The proof is similar to that of Theorem~\ref{theorem-cpt-moduli} but easier since for linear Hamiltonians all orbits are contained in the compact region. Each such building has the associated weight $N_{\bm \mu(\bm x^0)}$. We leave it to the reader to verify that compositions of morphisms on orientation lines associated with~\eqref{eq-continuation-boundary1} and~\eqref{eq-continuation-boundary2} for given $o_{\bm x^+}^{\Q}$ and $o_{\bm x^-}^{\Q}[(2-n)\chi]$ differ by $(-1)$ (also compare with the proof of Theorem~\ref{theorem-d2}).

Taking weights into account, we conclude that
\begin{equation}
    \mathfrak{c} \circ d - d \circ \mathfrak{c}=0.
\end{equation} \vskip-.2in\end{proof}

\begin{lemma}\label{lemma-cont-independent}
    The continuation map $\mathfrak{c}$ is independent of the choice of continuation Floer data $(\tilde{K}, \tilde{J})$.
\end{lemma}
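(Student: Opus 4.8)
The plan is to follow the standard continuation-map-independence argument, adapting it to the weighted branched groupoid setting developed above. Suppose $(\tilde K_0, \tilde J_0)$ and $(\tilde K_1, \tilde J_1)$ are two choices of continuation Floer data for the pair $(\bm H^+, J^+)$ and $(\bm H^-, J^-)$. First I would connect them through a generic smooth path $(\tilde K_\tau, \tilde J_\tau)$, $\tau \in [0,1]$, of continuation Floer data, all satisfying conditions (1)--(4) in the definition of continuation Floer data (in particular, all of them restrict on the symplectization end to the same linear profile $b_s r$, so the compactness estimates of \cite[Theorem A.1]{ganatra2012symplectic} apply uniformly along the family). For $\bm x^+ \in \mathcal{P}_{\mathrm{psym}}(\bm H^+)$ and $\bm x^- \in \mathcal{P}_{\mathrm{psym}}(\bm H^-)$ with $(n-2)\chi + |\bm x^-| - |\bm x^+| = -1$, I would form the parametrized moduli space
\begin{equation*}
    \mathscr{K}^{\chi}_{[0,1]}(\bm x^+, \bm x^-) = \bigsqcup_{\tau \in [0,1]} \{\tau\} \times \mathscr{K}^{\chi}(\bm x^+, \bm x^-; \tilde K_\tau, \tilde J_\tau),
\end{equation*}
which for a generic path is a wnb groupoid with boundary of dimension $0$, carrying the weight function and branched structure induced from $\hurwitzactionbr$ exactly as in Section~\ref{section-pseudo-holomophic-curves}. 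Counting its elements with signs (via the orientation-line isomorphism~\eqref{eq-hurwitz-orline}) and with the weights $\tfrac{1}{N_{\bm \mu(\bm x^-)}}$ defines a map $\mathfrak{h}\colon SC^*_{\kappa,\mathrm{psym}}(M; \bm H^+) \to SC^{*-1}_{\kappa,\mathrm{psym}}(M; \bm H^-)$, again passing to Kuranishi replacements when $n \ge 3$.

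Next I would analyze the boundary of the compactification $\overline{\mathscr{K}}^{\chi}_{[0,1]}(\bm x^+, \bm x^-)$ when $(n-2)\chi + |\bm x^-| - |\bm x^+| = 0$, which is a $1$-dimensional branched manifold. Its boundary (as a maximal Hausdorff quotient, viewed as an oriented weighted graph obeying the Kirchhoff rule, as in the proof of Theorem~\ref{theorem-d2}) consists of four types of strata: (i) the fiber over $\tau = 0$, contributing $\mathfrak{c}$ for $(\tilde K_0, \tilde J_0)$; (ii) the fiber over $\tau = 1$, contributing $\mathfrak{c}$ for $(\tilde K_1, \tilde J_1)$; (iii) breakings $\mathscr{M}^{\chi_1}(\bm x^+, \bm x^+_0; K^+, J^+) \times \mathscr{K}^{\chi_2}_{[0,1]}(\bm x^+_0, \bm x^-)$ at the positive end, contributing $\mathfrak{h} \circ d$; and (iv) breakings $\mathscr{K}^{\chi_1}_{[0,1]}(\bm x^+, \bm x^-_0) \times \mathscr{M}^{\chi_2}(\bm x^-_0, \bm x^-; K^-, J^-)$ at the negative end, contributing $d \circ \mathfrak{h}$. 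The compactness and gluing arguments are the parametrized analogue of Theorem~\ref{theorem-cpt-moduli} and the chain-map lemma just proved, with the key point again being that bubbling and higher-level degenerations on the cylinder side are of codimension $\ge 2$ (handled by the Kuranishi replacements or by \cite[Theorem 1.5]{ekholm-shende2022} for $n = 2$). The weight on a stratum of type (iii) or (iv) is $\tfrac{1}{N_{\bm \mu(\bm x^{\pm}_0)}}$ times $\tfrac{1}{N_{\bm \mu(\bm x^-)}}$, matching the weights appearing in the definitions of $d$ and $\mathfrak{c}$. Summing contributions over each connected component of the graph and using the Kirchhoff rule, the total signed count vanishes, yielding the chain homotopy relation
\begin{equation*}
    \mathfrak{c}_1 - \mathfrak{c}_0 = d \circ \mathfrak{h} \pm \mathfrak{h} \circ d,
\end{equation*}
so $\mathfrak{c}_0$ and $\mathfrak{c}_1$ induce the same map on homology; since the hypothesis of the lemma is literally a statement about the maps themselves on homology (the continuation map being the induced map), this is exactly what is required. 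Finally, since any two choices of continuation Floer data can be joined by such a path (the space of continuation Floer data is contractible, being defined by conditions that are convex or affine at each stage — the interpolation $b_s$ and the boundary normalizations (2)--(4) being unaffected by convex combinations in the interior), the conclusion follows.

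The main obstacle I anticipate is not the formal homotopy argument but ensuring that the parametrized transversality and compactness go through in the weighted branched groupoid framework with a genuine $[0,1]$-family of perturbation data: one must check that a generic path of continuation Floer data makes $\mathscr{K}^{\chi}_{[0,1]}(\bm x^+, \bm x^-)$ transversely cut out as a wnb groupoid with boundary (the boundary over $\tau = 0, 1$), which requires the parametrized version of the argument in Theorem~\ref{transversality-theorem} — in particular, somewhere-injectivity of the curves in the family (guaranteed again by~\ref{hamiltonian-properties2}--\ref{hamiltonian-properties3}) and the fact that perturbations supported in the region $\Omega$ suffice, now allowed to vary with $\tau$. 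The bookkeeping of weights and signs across the Kirchhoff-type graph, while routine in spirit, is the place where one must be careful that the factors $N_{\bm \mu}$ cancel correctly; I would verify this exactly as in the proof of Theorem~\ref{theorem-d2} and the preceding chain-map lemma, and leave the detailed sign and weight verification to the reader as is done elsewhere in the paper.
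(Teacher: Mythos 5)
The paper gives no proof of this lemma at all---it simply refers to \cite[Lemma 1.6.13]{abouzaid2015symplectic} for the heuristic---and your parametrized-moduli, chain-homotopy argument (joining the two data by a generic path, counting the $1$-dimensional parametrized spaces' boundary strata with the branched weights, and concluding equality on homology) is exactly the standard adaptation that citation intends, so your approach matches the paper's. The only small imprecision is the appeal to convexity for the almost complex structure part of the data: one should join $\tilde{J}_0$ and $\tilde{J}_1$ through a path using contractibility of $\mathcal{J}(\hat{M})$ rather than convex combinations, which does not affect the argument.
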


\begin{proof}
    We omit the proof here and refer the reader to \cite[Lemma 1.6.13]{abouzaid2015symplectic} for the heuristic of the argument that can be adapted in the context of Hurwitz spaces.
\end{proof}

\begin{lemma}\label{composition-lemma}
    Given three tuples of Hamiltonians satisfying $\bm H^+ \preceq \bm H^{\text{\ding{73}}} \preceq \bm H^-$, there is a commutative diagram
\begin{center} 
\begin{tikzpicture}
  \node at (2,0) (left) {$SC_\kappa^*(\hat{M}; \bm H^+)$};
  
  \node at (6,0) (right) {$SC_\kappa^*(\hat{M}; \bm H^{\text{\ding{73}}})$};
  
  \node at (6,-3) (bottom) {$SC_\kappa^*(\hat{M}; \bm H^-)$};
  
  \draw[->] (left) -- (right) node[midway, above] {$\mathfrak{c}_{+ \to \text{\ding{73}}}$};
  \draw[->] (left) -- (bottom) node[midway, below left] {$\mathfrak{c}_{+ \to -}$};
  \draw[->] (right) -- (bottom) node[midway, right] {$\mathfrak{c}_{\text{\ding{73}} \to -}$};
\end{tikzpicture}
\end{center}    
\end{lemma}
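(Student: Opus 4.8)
The plan is to prove the commutativity of the continuation triangle by the standard homotopy-of-homotopies argument, adapted to the branched groupoid setting. The idea is that composing the continuation data for $\bm H^+ \preceq \bm H^{\text{\ding{73}}}$ and for $\bm H^{\text{\ding{73}}} \preceq \bm H^-$ by gluing along a long neck produces continuation data for $\bm H^+ \preceq \bm H^-$, and the one-parameter family interpolating between this glued datum and a directly chosen datum for $\bm H^+ \preceq \bm H^-$ yields a chain homotopy between $\mathfrak{c}_{\text{\ding{73}} \to -} \circ \mathfrak{c}_{+ \to \text{\ding{73}}}$ and $\mathfrak{c}_{+ \to -}$. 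Combined with Lemma~\ref{lemma-cont-independent} (independence of the choice of continuation data), this gives the desired equality on homology.

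Concretely, first I would fix generic continuation Floer data $(\tilde{K}^{+\to\text{\ding{73}}}, \tilde{J}^{+\to\text{\ding{73}}})$ and $(\tilde{K}^{\text{\ding{73}}\to-}, \tilde{J}^{\text{\ding{73}}\to-})$ realizing the two maps $\mathfrak{c}_{+\to\text{\ding{73}}}$ and $\mathfrak{c}_{\text{\ding{73}}\to-}$. For a large gluing parameter $\rho \gg 0$, I would construct a pre-glued continuation datum $(\tilde{K}^\rho, \tilde{J}^\rho)$ on each $\hurwitzaction$ by cutting off the two interpolating profiles and splicing them across an $s$-interval of length $\rho$ where the Hamiltonian equals $\bm H^{\text{\ding{73}}}$; the slope interpolates $b^+ \to b^{\text{\ding{73}}} \to b^-$ monotonically, and the almost complex structures match up since both pieces agree with $J_t^{\text{\ding{73}}}$ there. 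A gluing/compactness argument in the spirit of Theorem~\ref{theorem-cpt-moduli} then shows that for $\rho$ large the count of rigid curves in $\mathscr{K}^\chi(\bm x^+, \bm x^-; \tilde{K}^\rho, \tilde{J}^\rho)$ equals the count of broken configurations in $\bigsqcup_{\chi_1+\chi_2=\chi,\,\bm x^0} \mathscr{K}^{\chi_1}(\bm x^+, \bm x^0; \tilde{K}^{+\to\text{\ding{73}}}) \times \mathscr{K}^{\chi_2}(\bm x^0, \bm x^-; \tilde{K}^{\text{\ding{73}}\to-})$, with weights multiplying to $\tfrac{1}{N_{\bm\mu(\bm x^0)}}\cdot\tfrac{1}{N_{\bm\mu(\bm x^-)}}$ exactly as in the definition~\eqref{eq-contmap}; this shows the map induced by $(\tilde{K}^\rho, \tilde{J}^\rho)$ is $\mathfrak{c}_{\text{\ding{73}}\to-}\circ\mathfrak{c}_{+\to\text{\ding{73}}}$. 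Finally, Lemma~\ref{lemma-cont-independent} identifies this with $\mathfrak{c}_{+\to-}$ on homology, giving the commutative diagram.

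The technical heart of the argument, and the step I expect to be the main obstacle, is the gluing/degeneration analysis compatible with the branched groupoid structure: one must check that as the neck length $\rho$ varies (or equivalently in the one-parameter family of continuation data) the only boundary degenerations of the relevant $0$- and $1$-dimensional moduli spaces are the two-level continuation breakings plus the usual Floer breakings at either end, with no new phenomena coming from the Hurwitz-space side (bubbling on the target cylinder remains codimension $\ge 2$ by Remark~\ref{remark-bubble-components}, and ghost bubbles are handled by Kuranishi replacements as in Section~\ref{section-pseudo-holomophic-curves}). The bookkeeping of weights $N_{\bm\mu''}$ across the glued configurations and the verification that orientation lines compose with the correct signs — paralleling the argument in the proof of Theorem~\ref{theorem-d2} — is routine but must be carried out carefully; I would state it and refer to \cite[Lemma 1.6.13]{abouzaid2015symplectic} and the analogous passages above for the details, rather than reproduce the full analytic package.
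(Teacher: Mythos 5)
Your proposal follows essentially the same route as the paper: glue the two chosen continuation data along a long neck where the data agrees with that of $\bm H^{\text{\ding{73}}}$, show by a gluing/compactness argument that for large neck length the rigid counts for the glued datum coincide (with the $N_{\bm\mu(\bm x_0^{\text{\ding{73}}})}$ weight bookkeeping) with the broken two-level counts computing $\mathfrak{c}_{\text{\ding{73}} \to -} \circ \mathfrak{c}_{+ \to \text{\ding{73}}}$, and invoke Lemma~\ref{lemma-cont-independent} to identify the glued datum's map with $\mathfrak{c}_{+ \to -}$. This matches the paper's argument, so the proposal is correct and not substantively different.
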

\begin{proof}
    Let $(\tilde{K}^+, \tilde{J}^+)$ and $(\tilde{K}^-, \tilde{J}^-)$ be Floer data defining continuation maps $\mathfrak{c}_{+ \to \text{\ding{73}}}$ and  $\mathfrak{c}_{\text{\ding{73}} \to -}$ respectively.
    We pick the following collection of Floer data $(\tilde{K}^R, \tilde{J}^R)$ for $R>0$ on all $\hurwitzaction$, satisfying that for any $(S, \pi) \in \hurwitzaction$:
\begin{enumerate}
      \item $\tilde{K}^R_{(S, \pi)}(z)=\tilde{K}^+_{(S, \pi^{-R})}(z)$ for $z \in S$ such that $\pi_\R(z) \ge 0$ where $\pi^{-R}=\pi+(-R, 0)$; similarly $\tilde{K}^R_{(S, \pi)}(z)=\tilde{K}^-_{(S, \pi^R)}(z)$ for $z \in S$ such that $\pi_\R(z) \le 0$, where $\pi^R=\pi+(R,0)$;
    \item $\tilde{J}^R_{(S, \pi)}(z)=\tilde{J}^+_{(S, \pi^{-R})}(z)$ for $z \in S$ such that $\pi_\R(z) \ge 0$; similarly $\tilde{J}^R_{(S, \pi)}(z)=\tilde{J}^-_{(S, \pi^R)}(z)$ for $z \in S$ such that $\pi_\R(z) \le 0$. 
\end{enumerate}
We notice that for $z$ satisfying $-R \le \pi_{\R}(z) \le R$ the Floer data as above coincides with some $(K^{\text{\ding{73}}}, J^{\text{\ding{73}}})$ defining differential on $SC_\kappa^*(\hat{M}, \bm H^{\text{\ding{73}}})$.

The data $(\tilde{K}^R, \tilde{J}^R)$ also defines the continuation map $\mathfrak{c}_{+ \to -}$ on cohomology. Now we fix $\bm x^\pm \in SC_\kappa^*(\hat{M}, \bm H^\pm)$ and consider a pair $(u^+, u^-)$ with 
$$u^+ \in \mathscr{K}^{\chi_1}(\bm x^+,\,  \bm x^{\text{\ding{73}}}_0;\tilde{K}^+, \tilde{J}^+) \text{ and } u^- \in \mathscr{K}^{\chi_2}( \bm x^{\text{\ding{73}}}_0,\,  \bm x^-; \tilde{K}^-, \tilde{J}^-), \text{ where}$$ 
$$(n-2)\chi_1+|\bm x'_0|-|\bm x^+|=(n-2)\chi_2+|\bm x^-|-|\bm x^{\text{\ding{73}}}_0|=0.$$
We claim then that for $R \gg 0$ there is a glued curve
$$u^+ \#_R u^- \in \mathscr{K}^{\chi}(\bm x^+,  \bm x^-; \tilde{K}^R, \tilde{J}^R),$$ where $\chi=\chi_1+\chi_2$. Moreover, we claim that for $R \gg 0$ all elements of $|\mathscr{K}^{\chi}(\bm x^+,\,  \bm x^-;\tilde{K}^R, \tilde{J}^R)|$ are obtained in this fashion. The only nuance is that this map is not injective, and the curve $u^+ \#_R u^-$ as above is obtained in $N_{\bm \mu (\bm x'_0)}$ many ways in the same way as in the description of boundary degenerations in Theorem~\ref{theorem-cpt-moduli}. Therefore, there is a bijection between elements of the finite set $|\mathscr{K}^{\chi}(\bm x^+,  \bm x^-; \tilde{K}^R, \tilde{J}^R)|$ taken with weights as above and

$$ \bigsqcup_{\chi_1+\chi_2=\chi; \, \bm x'_0 \in SC^*_\kappa(M, \bm H')}  |\mathscr{K}^{\chi_1}(\bm x^+,\,  \bm x^{\text{\ding{73}}}_0;\tilde{K}^+, \tilde{J}^+)| \times |\mathscr{K}^{\chi_2}(\bm x^{\text{\ding{73}}}_0, \bm x^-; \tilde{K}^-, \tilde{J}^-)|.$$

This implies
\begin{equation}\label{eq-composition}
    \mathfrak{c}_{+ \to -} = \mathfrak{c}_{\text{\ding{73}} \to -} \circ \mathfrak{c}_{+ \to \text{\ding{73}}}.
\end{equation}
\vskip-.2in
\end{proof}

\begin{corollary}
    For $\bm H$ and $\bm H^{\text{\ding{73}}}$ of the same slope, their symplectic cohomologies are isomorphic via the continuation map:
$$\mathfrak{c} \colon SH_\kappa^*(\hat{M}, \bm H) \xrightarrow{\cong} SH_\kappa^*(\hat{M}, \bm H^{\text{\ding{73}}}).$$
\end{corollary}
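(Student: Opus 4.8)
The plan is to deduce the corollary from the composition relation in Lemma~\ref{composition-lemma} together with the independence statement in Lemma~\ref{lemma-cont-independent}. The key observation is that for two tuples $\bm H$ and $\bm H^{\text{\ding{73}}}$ of the \emph{same} slope $b$, the relation $\preceq$ holds in both directions, i.e. $\bm H \preceq \bm H^{\text{\ding{73}}}$ and $\bm H^{\text{\ding{73}}} \preceq \bm H$, since the defining condition only involves the inequality of slopes $b^+ \le b^-$. Hence both continuation maps $\mathfrak{c}_{\bm H \to \bm H^{\text{\ding{73}}}}$ and $\mathfrak{c}_{\bm H^{\text{\ding{73}}} \to \bm H}$ are defined.

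First I would apply Lemma~\ref{composition-lemma} to the chain $\bm H \preceq \bm H^{\text{\ding{73}}} \preceq \bm H$, which yields
\begin{equation}
  \mathfrak{c}_{\bm H^{\text{\ding{73}}} \to \bm H} \circ \mathfrak{c}_{\bm H \to \bm H^{\text{\ding{73}}}} = \mathfrak{c}_{\bm H \to \bm H}
\end{equation}
on cohomology. By Lemma~\ref{lemma-cont-independent} the right-hand side is independent of the choice of continuation Floer data; choosing the ($s$-independent) data induced by the differential itself, the continuation moduli spaces $\mathscr{K}^0(\bm x, \bm x'; \tilde K, \tilde J)$ reduce to the $\chi = 0$ part of $\modulifloerbr$, i.e. the moduli of Floer cylinders with no branching, and higher $\chi$ contributions vanish after passing to the $\R$-quotient, so $\mathfrak{c}_{\bm H \to \bm H}$ is (chain homotopic to) the identity on $SC^*_\kappa(\hat M, \bm H)$. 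Symmetrically, applying Lemma~\ref{composition-lemma} to $\bm H^{\text{\ding{73}}} \preceq \bm H \preceq \bm H^{\text{\ding{73}}}$ gives $\mathfrak{c}_{\bm H \to \bm H^{\text{\ding{73}}}} \circ \mathfrak{c}_{\bm H^{\text{\ding{73}}} \to \bm H} = \operatorname{id}$ on $SH^*_\kappa(\hat M, \bm H^{\text{\ding{73}}})$. Therefore $\mathfrak{c}_{\bm H \to \bm H^{\text{\ding{73}}}}$ is an isomorphism with inverse $\mathfrak{c}_{\bm H^{\text{\ding{73}}} \to \bm H}$.

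The step I expect to require the most care is the identification of $\mathfrak{c}_{\bm H \to \bm H}$ with the identity: one must check that for the constant-slope interpolation one can choose continuation data that is genuinely $s$-independent (so that the only rigid solutions are the trivial cylinders over orbits, contributing $+1$ with the correct sign to the orientation lines, and no $\chi < 0$ curves appear in the relevant index-zero moduli space once we quotient by the $\R$-translation), and that this choice is admissible in the sense required by Lemma~\ref{lemma-cont-independent}. This is the standard argument in symplectic cohomology (see \cite[Section~1.6]{abouzaid2015symplectic}), and the branched-groupoid bookkeeping introduces no new difficulty here since the $\chi = 0$ stratum of $\hurwitzbr$ carries weight $1$ and trivial morphisms; the weights $1/N_{\bm \mu}$ attached to the branched strata only matter for $\chi < 0$, which do not contribute. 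The remaining verifications — that the composition and homotopy identities descend correctly through the $\hbar$-filtration of $SC^*_{\kappa,\mathrm{psym}}$ and that all signs match — are routine and parallel to the proof of Theorem~\ref{theorem-d2}.
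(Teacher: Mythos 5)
Your proposal is correct and is essentially the argument the paper intends: since the preorder only compares slopes, both $\mathfrak{c}_{\bm H \to \bm H^{\text{\ding{73}}}}$ and $\mathfrak{c}_{\bm H^{\text{\ding{73}}} \to \bm H}$ exist, Lemma~\ref{composition-lemma} identifies their compositions with the self-continuation maps, and Lemma~\ref{lemma-cont-independent} lets one compute those with translation-invariant data, where the only rigid solutions are trivial cylinders (any $\chi<0$ solution would move in an $\R$-family because of its branch points), giving the identity on cohomology. This matches the paper's (implicit) proof, so no further comment is needed.
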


Lemmas~\ref{lemma-cont-independent} and~\ref{composition-lemma} allow one to define Heegaard Floer symplectic homology as a direct limit:
\begin{equation}\label{linear-ham-defn}
    SH_{\kappa, \mathrm{psym}}^*(\hat{M}) = \lim_{\mathfrak{c}} SH_\kappa^*(\hat{M}, \bm H).
\end{equation}
At last, one may restrict to a sequence $\{\bm H^m\}_{m \in \mathbb{N}}$ with unbounded slopes, i.e.,

\begin{lemma}\label{lemma-limit-sequence}
    For any sequence of tuples of linear Hamiltonians $\bm H^m$ with slopes $b_k$ satisfying $\lim_{k \to \infty} b_k=+\infty$, there is an isomorphism 
    \begin{equation}
        \lim_{i \to \infty} SH_\kappa^*(\hat{M}, \bm H^i)=SH_\kappa^*(\hat{M}).
    \end{equation}
\end{lemma}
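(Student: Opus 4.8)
The plan is to show that the subdirected system $\{\bm H^i\}_{i\in\mathbb N}$ is cofinal in the full directed system of all tuples of linear Hamiltonians ordered by $\preceq$, and then invoke the standard fact that a direct limit over a directed set is computed by any cofinal subsystem. Concretely, recall that $SH_{\kappa,\mathrm{psym}}^*(\hat M)$ was defined in~\eqref{linear-ham-defn} as $\varinjlim_{\mathfrak c} SH_\kappa^*(\hat M,\bm H)$ over \emph{all} tuples $\bm H$ of linear Hamiltonians, with the preorder $\bm H^+\preceq\bm H^-$ meaning $b^+\le b^-$ on slopes (Definition~\ref{preorder}), and with the transition maps given by the continuation maps $\mathfrak c$, which are functorial by Lemma~\ref{composition-lemma} (equation~\eqref{eq-composition}) and independent of auxiliary data by Lemma~\ref{lemma-cont-independent}.

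The first step is to check that $\{\bm H^i\}$ is indeed cofinal: given any tuple $\bm H$ of linear Hamiltonians of slope $b$, since $\lim_{k\to\infty}b_k=+\infty$ there exists $i$ with $b_i\ge b$, hence $\bm H\preceq\bm H^i$, so every element of the big system is dominated by some element of the subsequence. The second step is to observe that $\{\bm H^i\}$ is itself directed under $\preceq$: for $\bm H^i,\bm H^j$ with $i\le j$ we may not have $b_i\le b_j$ a priori (the sequence $b_k$ need only tend to $+\infty$, not be monotone), but for any two indices $i,j$ we can pick $\ell$ large enough that $b_\ell\ge\max(b_i,b_j)$, giving $\bm H^i\preceq\bm H^\ell$ and $\bm H^j\preceq\bm H^\ell$. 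Combined with Lemma~\ref{composition-lemma}, which guarantees that the continuation maps among the $\bm H^i$ commute with those in the ambient system, this makes $\varinjlim_i SH_\kappa^*(\hat M,\bm H^i)$ a well-defined colimit over a directed set that maps compatibly into $SH_{\kappa,\mathrm{psym}}^*(\hat M)$.

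The final step is the cofinality argument proper. One constructs the comparison map $\varinjlim_i SH_\kappa^*(\hat M,\bm H^i)\to SH_{\kappa,\mathrm{psym}}^*(\hat M)$ from the inclusion of systems, and an inverse as follows: given a class in $SH_{\kappa,\mathrm{psym}}^*(\hat M)$ represented by some $\alpha\in SH_\kappa^*(\hat M,\bm H)$, choose $i$ with $\bm H\preceq\bm H^i$ and send it to $\mathfrak c(\alpha)\in SH_\kappa^*(\hat M,\bm H^i)$; Lemma~\ref{composition-lemma} shows this is well defined on the colimit and independent of the choice of $i$ (two choices $i,i'$ are reconciled by passing to a common upper bound $\ell$). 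That these two maps are mutually inverse is immediate from the composition relation~\eqref{eq-composition}. This is a purely formal manipulation of the directed colimit once the three structural inputs — directedness of the subsequence, cofinality in the full system, and functoriality/independence of $\mathfrak c$ (Lemmas~\ref{lemma-cont-independent} and~\ref{composition-lemma}) — are in place, so there is no real analytic obstacle here; the only mild subtlety, which is the point to be careful about, is that the $b_k$ are not assumed monotone, so one must everywhere pass to a sufficiently large index rather than simply to the next one.
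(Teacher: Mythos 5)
Your proposal is correct and is essentially the argument the paper intends: the paper omits the proof and refers to \cite[Lemma 1.6.17]{abouzaid2015symplectic}, which is exactly this cofinality argument for a sequence of slopes tending to $+\infty$, using the functoriality of the continuation maps (Lemma~\ref{composition-lemma}) and their independence of choices (Lemma~\ref{lemma-cont-independent}). Your care about the non-monotonicity of the slopes $b_k$ (passing to a sufficiently large index rather than the next one) is the right touch, and nothing further is needed.
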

\begin{proof}
    We omit the proof, which follows the lines of \cite[Lemma 1.6.17]{abouzaid2015symplectic}.
\end{proof}
\begin{theorem}\label{theorem-linear-to-quadratic}
    Given a tuple $ \bm H^{\mathrm{tot}} \in {\mathcal{ \bm H}}_{S^1}^\kappa(\hat{M})$ and $J_t \in \mathcal{J}(\hat{M})$ , there is an isomorphism
    \begin{equation}
        SH_{\kappa, \mathrm{psym}}^*(\hat{M}; H^{\mathrm{tot}}, J) \cong SH_{\kappa, \mathrm{psym}}^*(\hat{M}).
     \end{equation}
\end{theorem}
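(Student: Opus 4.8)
The plan is to show that the quadratic-at-infinity model $SH^*_{\kappa,\mathrm{psym}}(\hat M; H^{\mathrm{tot}}, J)$ computes the same homology as the direct limit $SH^*_{\kappa,\mathrm{psym}}(\hat M)$ defined via linear Hamiltonians in~\eqref{linear-ham-defn}. This is the Heegaard Floer analogue of the standard fact (see \cite{seidel2006biased, abouzaid2015symplectic}) that symplectic cohomology defined by a single Hamiltonian quadratic at infinity agrees with the direct limit over linear Hamiltonians of increasing slope. By Lemma~\ref{lemma-limit-sequence} it suffices to work with a cofinal sequence $\{\bm H^k\}$ of tuples of linear Hamiltonians with slopes $b_k \to +\infty$, so the goal becomes: construct a chain-level filtered comparison between $SC^*_{\kappa,\mathrm{psym}}(\hat M; H^{\mathrm{tot}}, J)$ and the telescope of $\{SC^*_{\kappa,\mathrm{psym}}(\hat M; \bm H^k, J_t)\}$ that induces an isomorphism on homology.

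First I would introduce the natural action filtration on both complexes: $SC^*_{\kappa,\mathrm{psym}}(\hat M; H^{\mathrm{tot}},J)$ is filtered by the total action $\mathcal{A}(\bm x)$ from~\eqref{action-psym}, and since all the moduli spaces $\modulifloerbr$ consist of solutions to a perturbed Cauchy--Riemann equation whose energy is controlled by the action difference (via \cite[Theorem A.1]{ganatra2012symplectic}, already invoked in the proof of Theorem~\ref{theorem-cpt-moduli}), the differential $d$ decreases $\mathcal{A}$, and likewise for each $\bm H^k$-complex and for the continuation maps $\mathfrak c$. For a fixed action window $(-\infty, a]$, the truncated complex $SC^{*, \le a}_{\kappa,\mathrm{psym}}(\hat M; H^{\mathrm{tot}},J)$ sees only tuples $\bm x$ whose orbits lie in a fixed compact region of $\hat M$ (again by the action estimate), on which $H^{\mathrm{tot}}_0 = r^2/2$ agrees with a linear Hamiltonian of slope $b_k$ up to a function supported away from that region, once $b_k$ is large enough. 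The key algebraic input is then the standard ``telescope vs. direct limit'' comparison: since the continuation maps form a directed system by Lemma~\ref{composition-lemma} and are independent of auxiliary choices by Lemma~\ref{lemma-cont-independent}, one has $SH^*_{\kappa,\mathrm{psym}}(\hat M) = \varinjlim_k SH^*_\kappa(\hat M, \bm H^k)$, and one must match this colimit with the homology of the quadratic model.

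The heart of the argument is to build, for each $k$, a chain map $\Phi_k \colon SC^*_\kappa(\hat M, \bm H^k, J_t) \to SC^*_{\kappa,\mathrm{psym}}(\hat M; H^{\mathrm{tot}}, J)$ and to check that these are compatible with the continuation maps $\mathfrak c_{k \to k+1}$ up to chain homotopy, so that they assemble into a map from the colimit; and then to produce a homotopy inverse. Concretely, $\Phi_k$ is defined by counting solutions on branched covers (i.e.\ on $\hurwitzactionbr$ for $\chi=0$ together with the branched pieces for $\chi<0$) associated with a monotone homotopy of Floer data interpolating from the linear datum adapted to $\bm H^k$ at $s \to -\infty$ to the quadratic datum $(H^{\mathrm{tot}}, J)$ at $s \to +\infty$; the analysis of these moduli spaces --- regularity, a priori $C^0$ bounds on the $\hat M$-component via the maximum principle / integrated maximum principle of \cite[Appendix A]{ganatra2012symplectic}, and the compactness and boundary description on the branched manifold --- is entirely parallel to the proofs of Theorems~\ref{transversality-theorem} and~\ref{theorem-cpt-moduli}, including the same weighting by $1/N_{\bm \mu(\bm x')}$ and the same Kuranishi replacements for $n \ge 3$. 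Running the homotopy in the opposite direction, together with a continuation-type homotopy of homotopies, yields that $\Phi_k$ and a candidate inverse $\Psi_k$ compose to the respective continuation maps, which are quasi-isomorphisms; a colimit-of-homotopy-equivalences argument then gives the claimed isomorphism on homology.

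The main obstacle I expect is not the algebra of direct limits but the analytic control of the interpolating moduli spaces in the quadratic-at-infinity regime: unlike for linear Hamiltonians, the orbits and curves for $H^{\mathrm{tot}}_0 = r^2/2$ need not a priori lie in a compact set, so one must invoke the $C^0$-estimates of \cite[Theorem A.1]{ganatra2012symplectic} and the weak monotonicity condition~\ref{def-floer-datum2} built into the Floer data on $\hurwitzsmbr$ to confine the $\hat M$-projections of the branched curves --- and one must do this uniformly over the branched manifold $\hurwitzsmbr$, including across the collar neighborhoods~\eqref{hurwitz-collar-nbhd} where necks degenerate, exactly as in the proof of Theorem~\ref{theorem-cpt-moduli}. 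A secondary technical point is verifying that the interpolation data can be chosen consistently with the wnbc groupoid structure (so that the boundary strata of the 1-dimensional interpolation moduli spaces, with their weights $1/N_{\bm \mu}$, match the composites $\mathfrak c \circ d$, $d \circ \Phi_k$, etc.\ with the correct signs via Lemma~\ref{orientation-lemma}); this is bookkeeping of the same type already carried out in Theorems~\ref{theorem-d2} and in the continuation-map lemmas above, so I would state it and refer back rather than repeat it.
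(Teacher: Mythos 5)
Your overall scaffolding (reduce to a cofinal sequence of linear tuples via Lemma~\ref{lemma-limit-sequence}, filter by the action~\eqref{action-psym}, use the $C^0$-estimates of \cite[Theorem A.1]{ganatra2012symplectic} for compactness, and build the map $\Phi_k \colon SC^*_\kappa(\hat M,\bm H^k) \to SC^*_{\kappa,\mathrm{psym}}(\hat M;\bm H^{\mathrm{tot}},J)$ by a monotone interpolation on the branched Hurwitz data) matches the paper's strategy, but the step where you produce the comparison in the other direction has a genuine gap. You propose a ``candidate inverse'' $\Psi_k$ obtained by ``running the homotopy in the opposite direction.'' This map does not exist: a continuation-type count from the quadratic-at-infinity datum at the input to a finite-slope linear datum at the output requires an interpolation whose slope at infinity \emph{decreases} in the output direction, which violates the weak monotonicity needed for the maximum-principle/energy estimates, so the relevant moduli spaces are not compact and no chain map is defined. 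Moreover, even granting such maps, your assertion that the composites are ``the respective continuation maps, which are quasi-isomorphisms'' is false: continuation maps between tuples of \emph{different} slopes are generally not quasi-isomorphisms (only equal-slope continuations are, as in the Corollary after Lemma~\ref{composition-lemma}); they are merely the structure maps of the directed system, so a ``colimit of homotopy equivalences'' argument cannot be run this way.

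The paper's proof avoids this entirely by an interleaving argument with action truncations (following \cite[Appendix 3]{ritter2013}): one takes $\bm H^k$ equal to $\bm H^{\mathrm{tot}}$ outside $(k,+\infty)\times\partial M$ and linear of slope $k$ beyond, and observes that the subcomplex $SC^*_\kappa(\bm H^{\mathrm{tot}};\mathcal A>c)$ is, for $k'\gg -c$, \emph{tautologically} a subcomplex of $SC^*_\kappa(\bm H^{k'})$ --- the reverse-direction map is an inclusion at the level of orbit tuples, requiring no moduli-space construction at all. One then checks that the composite $SH^*_\kappa(\bm H^k)\to SH^*_\kappa(\bm H^{\mathrm{tot}};\mathcal A>c)\to SH^*_\kappa(\bm H^{k'})$ is the continuation map and that the composite in the other order is the natural inclusion of truncations, and concludes by passing to the limits $k\to\infty$ and $c\to-\infty$, using $SH^*_\kappa(\bm H^{\mathrm{tot}})=\lim_{c\to-\infty}SH^*_\kappa(\bm H^{\mathrm{tot}};\mathcal A>c)$ together with Lemma~\ref{lemma-limit-sequence}. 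To repair your argument, replace the nonexistent $\Psi_k$ with this tautological inclusion of action-truncated subcomplexes and run the interleaving; the rest of your outline (regularity, weights $1/N_{\bm\mu}$, orientations via Lemma~\ref{orientation-lemma}, Kuranishi replacements for $n\ge 3$) then goes through as you describe.
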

\begin{proof}[Sketch of the proof.]
Here, we only sketch the proof and refer the reader to a more detailed treatment in the case of $\kappa=1$ to \cite[Appendix 3]{ritter2013}.
    Set $\bm H^k$ equal to the tuple of linear Hamiltonians of slope $k$ and equal to $\bm H^{\mathrm{tot}}$ on the complement of $(k, +\infty) \times \partial M \subset \hat{M}$. Let us denote by $SC_\kappa^*(\bm H^{\mathrm{tot}}; \mathcal{A}>c)$ the subcomplex of $SC_\kappa^*(\hat{M}; \bm H^{\mathrm{tot}})$ generated by tuples of orbits $\bm x$ with action $\mathcal{A}(\bm x)>c$. Note that for $c \ll -k$ we may naturally define a continuation map by interpolating between $\bm H^{\mathrm{tot}}$ and $\bm H^k$ which on the level of cohomology takes the form
$$SH_\kappa^*(\bm H^k)\to SH_\kappa^*(\bm H^{\mathrm{tot}}; \mathcal{A}>c).$$
    Additionally, for $k' \gg -c$, there is a natural cochain map induced by inclusion at the level of tuples of orbits
$$SC_\kappa^*(\bm H^{\mathrm{tot}}; \mathcal{A}>c) \to SC_\kappa^*(\bm H^{k'}).$$
Composing the above for $k' \gg -c \gg k$ we get
\begin{equation}
    SH_\kappa^*(\bm H^k) \rightarrow SH_\kappa^*(\bm H^{\mathrm{tot}} ; \mathcal{A}>c) \rightarrow SH_\kappa^*(\bm H^{k^{\prime}})
\end{equation} 
and clearly, this composition is the continuation map.

    On the other hand, for $-c' \gg k' \gg -c$ the composition of maps
\begin{equation}
    SC_\kappa^*(\bm H^{\mathrm{tot}}; \mathcal{A}>c) \rightarrow SC_\kappa^*(H^{k^{\prime}}) \rightarrow SC_\kappa^*(\bm H^{\mathrm{tot}} ; \mathcal{A}>c^{\prime})
\end{equation}
is the natural inclusion. The claim then follows by Lemma~\ref{lemma-limit-sequence} and since
$$SH_\kappa^*(\bm H^{\mathrm{tot}})= \lim_{c \to -\infty} SH_\kappa^*(\bm H^{\mathrm{tot}}; \mathcal{A}>c).$$
\vskip -.2in
\end{proof}

   As a corollary, we get that $SH^*_{\kappa, \mathrm{psym}}(\hat{M}; \bm H, J_t; (H, F, J))$ is independent of the choice of $\bm H^{\mathrm{tot}}\in{\mathcal{ \bm H}}_{S^1}^\kappa(\hat{M})$ and $J_t \in \mathcal{J}(\hat{M})$ and the choice of Floer data $(H, F, J)$ that we made in our construction. It is also well known that the standard symplectic cohomology $SH^*_{\kappa=1}(\hat{M})$ defined as a direct limit is an invariant under \emph{Liouville isomorphisms}; see \cite[Section 3]{seidel2006biased}, \cite[Theorem 8]{ritter2010}. The same argument works for any $\kappa$ and is similar to the one we provide in the proof of Theorem~\ref{theorem-linear-to-quadratic}. Hence we conclude that Theorem~\ref{thm-main} holds true for $SH^*_{\kappa, \mathrm{psym}}(M)$.

\subsection{Heegaard Floer symplectic cohomology with unsymmetrized orbit tuples}
\label{section-hamiltonian-unsym}

In this section, we give another version of Heegaard Floer symplectic cohomology $SC_{\kappa, \mathrm{unsym}}(\hat{M})$ which we expect to produce closely related chain complex to $SC_{\kappa, \mathrm{psym}}(\hat{M})$ even at the level of cohomology. On the chain level $SC_{\kappa, \mathrm{unsym}}(\hat{M})$ will have more generators in every degree than $SC_{\kappa, \mathrm{psym}}(\hat{M})$.

First, we fix some notation. For a given permutation $\sigma \in \mathfrak{S}_\kappa$ we assign its cycle decomposition $\sigma=c_1(\sigma) \circ \dots \circ c_{ l(\sigma)}(\sigma)$, with $\bm \mu (\sigma) = (\mu_1(\sigma), \ldots, \mu_{ l(\sigma)}(\sigma))$ being a cycle type (i.e., $\mu_i(\sigma)$ is the length of cycle $c_i(\sigma)$) of $\sigma$ and $ l(\sigma)$ denotes the length of the partition $\bm \mu (\sigma)$. We apply here the following order on cycles: we say that the cycle $c > c'$ for two cycles $c, c' \in \mathfrak{S}_\kappa$ if $c$ is longer than $c'$ or if the least element permuted by $c$ is smaller than the least element permuted by $c'$. For example, $c=(531)>c'=(246)$ since $1<2$.

Note that for a given permutation $\sigma \in \mathfrak{S}_\kappa$ the above relation provides a total order on the elements of the cyclic decomposition of $\sigma$ and $c_i(\sigma)$ in the above is assumed to be the $i$-the largest cycle in the decomposition of $\sigma$ under this relation. 

Let $H_0 \in \mathcal{H}(\hat{M})$ and $F_0 \in C^{\infty}(S^1 \times \hat{M})$ satisfy~\ref{function-perturbation-properties1} and~\ref{function-perturbation-properties2}.

We then choose $\kappa$ different functions $F_i\colon S^1\times \hat{M}\to\mathbb{R}$ for $i=1,\ldots,\kappa$ satisfying
\begin{enumerate}[(H1')]
    \item \label{hamiltonian-properties-unsym-1} each $F_i \colon \R / \Z \times \hat{M} \to \R$ is a non-negative function such that $|F_i-F_0|$ is absolutely bounded by a small constant and vanishes away from the locus of Hamiltonian orbits of $H_0^{\mathrm{tot}}=H_0+F_0$ of all orbits of periods not greater than $\kappa$, and for $t\in[0,\delta]\cup[1-\delta,1]\subset S^1$ it holds that $F_0(t)=F_i(t)$. We then denote $H_i^{\mathrm{tot}}=H_0+F_i$ for $i=1,\dots,\kappa$.
    \item \label{hamiltonian-properties-unsym-2} for any cyclic permutation $c=(j_1\cdots j_k) \in \mathfrak{S}_\kappa$ we define $F_c$ to be the function $F_c \colon \R/ k\Z \times \hat{M} \to \R$ such that $F_c(i-1+t)=F_{j_i}(t)$ for $t \in [0,1)$ and $i\in \Z$ such that $1 \le i \le k$ (note that $F_c$ is smooth by~\ref{hamiltonian-properties-unsym-1}); moreover, it is required that any time-$k$ orbit of $H_c^{\mathrm{tot}}=H_0+F_c$ is non-degenerate and lies in a small neighborhood of the corresponding time-$k$ orbit of $H_0^{\mathrm{tot}}$;
    \item \label{hamiltonian-properties-unsym-3} Hamiltonian time-$k$ orbits for different $H^{\mathrm{tot}}_c$'s are disjoint and embedded.
\end{enumerate}
We leave it to the reader to verify that such collections $\{F_i\}_{i=1}^\kappa$ exist and are sufficiently generic (see also Section~\ref{section-hamiltonian-partsym}).

Consider the set $\mathcal{P}_{\mathrm{unsym}}$ of ordered $\kappa$-tuples of Hamiltonian chords defined as
\begin{align}
    \label{eq-paths-unsym}
    \mathcal{P}_{\mathrm{unsym}}=\left\{\bm x=(x_1,\ldots,x_\kappa)\left|
        \begin{array}{ll}
            \text{$x_i\colon[0,1]\to\hat{M}$ is the integral flow of $\phi^t_{H^{\mathrm{tot}}_i}$,}\\
            \text{$x_i(1)=x_{\sigma(i)}(0)$ for some permutation $\sigma\in \mathfrak{S}_\kappa$.}
        \end{array}
    \right.
    \right\}
\end{align}

 For each $\bm x \in \mathcal{P}_{\mathrm{unsym}}$ we would denote the associated permutation by $\sigma(\bm x) \in \mathfrak{S}_\kappa$. For a given $\bm x$ with $\sigma=\sigma(\bm x)$ to each cycle $c_i(\sigma)$ with $i=1, \ldots,  l(\sigma)$  we associate a loop $x^{c_i(\sigma)}$ in $\hat{M}$ formed of chords $x_j$ with $j$ such that $c_i(\sigma)(j) \neq j$. Clearly, $x^{c_i(\sigma)}$ is a Hamiltonian orbit with respect to $H^{\mathrm{tot}}_{c_i(\sigma)}$. See Figure \ref{fig-generator} for an example.
We would also use the notation $\ell(\bm x)$ for $ l(\sigma)$.
 
\begin{figure}
    \centering
    \includegraphics[width=10cm]{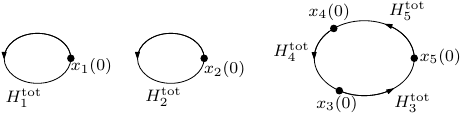}
    \caption{A $\kappa$-tuple ($\kappa=5$) of Hamiltonian chords $\bm$ as an element of $\mathcal{P}_{\mathrm{unsym}}$, where $H^{\mathrm{tot}}_i$'s stand for perturbed Hamiltonian functions. The permutation associated with this generator is given by $(1)(2)(354)$ in the cyclic form and $\bm x$ is comprised of loops $x_{(1)}, x_{(2)}, x_{(354)}$. 
  }
    \label{fig-generator}
\end{figure}

\begin{remark}
    Notice that the presentation above allows one to view $\bm x$ as a loop in the configuration space $\mathrm{Conf}_\kappa(\hat{M})$.
\end{remark}

As in the partially symmetrized case, to an orbit $x^{c_i(\sigma)}$ we associate a differential operator
\begin{equation}\label{loop-operator-unsym}
    D_{x^{c_i(\sigma)}} \colon W^{1,2}(\C, \C^{n}) \to L^2 (\C, \C^n)
\end{equation}
 following \cite[Section C.6]{abouzaid2010geometric}, the orientation line of $x^{c_i(\sigma)}$ 
 \begin{equation}
     o_{x^{c_i(\sigma)}} = \text{det}(D_{x^{c_i(\sigma)}})
 \end{equation}
and the \emph{orientation line} $o_{\bm x}$ of $\bm x \in \mathcal{P}_\mathrm{unsym}$ is given as follows
     \begin{equation}\label{orientation-eq-unsym}
         o_{\bm x} \coloneqq o_{x^{c_1(\sigma)}} \otimes \dots \otimes o_{x^{c_{ l(\sigma)}(\sigma)}}.
     \end{equation}
Regarding $o_{\bm x}$ as a $\Z$-module we set
    \begin{equation}\label{orientation-rational-eq-unsym}
        o_{\bm x}^{\Q} = \Q \otimes_{\Z}o_{\bm x}.
   \end{equation}
Note that each orbit $x^{c_i(\sigma)}$ has a well-defined Conley-Zehnder index $CZ(x^{c_i(\sigma)})$.
We then define the grading of $\bm x$ by
\begin{equation}\label{grading-unsym}
    |\bm x|=\kappa n-CZ(x^{c_1(\sigma)})-\dots-CZ(x^{c_{ l(\sigma)}(\sigma)}).
\end{equation}

We set 
\begin{equation}  
V_{\kappa, \mathrm{unsym}} = \bigoplus_{\bm x \in \mathcal{P}_{\mathrm{psym}}}o_{\bm x}^{\Q}.
\end{equation}
It is a graded vector space with $$V^i_{\kappa, \mathrm{unsym}}=\bigoplus_{|\bm x|=i}o_{\bm x}^{\Q}.$$
Consider a formal variable $\hbar$ with grading $|\hbar|=2-n$. 

The Heegaard Floer symplectic cochain complex in the unsymmetrized case then has as its underlying graded abelian group
\begin{equation}\label{HFSH-graded-abelian}
    SC_{\kappa, \mathrm{unsym}}^*(\hat{M}) = \bigoplus_{m \in \Z}SC_{\kappa, \mathrm{unsym}}^m(\hat{M})
\end{equation}
with
\begin{equation}
SC_{\kappa, \mathrm{unsym}}^m(\hat{M}) = \sum_{i+(2-n)j=m}V^i_{\kappa, \mathrm{psym}}\hbar^j=\sum_{i+(2-n)j=m}V^i_{\kappa, \mathrm{psym}}[(n-2)j].
\end{equation}

The group $SC_{\kappa, \mathrm{unsym}}^m(\hat{M})$ may be regarded as a graded vector subspace of $V_{\kappa, \mathrm{unsym}}\llbracket\hbar\rrbracket$.
We notice that $SC^*_{\kappa, \mathrm{unsym}}$ is a graded $\mathbb{Q}[h]$-module (but it is not a $\mathbb{Q}\llbracket\hbar\rrbracket$-module). In particular, multiplication by $\hbar$ is a $(2-n)$-degree map on $SC^*_{\kappa, \mathrm{psym}}(\hat{M})$.

For each $\kappa$-tuple of chords $\bm x=(x_1,\ldots,x_\kappa)\in \mathcal{P}_{\mathrm{unsym}}$, we define the action of $\bm x$ via
\begin{equation}\label{action}
    \mathcal{A}(\bm x)=\sum^\kappa_{i=1}\mathcal{A}(x_i),
\end{equation}
where the action of chord $x_i$ is given by the expression
\begin{equation}
    \mathcal{A}(x_i)=\int^1_0 x_i^*\lambda-\int^1_0 H^{\mathrm{tot}}_i(t,x_i(t))dt.
\end{equation}

Now we outline how to define a differential on $SC^*_{\kappa, \mathrm{unsym}}(\hat{M})$ following the strategy we applied in previous sections to $SC^*_{\kappa, \mathrm{psym}}(\hat{M})$.

Pick two permutations $\sigma, \sigma' \in \mathfrak{S}_\kappa$.
\begin{definition}\label{def-hurwitz-space-unsym}
    The \emph{Hurwitz space} $\hurwitzactionunsym$ is the space of ismorphism classes \emph{branched covers}, where a branched cover
    \begin{equation*}
        (S, \pi \colon S~\longrightarrow~\R ~\times~S^1, \bm p^+, \bm a^+, \bm p^-, \bm a^-, B)
    \end{equation*}
    is the following collection of data: 
    \begin{enumerate}
        \item A Riemann surface $S$ without boundary of Euler characteristic $\chi$ punctured at positive punctures $\bm p^+=\{p^+_1, \ldots, p^+_{l(\bm \mu)}\}$ and negative punctures $\bm p^-=\{p^-_1, \ldots, p^-_{l(\bm \mu ')}\}$.
        \item For each positive puncture $p_i^+$  we pick $\mu_i(\sigma)$ asymptotic markers $a_{i1}^+, \ldots, a_{i\mu_i(\sigma)} \in \mathcal{S}^1_{p_i^+}S$. We assume the total order on the positive asymptotic markers $a_1^+, \ldots, a_\kappa^+$ such that the asymptotic markers belonging to $\mathcal{S}^1_{p_i^+}S$ appear on this circle in the order prescribed by the cycle $c_i(\sigma)$ (note that $\mathcal{S}^1_{p_i^+}S$ posses the induced orientation from $S)$. There is an analogous choice of negative asymptotic markers $a_1^-, \ldots, a_\kappa^-$ satisfying similar conditions with respect to $\sigma'$.
        \item A holomorphic branched cover $\pi \colon S \longrightarrow \R \times S^1$ with simple branched points away from $\pm \infty$. At $+\infty$ (resp. at $- \infty$) the branching partial is given by $\bm \mu(\sigma)$ (resp. by $\bm \mu(\sigma')$), i.e., a point $p_i^+$ (resp. a point $p_i^-$) is a ramification point of multiplicity $\mu_i(\sigma)$ (resp. multiplicity $\mu_i(\sigma')$) of an extended map $\overline{\pi} \colon \overline{S} \longrightarrow S^2$. We additionally require that the map induced by $\pi$ from $\mathcal{S}^1_{p_i^+}\overline{S}$ to $\mathcal{S}^1_{+ \infty} S^2$ sends all associated with $p_i^+$ asymptotic markers to the marker $1 \in \mathcal{S}^1_{+ \infty} S^2$,  together with similar requirements for negative punctures of $S$.
        \item An ordered tuple $B$ of points $q_1, \ldots, q_b$ in $\R \times S^1$ of simple branching, where $b= - \chi$.
    \end{enumerate}

    Two branched covers $(S, \pi, \bm p^\pm, \bm a^\pm, B)$ and $(S', \pi', \bm p'^\pm, \bm a'^\pm, B')$ are said to be \emph{isomorphic} if there exists a biholomorphism $\alpha \colon \overline{S} \rightarrow \overline{S'}$ such that $\alpha(p^\pm_i)=p_i'^\pm$, $\alpha(a^\pm_i)=a_i'^\pm$ and $\pi' \circ \alpha = \pi$. 
\end{definition}

Without going into much detail, we claim that there is a compactification $\hurwitzcompunsym$ containing a subset $\hurwitzsmunsym \subset \hurwitzcompunsym$ of branched buildings with smooth levels, see Section~\ref{hurwitz-spaces}. Furthermore, one may introduce a branched structure $(\hurwitzsmbrunsym, \Lambda)$ on $\hurwitzsmunsym$ following the strategy of Section~\ref{branched-manifolds}. We highlight here one of the main differences.

Recall that a partition $\bm \mu \vdash \kappa$, which we regard as a cycle type, corresponds to the unique conjugacy class in $\mathfrak{S}_\kappa$. Then the analogue of codimension $1$ boundary degeneration described in Definition~\eqref{def-hurwitz-2-level} is denoted by
\begin{equation}\label{eq-hurwitz-unsym-2-level}
    {\mathcal{H}^{\sigma, \bm \mu}_{\kappa, \chi_1} \times_{\bm m} \mathcal{H}^{\bm \mu, \sigma'}_{\kappa, \chi_2} }.
\end{equation}
Essentially, it is a space of height $2$ buildings with levels glued along $l(\bm \mu)$ nodes with matching markers $\bm m$ and branching profile over the corresponding node given by $\bm \mu$. Then the analogue of the forgetful covering~\eqref{hurwitz-boundary-cover}, which is essential for the construction of $\hurwitzsmbr$, is given by
\begin{equation}\label{hurwitz-unsym-boundary-cover}
    bf_{\mathrm{unsym}}^1 \colon \bigsqcup_{\sigma'' \colon \bm \mu(\sigma'')=\bm \mu} {\mathcal{H}^{\sigma, \sigma''}_{\kappa, \chi_1} \times \mathcal{H}^{\sigma'', \sigma'}_{\kappa, \chi_2} } \to {\mathcal{H}^{\sigma, \bm \mu}_{\kappa, \chi_1} \times_{\bm m} \mathcal{H}^{\bm \mu, \sigma'}_{\kappa, \chi_2} }.
\end{equation}
We note that $bf_{\mathrm{unsym}}^1$ is a $\kappa!$-sheeted cover as for given matching $\bm m$ there are exactly $\kappa!$ ways to assign asymptotic markers with such matching (in the sense of~\eqref{eq-asymptotic-to-matching}). Then the weight function $\Lambda$ takes value $\frac{1}{\kappa!}$ on each element of ${\mathcal{H}^{\sigma, \sigma''}_{\kappa, \chi_1} \times \mathcal{H}^{\sigma'', \sigma'}_{\kappa, \chi_2} } \times (R, +\infty] \subset (\hurwitzsmbrunsym)^{\vee}_H.$

Following the lines of Section~\ref{section-floer-data} one may introduce consistent cylindrical ends and finite cylinders data on $\hurwitzbrunsym$. This may be further utilized to introduce consistent Floer data $(H,F, J)$ on $\hurwitzbrunsym$. We note that for any $[S, \pi] \in \hurwitzbrunsym$ the function $H^{\mathrm{tot}}_{(S, \pi)}=H_{(S,\pi)}+F_{(S, \pi)}$ restricts to $H^{\mathrm{tot}}_c$ on a given cylinder (away from the collar) with assigned cycle $c$.

Given such consistent Floer data and two tuples $\bm x, \bm x' \in \mathcal{P}_{\mathrm{unsym}}$ with $\sigma=\sigma(\bm x)$ and $\sigma'=\sigma(\bm x')$, we define $\modulifloerbractionunsym$ to be the set of pairs
\begin{equation}\label{pairs-unsym}
\Big( [(S, \pi, \bm p^{\pm}, \bm a^{\pm}, B)] \in \left(\hurwitzbrunsym \right)_0, \, u=(\pi', v): S \to \R \times S^1 \times \hat{M}\Big) 
\end{equation}
\noindent such that
\begin{equation}
\begin{dcases}
  [S, \pi']= [S, \pi];\\
(dv-Y_{(S, \pi)})^{0,1}=0, \text{ with respect to } J_{(S,\pi)};  \\
\lim_{s \to +\infty } v\circ \delta_i^+(s, \cdot) = x^{c_i(\sigma)}(\cdot) \text{ for }i=1, \ldots,  l(\sigma); \\
\lim_{s \to -\infty } v\circ \delta_i^-(s, \cdot) = x^{c_i(\sigma')}(\cdot) \text{ for }i=1, \ldots, \ell(\sigma').
\end{dcases}
\end{equation}

We set $\modulifloerbrunsym$ to be the quotient of $\modulifloerbractionunsym$ under the following equivalence relation.
We say that two pairs $$\Big( [(S_1, \pi_1, \bm p^{\pm}, \bm a^{\pm}, B)], \, u=(\pi'_1, v_1) \Big) \text{ and } \Big( [(S_2, \pi_2, \bm p^{\pm}, \bm a^{\pm}, B)], \, u=(\pi'_2, v_2) \Big)$$
\noindent are equivalent if $[S_1, \pi_1]=[S_2, \pi_2]$ and $v_1=v_2$.

\begin{proposition}\label{transversality-theorem-unsym}
    For a generic Floer data on $\hurwitzbrunsym$ the moduli space $\modulifloerbractionunsym$ and $\modulifloerbrunsym
    $ are wnb groupoids. Moreover, the dimension of $\modulifloerbrunsym$ is given by
    $$ \op{dim}_{\R}\modulifloerbrunsym=(n-2)\chi+|\bm x'|-|\bm x|-1.$$
\end{proposition}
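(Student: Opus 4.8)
The plan is to reduce Proposition~\ref{transversality-theorem-unsym} to the already established partially symmetrized case, Theorem~\ref{transversality-theorem}, by observing that the transversality argument there is entirely local on the domain and does not depend on which version of the perturbation scheme is in place. Concretely, I would first note that the case $\chi=0$ is handled exactly as in Theorem~\ref{transversality-theorem}: $\hurwitzunsym$ is nonempty only when $\bm\mu(\sigma)=\bm\mu(\sigma')$, consists of finitely many points, and each point is a disjoint union of $l(\sigma)$ standard cylinders. The associated linearized operator splits as a direct sum over cylinders, and by the classical transversality results in symplectic homology (\cite[Section 8.6]{audin-damian2014}) a comeagre set of perturbations of the $F_i$'s — hence of the $F_c$'s in~\ref{hamiltonian-properties-unsym-2} — achieves regularity for each cylinder separately. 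The crucial point enabling Sard--Smale is that each such orbit $x^{c_i(\sigma)}$ is embedded and the orbits for distinct $H^{\mathrm{tot}}_c$ are disjoint by~\ref{hamiltonian-properties-unsym-3}, so the relevant curves are somewhere injective.

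For $\chi<0$, I would reproduce the structure of the proof of Theorem~\ref{transversality-theorem} verbatim with the unsymmetrized data in place. First pick a family $\Omega_{[S,\pi]}\subset\operatorname{Int}(S)$ of open sets smoothly varying over $(\hurwitzsmbrunsym)^\vee$, disjoint from all cylindrical ends and finite cylinders, and meeting every non-cylindrical component; assume a consistent Floer datum $(H,F,J)$ is already chosen so that its restriction to $\partial(\hurwitzsmbrunsym)_0$ is regular (this is set up inductively over $\chi'>\chi$ exactly as before). Then perturb $(\delta H,\delta J)$ supported in $\Omega$, form the universal linearized operator $D_{\mathscr{T},[(S,\pi)],u}$, and argue surjectivity: the Fredholm summand makes the image closed, any cokernel element $\eta$ satisfies $D^*_{([S,\pi],u)}\eta=0$ and $\int_S\langle(\delta Y)^{0,1},\eta\rangle=0$ for all admissible $\delta Y$, forcing $\eta\equiv 0$ on $\Omega_{[S,\pi]}$ (by \cite[Lemma 4.3]{fabert2009}), hence on all non-cylindrical components by unique continuation; the cylindrical components are covered by the $\chi=0$ analysis. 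Here somewhere-injectivity of $u$ again follows from~\ref{hamiltonian-properties-unsym-2} and~\ref{hamiltonian-properties-unsym-3}. Transversality of the section then endows the zero set with a wnb groupoid structure, as recalled in Section~\ref{branched-manifolds}; since $\modulifloerbrunsym$ is the quotient of $\modulifloerbractionunsym$ by the free $\R$-action by $s$-translation on the target (which acts smoothly and properly), it inherits a wnb groupoid structure as well.

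The dimension count is then identical to Lemma~\ref{index-count}: for fixed domain $S$ the standard index formula of \cite{bourgeois2002} gives $\operatorname{ind}(v)=n\chi+|\bm x'|-|\bm x|$, where now the gradings $|\bm x|$, $|\bm x'|$ are the ones defined in~\eqref{grading-unsym} via the Conley--Zehnder indices of the loops $x^{c_i(\sigma)}$; adding $\dim_\R\hurwitzunsym=-2\chi-1$ (the Hurwitz space is again a covering of $\operatorname{Conf}_{-\chi}(\C\setminus\{0,\infty\})$, quotiented by the $\R$-translation on the target) yields $\dim_\R\modulifloerbrunsym=(n-2)\chi+|\bm x'|-|\bm x|-1$. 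For $n\ge 3$ one passes to Kuranishi replacements to excise ghost bubbles exactly as in Section~\ref{section-pseudo-holomophic-curves}.

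The only genuinely new bookkeeping — and the step I expect to require the most care — is checking that the branched structure $\hurwitzsmbrunsym$ is compatible with all of this: one must verify that the weight function $\Lambda$ (taking value $1/\kappa!$ on each $\mathcal{H}^{\sigma,\sigma''}_{\kappa,\chi_1}\times\mathcal{H}^{\sigma'',\sigma'}_{\kappa,\chi_2}\times(R,+\infty]$, per the discussion around~\eqref{hurwitz-unsym-boundary-cover}) and the local branches satisfy Definition~\ref{defn-wnb-groupoid}, and that the consistent Floer data extends over the branched boundary so that the inductive regularity hypothesis used above genuinely holds. This is the unsymmetrized analogue of the (omitted, ``left to the reader'') verification in Section~\ref{branched-structure}, and it is where the sheeted-covering degrees $\kappa!$ replace the $N_{\bm\mu''}$ of the partially symmetrized case; none of it is conceptually harder, but it is the part where a careful reader would want the details spelled out.
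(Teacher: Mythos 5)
Your proposal is correct and follows exactly the route the paper intends: the paper's own proof is simply the remark that the argument is ``similar to that of Theorem~\ref{transversality-theorem}'', and your write-up is precisely that adaptation (split cylinders for $\chi=0$, perturbations supported in $\Omega$ plus somewhere-injectivity from~\ref{hamiltonian-properties-unsym-2}--\ref{hamiltonian-properties-unsym-3} and unique continuation for $\chi<0$, then the index count of Lemma~\ref{index-count} with $\dim_{\R}\hurwitzunsym=-2\chi-1$). No discrepancies to report.
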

The proof of this proposition is similar to that of Theorem~\ref{transversality-theorem}.

In case when $\op{dim}_{\R}\modulifloerbrunsym=1$ there is an induced morphism
\begin{equation}\label{orientation-map-unsym}
    \partial_ u \colon o_{\bm x} \to o_{\bm x'}\bigl[(2-n)\chi\bigr],
\end{equation}
similar to~\eqref{orientation-map-floer}.

The differential on $SC_{\kappa, \mathrm{unsym}}(\hat{M})$ is then defined by the formula
\begin{equation}
    d[\bm x]=\sum_{\substack{\bm x',\chi \colon |\bm x'|-|\bm x|=1, \\ u \in \mathscr{M}_{\mathrm{unsym}}^{\chi=0}(\bm x; \bm x')}} \partial_u([\bm x])+
    \sum_{\substack{\bm x',\chi \colon (n-2)\chi+|\bm x'|-|\bm x|=1, \\ u \in \modulifloerbrunsym}} \frac{\partial_u([\bm x])}{\kappa!}\hbar^{-\chi},
\end{equation}
where $[\bm x]$ is a generator in $ o_{\bm x}^{\Q}$ for $\bm x \in \mathcal{P}_{\mathrm{unsym}}$.

The proof of the fact that the above forms a chain complex is based on the following
\begin{proposition}
       For a generic consistent choice of $(H, F, J)$, given $(n-2)\chi+|\bm x'|-|\bm x|\leq2$, the wnb groupoid $\mathscr{M}_{\mathrm{unsym}}^{\chi}(\bm x, \bm x'; H^{\mathrm{tot}}, J)$ admits a compactification $\overline{\mathscr{M}}_{\mathrm{unsym}}^{\chi}(\bm x, \bm x'; H^{\mathrm{tot}}, J)$. In case $(n-2)\chi+|\bm x'|-|\bm x|=2$ the boundary takes the form
    $$
       \bigsqcup_{\substack{\chi_1+\chi_2=\chi, \\ (n-2)\chi_1+|\bm x''|-|\bm x|= (n-2)\chi_2+|\bm x'|-|\bm x''|=1}} \mathscr{M}_{\mathrm{unsym}}^{\chi_1}(\bm x, \bm x''; H^{\mathrm{tot}}, J) \times \mathscr{M}_{\mathrm{unsym}}^{\chi_2}(\bm x'', \bm x'; H^{\mathrm{tot}}, J).
    $$
\end{proposition}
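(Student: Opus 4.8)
The plan is to mirror the proof of Theorem~\ref{theorem-cpt-moduli} essentially verbatim, since the only structural differences between $\modulifloerbrunsym$ and $\modulifloerbrshort$ are: (i) the Hamiltonian orbits are now ordered $\kappa$-tuples of chords glued according to a permutation $\sigma(\bm x)$ rather than tuples indexed by a partition, and (ii) the relevant Hurwitz spaces $\hurwitzactionunsym$ carry a full set of asymptotic markers (one per sheet at each puncture) rather than one marker per ramification point. Neither of these changes affects the analytic input. First I would invoke the action bound: by \cite[Theorem A.1]{ganatra2012symplectic}, applied to the perturbed Cauchy–Riemann equation~\eqref{hamiltonian-cr} satisfied by the $\hat M$-component $v$ of any $u \in \modulifloerbractionunsym$, there is a constant $C=C(\bm x,\bm x',H^{\mathrm{tot}})$ such that every such curve has image contained in $\hat M \setminus (C,+\infty]\times\partial M$; this confines all limiting curves to a fixed compact region, so the SFT compactness theorem of \cite{BEHWZ2003sft} applies (with the adaptation to domain-dependent $\underline J^H$ as in \cite[Section 5]{cielebak-mohnke2007} and \cite[Section 3]{fabert2009}, using Lemma~\ref{sft-to-hamiltonian} which holds identically here).

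Next I would analyze the SFT limit $u_\infty$ of a sequence in $\modulifloerbrunsym$. Its domain is an element of $\hurwitzcompunsym$, possibly with inserted unbranched cylinder/sphere levels, and $u_\infty$ is pseudo-holomorphic for the almost complex structure of Remark~\ref{data-on-compactification}. Exactly as in Theorem~\ref{theorem-cpt-moduli}: any component of the domain that covers a tree of spheres in $\bm C_\infty$ must be constant by exactness of $\hat M$; and the ghost-bubble / sphere-bubble strata are ruled out by the dimension count of Lemma~\ref{index-count} when $n=1$, by \cite[Theorem 1.5]{ekholm-shende2022} (equivalently \cite[Theorem 1.1]{doan-walpuski2019}) when $n=2$, and — after passing to the Kuranishi replacements of Section~\ref{section-pseudo-holomophic-curves} — by the codimension-$\geq 2$ nature of the sphere-bubbling strata in $\hurwitzcompbrunsym$ for all $n$; closed components have negative index and do not occur. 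Hence the limit lies in $\hurwitzsmbrunsym$ with some levels possibly pure cylinders, and consistency of the Floer data $(H,F,J)$ together with the transversality of Proposition~\ref{transversality-theorem-unsym} forces the restriction to each level to be regular. When $(n-2)\chi+|\bm x'|-|\bm x|=1$ this forces a single level, so $\modulifloerbrunsym$ is a finite set of points with weight $1$ (choosing $R$ large in the construction of $\hurwitzsmbrunsym$); when $(n-2)\chi+|\bm x'|-|\bm x|=2$ genericity of $(H,F,J)$ excludes index-$0$ curves, so the limit building has height at most $2$, and a height-$2$ limit is a pair $(u_\infty^1,u_\infty^2)$ with $u_\infty^1 \in \mathscr{M}_{\mathrm{unsym}}^{\chi_1}(\bm x,\bm x'')$ and $u_\infty^2 \in \mathscr{M}_{\mathrm{unsym}}^{\chi_2}(\bm x'',\bm x')$, $\chi_1+\chi_2=\chi$, with index conditions $(n-2)\chi_i$ balanced as stated. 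Finiteness of the set of intermediate tuples $\bm x''$ follows from the unsymmetrized analogue of Lemma~\ref{lemma-cpt-finite}, and for each fixed $\bm x''$ finiteness of such pairs from the height-$1$ case.

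Finally I would assemble $\modulifloercompbrunsym$ as the groupoid whose objects are these (possibly broken) pairs, with morphisms and the weight function $\Lambda$ inherited from $\hurwitzsmbrunsym$ exactly as for $\modulifloercompbr$; the weight on a boundary stratum $\mathscr{M}_{\mathrm{unsym}}^{\chi_1}(\bm x,\bm x'')\times\mathscr{M}_{\mathrm{unsym}}^{\chi_2}(\bm x'',\bm x')$ with $\chi_1,\chi_2<0$ equals $\tfrac{1}{\kappa!}$ coming from the $\kappa!$-sheeted forgetful cover~\eqref{hurwitz-unsym-boundary-cover}, and equals $1$ if one of the levels is a collection of cylinders. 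Showing $\overline{\mathscr{M}}_{\mathrm{unsym}}^\chi=\modulifloercompbrunsym$ (the analogue of~\eqref{branched-compactification}) is the statement that this set is both closed and that every such broken configuration is a genuine limit, the latter via the standard gluing construction along the smooth neck coordinates of~\eqref{hurwitz-collar-nbhd-codim1}, which goes through because the relevant boundary stratum $\hurwitzactionunsym$-type space has a smooth normal collar (the unsymmetrized version of Theorem~\ref{hurwitz-level-boundary-neighborhood}, which one should note holds by the same argument). The one point that genuinely requires care — and which I expect to be the main obstacle — is bookkeeping the asymptotic markers: in the unsymmetrized setting a matching marker at a node of profile $\bm\mu$ admits $\kappa!$ compatible choices of ordered asymptotic-marker pairs rather than the $N_{\bm\mu''}$ of the psym case, so one must check that the weight $\tfrac{1}{\kappa!}$ is exactly what makes the wnbc-groupoid conditions of Definition~\ref{defn-wnb-groupoid} hold near height-$2$ boundary points, and that the induced orientation-line compositions (via the unsymmetrized $\partial_u$ of~\eqref{orientation-map-unsym}) are consistent with the boundary orientation — but this is the same verification carried out for $\hurwitzsmbr$ in Section~\ref{branched-structure}, with $\kappa!$ in place of $N_{\bm\mu''}$, and introduces no new difficulty beyond careful indexing.
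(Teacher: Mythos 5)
Your proposal is correct and follows essentially the same route as the paper, which itself proves this proposition by adapting the argument of Theorem~\ref{theorem-cpt-moduli} (action bound via \cite[Theorem A.1]{ganatra2012symplectic}, SFT compactness, exclusion of ghost/sphere bubbles by the index and codimension arguments, regularity of levels from consistency of the Floer data) and explicitly leaves the details to the reader. Your additional bookkeeping of the $\kappa!$-sheeted forgetful cover~\eqref{hurwitz-unsym-boundary-cover} and the resulting weight $\tfrac{1}{\kappa!}$ on height-$2$ strata matches the paper's construction of $\hurwitzsmbrunsym$ and the unsymmetrized differential.
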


The proof is similar to that of Theorem~\ref{theorem-cpt-moduli} and left to the reader.
We then state the following 
\begin{theorem}
\label{theorem-d2-unsym}
    The differential $d$ makes $SC^*_{\kappa, \mathrm{unsym}}(\hat{M})$ into a chain complex.
\end{theorem}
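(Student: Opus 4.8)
The plan is to mirror the proof of Theorem~\ref{theorem-d2} essentially verbatim, replacing the partially symmetrized objects with their unsymmetrized counterparts and the weight $\tfrac{1}{N_{\bm\mu(\bm x'')}}$ by the constant weight $\tfrac{1}{\kappa!}$ throughout. First I would observe that $SC^*_{\kappa,\mathrm{unsym}}(\hat M,d)$ is again an instance of Construction~\ref{series-complex}: writing $d=d_0+d_1\hbar+d_2\hbar^2+\cdots$ with $d_i$ counting curves of Euler characteristic $-i$ (weighted by $\tfrac{1}{\kappa!}$ for $i>0$), it suffices to verify the relations $d_id_0+d_{i-1}d_1+\cdots+d_0d_i=0$, equivalently $d^2=0$. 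The finiteness of the relevant sums follows from the unsymmetrized analogue of Lemma~\ref{lemma-cpt-finite} (same action estimate via \cite[Theorem A.1]{ganatra2012symplectic}, since curves in $\modulifloerbrunsym$ still solve the perturbed Cauchy--Riemann equation~\eqref{hamiltonian-cr} and $\bm x'$ of very negative action forces emptiness) and from the compactness statement just above, which says $\overline{\mathscr{M}}_{\mathrm{unsym}}^{\chi}(\bm x,\bm x';H^{\mathrm{tot}},J)$ is a finite set when $(n-2)\chi+|\bm x'|-|\bm x|=1$.

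Next I would run the boundary-counting argument. Given generators $[\bm x]\in o_{\bm x}^{\Q}$ and $\hbar^{-\chi}[\bm x']\in o_{\bm x'}^{\Q}$ with $(n-2)\chi+|\bm x'|-|\bm x|=2$, the coefficient of $\langle d^2[\bm x],\hbar^{-\chi}[\bm x']\rangle$ is a signed sum over the boundary strata of $\overline{\mathscr{M}}_{\mathrm{unsym}}^{\chi}(\bm x,\bm x')$, which by the cited proposition are products $\mathscr{M}_{\mathrm{unsym}}^{\chi_1}(\bm x,\bm x'')\times\mathscr{M}_{\mathrm{unsym}}^{\chi_2}(\bm x'',\bm x')$ with $\chi_1+\chi_2=\chi$. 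Each such stratum carries weight $\tfrac{1}{\kappa!}\cdot\tfrac{1}{\kappa!}$ when $\chi_1,\chi_2<0$ (and weight $\tfrac{1}{\kappa!}$ when one level is a collection of cylinders), exactly parallel to~\eqref{boundary-differential}; here the $\tfrac{1}{\kappa!}$ arises because the forgetful covering $bf^1_{\mathrm{unsym}}$ of~\eqref{hurwitz-unsym-boundary-cover} is $\kappa!$-sheeted, so the weight function $\Lambda$ on $\hurwitzsmbrunsym$ takes value $\tfrac{1}{\kappa!}$ there. As in the proof of Theorem~\ref{theorem-d2}, I would present $\big(\overline{\mathscr{M}}_{\mathrm{unsym}}^{\chi}(\bm x,\bm x')\big)^{\vee}_H$ as an oriented weighted graph satisfying the Kirchhoff junction rule at interior (branching) vertices, with exterior vertices on the boundary each inducing a map $o_{\bm x}^{\Q}\to o_{\bm x'}^{\Q}[(n-2)\chi]$ via the unsymmetrized orientation isomorphism (the analogue of Lemma~\ref{orientation-lemma}), using the trivialization given by the outward-pointing vector; the signed count over a connected component of the graph then vanishes, and the composition $\partial_{u_2}\circ\partial_{u_1}$ agrees with the outward-vector-induced map by the same permutation-of-factors argument as before (no extra sign, since the Hurwitz spaces are even-dimensional and reordering of branch/marked points provides local complex coordinates). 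Summing the weighted contributions $\tfrac{1}{(\kappa!)^2}\,\partial_{u_2}\circ\partial_{u_1}$ over each component gives $0$, hence $d^2|_{o_{\bm x}}=0$; the verification that $d^2|_{o_{\bm x}[(n-2)j]}=0$ for $j>0$ is identical to the psym case and I would leave it to the reader.

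The only genuinely new bookkeeping, and the step I expect to demand the most care, is confirming that the constant weight $\tfrac{1}{\kappa!}$ (rather than the partition-dependent $\tfrac{1}{N_{\bm\mu''}}$) is the correct one and is consistent across all strata and all heights in the inductive construction of $\hurwitzsmbrunsym$ — in particular that for a height-$k$ breaking the weight is $\tfrac{1}{(\kappa!)^k}$ uniformly, independent of the intermediate cycle types, so that the cancellation at each Kirchhoff vertex is clean. This is where the unsymmetrized setup is actually simpler than the psym one: because in $\mathcal{P}_{\mathrm{unsym}}$ an unperturbed loop gives $\kappa!$ generators rather than $N_{\bm\mu}$, the covering degree is the same constant $\kappa!$ at every node regardless of the profile $\bm\mu$, so the weighted graph argument goes through without the profile-dependent factors that complicate the psym bookkeeping. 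Once this is in place the rest is a routine transcription of Theorem~\ref{theorem-d2}, and I would write the proof accordingly.
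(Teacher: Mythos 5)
Your proposal is correct and follows exactly the route the paper intends: the paper gives no separate argument for Theorem~\ref{theorem-d2-unsym}, relying on the unsymmetrized compactness proposition and a verbatim transcription of the proof of Theorem~\ref{theorem-d2}, with the partition-dependent weights $\tfrac{1}{N_{\bm\mu''}}$ replaced by the constant $\tfrac{1}{\kappa!}$ coming from the $\kappa!$-sheeted covering~\eqref{hurwitz-unsym-boundary-cover}. Your observation that the constant covering degree makes the weighted Kirchhoff-graph cancellation uniform across strata and heights is precisely the simplification the unsymmetrized setting affords, so nothing further is needed.
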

We conclude this section by pointing out that the unsymmetrized version can also be defined via tuples of linear Hamiltonians as in Section~\ref{section-linear-hamiltonians} and, therefore, $SH_{\kappa, \mathrm{unsym}}(\hat{M})$ is also an invariant of the Liouville domain $M$.

\subsection{Orientation lines conventions}\label{section-orientation-lines}

 Here we introduce some basics on orientation lines that we use to have all chain complexes and chain maps appearing in this text well-defined over $\Q$. We use the conventions from \cite{abouzaid2015symplectic} with slight adjustements. 
 
 Given a finite dimensional real vector space $V$, its determinant line is the $1$-dimensional $\Z$-graded  real vector space $\operatorname{det} V=\Lambda^{\operatorname{dim}(V) } V$ supported in degree $\operatorname{dim}(V)$. 

To any $1$-dimensional graded real vector space $\delta$ we associate an orientation line $|\delta|$, which is the rank $1$ graded free abelian group generated by the two possible orientations of $\delta$, modulo the relation that their sum vanishes. The orientation line $|\delta|$ is supported in the same degree as $\delta$. When $\delta=\operatorname{det} V$ we denote its orientation line by $|V|$. 
Given an orientable manifold $M$ we will denote by $|M|$ the orientation line generated by the orientations of $M$ and supported in degree $\operatorname{dim}_{\R}M$.

Given two vector spaces $U$ and $V$, let $W= U \oplus V$. There is then an isomorphism
$$|U| \otimes |V| \cong |W|$$
given by first choosing a basis corresponding to a given orientation of $U$ and then extending it to the basis of $W$ by adding (on the right) a basis of $V$ corresponding to a given orientation of $V$. Notice that this procedure gives rise to the \emph{Koszul sign}:
\begin{equation}\label{eq-koszul-sign}
    |W| \cong |U| \otimes |V| \cong (-1)^{\operatorname{dim}(U)\cdot\operatorname{dim}(V)} |V| \otimes |U| \cong (-1)^{\operatorname{dim}(U)\cdot\operatorname{dim}(V)} |W|.
\end{equation}

More generally, for an exact sequence of vector spaces 
$$0 \to V \to W \to U \to 0$$
which splits, we assign an isomorphism
\begin{equation}
    |U| \otimes |V| \cong |W|.
\end{equation}

We note that this convention is different from the one used in \cite{abouzaid2015symplectic}. We are motivated by the following example. Let $N$ be an orientable submanifold of an orientable manifold $M$. Let us denote by $\eta$ the normal bundle $TM/TN$ over $N$. There is then a short exact sequence
$$0 \to TN \to TM|_{N} \to \eta \to 0.$$
The standard convention for inducing an orientation on $N$ from an orientation on $M$ is to first set a positive orientation on $\eta$ (which in some cases is straightforward as $\eta$ could be a trivial bundle) and then declare an orientation on $N$ to be positive if a basis with such orientation extends a basis with positive orientation on $\eta$ to a basis with positive orientation on $M$. In other words,
\begin{equation}\label{eq: conormal-orientation-identity}
    |\eta| \otimes |N| \cong |M|.
\end{equation}
Given a $\mathbb{Z}$-graded line $\ell$ (rank $1$ free abelian group), its dual line $\ell^{-1}=$ $\operatorname{Hom}_{\mathbb{Z}}(\ell, \mathbb{Z})$ is, by definition, supported in the opposite degree as $\ell$. Here, we use the convention that $\Z$ is supported in degree $0$. There is a canonical isomorphism $\ell^{-1} \otimes \ell \cong \mathbb{Z}$ induced by evaluation. According to~\eqref{eq-koszul-sign} there is an isomorphism $(-1)^{\operatorname{deg}(\ell)}\ell \otimes \ell^{-1} \cong \Z$.

We will denote by $\zeta$ the trivial rank $1$ free abelian group (i.e., with prescribed isomorphism with $\Z$) supported in degree $1$.

Given a $\mathbb{Z}$-graded rank $1$ free abelian group $\ell$, we denote by $\ell[k]$ the rank $1$ group obtained by shifting the degree down by $k \in \mathbb{Z}$, i.e., $\ell[k]$ is supported in degree $\operatorname{deg}(\ell)-k$. We will use the agreement

\begin{gather}\label{eq-orientation-lines-shift}
    \ell[-k]\cong \ell \otimes \zeta^{k}, \text{ for }k>0, \\
    \ell[-k] \cong \zeta^{-k} \otimes \ell, \text{ for }k<0. \nonumber
\end{gather}

For any such rank $1$ free abelian group $\ell$ we associate 
\begin{equation}
    \ell^{\Q} = \ell \otimes_{\Z} \Q,
\end{equation} 
which is a $\Z$-graded $\Q$-vector space supported in $\operatorname{deg}(\ell)$.
\section{The free multiloop complex}
\label{section-morse}
In this section, we introduce a certain Morse chain complex which is intended to be a counterpart of the unsymmetrized version of HFSH. We note that this complex is the closed string analogue of the Morse complex introduced in \cite{honda2025morse}.
Throughout this section, $Q$ is a closed orientable $n$-manifold.

\subsection{Free multiloops}
The analogue of $SH_\kappa(T^*Q)$ on the Morse side should, on the chain level, be based on time-$1$ free loops in the configuration space $\operatorname{UConf}_\kappa(Q)$. 
To model this, we consider $\kappa$-strand multiloops on $Q$. We start by recalling the setup of \cite{abbondandolo2010floer} for the Morse complex of paths alone.

Consider the space of continuous time-1 paths on $Q$ with free endpoints:
\begin{equation*}
    \Omega(Q)=\{\gamma\in C^0([0,1],Q)\}.
\end{equation*}
We denote the subset of $\Omega(Q)$ in the class $W^{1,2}$ by $\Omega^1(Q)$. Further, we denote by $\Lambda^1(Q)$ the subset of $\Omega^1(Q)$ consisting of closed loops. It is naturally a Hilbert manifold with a standard metric on it, see e.g. \cite[Section 2.2]{abbondandolo2006floer}. We also consider the spaces of Sobolev  $W^{m,2}$ paths and we denote via $\Omega^{m,2}(Q)$ and $\Lambda^{m,2}(Q)$ the corresponding path and loop spaces (note that for the free loop space above one additionally requires that first $m-1$ derivatives coincide at the endpoints). For the spaces we introduce below, we assume similar $W^{m,2}$-analogs and do not specify this explicitly.

Fix a generic Riemannian metric $g$ on $Q$. 
Let $$L=L(t,q,v)\colon [0,1]\times TQ\to\mathbb{R}$$ be a \emph{Lagrangian function} which is the \emph{Fenchel dual} of a Hamiltonian $H$ that we introduced in Section~\ref{section-hamiltonian-unsym}.

We then define the Lagrangian action functional $\mathcal{A}_{L}\colon\Omega^1(Q)\to\mathbb{R}$ by
\begin{equation}
    \mathcal{A}_{L}(\gamma)=\int_0^1 L(t,\gamma(t),\dot{\gamma}(t))\,dt.
\end{equation}

Let $\varphi^t_{L}$ be the integral curve on $Q$ satisfying the Euler-Lagrangian equation:
\begin{equation}\label{Euler-Lagrange}
    \frac{d}{dt}\partial_v L(t,\gamma(t),\dot{\gamma}(t))=\partial_q L(t,\gamma(t),\dot{\gamma}(t)).
\end{equation}

The critical points of $\mathcal{A}_L$ are exactly the solutions of~\eqref{Euler-Lagrange}.
For $L$ as one of the functions above, the Lagrangian action functional $\mathcal{A}_L$ is not of class $C^2$, unless $L$ is a polynomial of degree at most two on each fiber of $TQ$, hence in general one may not construct a Morse complex on $\Omega(Q)$ by means of the gradient vector field of $\mathcal{A}_L$. To overcome this obstacle, in \cite{abbondandolo2009pseudo-gradient} the authors introduce the notion of pseudo-gradient vector field $X$ for such functions on Hilbert manifolds and prove the existence of pseudo-gradients for Lagrangian action functional.
The following summarizes the key properties of $\mathcal{A}_L$ and its pseudo-gradient vector fields (see \cite[Theorem 4.1]{abbondandolo2009pseudo-gradient}, \cite[Appendix A.1]{abbondandolo2010floer}):
\begin{theorem}\label{theorem-action+gradient}
    \begin{enumerate}
        \item For a generic choice of $L$ all critical points of $\mathcal{A}_L$ are non-degenerate.
        \item  The action functional $\mathcal{A}_L$ is bounded below, and there exists a smooth pseudo-gradient vector field $X$ on $\Lambda^1(Q)$, i.e.,
           
        \begin{enumerate}[(PG1)]
            \item \label{PG1} $\mathcal{A}_L$ is a Lyapunov function for $X$;
            \item \label{PG2} $X$ is a Morse vector field whose singular points (= critical points of $\mathcal{A}_L$) have finite Morse index;
            \item \label{PG3} the pair $(X, \mathcal{A}_L)$ satisfies the Palais-Smale condition;
            \item \label{PG4} 
            $X$ is forward-complete, i.e., the flow $\Phi(t, x)$ is defined for all $t>0$ and $x \in \Lambda^1(Q)$;
            \item \label{PG5} X satisfies the Morse-Smale condition, i.e., for each pair of critical points $\gamma, \gamma' \in \Lambda^1(Q)$, the unstable manifold of $\gamma$ is transverse to the stable manifold of $\gamma'$.
        \end{enumerate}
          
        \item  Any generic small perturbation of $X$ which coincides with $X$ on a neighborhood of the critical points of $\mathcal{A}_L$, is still a pseudo-gradient (i.e., satisfies (PG1)-(PG5)).
    \end{enumerate}
\end{theorem}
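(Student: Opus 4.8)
The plan is to deduce all three parts from the theory of the Lagrangian action functional developed by Abbondandolo--Schwarz and Abbondandolo--Majer, after checking that our $L$ falls within the scope of that theory. Since $L$ is the Fenchel dual of one of the Hamiltonians $H^{\mathrm{tot}}$ from Section~\ref{section-hamiltonian-unsym}, which is quadratic at infinity with uniformly controlled derivatives, the dual $L(t,q,v)$ is smooth, fiberwise strictly convex ($\partial_{vv}L>0$), and obeys quadratic growth estimates of the form $\ell_0|v|^2-\ell_1\le L(t,q,v)\le \ell_2|v|^2+\ell_3$ together with the matching bounds on $\partial_vL$ and $\partial_{vv}L$. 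These are exactly the convexity and growth hypotheses (of the form (L0)--(L2)) under which \cite{abbondandolo2009pseudo-gradient, abbondandolo2010floer} operate, so the first step is simply to verify these estimates from the Legendre transform.

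For part (1), I would use that the critical points of $\mathcal{A}_L$ on $\Lambda^1(Q)$ are precisely the $1$-periodic solutions of the Euler--Lagrange equation~\eqref{Euler-Lagrange}, which under the Legendre transform correspond bijectively to the $1$-periodic Hamiltonian orbits of $H$, and that Morse-theoretic non-degeneracy of such a critical point is equivalent to non-degeneracy of the corresponding Hamiltonian orbit. Since the orbits of the $H^{\mathrm{tot}}_i$ were already arranged to be non-degenerate in Section~\ref{section-hamiltonian-unsym}, part (1) holds for our choice; alternatively one runs the standard Sard--Smale argument perturbing $L$ in its fiberwise lower-order part, as in \cite[Section 4]{abbondandolo2009pseudo-gradient}.

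For part (2), boundedness below is immediate: $\mathcal{A}_L(\gamma)\ge \ell_0\|\dot\gamma\|_{L^2}^2-\ell_1\ge -\ell_1$. The existence of the pseudo-gradient is the substantive point, and I would invoke \cite[Theorem 4.1]{abbondandolo2009pseudo-gradient}: because $\mathcal{A}_L$ is of class $C^{1,1}$ but in general not $C^2$ on $\Lambda^1(Q)$ (it is $C^2$ only when $L$ is fiberwise a polynomial of degree $\le 2$), the negative gradient flow is unavailable, and one instead constructs a smooth vector field $X$ with the same rest points, equal to an honest gradient near each of them --- so that (PG2), finiteness of the Morse index via the Legendre condition and the Morse index theorem, is inherited from the smooth local model --- having $\mathcal{A}_L$ as a strict Lyapunov function away from the rest points (PG1), satisfying Palais--Smale (PG3) by the quadratic estimates together with Rellich compactness, and then rescaled so as to be forward complete (PG4). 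The Morse--Smale condition (PG5) is achieved by one further generic perturbation of $X$ supported away from the rest points, via a Sard--Smale argument in the Banach manifold of admissible pseudo-gradients, exactly as in \cite[Appendix A.1]{abbondandolo2010floer} and \cite{abbondandolomajer2006lectures}.

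Part (3) is a robustness statement and follows by the same reasoning: (PG1) persists because being a strict Lyapunov function on the complement of a neighborhood of the (finitely many relevant) rest points is an open condition; (PG2) is untouched since the perturbation agrees with $X$ near the rest points; (PG3) and (PG4) survive small compactly-controlled perturbations by the a priori estimates above; and (PG5) is precisely the generic condition whose verification we just sketched. The main obstacle --- indeed the only genuinely nontrivial input --- is the construction in part (2) of a smooth pseudo-gradient satisfying the Palais--Smale condition, which is the content of \cite{abbondandolo2009pseudo-gradient}; the remaining assertions are a combination of the Legendre-transform dictionary, standard a priori estimates, and routine transversality.
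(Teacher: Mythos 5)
Your proposal is correct and follows essentially the same route as the paper: the paper presents this theorem as a summary of the results of \cite[Theorem 4.1]{abbondandolo2009pseudo-gradient} and \cite[Appendix A.1]{abbondandolo2010floer}, giving no independent proof, and your argument amounts to verifying that the Fenchel-dual Lagrangians satisfy the convexity and growth hypotheses of those references (as recorded in \ref{L1-condition}--\ref{L2-condition}) and then quoting their conclusions, with the Legendre-transform dictionary handling part (1). The extra detail you supply (boundedness below from quadratic growth, genericity of (PG5), openness arguments for part (3)) is consistent with and implicit in the cited sources.
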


For the generalization of the above to the set of what we call free multiloops, choose Lagrangians $L_i$ to be \emph{Fenchel duals} (see~\cite{abbondandolo2010floer} for the precise definition) of $H^{\mathrm{tot}}_i$'s that we picked in Section~\ref{section-hamiltonian-unsym} for $H_0=\frac{1}{2}\lvert p\rvert ^2$. 

By the assumption~\ref{hamiltonian-properties-unsym-1} on the $H_i^{\mathrm{tot}}$ it follows that for $\delta$ such that $0<\delta<1/2$ as in~\ref{hamiltonian-properties-unsym-1}
 \begin{equation}\label{lagrangian-gluing}
 L_i(t,\cdot, \cdot)=L_0(t,\cdot, \cdot)\, \text{ for }\, t\in[0,\delta]\cup[1-\delta,1].
\end{equation}

Moreover, it is straightforward (see, e.g., \cite{abbondandolo2006floer}) to show that each $L=L_i$ for $i=1,\ldots, \kappa$ satisfy the following two properties
\begin{enumerate}[(L1)]
    \item \label{L1-condition} There is a continuous function $\ell_1$ on $Q$ such that for every $(t, x, v) \in [0,1] \times T Q$ with $x\in Q$ and $v\in T_xQ$,
    $$
    \begin{aligned}
        \|\nabla_{v v} L(t, x, v)\| & \leq \ell_1(x), \\
        \|\nabla_{v x} L(t, x, v)\| & \leq \ell_1(x)(1+|v|), \\
        \|\nabla_{x x} L(t, x, v)\| & \leq \ell_1(x)(1+|v|^2),
    \end{aligned}
    $$
    where $|v|=g(v,v)^{1/2}$.
    \item \label{L2-condition} There is a continuous positive function $\ell_2$ on $Q$ such that $\nabla_{v v} L(t, x, v) \geq$ $\ell_2(x) I$, for every $(t, x, v) \in[0,1] \times TQ$.
\end{enumerate}

To each permutation $\sigma \in \mathfrak{S}_\kappa$ we associate the set $\Omega^1_\sigma(Q)$ of $\kappa$-tuples of multipaths on $Q$ which satisfy
\begin{align}
    \label{eq-lag-paths}
    \Omega^1_\sigma(Q)=\left\{\bm \gamma=(\gamma_1,\ldots,\gamma_\kappa)\in\Omega^1(Q)^\kappa\,|\gamma_i(1)=\gamma_{\sigma(i)}(0)\right\}.
\end{align}
We call elements of such a set \emph{free multiloops} since each such multipath consists of $ l(\sigma)$ loops.
Specifically, each free multiloop $\bm \gamma$ consists of closed loops $\gamma^{c_1(\sigma)}, \ldots, \gamma^{c_{l(\sigma)}(\sigma)}$ with $$\gamma^{c_j(\sigma)} \in \Lambda^1_{\mu_j(\sigma)}=\{\gamma \in W^{1,2}([0, \mu_j(\sigma)], Q) \mid \gamma(0)=\gamma(\mu_j(\sigma))\},$$
and therefore, we have an isomorphism
$$\Omega^1_{\sigma}(Q)\cong \Lambda^1_{\mu_1(\sigma)}(Q) \times \dots \times \Lambda^1_{\mu_{ l(\sigma)}}(Q).$$
In light of the above isomorphism, we set 
\begin{equation}
    \Omega^m_{\sigma}(Q)\coloneqq \Lambda^{m,2}_{\mu_1(\sigma)}(Q) \times \dots \times \Lambda^{m,2}_{\mu_{ l(\sigma)}}(Q).
\end{equation}

We denote by $\bm L$ the collection $(L_1,\ldots,L_\kappa)$ and define the Lagrangian action functional on $\Omega^1_\sigma(Q)$ by
\begin{equation}
    \mathcal{A}_{\bm L}(\bm \gamma)=\sum_{i=1}^\kappa \mathcal{A}_{L_i}(\gamma_i),
\end{equation}
and similarly we define action functionals $\mathcal{A}_{L_{c_i(\sigma)}}$ on $\Lambda^1_{c_i(\sigma)}(Q)$; their sum is equal to $\mathcal{A}_{\bm L}$.
One can also view $\Omega^1_\sigma(Q)$ as set of paths $\bm \gamma \in \Omega^1(Q^\kappa)$ with boundary conditions
\begin{equation}
    (\bm \gamma(0), \bm \gamma(1)) \in R_\sigma \subset Q^\kappa \times Q^\kappa,
\end{equation}
where $R_\sigma=\left\{(\bm x, \bm y) \in  Q^\kappa \times Q^\kappa \, | \, x_{\sigma(i)}=y_i \, \right\}$.

The critical points of $\mathcal{A}_{\bm L}$ form a subset $\mathcal{P}_\sigma^L\subset\Omega^1_\sigma(Q)$ defined as
\begin{align}
    \label{eq-lag-paths-crit}
    \mathcal{P}_\sigma^{\bm L}=\left\{\bm \gamma \in\Omega^1_\sigma(Q)\,|\,\text{$\gamma_i\colon[0,1]\to Q$ is the integral flow of $\varphi^t_{L_i}$}\right\}.
\end{align}
When there is no ambiguity, we also denote $(\bm \gamma,\sigma)$ simply by $\bm \gamma$.
We denote the permutation element $\sigma$ associated to $\bm \gamma$ by $\sigma({\bm \gamma})$.
Note that each cycle in $\sigma({\bm \gamma})$ corresponds to a loop in $Q$. 
We further require that for all $\bm \gamma\in\mathcal{P}_\sigma^{\bm L}$,
\begin{enumerate}
    \item $\gamma_i(t)\neq \gamma_j(t)$ for all $t$ and $i\neq j$;
    \item each $\gamma^{c_i(\sigma)}$ is a non-degenerate critical point of $\mathcal{A}_{L_{c_i(\sigma)}}$.
\end{enumerate}

The following lemma follows immediately from Theorem~\ref{theorem-action+gradient}.
\begin{lemma}
    For a generic choice of metric and $\{H_1^{\mathrm{tot}},\ldots,H_\kappa^{\mathrm{tot}}\}$, the above conditions can be achieved. Moreover, $\mathcal{A}_{\bm L}$ is a Morse function on $\Omega^1_\sigma(Q)$ such that the set of critical points of $\mathcal{A}_{\bm L}$ is $\mathcal{P}_\sigma^{\bm L}$, all of which are non-degenerate and have finite Morse indices.
\end{lemma}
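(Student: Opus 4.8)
The plan is to reduce the statement to the path-space case treated in \cite{abbondandolo2009pseudo-gradient, abbondandolo2010floer} via the product decomposition $\Omega^1_\sigma(Q)\cong\Lambda^1_{\mu_1(\sigma)}(Q)\times\dots\times\Lambda^1_{\mu_{l(\sigma)}}(Q)$ that was already recorded above, together with the fact that $\mathcal{A}_{\bm L}$ splits as a sum $\sum_i\mathcal{A}_{L_{c_i(\sigma)}}$ of functionals, one on each factor. First I would observe that a tuple $\bm\gamma\in\Omega^1_\sigma(Q)$ is a critical point of $\mathcal{A}_{\bm L}$ if and only if each component path $\gamma_i$ solves the Euler--Lagrange equation~\eqref{Euler-Lagrange} for $L_i$; since $\bm\gamma$ lies in $\Omega^1_\sigma(Q)$, the endpoint constraints $\gamma_i(1)=\gamma_{\sigma(i)}(0)$ force the concatenations $\gamma^{c_j(\sigma)}$ to be genuine $\mu_j(\sigma)$-periodic loops, and each is a critical point of $\mathcal{A}_{L_{c_j(\sigma)}}$ on $\Lambda^1_{\mu_j(\sigma)}(Q)$. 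This is exactly the description of $\mathcal{P}^{\bm L}_\sigma$ in~\eqref{eq-lag-paths-crit}, so the critical set is identified as claimed.

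Next I would invoke the first bullet of Theorem~\ref{theorem-action+gradient} factor by factor: for a generic metric $g$ and generic Lagrangians, all critical points of $\mathcal{A}_{L_{c_j(\sigma)}}$ on each $\Lambda^1_{\mu_j(\sigma)}(Q)$ are non-degenerate with finite Morse index. Non-degeneracy and Morse index are both additive under products (the Hessian of $\mathcal{A}_{\bm L}$ at a critical point is the direct sum of the Hessians of the $\mathcal{A}_{L_{c_j(\sigma)}}$ on the factors), so every critical point of $\mathcal{A}_{\bm L}$ on $\Omega^1_\sigma(Q)$ is non-degenerate with finite index equal to $\sum_j \operatorname{ind}(\gamma^{c_j(\sigma)})$. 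Thus $\mathcal{A}_{\bm L}$ is a Morse function. One subtlety I would flag explicitly: the functions $H^{\mathrm{tot}}_i$ of Section~\ref{section-hamiltonian-unsym} were chosen so that the induced $H^{\mathrm{tot}}_c$ have non-degenerate time-$k$ orbits for each cycle $c$ (property~\ref{hamiltonian-properties-unsym-2}); under Fenchel duality with $H_0=\tfrac12|p|^2$ this translates directly into non-degeneracy of the corresponding critical points of $\mathcal{A}_{L_c}$, so conditions~(1) and~(2) following~\eqref{eq-lag-paths-crit} are compatible with genericity and can be met simultaneously — condition~(1), disjointness $\gamma_i(t)\ne\gamma_j(t)$, is inherited from property~\ref{hamiltonian-properties-unsym-3} plus a further generic perturbation, and here I would note that since the relevant loops are isolated and finitely many in any compact region, a small generic choice of $g$ and $\{H^{\mathrm{tot}}_i\}$ achieves it.

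Finally I would note that $\mathcal{A}_{\bm L}$ is bounded below because each summand is, by the second item of Theorem~\ref{theorem-action+gradient}; the remaining assertions of that theorem (existence of a forward-complete Palais--Smale pseudo-gradient, Morse--Smale after perturbation) are not claimed in this lemma but are used later, and I would simply remark that they also follow factorwise, taking the pseudo-gradient on $\Omega^1_\sigma(Q)$ to be the direct sum of pseudo-gradients on the $\Lambda^1_{\mu_j(\sigma)}(Q)$ and then applying item~(3) of Theorem~\ref{theorem-action+gradient} to obtain Morse--Smale transversality on the product. The main obstacle — really the only non-formal point — is checking that the \emph{joint} genericity requirements (non-degeneracy of all $\mathcal{A}_{L_{c_j(\sigma)}}$ over \emph{all} $\sigma$ simultaneously, plus the disjointness condition~(1), plus compatibility with the already-fixed choices from Section~\ref{section-hamiltonian-unsym}) can be arranged with a single metric $g$ and a single tuple $\{H^{\mathrm{tot}}_i\}$; this is handled by the standard observation that a finite (indeed countable) intersection of comeagre sets of perturbations is comeagre, exactly as in the transversality arguments of Section~\ref{section-floer} and in \cite{abbondandolo2009pseudo-gradient}.
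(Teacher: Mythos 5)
Your argument is correct and is essentially the expansion of what the paper does: the paper simply asserts that the lemma follows immediately from Theorem~\ref{theorem-action+gradient}, and your factorwise reduction via $\Omega^1_\sigma(Q)\cong\prod_j\Lambda^1_{\mu_j(\sigma)}(Q)$, the splitting $\mathcal{A}_{\bm L}=\sum_j\mathcal{A}_{L_{c_j(\sigma)}}$, additivity of non-degeneracy and index, and the comeagre-intersection argument for joint genericity (with disjointness supplied by~\ref{hamiltonian-properties-unsym-3} under Fenchel duality) is exactly the intended justification. No gaps worth flagging.
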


Clearly for $\bm \gamma =(\gamma_1, \ldots, \gamma_\kappa) \in \mathcal{P}^{\bm L}_\sigma$ the Morse index $|\bm \gamma|$ can be computed as a sum of Morse indices of $\gamma_i$:
\begin{equation}
    |\bm \gamma|=I_{L_1}(\gamma_1)+\ldots+I_{L_\kappa}(\gamma_\kappa).
\end{equation}

We denote the set of all multiloops by
\begin{equation}
    \Lambda^1_{[\kappa]}(Q)= \bigsqcup_{\sigma \in \mathfrak{S}_\kappa} \Omega^1_\sigma(Q)
\end{equation}
and the set of all critical points of $\bm L$ by
\begin{equation}
    \mathcal{P}^{\bm L}_\kappa= \bigsqcup_{\sigma \in \mathfrak{S}_\kappa}  \mathcal{P}_\sigma^{\bm L}.
\end{equation}
There is a conjugation action on $\Lambda^1_{[\kappa]}(Q)$ by any $\rho \in \mathfrak{S}_\kappa$ given by the following diffeomorphism
    \begin{gather}
        {c}_{\rho} \colon \Lambda^1_{[\kappa]}(Q) \to \Lambda^1_{[\kappa]}(Q), \\
        \bm \gamma \mapsto (\gamma_{\rho(1)}, \ldots, \gamma_{\rho(\kappa)}). \nonumber
    \end{gather}
    Notice that $$c_\rho \left( \Omega^1_\sigma(Q)\right)=\left( \Omega^1_{\rho \circ \sigma \circ \rho^{-1}}(Q)\right).$$

\subsection{Switching map.}
\subsubsection{Motivation.} The main new ingredient that we need for defining the deformed Morse chain complex for free multiloops is the so-called \emph{switching map}. For some pair of distinct elements of $[\kappa]=\{1, \ldots, \kappa\}$, i.e. an element $\{i, j\} \in \binom{[\kappa]}{2}$, consider the subspace $\Delta_{I}^{ij}$ of $[0,1] \times \Lambda^1_{[\kappa]}(Q)$ given by the preimage of the diagonal $\Delta_Q \subset Q \times Q$ under 
\begin{gather*}
ev^{ij}_I: [0,1]\times \Lambda^1_{[\kappa]}(Q) \to Q\times Q,\\
ev^{ij}_I(\theta, \bm \gamma)=(\gamma_i(\theta),\gamma_j(\theta)).
\end{gather*}
Then \emph{the naive switching map} $\Delta^{ij}_I \to \Delta^{ij}_I$ is given by 
$$(\theta, \gamma_1, \ldots, \gamma_\kappa) \mapsto (\theta, \gamma_1, \ldots, \gamma_i|_{[0, \theta]}\# \gamma_j|_{[\theta, 1]}, \ldots, \gamma_j|_{[0, \theta]}\# \gamma_i|_{[\theta, 1]}, \ldots, \gamma_\kappa).$$
To make this naive map work well with the moduli of Morse flow trajectories, we need to introduce a tedious upgrade of the above map, which we do in the rest of this section. Notice that one source of problems comes from the fact that $\Delta^{ij}_I$ is not even a $C^1$-submanifold in $\Lambda^1_{[\kappa]}(Q)$. 

\begin{figure}[ht]
    \centering
    \includegraphics[width=0.8\linewidth]{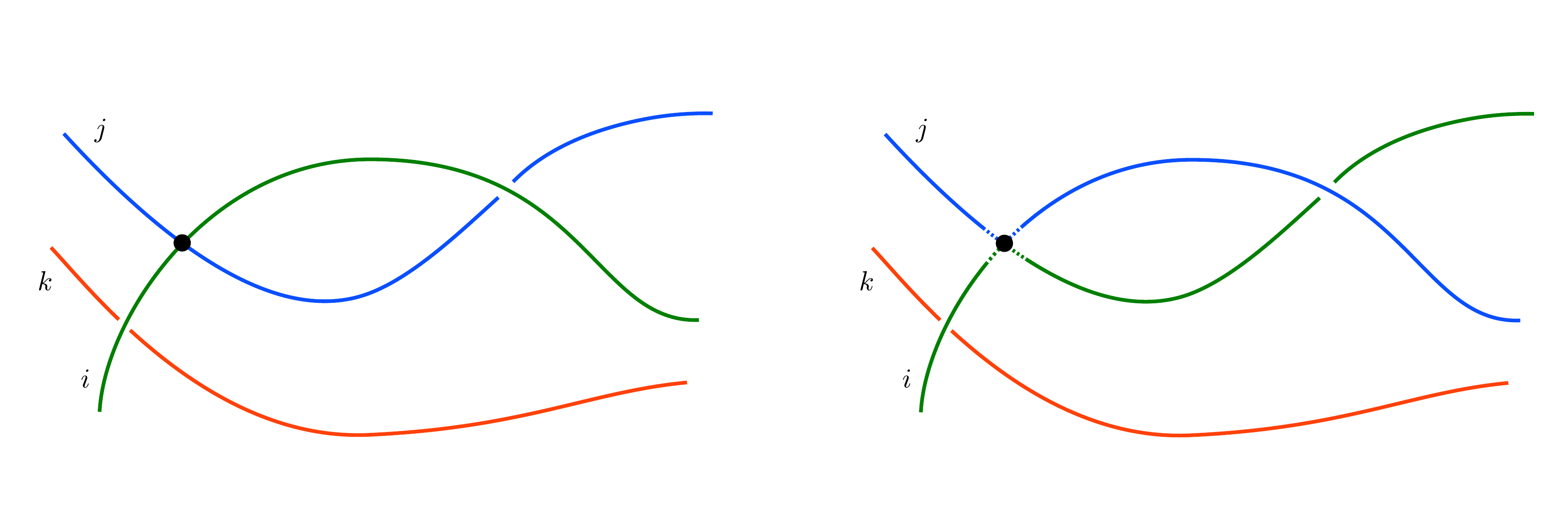}
    \caption{Here we schematically illustrate the effect of the switching map $sw^{ij}_{I_1}$ on the left tuple of paths, where we have locally drawn $3$ out of $\kappa$ paths. All paths except $\gamma_i$ and $\gamma_j$ are unaffected, and the portion of $\gamma_i$ after the intersection point at $\theta$ gets replaced with the reparametrized portion of $\gamma_j$ and vice versa. Dashed lines depict the portions of $\gamma_i$ and $\gamma_j$ affected by the smoothing reparametrization.}
    \label{fig:switching-map}
\end{figure}
\subsubsection{Construction of the switching map.}\label{section: switching map} Here we closely follow the exposition given in \cite[Section 2.2]{honda2025morse} and only slightly change the definition given there, adapting it to the closed string setting.

Fix an integer $\ell \ge 1$. Consider a subset $A=\{i_1, \ldots, i_{\tilde{\ell}}\} \subset [\ell]$ and an $\tilde{\ell}$-tuple of elements of $\binom{[\kappa]}{2}$ given by $\bm \tau=(\tau_1=\{j_1^1,j_1^2\}, \ldots, \tau_{\tilde{\ell}}=\{j^1_{\tilde{\ell}},j^2_{\tilde{\ell}}\})$. For any space $X$ we denote by $\operatorname{Conf}_A(X)$ the subspace of $X^\ell$ consisting of tuples $(x_1, \ldots, x_\ell)$ such that $x_i \neq x_j$ for any two $i, j \in A$, and it is isomorphic to $\operatorname{Conf}_{|A|}(X) \times X^{\ell-|A|}$. We associate with $A$ and $\bm \tau$ a \emph{multidiagonal} subspace $\Delta^{\bm \tau}_A$ of $\operatorname{Conf}_{A}([0,1])\times \Lambda_{[\kappa]}^1(Q)$ given by
\begin{equation}
    \Delta^{\bm \tau}_A\coloneqq (ev_{I_A}^{\bm \tau})^{-1}(\Delta_Q^{\tilde{\ell}}), \text{ where}
\end{equation}
\begin{gather}\label{eq: evaluation-map}
    ev_{I_A}^{\bm \tau} = ev^{\tau_1, \dots, \tau_{\tilde{\ell}}}_{I_{i_1}, \dots, I_{i_{\tilde{\ell}}}}\colon \operatorname{Conf}_A([0,1]) \times \Lambda^{1}_{[\kappa]}(Q) \to Q^{2\tilde{\ell}}, \\
    (\theta_1, \ldots, \theta_\ell, \bm \gamma) \mapsto ((\gamma_{j^1_1}(\theta_{i_1}),\gamma_{j^2_1}(\theta_{i_1})),\ldots,(\gamma_{j^1_{\tilde{\ell}}}(\theta_{i_{\tilde{\ell}}}),\gamma_{j^2_{\tilde{\ell}}}(\theta_{i_{\tilde{\ell}}}))) \nonumber.
\end{gather}

We point out that restriction of $\Delta^{\bm \tau}_A$ to $\operatorname{Conf}_A([0,1]) \times\Lambda^{m,2}_{[\kappa]}(Q)$ is a $C^{m-1}$-submanifold with boundary as shown in \cite[Section 2.2.1]{honda2025morse}. By taking unions of such subspaces and their closures over various choices of $\bm \tau$ we define subsets
\begin{equation}
    \Delta_{I_A}=\Delta_{I_{i_1}, \dots, I_{i_{\tilde{\ell}}}}, \quad\overline{\Delta}_{I_A}=\overline{\Delta}_{I_{i_1}, \dots, I_{i_{\tilde{\ell}}}} \subset [0,1]^{\ell} \times \Lambda^1_{[\kappa]}(Q).
\end{equation}

We also introduce
\begin{gather}
    \overline{\Delta}_{\R^\ell}^A=\operatorname{ev}_A^{-1}(\Delta^{\operatorname{thin}}_{\R^{\tilde{\ell}}}), \nonumber \\
    \operatorname{ev}_A \colon \R^\ell \to \R^{\tilde{\ell}}, \quad(s_1, \ldots, s_\ell) \mapsto (s_{i_1}, \ldots, s_{i_{\tilde{\ell}}}), \nonumber
\end{gather}
where $\Delta^{\operatorname{thin}}_{\R^\ell}$ is the thin diagonal in $\R^\ell=\R\times\dots\times\R$. We denote by $\Delta_{\R^\ell}^A$ the open submanifold of $\overline{\Delta}_{\R^\ell}^A$ consisting of points such that $s_j \neq s_i$ for $j \in [\ell] \setminus A$ and $i \in A$. Let $\overline{\Delta}_{\R^\ell}^A$ be its closure in $\R^\ell$. Note that $\overline{\Delta}^A_{\R^\ell}=\R^\ell$ if $|A| \le 1$. We define
\begin{equation}
    \Delta^{|}_{I_A}\coloneqq\Delta_{\R^{\ell}}^A \times \Delta_{I_A}, \quad \overline{\Delta}^{|}_{I_A}\coloneqq \overline{\Delta}^A_{\R^\ell} \times \overline{\Delta}_{I_A} \subset \R^\ell\times[0,1]^\ell\times \Lambda^1_{[\kappa]}(Q).
\end{equation}

Notice that each of these manifolds is invariant under the $\R$ action on $\R^\ell$ by simultaneous translations of each coordinate. 
We set
$$\operatorname{Conf}_S([0,1])=\{(\theta_1, \ldots, \theta_p) \in [0,1]^p \mid \theta_i=\theta_j \text{ iff }i \sim_Sj\},$$
where $\sim_S$ is an equivalence relation associated with a partition $S$. 

Given a $1$-periodic diffeomorphism $f \colon \R \to \R$, it induces a diffeomorphism $$f^{\circ} \colon \Lambda^1_p(Q) \to \Lambda^1_p(Q)$$ by pre-composition $\gamma \mapsto \gamma\circ f$ for any integer $p \ge 1$ since $f$ induces a diffeomorphism of $\R/p\Z$. Therefore, any such $f$ also induces a diffeomorphism $f^\circ$  of $\Lambda^1_{[\kappa]}(Q)$.

For each $p \ge 1$ we pick a continuous family of such diffeomorphisms $f_{\theta_1, \ldots, \theta_p}$ parametrized by $(\theta_1, \ldots, \theta_p) \in [0,1]^p$ satisfying 
\begin{gather}
f_{\theta_1, \dots, \theta_p}(\theta_i)=\theta_i \quad \text{ for all $i$ such that  $1\le i \le p$;} \label{eq: family-property1 for r}   \\
    f_{\theta_1,\dots, \theta_p}^{(k)}(\theta_i)=0 \quad \text{ for all $k\geq 1$ and $i$ such that $1\leq i\leq p$};\\
    |f_{\theta_1, \ldots, \theta_p}'| \le 1+c \text{ for some small  } c> 0 \label{eq: family-property3 for r} ;\\
    f_{\theta_1, \ldots, \theta_p} \text{ restricts to a smooth family on }\operatorname{Conf}_S([0,1]) \text{ for any partition } S \text{ of }[p]; \\
    f_{\theta_1, \ldots, \theta_p} \text{ descends to a well-defined family on }T^p=([0,1]/(0 \sim 1))^p \label{eq: family-property5}.
\end{gather}
We omit the explicit construction and refer to \cite[Section 2.2.2]{honda2025morse}. We point out that we do not require $f_{\theta_1, \ldots, \theta_p}(0) \equiv 0$ or $f_{\theta_1, \ldots, \theta_p}(1) \equiv 1$.

\smallskip
We now construct a continuous family of smooth $1$-periodic diffeomorphisms $\nu_{[\ell]}=:\R\to \R$ parametrized by $\R^\ell \times [0,1]^\ell \times \Lambda^1_{[\kappa]}(Q)$. First choose a cover $\{U_A\}_{A \subset [\ell]}$ of $\R^\ell \times [0,1]^\ell \times \Lambda^1_{[\kappa]}(Q)$ 
such that:
\begin{enumerate}
\item[(C1)] $\overline{\Delta}^{|}_{I_A} \subset \bigcup_{A' \subseteq A} U_{A'}$  for all $A \subset [\ell]$;
\item[(C2)] $U_{A} \cap \overline{\Delta}^{|}_{I_{A'}}=\emptyset$  for all pairs $A \subsetneq A' \subset [\ell]$;
\item[(C3)] $U_A$ is contained in an $\varepsilon$-neighborhood of $\Delta_A^{|}$ for each $A \subset [\ell]$ for a fixed $\varepsilon>0$; each $U_A$ is $\R$-invariant.
\end{enumerate}
Fix a partition of unity $\{\psi_A\}_{A \subset [\ell]}$ subordinate to $\{U_A\}_{A \subset [\ell]}$ with $\psi_A$'s $\R$-invariant. Given $A=\{j_1, \dots, j_{r'}\} \subset [\ell]$ we define $$f_A:U_A\times\R\to \R, \quad ((s_1, \ldots, s_\ell,\theta_1,\dots,\theta_\ell,\bm\gamma),x)\mapsto f_{\theta_{j_1},\dots,\theta_{j_{r'}}}(x).$$ 
We then set
\begin{equation}
    \nu_{[\ell]}=\sum_{S \subset [\ell]}\psi_Sf_S.
\end{equation}

By (C1)--(C3), the restriction of $\nu_{[\ell]}$ to $\overline{\Delta}_A^{|}$ is equal to $f_A$ on the complement 
of the union $\bigcup_{A' \subsetneq A} U_{A'}$, which by (C3) is contained within a small neighborhood of $\partial \overline{\Delta}_A^{|}$. In particular, its restriction to the most of $\Delta^{|}_{I_j}=\R^\ell\times \Delta_{I_j}$ is equal to $f_{\theta_j}$. We observe that $\nu_\ell$ descends to a function parametrized by $\R^\ell \times T^\ell \times \Lambda^1_{[\kappa]}(Q)$ by~\eqref{eq: family-property5}.

\begin{definition}\label{defn: switching-map}
    For $\overline{\Delta}^{ij, \,|}_{I_k} \subset \R^\ell \times [0,1]^{\ell} \times \Lambda^1_{[\kappa]}(Q)$ with $\{i, j\} \in \binom{[\kappa]}{2}$ and $k \in [\ell]$ the \emph{switching map} $sw_{I_k}^{ij} \colon \overline{\Delta}^{ij, \, |}_{I_k} \to \overline{\Delta}^{ij,\, |}_{I_k}$ is defined by the formula
    \begin{gather}
        sw_{I_k}^{ij}(\bm s, \bm \theta, \bm \gamma)=sw^{ij, \operatorname{ naive}}_{I_k}(\bm s, \bm \theta, \nu_{[\ell]}(\bm s, \bm \theta, \bm \gamma)^{\circ}(\bm \gamma)), \text{ where}
        \\
        sw^{ij, \operatorname{naive}}_{I_k}(\bm s, \bm \theta, \bm \gamma')=(\bm s, \bm \theta, \gamma_1', \ldots, \gamma_i'|_{[0, \theta_k]}\# \gamma_j'|_{[\theta_k, 1]}, \ldots, \gamma_j'|_{[0, \theta_k]}\# \gamma_i'|_{[\theta_k, 1]}, \ldots, \gamma_\kappa'), \nonumber
    \end{gather}
    and $\#$ stands for the concatenation. See Figure~\ref{fig:switching-map} for an illustration.
\end{definition}

In the following lemma, we state the properties satisfied by the switching map and refer the reader to \cite[Proposition 2.6]{honda2025morse} for proofs.
\begin{figure}[h]
    \centering
    \includegraphics[width=6cm]{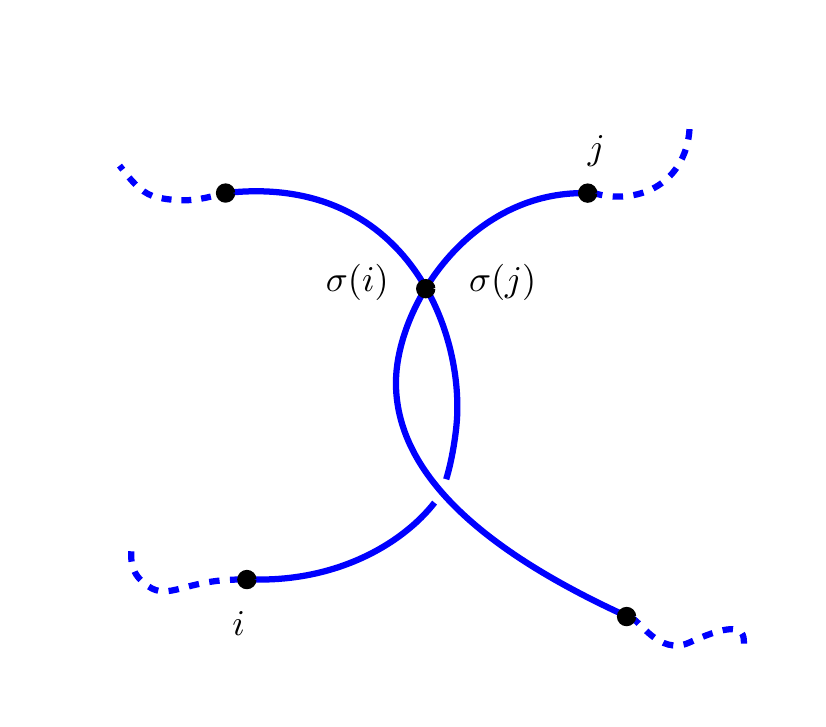}
    \caption{Here we depict a portion of the tuple $\bm \gamma \in \Omega^1_\sigma(Q)$ such that $\gamma_i(1)=\gamma_{\sigma(i)}(0)=\gamma_{\sigma(j)}(0)=\gamma_j(1)$; the strands are oriented counterclockwise. It is straightforward to verify that ${}^{(\sigma(i)\sigma(j))}(\sigma \cdot (ij))=\sigma \cdot (\sigma(i)\sigma(j)),$ which confirms~\eqref{eq: loop-conjugation-relation}.}
    \label{fig: circular-switch}
\end{figure}

\begin{lemma}
    For arbitrary indices $p_1, p_2 \in [\ell]$:
\begin{enumerate}
\item the map $sw^{ij}_{I_{p_1}}$ is a $C^{m-1}$-immersion when restricted to $\operatorname{Conf}_{S}(\R \times [0,1]) \times \Lambda^{m,2}_{[\kappa]}(Q)$ for an arbitrary partition $S$ of $[\ell]$;
\item the following commutativity relations hold:
\begin{gather}
    sw^{ij}_{I_{p_2}}(sw^{ij}_{I_{p_1}}(\bm s,\bm \theta, \bm \gamma))=sw^{ij}_{I_{p_1}}(sw^{ij}_{I_{p_2}}(\bm s,\bm \theta, \bm \gamma) ), \text{ if }(\bm s,\bm \theta, \bm \gamma) \in \Delta^{ij, ij,\, |}_{I_{p_1},I_{p_2}}, \label{eq : commutativity-switching}\\
    sw^{jk}_{I_{p_2}}(sw^{ij}_{I_{p_1}}(\bm s, \bm \theta, \bm \gamma))=sw^{ij}_{I_{p_1}}(sw^{ik}_{I_{p_2}}(\bm s, \bm \theta, \bm \gamma)), \text{ if }(\bm s, \bm \theta, \bm \gamma) \in \Delta^{ij, ik, \, |}_{I_{p_1},I_{p_2}} \text{ and }\theta_{p_1}\le\theta_{p_2}, \label{eq-switching-commuting-1}\\
    sw^{ik}_{I_{p_2}}(sw^{ij}_{I_{p_1}}(\bm s,\bm \theta, \bm \gamma))=sw^{kj}_{I_{p_1}}(sw^{ik}_{I_{p_2}}(\bm s,\bm \theta, \bm \gamma)), \text{ if }(\bm s,\bm \theta, \bm \gamma) \in \Delta^{ij, ik, \,|}_{I_{p_1},I_{p_2}} \text{ and }\theta_{p_1} \ge \theta_{p_2}, \label{eq-switching-commuting-2} \\
    sw_{I_{p_1}}^{ij}(sw^{kl}_{I_{p_2}}(\bm s,\bm \theta, \bm \gamma))=sw_{I_{p_2}}^{ij}(sw_{I_{p_1}}^{kl}(\bm s,\bm \theta, \bm \gamma)), \text{ if } (\bm s,\bm \theta, \bm \gamma) \in \Delta^{ij, kl, \, |}_{I_{p_1},I_{p_2}}, \\
    c^{|}_{\tau}(sw^{ij}_{I_{p_1}}(\bm s, \bm \theta, \bm \gamma))=sw_{I_{p_1}}^{\sigma(i)\sigma(j)}(s, \bm \theta', \bm \gamma), \text{ if }\theta_{p_1}=1, \label{eq: loop-conjugation-relation}
\end{gather}
where $i, j, k, l$ are assumed to be distinct, $\bm s=(s_1, \ldots, s_{\ell})$, $\bm \theta=(\theta_1, \dots, \theta_\ell)$; $\bm \theta'$ in~\eqref{eq: loop-conjugation-relation} is obtained from $\bm \theta$ by replacing $\theta_{p_1}$ with $\theta_{p_1}'=0$, $\sigma=\sigma(\gamma)$; $\tau$ is the transposition $(\sigma(i)\sigma(j))$, and 
\begin{equation}\label{eq: conjugation-with-parameters}
    c_\tau^|(\bm s, \bm \theta, \bm \gamma)\coloneqq(\bm s, \bm \theta, c_\tau(\bm \gamma)).
\end{equation}
\item for $(\bm s, \bm \theta, \bm \gamma')=sw^{ij}_{I_{p_1}}(\bm s, \bm \theta, \bm \gamma)$ it holds
\begin{equation}
    \mathcal{A}_{\bm L}(\bm \gamma') \le (1+c)\mathcal{A}_{\bm L}(\bm \gamma)
\end{equation}
for a fixed small $c>0$ as in~\eqref{eq: family-property3 for r}.
\end{enumerate}
\end{lemma}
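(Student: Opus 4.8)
The plan is to verify the three clauses of the lemma essentially by reducing everything to the corresponding statements in \cite[Proposition 2.6]{honda2025morse} for the path (rather than loop) setting, and then checking that the extra boundary identifications $\theta_i \sim 0 \sim 1$ and the new relation \eqref{eq: loop-conjugation-relation} cause no trouble. For clause (1), the map $sw^{ij}_{I_{p_1}}$ is by Definition~\ref{defn: switching-map} the composition of the reparametrization $\bm\gamma \mapsto \nu_{[\ell]}(\bm s,\bm\theta,\bm\gamma)^\circ(\bm\gamma)$ with the naive switching map $sw^{ij,\,\mathrm{naive}}_{I_{p_1}}$. The first factor is a $C^{m-1}$-diffeomorphism of $\Lambda^{m,2}_{[\kappa]}(Q)$ depending $C^{m-1}$-smoothly on parameters (this is where \eqref{eq: family-property5} and the fact that $\nu_{[\ell]}$ descends to $\R^\ell\times T^\ell\times\Lambda^1_{[\kappa]}$ are used, so that concatenation at $\theta_{p_1}$ — which may be $0$ or $1$ — still lands in the loop space). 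The second factor, after the reparametrization has made all derivatives of the gluing profile vanish to infinite order at $\theta_{p_1}$ (property \eqref{eq: family-property1 for r} and its derivative analogue), is a $C^{m-1}$-immersion on each $\operatorname{Conf}_S(\R\times[0,1])\times\Lambda^{m,2}_{[\kappa]}(Q)$ by exactly the computation in \cite[Section 2.2]{honda2025morse}: injectivity of the differential fails only through the mismatch of jets at the switching time, which has been killed. The only genuinely new point is to note that when $\theta_{p_1}\in\{0,1\}$ the image still lies in $\Lambda^1_{[\kappa]}(Q)$ and the immersion property persists, since the loop-closing condition is a smooth constraint compatible with the concatenation.

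For clause (2), each identity is a pointwise statement about piecewise-concatenated paths after a common reparametrization, so it suffices to check it for the naive maps once one verifies that $\nu_{[\ell]}$ is symmetric enough — i.e., that $\nu_{[\ell]}(sw(\cdots))=\nu_{[\ell]}(\cdots)$ on the relevant multidiagonals, which holds because the cover $\{U_A\}$ and partition of unity $\{\psi_A\}$ are built $\R$-equivariantly and depend only on the $\theta$-coordinates and the diagonal-incidence data, both of which are preserved by switching. Granting that, \eqref{eq : commutativity-switching}, \eqref{eq-switching-commuting-1}, \eqref{eq-switching-commuting-2} and the $\{i,j\}\cap\{k,l\}=\varnothing$ case are the same case-by-case bookkeeping of which sub-arc goes where as in \cite{honda2025morse}; the inequality $\theta_{p_1}\le\theta_{p_2}$ versus $\theta_{p_1}\ge\theta_{p_2}$ in \eqref{eq-switching-commuting-1}--\eqref{eq-switching-commuting-2} dictates the order in which the two switches cut the shared strand $\gamma_i$, which is exactly the source of the asymmetry. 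The truly new relation is \eqref{eq: loop-conjugation-relation}: when $\theta_{p_1}=1$ the switching time sits at the common endpoint $\gamma_i(1)=\gamma_{\sigma(i)}(0)$, so switching the tails of $\gamma_i$ and $\gamma_j$ at $\theta_{p_1}=1$ produces nothing but a relabeling of which strand feeds into position $\sigma(i)$ versus $\sigma(j)$, i.e.\ precomposition with the conjugation $c_\tau$; Figure~\ref{fig: circular-switch} makes the combinatorial identity ${}^{(\sigma(i)\sigma(j))}(\sigma\cdot(ij))=\sigma\cdot(\sigma(i)\sigma(j))$ transparent, and one checks the reparametrization $\nu_{[\ell]}$ is unaffected because $f_{\theta_1,\dots,\theta_p}$ descends to $T^p$ and $\theta_{p_1}'=0$ and $\theta_{p_1}=1$ are identified there.

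For clause (3), the estimate $\mathcal{A}_{\bm L}(\bm\gamma')\le(1+c)\mathcal{A}_{\bm L}(\bm\gamma)$ is immediate from the change-of-variables bound $|f'_{\theta_1,\dots,\theta_p}|\le 1+c$ of \eqref{eq: family-property3 for r}: after reparametrization the action of each reparametrized strand is at most $(1+c)$ times the original (here one uses that $L_i$ is, up to the fiberwise-quadratic growth controlled by \eqref{L1-condition}--\eqref{L2-condition}, comparable to $|v|^2$, and that a reparametrization with derivative $\le 1+c$ scales $\int|\dot\gamma|^2$ by at most $(1+c)$ after accounting for the Jacobian — the precise monotone dependence is the one recorded in \cite[Section 2.2]{honda2025morse}); and the naive switching step only permutes and concatenates arcs, hence leaves the total action unchanged since $\mathcal{A}_{\bm L}$ is additive over the decomposition at $\theta_{p_1}$ and the $L_i$ agree on $[0,\delta]\cup[1-\delta,1]$ by \eqref{lagrangian-gluing} so the cut-and-paste is consistent with the Lagrangians in play. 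I expect the main obstacle to be purely notational bookkeeping in clause (2): keeping straight, across the five relations, which of $\gamma_i,\gamma_j,\gamma_k,\gamma_l$ occupies which slot before and after each switch, and ensuring that the reparametrization data $\nu_{[\ell]}$ genuinely commutes with all of these — the loop-space identifications make the indexing slightly more delicate than in the path case, but introduce no new analytic difficulty beyond what Theorem~\ref{theorem-action+gradient} and the construction of $\{f_{\theta_1,\dots,\theta_p}\}$ already provide.
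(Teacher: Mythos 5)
Your proposal mirrors the paper's own treatment, which gives no independent argument: clauses (1)--(2) are deferred to \cite[Proposition 2.6]{honda2025morse}, the one genuinely new relation \eqref{eq: loop-conjugation-relation} is checked by exactly the direct computation you describe (the identity ${}^{(\sigma(i)\sigma(j))}(\sigma\cdot(ij))=\sigma\cdot(\sigma(i)\sigma(j))$ of Figure~\ref{fig: circular-switch}, together with the descent property \eqref{eq: family-property5} identifying $\theta_{p_1}=1$ with $\theta_{p_1}'=0$), and clause (3) is asserted to follow from \ref{L1-condition}--\ref{L2-condition} and \eqref{eq: family-property3 for r}. So structurally you are doing what the paper does.

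Two sub-steps are, however, misstated. In clause (3) your claim that the naive cut-and-paste ``leaves the total action unchanged'' because the $L_i$ agree on $[0,\delta]\cup[1-\delta,1]$ is wrong: after the switch the tail $\gamma_j|_{[\theta_{p_1},1]}$ is weighed by $L_i$ instead of $L_j$ (and vice versa), and \eqref{lagrangian-gluing} only forces $L_i=L_j=L_0$ near the endpoints, not at a generic switching time $\theta_{p_1}\in(\delta,1-\delta)$. The discrepancy $\int_{\theta_{p_1}}^1\bigl((L_i-L_j)(t,\gamma_j,\dot\gamma_j)+(L_j-L_i)(t,\gamma_i,\dot\gamma_i)\bigr)dt$ does not vanish; it is merely uniformly controlled, since by \ref{hamiltonian-properties-unsym-1} the perturbations $F_i$ are uniformly close to one another and Fenchel duality is $1$-Lipschitz in the sup norm, so $|L_i-L_j|$ is uniformly small, and this error must then be absorbed into the same estimate, driven by \eqref{eq: family-property3 for r} and the growth bounds \ref{L1-condition}--\ref{L2-condition}, that handles the reparametrization; the exact equality you assert should be replaced by this bound. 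Secondly, in clause (2) your reduction to the naive maps rests on the assertion that $\{U_A\}$ and $\{\psi_A\}$ ``depend only on the $\theta$-coordinates and the diagonal-incidence data''; nothing in the construction of Section~\ref{section: switching map} guarantees this, and the compatibility of the reparametrization $\nu_{[\ell]}$ with iterated switchings is precisely what \cite[Proposition 2.6]{honda2025morse} establishes, so deferring to it (as the paper does) is the correct move rather than supplying this unsupported claim.
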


\begin{remark}
    The property~\eqref{eq: loop-conjugation-relation} is the only new interesting relation in the closed multiloop case when compared to based multiloops in \cite[Proposition 2.6]{honda2025morse}. It is verified by direct computation, and we illustrate it in Figure~\ref{fig: circular-switch}.
\end{remark}

\begin{remark}
    The last property above follows immediately from \ref{L1-condition}-\ref{L2-condition} and~\eqref{eq: family-property3 for r}.
\end{remark}

\subsection{The free multiloop complex}

\subsubsection{Recollection of the standard Morse complex for $\Lambda^1(Q)$} We first recall the definition of the Morse chain complex $CM_*\left(\Lambda^1(Q)\right)$ in the standard $\kappa=1$ setting following \cite{abbondandolo2006floer, abbondandolo2010floer}. The chains are given by the graded vector space over $\mathbb{Q}$ generated by the critical points of $\mathcal{A}_L$. By Theorem~\ref{theorem-action+gradient} there exists a pseudo-gradient vector field $X$ for $\mathcal{A}_L$. The Morse differential is then given by counting index $1$ \emph{pseudo-gradient trajectories} of $\mathcal{A}_L$ associated with the flow of $X$
\begin{equation}
\frac{\partial \Gamma}{\partial s}=X(\Gamma),    
\end{equation}
where we are parametrizing paths in $\Lambda^1(Q)$ by the variable $s$.

Given a critical point $\gamma \in \mathcal{P}^L$ one may define the \emph{stable} and \emph{unstable} manifolds of $\gamma$ with respect to $X$:
\begin{equation}
\begin{aligned}
    W^s(\gamma ; X) & \coloneqq\{\Gamma \in C^1([0, +\infty)_s, \Omega^{1,2}(M, \bm q, \bm q')) \mid   \, \lim_{s \to +\infty} \Gamma= \bm \gamma,  \, \lim_{s \to +\infty} \partial_s \Gamma=0, \, \partial_s\Gamma+X=0\}, \\
    W^u(\gamma ; X) & \coloneqq\{\Gamma \in C^1((-\infty, 0]_s, \Omega^{1,2}(M, \bm q, \bm q')) \mid   \, \lim_{s \to -\infty} \Gamma= \bm \gamma,  \, \lim_{s \to -\infty} \partial_s \Gamma=0, \, \partial_s\Gamma+X=0\} .
\end{aligned}
\end{equation}
Note that by condition~\ref{PG3} the unstable manifold is always finite-dimensional.

To each critical $\gamma$ we associate a graded rank $1$ abelian group $o_\gamma$ generated by orientations of the unstable manifold $W^u(\gamma; X)$ with grading given by
\begin{equation}
    |\gamma|=-\operatorname{ind}_{\operatorname{Morse}}(\gamma).
\end{equation}
We stress that we are using negative (cohomological) grading.

We denote by $\widetilde{\mathcal{M}}\left(\gamma, \gamma' ; X\right)$ the moduli space of pseudo-gradient trajectories from $\gamma$ to $\gamma'$ with respect to $X$ and $\mathcal{M}\left(\gamma, \gamma' ; X\right)=\widetilde{\mathcal{M}}\left(\gamma, \gamma^{\prime} ; X\right) / \R$ be the quotient by the $\R$-translation. Notice that $\widetilde{\mathcal{M}}\left(\gamma, \gamma' ; X\right)$ can be defined as a fiber product of $W^u(\gamma; X)$ and $W^s(\gamma'; X)$. 
We set
\begin{equation}
    C M_{-*}\left(\Lambda^1(Q)\right)= \bigoplus_{\gamma \in \mathcal{P}^L}o_\gamma.
\end{equation}
Then we define

\begin{gather}
    \partial\colon C M_{-*}\left(\Lambda^1(Q)\right) \rightarrow CM_{-*+1}\left(\Lambda^1(Q)\right) \\
    \partial [\gamma]=\sum_{\substack{\Gamma \in \mathcal{M}\left(\gamma, \gamma' ; X\right) \\ |\gamma'|=|\gamma|+1}} \# \partial_\Gamma([\gamma]), \nonumber
\end{gather}
where $[\gamma]$ is a generator of $o_\gamma$, and 
$$\partial_\Gamma \colon o_\gamma \to o_{\gamma'}$$
is a natural morphism associated with the trajectory $\Gamma$.
The cohomology of this complex is independent of the choice of $X$ according to \cite{abbondandolo2010floer}.

\subsubsection{Perturbation spaces}\label{section: perturbation spaces}
Here we shed more light on statement (3) of Theorem~\ref{theorem-action+gradient} and introduce particular perturbation spaces which we use to achieve transversality for various moduli spaces. First, recall that given a smooth loop $\gamma \in \Lambda^1_T(Q)$ for $T \in \Z_{>0}$, and a small open ball $U \subset \R^n$ there is a local chart $\mathcal{U}_{\gamma} \subset \Lambda^1_T(Q)$ given by
\begin{gather}
    \operatorname{exp}_* \colon \Lambda^1_T(U) \to \Lambda^1_T(Q), \\
    \xi \mapsto \operatorname{exp}_{\gamma(\cdot)}(\xi(\cdot)),
\end{gather}
where we use trivialization $[0, T] \times U \subset [0, T] \times \R^n \cong \gamma^*TQ$, and we refer to \cite[Section 3]{abbondandolo2009pseudo-gradient} and \cite[Appendix A]{honda2025morse} for more details.

We claim that there exists a countable locally finite cover $\{\mathcal{U}_{\bm \gamma_j}\}_{j \in \mathcal{J}}$ of $\Lambda^{1}_{[\kappa]}(Q)$, where $\bm \gamma_j=(\gamma_{j}^{c_1(\sigma)},\dots, \gamma_{j}^{c_{l(\sigma)}(\sigma)})$ is a \emph{smooth} multiloop, here $\sigma=\sigma(\bm \gamma_j)$, and $\mathcal{U}_{\bm \gamma_j}$ is the product of neighborhoods $\mathcal{U}_{\gamma_{j}^{c_i(\sigma)}}$ as above.  We also assume the following: 
\begin{enumerate}
\item[(C1)] if some $\gamma\in \mathcal{U}_{\gamma_{ji}}$ is a critical point, then $\gamma$ lies in only one chart $\mathcal{U}_{\gamma_{ji}}$ (called a {\em critical point chart}) and in that case $\gamma=\gamma_{ji}$;
\item [(C2)] critical point charts will be arbitrarily small and will be shrunk as necessary in response to given closed subsets in $\R^\ell \times [0,1]^\ell\times \Lambda_{[\kappa]}^{1}(Q).$
\end{enumerate}
We fix a partition of unity $\{\psi_j\}_{j \in \mathcal{J}}$ subordinate to this cover.

For such a cover we define $\mathfrak{X}$ to consist of smooth vector fields $Y$ on $\Lambda^1_{[\kappa]}(Q)$ which admit a presentation $Y=\sum_{j \in \mathcal{J}} \psi_j Y_j$, where $Y_j=(Y_j^{c_1(\sigma)}, \dots,Y_j^{c_l(\sigma)})$ and each $Y_j^{c_i(\sigma)}$ is linear in $\mathcal{U}_{\gamma_{j}^{c_i(\sigma)}}$ with restriction to $\gamma_{j}^{c_i(\sigma)}$ of finite $C_{\bm \varepsilon}$-norm for an appropriate choice of $\bm \varepsilon=(\varepsilon_k)_{k \ge 1}$.
The sequence $\{Y_j\}_{j \in \mathcal{J}}$ is required to be bounded, which makes $\mathfrak{X}$ into a separable Banach space, see \cite[Appendix B]{honda2025morse} for more details. Further we define $\mathfrak{X}_-$ and $\mathfrak{X}_+$ to be subspaces of $C_{\bm\varepsilon}((-\infty, 0], \mathfrak{X})$ and $C_{\bm\varepsilon}([0,+\infty), \mathfrak{X})$ consisting of functions vanishing away fram $[-1,0]$ and $[0,1]$, respectively. 

Next, fix a nondecreasing smooth function $g \colon [0, +\infty) \to [0, +\infty)$ such that $g(l)=\frac{l}{3}$ for $l \le 1$ and $g(l)=1$ for $l\ge 3$, and define
\begin{gather}\label{eq: finite-length-parameter-space}
    \mathfrak{X}_0=
    \{X\in C_{\bm\varepsilon}([0,+\infty)^2, \mathfrak{X})~ \mid  ~ \mbox{$X(l,s)=0$ for $s \in  (g(l), \operatorname{max}(l-g(l),l-1))\cup [l, +\infty)$}; \\
    \lim_{l \to +\infty}\operatorname{split}X(l,\cdot) \text{ exists in } \mathfrak{X}_+ \times \mathfrak{X}_-; \text{ all partial derivatives} \nonumber\\
    \text{ of } X 
    \text{ vanish at } (0,s) \text{ for all } s \in [0,+\infty)\}, \nonumber
\end{gather}
where, writing $X_\lambda(s)\coloneqq X(\lambda,s)$ and $X_\lambda^T(s)\coloneqq  X_\lambda(s+T)$,
$$\operatorname{split}X_\lambda=(X_\lambda|_{[0,1]}\#0|_{[1, +\infty)}, 0|_{(-\infty,-1]}\#X_\lambda^{\lambda+1}|_{[-1,0]}).$$
We refer to \cite[Appendix B]{honda2025morse} and \cite[Section 1]{mescher2018} for more details.

Given an integer $\ell \ge 0$, and a permutation $\bm h=(h_1, \ldots, h_\ell)$ in $\mathfrak{S}_\ell$ encoding the component $\operatorname{Conf}_\ell^{\bm h}(\R)$ of $\operatorname{Conf}_\ell(\R)$ we define perturbation spaces
$\mathfrak{X}(\ell, \bm h, \bm \tau,  \bm \rho)$ that are necessary to work with to achieve \emph{consistency} of the data in the sense that we explain below. For the rest of the discussion, we regard the tuple $\bm h$ also as a strict linear order on the given $\ell$ elements.

The basic building blocks are the spaces $\mathfrak{X}_{\pm}(k)$ and $\mathfrak{X}_0^i(k)$, $k=0,1,2,\dots$, $i=1,\dots, k$ introduced in \cite[Appendix C, Equations (C.1)-(C.3)]{honda2025morse} and we refer the reader there for the detailed definition. We recall that element of each of theses spaces is either an element of $C^\infty(\mathscr{D}_k \times [0,+\infty), \mathfrak{X})$ or $ C^\infty(\mathscr{D}_k \times (-\infty, 0], \mathfrak{X})$, where $\mathscr{D}_k=[0, +\infty)^k$. We denote by $(\lambda_1, \ldots, \lambda_k) \in \mathscr{D}_k$ a point of this space.

For each $k\ge 0$ we define the following map from the closure of $\operatorname{Conf}^{\bm h}_k(\R)$ in $\R^k$
\begin{gather}
    \mathcal{D}_{\bm h}\colon \overline{\operatorname{Conf}}^{\bm h}_k(\R) \to \mathscr{D}_{k-1}, \label{eq: d-map-conf} \\
    (s_1, \ldots, s_k) \mapsto (s_{h_1}-s_{h_2}, \dots, s_{h_{k-1}}-s_{h_k}). \nonumber
\end{gather}
Set $\mathfrak{X}_{\pm}(k, \bm h)$ and $\mathfrak{X}_{0}^i(k, \bm h)$ to be the pullbacks of the spaces introduced above (corresponding to $(k-1)$) under $\mathcal{D}_{\bm h}$. Notice that $\overline{\operatorname{Conf}}^{\bm h}_k(\R) \cong \R \times \mathscr{D}_{k-1}$. For $Y \in \mathfrak{X}_{0}^i(k, \bm h)$ and given $\bm s \in \operatorname{Conf}^{\bm h}_k(\R)$ we denote by $Y_{\bm s, i} \in \mathfrak{X}_0$ the restriction of $Y$ to $$\R \times \{s_{h_1}-s_{h_2}\} \times \dots \times \{s_{h_{i-1}}-s_{h_i}\} \times [0, +\infty) \times \{s_{h_{i+1}}-s_{h_{i+2}}\} \times \dots \times \{s_{h_{\ell-1}}-s_{h_{\ell}}\},$$
i.e. its ``length" parameter $\lambda$ is the $i$-th distance.

We then have that an element $Y$ can be regarded as an element of $C^\infty(\overline{\operatorname{Conf}}^{\bm h}_k(\R) \times [0, +\infty), \mathfrak{X})$ or $C^\infty(\overline{\operatorname{Conf}}^{\bm h}_k(\R) \times (-\infty, 0], \mathfrak{X})$, and is invariant under simultaneous parallel transport of the $k$ points in $\R$. One may interpret the defining conditions for these spaces as allowing to extend $Y$ as above to the codimension $1$ boundary of the natural compactification of $\operatorname{Conf}^{\bm h}_k(\R)$ obtained by ``breaking" the real line into two parts whenever $(s_{h_i}-s_{h_{i+1}}) \to +\infty$ for some $i=1,\dots, k$. The codimension $1$ boundary of such compactification is given by
\begin{equation}\label{eq: conf-boundary-decomposition}
\bigsqcup_{i=1}^k\overline{\operatorname{Conf}}^{\bm h_i^1}_{i}(\R) \times \overline{\operatorname{Conf}}^{\bm h^2_{i}}_{k-i}(\R),
\end{equation}
Where $(\bm h_i^1, \bm h_i^2) \in \mathfrak{S}_i \times \mathfrak{S}_{k-i}$ are permutations on $i$ and $(k-i)$ elements respectively, given by restricting the strict order $\bm h$ to the $i$ greatest (resp., $k-i$ smallest) elements.

To each $k\in \Z_{\ge 1}$ we also associate the \emph{background perturbation spaces}
\begin{equation} \label{back pm}
    \mathfrak{X}_{ \pm}^{\operatorname{back}}(k, \bm h)\coloneqq  \left\{\bm Z=(Z_1, \dots, Z_k) \in \mathfrak{X}_{\pm}(k-2)^{{k-1}} \mid   \lim _{\lambda_i' \rightarrow \infty} Z_j|_{\lambda_i=\lambda_i'} =\lim _{\lambda_{j-1}' \rightarrow \infty}Z_i|_{\lambda_{j-1}=\lambda_{j-1}'} ~\forall j>i\right\}.
\end{equation}
One should think of the $i$-th component $\mathfrak{X}_{\pm}(k-2)$ in the product above as of a subspace of $C^\infty(\overline{\operatorname{Conf}}^{\bm h_i^1}_{i}(\R) \times \overline{\operatorname{Conf}}^{\bm h^2_{i}}_{k-i}(\R) \times [0, +\infty), \mathfrak{X})$ (or of the analogous space for $(-\infty, 0]$) pulled back under
$$(\mathcal{D}_{\bm h_i^1}, \mathcal{D}_{\bm h_i^2}) \colon \overline{\operatorname{Conf}}^{\bm h_i^1}_{i}(\R) \times \overline{\operatorname{Conf}}^{\bm h^2_{i}}_{k-i}(\R) \to \mathscr{D}_{k-1}.$$

In the light of boundary decomposition~\eqref{eq: conf-boundary-decomposition} we introduce a ``face map'' which assigns to $Y \in \mathfrak{X}_{\pm}(k, \bm h)$ its {\em background perturbation data}
\begin{gather}\label{eq-constructing-background}
F_{\pm,k}:\mathfrak{X}_{\pm}(k, \bm h) \to \mathfrak{X}_{ \pm}^{\operatorname{back} }(k, \bm h), \quad
Y\mapsto  (\lim_{\lambda_1'\rightarrow \infty} Y|_{s_{h_1}-s_{h_2}=\lambda_1'}, \dots, \lim_{\lambda_{k-1}' \rightarrow \infty} Y|_{s_{h_{k-1}-s_{h_k}}=\lambda_{k-1}'}).   
\end{gather}

Similarly, one defines the spaces $\mathfrak{X}_0^{\operatorname{back},i}(k, \bm h)$ for $i=1,\dots, k-1$, see~\cite[Equation (C.6)]{honda2025morse}, and there are face maps
\begin{gather} \label{eq-constructing-background-part2}
    F_{0,k,i}:  \mathfrak{X}_0^i(k, \bm h) \to \mathfrak{X}_0^{\operatorname{back},i}(k, \bm h),\\
    Y \mapsto (\lim_{\lambda_1' \rightarrow \infty}  Y|_{s_{h_1}-s_{h_2}=\lambda_1'}, \dots,\widehat{\lim_{\lambda_i' \rightarrow \infty} Y|_{s_{h_i}-s_{h_{i+1}}=\lambda_i'}},\dots, \lim_{\lambda_{k-1}' \rightarrow \infty} Y|_{s_{h_{k-1}}-s_{h_k}=\lambda_{k-1}'}, \lim_{\lambda_i' \to \infty} \operatorname{split}Y|_{s_{h_i}-s_{h_{i+1}}=\lambda_i'}).\nonumber
\end{gather}

Given $\bm Z \in \mathfrak{X}^{\operatorname{back}}_{\pm}(k, \bm h)$ and $\bm Z' \in \mathfrak{X}^{\operatorname{back},i}_{0}(k, \bm h)$ we set $\mathfrak{X}_{ \pm}(k, \bm h, \bm Z):=F_{\pm,k}^{-1}(\bm Z)$ and $\mathfrak{X}_{0}(k, \bm h, \bm Z'):=F_{0,k,i}^{-1} (\bm Z')$.

Using the above building blocks we now construct the spaces $\mathfrak{X}_{\operatorname{sw}}(\ell)$, $\ell\geq 1$, of \emph{$\ell$-switching perturbation data} and $\mathfrak{X}^{\operatorname{back}}_{\operatorname{sw}}(\ell)$ of \emph{background $\ell$-switching perturbation data}. Given an $\ell$-tuple of transpositions $\bm \tau=(\tau_1=(i_1j_1), \ldots, \tau_\ell=(i_\ell j_\ell))$, an $\ell$-tuple $\rho=(\rho_1, \ldots, \rho_\ell)$ of permutations in $\mathfrak{S}_\kappa$, and a permutation $\bm h \in \mathfrak{S}_\ell$, we set:
\begin{gather}\label{eqn: sw}
\mathfrak{X}_{\operatorname{sw}}(\ell, \bm h, \bm \tau, \bm \rho)\coloneqq  \mathfrak{X}_-(\ell, \bm h) \times \mathfrak{X}_0^1(\ell, \bm h)\times\dots\times  \mathfrak{X}_0^{\ell-1}(\ell, \bm h) \times \mathfrak{X}_+(\ell, \bm h),\\
\mathfrak{X}_{\operatorname{sw}}(\ell)\coloneqq\prod_{\bm h \in \mathfrak{S}_\ell, \bm \tau \in \binom{[\kappa]}{2}^\ell, \bm \rho \in \mathfrak{S}_\kappa^\ell}  \mathfrak{X}_{\operatorname{sw}}(\ell, \bm h, \bm \tau, \bm \rho), \label{eqn: sw1}\\
\label{eqn: sw2}
    \mathfrak{X}^{\operatorname{back}}_{\operatorname{sw}}(\ell, \bm h, \bm \tau, \bm \rho)\coloneqq \mathfrak{X}^{\operatorname{back}}_-(\ell, \bm h) \times \mathfrak{X}_0^{\operatorname{back},1}(\ell, \bm h)\times\dots\times  \mathfrak{X}_0^{\operatorname{back},\ell-1}(\ell, \bm h) \times \mathfrak{X}^{\operatorname{back}}_+(\ell, \bm h),\\
    \mathfrak{X}^{\operatorname{back}}_{\operatorname{sw}}(\ell) \coloneqq\prod_{\bm h \in \mathfrak{S}_\ell, \bm \tau \in \binom{[\kappa]}{2}^\ell, \bm \rho \in \mathfrak{S}_\kappa^\ell} \mathfrak{X}^{\operatorname{back}}_{\operatorname{sw}}(\ell, \bm h, \bm \tau, \bm \rho).
\end{gather}
Given $\bm Z_{\bm h, \bm \tau, \bm \rho}=(\bm Z^-, \bm Z_1^0, \dots, \bm Z_{\ell-1}^0, \bm Z^+) \in \mathfrak{X}^{\operatorname{back}}_{\operatorname{sw}}(\ell, \bm h, \bm \tau, \bm \rho)$ we define
the space of {\em $\ell$-switching data consistent with background data $\bm Z_{\bm h, \bm \tau, \bm \rho}$}:
\begin{equation}
    \mathfrak{X}_{\operatorname{sw}}(\ell, \bm h, \bm \tau, \bm \rho, \bm Z_{\bm h, \bm \tau, \bm \rho})\coloneqq\mathfrak{X}_-(\ell, \bm h, \bm Z^-) \times \mathfrak{X}_0^1(\ell, \bm h, \bm Z^0_1) \times \dots \times \mathfrak{X}_0^{\ell-1}(\ell, \bm h, \bm Z^0_{\ell-1}) \times \mathfrak{X}_+(\ell, \bm h, \bm Z^+).
\end{equation}

And, similarly, for a given $\bm Z=(\bm Z_{\bm h, \bm \tau, \bm \rho})_{\bm h, \bm \tau, \bm \rho} \in \mathfrak{X}^{\operatorname{back}}_{\operatorname{sw}}(\ell) $ we have
\begin{equation}
    \mathfrak{X}_{\operatorname{sw}}(\ell, \bm Z)\coloneqq\prod_{\bm h \in \mathfrak{S}_\ell, \bm \tau \in \binom{[\kappa]}{2}^\ell, \bm \rho \in \mathfrak{S}_\kappa^\ell}\mathfrak{X}_{\operatorname{sw}}(\ell, \bm h, \bm \tau, \bm \rho, \bm Z_{\bm h, \bm \tau, \bm \rho}).
\end{equation}

We say that $\bm Y ^\ell =(\bm Y^\ell_{\bm h, \bm \tau, \bm \rho})_{\bm h, \bm \tau, \bm \rho} \in \mathfrak{X}_{\operatorname{sw}}(\ell)$ is \emph{commutative} if   
    \begin{equation}\label{eq-perturbation-switching}
    \bm Y^\ell_{\bm h, \bm \tau, \bm \rho}|_{s_{h_i}-s_{h_{i+1}}=0}=\bm Y^\ell_{\bm h^i, \bm \tau^{i,1}, \bm \rho}|_{s_{h^i_i}-s_{h^i_{i+1}}=0} =\bm Y^\ell_{\bm h^i, \bm \tau^{i,2}, \bm \rho}|_{s_{h^i_i}-s_{h^i_{i+1}}=0}\text{ for all }i=1, \dots, \ell-1,
 \end{equation}
where $\bm h^i$ is obtained from $\bm h$ by transposing $s_{h_i}$ and $s_{h_{i+1}}$, and $\bm \tau^{i,1}$ and $\bm \tau^{i,2}$ are obtained from $\bm \tau$ by modifying $\tau_i$ and $\tau_{i+1}$ using the following rules:
\begin{gather}
   (\tau_i^{i,1}, \tau_{i+1}^{i,1})\coloneqq ({}^{\tau_i\rho_i^{-1}} {\tau_{i+1}}, {}^{\rho_i} {\tau_{i}}), \label{eq: data-commutativity-rule-1}  \\ 
    (\tau_i^{i,2}, \tau_{i+1}^{i,2})\coloneqq ({}^{\rho_i^{-1}} {\tau_{i+1}}, {}^{\tau_{i+1}\rho_i} {\tau_{i}}). \label{eq: data-commutativity-rule-2} 
\end{gather}
where we use the standard notation for conjugation in $\mathfrak{S}_\kappa$ above, i.e., ${}^{\sigma'}\sigma=\sigma' \circ \sigma \circ \sigma'^{-1}$. These rules arise naturally from the commutativity properties~\eqref{eq-switching-commuting-1} and~\eqref{eq-switching-commuting-2} of the switching map.

Given a permutation $\sigma \in \mathfrak{S}_\kappa$ and tuple $\bm \tau$ and $\bm \rho$ as above we set $\bm \sigma(\sigma, \bm \tau, \bm \rho)\coloneqq(\sigma_0, \sigma_1, \dots, \sigma_\ell)$ with 
\begin{equation}\label{eq: data-cyclic}
    \sigma_0=\sigma, \quad \sigma_{i+1}={}^{\rho_i}(\sigma_i\tau_i), \, \text{for }i=1, \ldots, \ell .
\end{equation}
Set the restriction $\bm Y^\ell_{\bm h, \bm \tau, \bm \rho}|_\sigma$ of $\bm Y^\ell_{\bm h, \bm \tau, \bm \rho}=(Y_0, Y_1, \dots, Y_\ell)$ to be the restriction of each $Y_i$ to $\Omega^1_{\sigma_i}(Q)$. 
We say that $\bm Y ^\ell$ as above is \emph{cyclic} if for $i=1, \ldots, \ell$ and for any permutation $\sigma \in \mathfrak{S}_\kappa$
\begin{equation}\label{eq:cyclic-data}
    \bm Y^\ell_{\bm h, \bm \tau, \bm \rho}|_\sigma=\bm Y^\ell_{\bm h, \bm \tau_i(\sigma), \bm \rho_i(\sigma)}|_\sigma,
\end{equation}
where $\bm \tau_i(\sigma)$ is obtained from $\bm \tau$ by substituting $\tau_i$ with ${}^{\sigma_i}\tau_i$ and $\bm \rho_i(\sigma)$ is obtained by replacing $\rho_i$ with $\rho_i{}^{\sigma_i}\tau_i$. The condition~\eqref{eq:cyclic-data} is motivated by the conjugation property~\eqref{eq: loop-conjugation-relation} of the switching map.

\begin{definition}\label{defn-universal-data}
    A \emph{universal switching data} is a collection $\bm Y=(\bm Y^\ell)_{\ell \ge 1}$ of $\ell$-switching data $\bm Y^\ell\in \mathfrak{X}_{\operatorname{sw}}(\ell)$ for all integers $\ell \ge 1$ such that the following hold:
    \begin{enumerate}
        \item For each $\ell$ the data $\bm Y^\ell$ is commutative and cyclic.
        \item The data is \emph{consistent}, i.e., $\bm Y^\ell_{\bm h, \bm \tau, \bm \rho} \in \mathfrak{X}_{\operatorname{sw}}(\ell, \bm Z_{\bm h, \bm \tau, \bm \rho}^{< \ell})$, where $\bm Z_{\bm h, \bm \tau, \bm \rho}^{<\ell}$ is obtained from the collection $(\bm Y^1, \dots, \bm Y^{\ell-1}$) as follows:
        
        Let us write $\bm Z_{\bm \tau, \bm s}^{< \ell}=(\bm Z^-, \bm Z_1^0, \dots, \bm Z_{\ell-1}^0, \bm Z^+)$, where 
        $$\bm Z^{\pm}=(Z^{\pm}_1, \dots, Z^{\pm}_{\ell-1}) \quad \mbox{ and } \quad\bm Z^0_i=(Z^0_{i1}, \dots, Z^{0}_{i(i-1)}, Z_{ii}^+, Z_{ii}^-, Z^{0}_{i(i+1)}, \dots,  Z^0_{i(\ell-1)}).$$ Then the vector 
        $$(Z^-_{\ell'}, Z^0_{1\ell'}, \dots, Z^0_{(\ell'-1)\ell'}, Z^+_{\ell'\ell'}, Z^-_{\ell'\ell'}, Z^0_{(\ell'+1)\ell'}, \dots, Z^0_{(\ell-1)\ell'}, Z^+_{\ell'})$$
        is equal to $(\bm Y^{\ell'}, \bm Y^{\ell-\ell'})$ for each $\ell'$ such that $1 \le \ell' \le \ell-1$.
    \end{enumerate}
\end{definition}

\subsubsection{Multiloop complex}
For $\kappa \geq 1$ and $\bm \gamma \in \Omega_\sigma^1(Q)$, there is a straightforward definition of a pseudo-gradient trajectory on $\Omega_\sigma^1(Q)$ given by
$$
\frac{\partial \bm \Gamma}{\partial s}=X(\gamma),
$$
where $X$ is a pseudo-gradient vector field for $\mathcal{A}_{\bm L}$ on $\Lambda^1_{[\kappa]}(Q)$ which exists as follows from Theorem~\ref{theorem-action+gradient}.

 To the critical multiloop $\bm \gamma$ as above we associate rank-$1$ abelian group $o_{\bm \gamma}$ generated by orientations of $W^u(\bm \gamma; \bm X)$, i.e.,
 \begin{equation}
     o_{\bm \gamma}=o_{\gamma^{c_1(\sigma)}}\otimes\dots\otimes o_{\gamma^{c_{ l(\sigma)}}},
 \end{equation}
 and as before, we set
\begin{equation}
    o_{\bm \gamma}^{\Q}=\Q \otimes_{\Z} o_{\bm \gamma}.
\end{equation}

The Morse cohomology, whose differential is defined by counting pseudo-gradient multi-trajectories between critical points as above, gives nothing but $ l(\sigma)$ copies of the Morse homology $HM_*(\Lambda^{1}(Q))$ of the loop space of $Q$. In order to mimic higher genus curves appearing in the Floer differential~\eqref{eq-diff} we deform the definition of the Morse differential so that, 
given a gradient trajectory $\Gamma(s)$ starting at $\bm \gamma$, whenever there exists $s$ such that $\Gamma(s)$ crosses the big diagonal of $\text{Sym}^\kappa(Q)$, additional \emph{Morse flow lines with switches} that bifurcate from the original one are added. This we do by means of the switching map that we previously introduced.

Specifically, given two multiloops $\bm \gamma \in \Omega^1_\sigma(Q)$ and $\bm \gamma' \in \Omega^1_{\sigma'}(Q)$ in $\mathcal{P}_\kappa^{\bm L}$, a perturbation data
$$\bm Y=(Y_-, Y_1, \dots, Y_{\ell-1}, Y_+) \in \mathfrak{X}_- \times (\mathfrak{X}_0)^{\ell-1} \times \mathfrak{X}_+,$$ a tuple $\bm h=(h_1, \ldots, h_\ell)$, a permutation of elements of $[\ell]$, which also encodes a component of $\operatorname{Conf}_\ell(\R)$, a tuple of transpositions $\bm \tau=(\tau_1=(i_1j_1), \ldots, \tau_\ell=(i_\ell j_\ell))$, and a collection $\bm \rho=(\rho_1, \ldots , \rho_\ell)$  of permutations in $\mathfrak{S}_\kappa$,  we define $\widetilde{\mathcal{M}}_\ell(\bm \gamma,\bm \gamma'; \bm Y, \bm h, \bm \tau, \bm \rho)$ as follows:

\begin{definition}
    \label{def-morse-diff}
    A \emph{Morse flow line with switchings} $\bm \Gamma\in\widetilde{\mathcal{M}}_\ell(\bm \gamma,\bm \gamma'; \bm Y, \bm h, \bm \tau,  \bm \rho)$ is a tuple
    \begin{equation*}
        \bm \Gamma=\left(\bm s, \bm \theta,(\Gamma_0,\ldots,\Gamma_\ell)\right), 
    \end{equation*}
    where $(\bm s, \bm \theta) \in \operatorname{Conf}_\ell(\R \times [0,1]) \subset \R^\ell \times [0,1]^\ell$ with $\bm s=(s_1, \ldots, s_{\ell})$ in the closure $\overline{\bm h}$ of $\bm h$, i.e. $s_{h_1}\ge\dots \ge s_{h_\ell}$. Set $\bm \lambda= (\lambda_0=0, \lambda_1=s_{h_1}-s_{h_2}, \ldots, \lambda_{\ell-1}=s_{h_{\ell-1}}-s_{h_\ell})$.
    We require that:
    \begin{enumerate}
        \item The maps $$\left\{\begin{array}{l} \Gamma_0 \colon (-\infty, 0]\to \Lambda^{1}_{[\kappa]}(Q);\\
        \Gamma_i \colon [0,\lambda_i]\to \Lambda^{1}_{[\kappa]}(Q) \quad \text{ for }\quad  i=1,\dots,\ell-1;\\
        \Gamma_\ell \colon [0, +\infty) \to \Lambda^{1}_{[\kappa]}(Q)
        \end{array}\right.$$
        are continuously differentiable.
        
        \item $\partial_s\Gamma_i(s)=(X+Y_i(\lambda_i, s))_{\Gamma_i(s)}$ for $\, i=0,\dots,\ell$ (by definition, for $i=0, \ell$, $Y_i(\lambda_i, s)$ is simply $Y_i(s)$).
        
        \item $\Gamma_0(+\infty)=\bm \gamma$, $\Gamma_\ell(-\infty)=\bm \gamma'$.

        \item \label{item: switch-condition} For $k=1, \dots, \ell$, $(\bm s, \bm \theta, \Gamma_{k-1}(\lambda_{k-1})) \in \Delta^{\tau_k, \, |}_{I_{h_k}}$  and
        $$c^|_{\rho_{k}}\circ sw^{i_kj_k}_{I_{h_k}}(\bm s, \bm \theta, \Gamma_{k-1}(\lambda_{k-1}))=(\bm s, \bm \theta, \Gamma_{k}(0)).$$
    \end{enumerate}
\end{definition}
We refer the reader to Figure~\ref{fig-example-of-MFLS} for an example of a portion of a Morse flow line with switchings.

\begin{figure}[h]
    \centering
    \includegraphics[width=11cm]{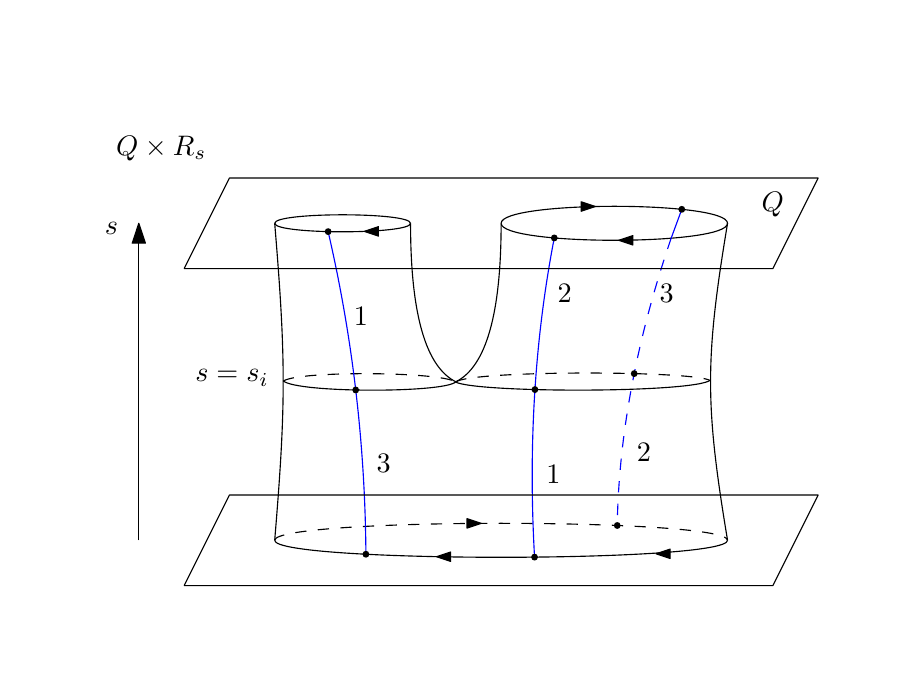}
    \caption{The image of $\bm \Gamma$ viewed as the graph in $Q\times\mathbb{R}_s$, where $\kappa=3$ and we have $\Gamma_{i-1}(\lambda_{i-1}) \in \Omega^1_{\sigma_{i-1}}(Q), \Gamma_{i}(0) \in \Omega^1_{\sigma_{i}}(Q)$  with $\sigma_{i-1}=\begin{pmatrix}
    1 & 2 & 3 \\
    1 & 3 & 2 
  \end{pmatrix}$, $\sigma_{i}=\begin{pmatrix}
    1 & 2 & 3 \\
    3 & 1 & 2 
  \end{pmatrix}$, $\rho_{i}=\begin{pmatrix}
    1 & 2 & 3 \\
    1 & 2 & 3 
  \end{pmatrix}$ and $\tau_i=(12)$.
    The blue arcs labeled by $j=1, \ldots, \kappa$ denote the image of $\Gamma(s)_j(0)$.}
    \label{fig-example-of-MFLS}
\end{figure}
More generally, given a background perturbation data $\bm Z_{\bm h, \bm \tau, \bm \rho} \in \mathfrak{X}^{\operatorname{back}}_{\operatorname{sw}}(\ell, \bm h, \bm \tau, \bm \rho)$ and a switching data $\bm Y^\ell_{\bm h, \bm \tau, \bm \rho} \in \mathfrak{X}_{\operatorname{sw}}(\ell, \bm h, \bm \tau, \bm \rho, \bm Z_{\bm h, \bm \tau, \bm \rho})$ we can similarly define the moduli space $\mathcal{M}_\ell(\bm \gamma,\bm \gamma'; \bm Y^\ell_{\bm h, \bm \tau, \bm \rho}, \bm \tau, \bm h, \bm \rho)$ by appropriately substituting $\bm Y$ with  $Y^\ell_{\bm h, \bm \tau, \bm \rho}$ in the definition above. We say that a universal perturbation data $\bm Y=(\bm Y^\ell)_{\ell\ge 1}$ is \emph{regular} if all the moduli spaces $\mathcal{M}_\ell(\bm \gamma,\bm \gamma'; \bm Y^\ell_{\bm h, \bm \tau, \bm \rho}, \bm \tau, \bm h, \bm \rho)$ are smooth.
\begin{proposition}
    \label{lemma-morse-transversality}
    For given $\bm \gamma, \bm \gamma' \in \mathcal{P}^{\bm L}_\kappa$, $\ell$, $\bm\tau$, $\bm \rho$ and $\bm h$:
    \begin{enumerate}
        \item  for generic $\bm Y \in \mathfrak{X}_- \times (\mathfrak{X}_0)^{\ell-1} \times \mathfrak{X}_+$ the space $\widetilde{\mathcal{M}}_\ell(\bm \gamma,\bm \gamma'; \bm Y, \bm h, \bm \tau,  \bm \rho)$ is a smooth manifold of dimension
$$\operatorname{dim}\widetilde{\mathcal{M}}_\ell(\bm \gamma,\bm \gamma'; \bm Y, \bm \tau, \bm h, \bm \rho)=\operatorname{ind}(\bm \gamma,\bm \gamma',\ell)=-(n-2)\ell+|\bm\gamma'|-|\bm\gamma|;$$
in particular, if $\operatorname{dim}\widetilde{\mathcal{M}}_\ell(\bm \gamma,\bm \gamma'; \bm Y, \bm h, \bm \tau,  \bm \rho)$ is negative, then $\widetilde{\mathcal{M}}_\ell(\bm \gamma,\bm \gamma'; \bm Y, \bm h, \bm \tau,  \bm \rho)=\varnothing$.
\item There exists a universal regular perturbation data $\bm Y=(\bm Y^\ell)_{\ell\ge 1}$.
\item  Fixing a choice of universal regular perturbation data and of orientations on unstable manifolds of $\bm \gamma$ and $\bm \gamma'$ provides a natural orientation of $\widetilde{\mathcal{M}}_\ell(\bm \gamma, \bm \gamma'; \bm Y^\ell_{\bm h, \bm \tau, \bm \rho}, \bm h, \bm \tau,  \bm \rho)$,  which depends on the orientation of $Q$.
    \end{enumerate}
\end{proposition}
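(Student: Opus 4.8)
\textbf{Proof plan for Proposition~\ref{lemma-morse-transversality}.}

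The plan is to treat the three statements in sequence, since (2) and (3) build on the transversality setup of (1). For part (1), I would set up the moduli space $\widetilde{\mathcal{M}}_\ell(\bm\gamma,\bm\gamma';\bm Y,\bm h,\bm\tau,\bm\rho)$ as the zero set of a Fredholm section of a Banach bundle over a Banach manifold whose base parametrizes the tuples $(\bm s,\bm\theta,(\Gamma_0,\dots,\Gamma_\ell))$ together with the perturbation $\bm Y$. The key observation is that, away from the switching constraints (4), each $\Gamma_i$ is an honest pseudo-gradient half-trajectory for $X+Y_i(\lambda_i,\cdot)$ on $\Lambda^1_{[\kappa]}(Q)$, so the standard Morse-theoretic Fredholm package of Abbondandolo--Majer \cite{abbondandolomajer2006lectures} and Abbondandolo--Schwarz \cite{abbondandolo2010floer} applies levelwise; the Fredholm index of the concatenated operator is $|\bm\gamma'|-|\bm\gamma|$ plus the contribution of the $\ell$ intermediate ``free'' parameters $\lambda_1,\dots,\lambda_{\ell-1}$ and the $\ell$ parameters $\theta_{h_1},\dots,\theta_{h_\ell}$, minus the codimension imposed by the $\ell$ incidence conditions $(\bm s,\bm\theta,\Gamma_{k-1}(\lambda_{k-1}))\in\Delta^{\tau_k,|}_{I_{h_k}}$, i.e.\ minus $\ell n$ from the $\ell$ diagonals $\Delta_Q\subset Q\times Q$. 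Bookkeeping these contributions gives $-(n-2)\ell+|\bm\gamma'|-|\bm\gamma|$; here I would use that the switching map $sw^{i_kj_k}_{I_{h_k}}$ is a $C^{m-1}$-immersion when restricted to the appropriate $\operatorname{Conf}_S$-stratum (the cited Lemma after Definition~\ref{defn: switching-map}), so the incidence conditions cut out the expected codimension transversally in the Sobolev $W^{m,2}$-models, and then pass to the smooth category by elliptic regularity of pseudo-gradient flow lines. Transversality itself follows from the Sard--Smale theorem once surjectivity of the universal linearized operator is established: perturbing $Y_i$ in the interior of each level $\Gamma_i$ (using that the flow lines are non-stationary between switchings and that $\mathfrak{X}$ contains enough localized vector fields, by the partition-of-unity construction in Section~\ref{section: perturbation spaces}) kills the cokernel, exactly as in \cite[Section 2.3]{honda2025morse}.

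For part (2), the issue is not transversality for a \emph{single} tuple $(\ell,\bm h,\bm\tau,\bm\rho)$ but the existence of one \emph{universal} perturbation data $\bm Y=(\bm Y^\ell)_{\ell\ge1}$ that is simultaneously commutative, cyclic, consistent, and regular. I would construct $\bm Y^\ell$ by induction on $\ell$. At stage $\ell$, the consistency condition of Definition~\ref{defn-universal-data}(2) forces the restriction of $\bm Y^\ell_{\bm h,\bm\tau,\bm\rho}$ to the boundary faces~\eqref{eq: conf-boundary-decomposition} of $\overline{\operatorname{Conf}}^{\bm h}_\ell(\R)$ to equal prescribed gluings of $\bm Y^{\ell'}$ and $\bm Y^{\ell-\ell'}$ with $\ell',\ell-\ell'<\ell$, i.e.\ $\bm Y^\ell_{\bm h,\bm\tau,\bm\rho}$ must lie in the affine fiber $\mathfrak{X}_{\operatorname{sw}}(\ell,\bm Z^{<\ell}_{\bm h,\bm\tau,\bm\rho})$ over a background datum already determined by lower stages. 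The commutativity~\eqref{eq-perturbation-switching} and cyclicity~\eqref{eq:cyclic-data} conditions are closed linear constraints relating the various $(\bm h,\bm\tau,\bm\rho)$-components; I would first choose a distinguished set of representatives under the equivalence generated by~\eqref{eq: data-commutativity-rule-1}--\eqref{eq: data-commutativity-rule-2} and~\eqref{eq: data-cyclic}, pick regular perturbations freely there, and propagate to the other components by the prescribed conjugation rules (the switching-map relations~\eqref{eq-switching-commuting-1}, \eqref{eq-switching-commuting-2}, \eqref{eq: loop-conjugation-relation} guarantee the propagated data is still regular for those components, since the moduli spaces are literally identified). Regularity at stage $\ell$ is then a residual-set statement inside the affine space $\mathfrak{X}_{\operatorname{sw}}(\ell,\bm Z^{<\ell})$: one needs that the already-fixed boundary data is regular (true by the inductive hypothesis plus a gluing argument showing the boundary strata of $\overline{\mathcal{M}}_\ell$ are transversely cut out) and that perturbations in the open part still achieve surjectivity; a countable intersection of comeagre sets over all finitely-generated data $(\bm\gamma,\bm\gamma',\bm h,\bm\tau,\bm\rho)$ is comeagre, hence nonempty. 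This is the step I expect to be the main obstacle: keeping the bookkeeping of the commutativity/cyclicity symmetries compatible with the inductive consistency while not over-constraining the perturbation space, and verifying that the faces~\eqref{eq: conf-boundary-decomposition} carry the broken-trajectory structure one expects, so that the inductive ``extend from the boundary'' step is actually possible --- this is precisely where the detailed structure of the spaces $\mathfrak{X}_\pm(k,\bm h)$, $\mathfrak{X}_0^i(k,\bm h)$ and their face maps $F_{\pm,k}$, $F_{0,k,i}$ does the work, following \cite[Appendix C]{honda2025morse}.

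For part (3), I would orient $\widetilde{\mathcal{M}}_\ell(\bm\gamma,\bm\gamma';\bm Y^\ell_{\bm h,\bm\tau,\bm\rho},\bm h,\bm\tau,\bm\rho)$ via the determinant line of its defining Fredholm operator. Fixing orientations of the unstable manifolds $W^u(\bm\gamma;\bm X)$ and $W^u(\bm\gamma';\bm X)$ — equivalently, generators of $o_{\bm\gamma}$ and $o_{\bm\gamma'}$ — orients the linearized pseudo-gradient operators at the two ends in the usual Morse way, while the intermediate levels contribute canonically oriented factors (the parameters $\lambda_i\in[0,+\infty)$ and the operators on compact intervals, which have canonically trivial determinant lines). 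The only non-canonical contribution comes from the $\ell$ incidence conditions with the diagonals $\Delta_Q\subset Q\times Q$: orienting each such condition requires an orientation of $\Delta_Q\cong Q$, i.e.\ an orientation of $Q$ — this is exactly the stated dependence. Concretely, using the convention~\eqref{eq: conormal-orientation-identity} one has $|\Delta_Q|\otimes|\text{normal}|\cong|Q\times Q|$, and since $Q\times Q$ and $\Delta_Q$ are oriented by a choice of orientation on $Q$ while the normal bundle to the diagonal is canonically identified with $TQ$ via $(v,w)\mapsto v-w$, the induced co-orientation of each switching locus flips sign under reversing the orientation of $Q$. Assembling these pieces with the Koszul sign conventions of Section~\ref{section-orientation-lines} produces the claimed natural orientation; I would record it as an isomorphism of the form $o_{\bm\gamma}\otimes|\widetilde{\mathcal{M}}_\ell|\cong o_{\bm\gamma'}[(n-2)\ell]\otimes(\text{orientation of }Q)^{\otimes\ell}$, parallel to Lemma~\ref{orientation-lemma} on the Floer side, and leave the sign verification to the reader as is customary.
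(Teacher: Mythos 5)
Your proposal is correct and follows essentially the same route as the paper: part (1) is proved there by exhibiting $\widetilde{\mathcal{M}}_\ell$ as an iterated fiber product of the Banach manifolds $\tilde W^u$, $\tilde W^{ft}$, $\tilde W^s$ with the multidiagonals $\Delta^{\tau,\,|}_{I_h}$ (submersivity of the evaluation maps playing the role of your Fredholm-section surjectivity) followed by Sard--Smale, part (2) is handled by the same induction you sketch (the paper defers the details to \cite[Theorem C.4]{honda2025morse} and \cite[Lemma 5.1]{mescher2018}), and part (3) uses exactly your coorientation-of-the-diagonal argument, $|Q|\cong\zeta^n$ via the fixed orientation of $Q$, assembled through the fiber-product Koszul conventions into the isomorphism $|\widetilde{\mathcal{M}}_\ell|\otimes o_{\bm\gamma}\cong o_{\bm\gamma'}[(n-2)\ell]$. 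The only cosmetic slip is your labeling of the $\ell-1$ gap lengths $\lambda_1,\dots,\lambda_{\ell-1}$ as ``$\ell$ parameters''; the overall $s$-translation supplies the missing dimension and your final index formula agrees with the paper's.
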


This proposition is a consequence of the discussion that follows.

First, given two immersed $C^{m-1}$-submanifolds $N_1$ and $N_2$ of $\R^\ell \times [0,1]^ \ell \times \Lambda^{m,2}_{[\kappa]}(Q)$ we define the space of finite time trajectories connecting $N_1$ and $N_2$:
\begin{gather}
     \tilde W^{ft}(N_1, N_2; \mathfrak{X}_0)=\{(\bm s, \bm\theta, \lambda, \Gamma, Y,  x_1, x_2) \mid   (\bm s,\bm\theta)\in \R^\ell \times [0,1]^\ell,  \, \lambda>0,\\
    \Gamma \in C^1([0, \lambda], \Lambda^1_{[\kappa]}(Q)), Y\in \mathfrak{X}_0, (x_1,x_2) \in N_1 \times N_2,\nonumber \\ 
    \partial_s \Gamma-(X+Y_\lambda)=0, (\bm s, \bm\theta, \Gamma(0))=x_1, (\bm s, \bm\theta, \Gamma(\lambda))=x_2\}. \nonumber
\end{gather}

This space comes equipped with two evaluation maps, which we record as follows
\begin{gather} \label{eqn: tilde E zero}
    \tilde E_0=(\tilde E_0^s, \tilde E_0^t) \colon \tilde W^{ft}(N_1, N_2; \mathfrak{X}_0) \to (\R^\ell \times [0,1]^{\ell} \times \Lambda^1_{[\kappa]}(Q))^{2},\\
    (\bm s,\bm\theta, \lambda, \Gamma, Y) \mapsto ((\bm s, \bm\theta, \Gamma(0)), (\bm s, \bm\theta, \Gamma(\lambda)).\nonumber
\end{gather}
We write $\tilde W^{ft}(-, N_2; \mathfrak{X}_0)$ to denote the space of finite-time trajectories where the start point $(\bm s, \bm \theta, \Gamma(0))$ does not have to belong to $N_1$, and this is clearly a $C^{m-1}$ Banach manifold. Similarly, there is a $C^{m-1}$ Banach manifold $\tilde W^{ft}(N_1, -; \mathfrak{X}_0)$ of trajectories starting at $N_1$, and it is diffeomorphic to $[0, +\infty) \times N_1 \times \mathfrak{X}_0$ by uniqueness of solutions (assuming $N_1$ does not contain critical points). We notice that $$\tilde W^{ft}(N_1, N_2; \mathfrak{X}_0)=\tilde W^{ft}(N_1, -; \mathfrak{X}_0) \tensor*[_{\tilde{E}_0^t}]{\times}{} N_2,$$
and since $\tilde E_0^t$ is a submersion (away from small neighborhoods of critical points), we conclude that $\tilde W^{ft}(N_1, N_2; \mathfrak{X}_0)$ is a $C^{m-1}$-smooth Banach manifold.
For more details, we refer to \cite[Lemma B.5]{honda2025morse}.

We introduce the space $\tilde W^{ft}_{ij}(N_1, N_2; \mathfrak{X}_0)$ to be the subspace of $\tilde W^{ft}(N_1, N_2; \mathfrak{X}_0)$ consisting of tuples $(\bm s, \bm\theta, \lambda, \Gamma, Y,  x_1, x_2)$ with the additional condition
\begin{equation}
    s_i-s_j=\lambda.
\end{equation}
This space is a fiber product 
\begin{equation}
    \tilde W^{ft}(N_1, -; \mathfrak{X}_0)\tensor*[_{(\lambda, \tilde{E}_0^t)}]{\times}{_{(s_i-s_j, id)}} N_2
\end{equation} for maps 
\begin{gather}
    (\lambda, \tilde{E}_0^t) \colon \tilde W^{ft}(N_1, -; \mathfrak{X}_0) \to \R \times (\R^\ell \times [0,1]^\ell \times \Lambda^1_{[\kappa]}(Q)), \\(\bm s, \bm \theta, l, \Gamma, x_1) \mapsto (\lambda, \tilde{E}_0^t); \nonumber \\
(s_i-s_j, \tilde{E}_0^t) \colon N_2 \to \R \times (\R^\ell \times [0,1]^\ell \times \Lambda^1_{[\kappa]}(Q)), \\
\quad x_2 \mapsto (s_i(x_2)-s_j(x_2), x_2). \nonumber
\end{gather}
These two maps are also transversal, since $(\lambda, \tilde E_0^t)$ is a submersion.

For the data parametrized by $\operatorname{Conf}_\ell(\R)$ we also introduce the space
\begin{gather}\label{eq: finite-trajectory-with-parameters}
     \tilde W^{ft}(N_1, N_2; \mathfrak{X}^i_0(\ell, \bm h))=\{(\bm s, \bm\theta, \Gamma, Y,  x_1, x_2) \mid   (\bm s,\bm\theta)\in \operatorname{Conf}_\ell^{\bm h}(\R) \times [0,1]^\ell,  \\
    \Gamma \in C^1([0, s_{h_i}-s_{h_{i+1}}], \Lambda^1_{[\kappa]}(Q)), Y\in \mathfrak{X}^i_0(\ell, \bm h), (x_1,x_2) \in N_1 \times N_2,\nonumber \\ 
    \partial_s \Gamma-(X+Y_{\bm s, i}(s_{h_i}-s_{h_{i+1}}, \cdot))=0, (\bm s, \bm\theta, \Gamma(0))=x_1, \, (\bm s, \bm\theta, \Gamma(s_{h_i}-s_{h_{i+!}}))=x_2\}, \nonumber
\end{gather}
A similar argument to the one above shows that it is a $C^{m-1}$ smooth Banach manifold.

For $\bm \gamma \in \mathcal{P}^{\bm L}_\kappa$ we introduce unstable and stable manifolds of trajectories connecting $\bm \gamma$ to a given immersed $C^{m-1}$ submanifold $N$ of $\Lambda^{m,2}_{[\kappa]}(Q)$:
\begin{gather}
\tilde W^u(\bm \gamma, N; \mathfrak{X}_-) \coloneqq  \{(\bm s,\bm\theta, \Gamma, Y) \ \mid (\bm s, \bm \theta) \in \R^\ell \times [0,1]^{\ell}, \Gamma \in C^1((-\infty, 0], \Lambda^1_{[\kappa]}(Q)), \\ Y \in \mathfrak{X}_-, \, 
\partial_s\Gamma-(X+Y)=0, \, (\bm s,\bm \theta, \Gamma(0)) \in N\},    \nonumber \\
\tilde W^s(N, \bm \gamma; \mathfrak{X}_+) \coloneqq  \{(\bm s,\bm\theta, \Gamma, Y) \ \mid (\bm s, \bm \theta) \in \R^\ell \times [0,1]^{\ell}, \Gamma \in C^1([0,+\infty), \Lambda^1_{[\kappa]}(Q)), \\ Y \in \mathfrak{X}_+, \, 
\partial_s\Gamma-(X+Y)=0, \, (\bm s,\bm \theta, \Gamma(0)) \in N\}.    \nonumber
\end{gather}

These spaces are also $C^{m-1}$ Banach manifolds. We also point out that trajectories $\Gamma$ in all of the above spaces stay within $\Lambda^{m,2}_{[\kappa]}(Q)$, which follows from \cite[Theorem A.10]{honda2025morse} (the proof in the closed string case is slightly easier). Similarly, there are evaluation maps 
\begin{gather} \label{eqn: tilde E minus plus}
    \tilde E_- \colon \tilde W^{u}(\bm \gamma, N; \mathfrak{X}_-) \to \R^\ell \times [0,1]^{\ell} \times \Lambda^1_{[\kappa]}(Q),\\
    (\bm s,\bm\theta, \Gamma, Y) \mapsto (\bm s, \bm\theta, \Gamma(0));\nonumber \\
     \tilde E_+ \colon \tilde W^{s}(N, \bm \gamma; \mathfrak{X}_+) \to \R^\ell \times [0,1]^{\ell} \times \Lambda^1_{[\kappa]}(Q),\\
    (\bm s,\bm\theta, \Gamma, Y) \mapsto (\bm s, \bm\theta, \Gamma(0)).\nonumber
\end{gather}

As in~\eqref{eq: finite-trajectory-with-parameters}, we also have Banach manifolds $\tilde W^u(\bm \gamma, N; \mathfrak{X}_-(\ell, \bm h))$ and $\tilde W^s(N, \bm \gamma; \mathfrak{X}_+(\ell, \bm h))$ defined similarly.

We can now introduce the space of main interest as a fibered product of the spaces that we just introduced. We set
\begin{gather}\label{eq: defn-MFLS-parameters}
    \tilde W_\ell(\bm \gamma, \bm \gamma'; \mathfrak{X}_- \times  (\mathfrak{X}_0)^{\ell-1} \times \mathfrak{X}_+, \bm \tau, \bm h)\coloneqq\tilde W^u(\bm \gamma, \Delta_{I_{h_1}}^{i_1j_1, \, |};\mathfrak{X}_-)\tensor*[_{c^|_{\rho_1}\circ sw^{i_1j_1}_{I_{h_1}} \circ \tilde E_-}]{\times}{_{\tilde E_0^s}}\tilde W^{ft}_{h_1h_2}(\Delta_{I_{h_1}}^{i_1j_1, \, |},\Delta^{i_2j_2, \, |}_{I_{h_2}}; \mathfrak{X}_0 )\\
    \qquad\qquad 
    \tensor*[_{c^|_{\rho_2}\circ sw^{i_2j_2}_{I_{h_2}} \circ \tilde E_0^t}]{\times}{_{\tilde E_0^s}} 
    \cdots
    \tensor*[_{c^|_{\rho_{\ell-1}}\circ sw^{i_{\ell-1}j_{\ell-1}}_{I_{h_{\ell-1}}} \circ \tilde E_0^t}]{\times}{_{\tilde E_0^s}}\tilde W^{ft}_{h_{\ell-1}h_{\ell}}(\Delta_{I_{h_{\ell-1}}}^{i_{\ell-1}j_{\ell-1}, \, |},\Delta_{I_{h_\ell}}^{i_\ell j_\ell, \, |};\mathfrak{X}_0 ) \nonumber \\
    \tensor*[_{c^|_{\rho_\ell}\circ sw^{i_\ell j_\ell}_{I_{h_\ell}} \circ \tilde E_0^t}]{\times}{_{\tilde E_+}} \tilde W^s(\Delta_{I_{h_\ell}}^{i_\ell j_\ell, \, |},\bm \gamma'; \mathfrak{X}_+). \nonumber
\end{gather}

We note that the fiber above $\bm Y \in \mathfrak{X}_- \times  (\mathfrak{X}_0)^{\ell-1} \times \mathfrak{X}_+$ under the natural projection is $\mathcal{M}_\ell(\bm \gamma,\bm \gamma'; \bm Y, \bm h, \bm \tau,  \bm \rho)$. 
\begin{proof}[Proof of Proposition~\ref{lemma-morse-transversality}]
From the discussion above, it follows that each term in the above fiber product, when restricted to $\R^\ell \times [0,1]^\ell \times \Lambda^{m,2}_{[\kappa]}(Q)$, is a $C^{m-1}$ Banach manifold. Using \cite[Theorem A.10]{honda2025morse}, we deduce that all elements of the moduli space~\eqref{eq: defn-MFLS-parameters} are Morse flow lines consisting of trajectories contained in the product of $\R^\ell \times [0,1]^\ell$ with the space of smooth $\kappa$-multiloops. Now to show that~\eqref{eq: defn-MFLS-parameters} is a smooth Banach manifold, one can apply an inductive argument, since this space is obtained by taking consecutive fiber products. Explicitly, by setting
\begin{gather}\label{eq: defn-MFLS-parameters-k}
    \tilde W_k(\bm \gamma, \bm \gamma'; \bm h, \bm \tau,  \bm \rho)=\tilde W^u(\bm \gamma, \Delta_{I_{h_1}}^{i_1j_1, \, |};\mathfrak{X}_-)\tensor*[_{c^|_{\rho_1}\circ sw^{i_1j_1}_{I_{h_1}} \circ \tilde E_-}]{\times}{_{\tilde E_0^s}} \cdots \nonumber\\
    \tensor*[_{c^|_{\rho_{k}}\circ sw^{i_{k}j_{k}}_{I_{h_{k}}} \circ \tilde E_0^t}]{\times}{_{\tilde E_0^s}}\tilde W^{ft}_{h_{k}h_{k+1}}(\Delta_{I_{h_{k}}}^{i_{k}j_{k}, \, |},\Delta_{I_{h_{k+1}}}^{i_{k+1} j_{k+1}, \, |};\mathfrak{X}_0 ), \nonumber
\end{gather}
for $0 \le k< \ell$ we see that 
$$\tilde W_{k}(\bm \gamma, \bm \gamma'; \bm h, \bm \tau,  \bm \rho)=\tilde W_{k-1}(\bm \gamma, \bm \gamma'; \bm \tau, \bm h) \tensor*[_{c^|_{\rho_{k}}\circ sw^{i_{k}j_{k}}_{I_{h_k}} \circ \tilde E_0^t}]{\times}{_{\tilde E_0^s}} \tilde W^{ft}(-, -; \mathfrak{X}_0) \tensor*[_{\tilde E_0^t}]{\times}{} \Delta^{i_{k+1}j_{k+1}, \, |}_{I_{h_{k+1}}}.$$
The transversality of the $\tilde E_0^{t}$ and $\Delta^{i_{k+1}j_{k+1}, \, |}_{I_{h_{k+1}}}$, and of $c^|_{\rho_{k}}\circ sw^{i_{k}j_{k}}_{I_{s_{k}}} \circ \tilde E_0^t$ and $\tilde E_0^s$ is straightforward according to the choice of perturbation space $\mathfrak{X}_0$. We refer to the proof of \cite[Theorem B.6]{honda2025morse} for more details. Notice that $\tilde W_\ell(\bm \gamma, \bm \gamma'; \mathfrak{X}_- \times  (\mathfrak{X}_0)^{\ell-1} \times \mathfrak{X}_+, \bm \tau, \bm h)$ is obtained from $\tilde W_{\ell-1}(\bm \gamma, \bm \gamma'; \bm \tau, \bm h)$ via slightly different fiber product construction, but transversality can be shown via the analogous argument. The first statement of the Proposition~\ref{lemma-morse-transversality} now follows from the Sard-Smale theorem.

The proof of the second statement is inductive and analogous to that of the first statement. We omit the details here and refer the reader to \cite[Theorem C.4]{honda2025morse} and \cite[Lemma 5.1]{mescher2018} for similar arguments. 

Let us now focus on the third statement. In order to describe $\mathcal{M}_\ell(\bm \gamma,\bm \gamma'; \bm Y^\ell_{\bm h, \bm \tau, \bm \rho}, \bm h, \bm \tau,  \bm \rho)$ as above in terms of a fiber above $\bm Y^\ell_{\bm h, \bm \rho, \bm \tau} \in \mathfrak{X}_{\operatorname{sw}}(\ell, \bm h, \bm \tau, \bm \rho)$, one simply replaces $\tilde W_{k}(\bm \gamma, \bm \gamma'; \bm h, \bm \tau,  \bm \rho)$ with $\tilde W_{k}'(\bm \gamma, \bm \gamma'; \bm h, \bm \tau,  \bm \rho)$ where instead of $\mathfrak{X}_{0}$ and $\mathfrak{X}_{\pm}$ one uses spaces $\mathfrak{X}^i_0(\bm \ell, \bm h)$ and $\mathfrak{X}_{\pm}(\bm \ell, \bm h)$ instead.

Now, for a fixed choice of perturbation data $\bm Y^\ell_{\bm h, \bm \rho, \bm \tau}=(Y_-, Y_1, \ldots, Y_{\ell-1}, Y_+)$ as above we denote by $\mathcal{M}_k(\bm \gamma, \bm \gamma'; \bm h, \bm \tau,  \bm \rho)$ the fiber of $\tilde W_{k}'(\bm \gamma, \bm \gamma'; \bm h, \bm \tau,  \bm \rho)$ above (the appropriate truncation of) $\bm Y^\ell_{\bm h. \bm \rho, \bm \tau}$, which is a smooth manifold. Observe that each $\Delta^{i_kj_k. \, |}_{h_k}$ is a submanifold of $(\R \times [0,1])^\ell \times \Lambda^{m,2}_{[\kappa]}(Q)$ whose coorientation at each point is given by $|Q| \cong \zeta^n$, since it is a preimage of the diagonal $\Delta_Q \subset Q \times Q$ under the evaluation map as in~\eqref{eq: evaluation-map}. Then, by~\eqref{eq: conormal-orientation-identity} we have
\begin{equation}\label{eq: orinetation-identity-prop1}
    \zeta^n \otimes |\mathcal{M}_0(\bm \gamma, \bm \gamma'; \bm h, \bm \tau,  \bm \rho)| \cong |\R \times [0,1]|^\ell \otimes o_{\bm \gamma}^{-1} \cong \zeta^{2\ell} \otimes o_{\bm \gamma}^{-1},
\end{equation}
since $\mathcal{M}_1(\bm \gamma, \bm \gamma'; \bm \tau, \bm h)$ is a fiber product of $(\R \times [0,1])^\ell \times  W^u(\bm \gamma; Y_-)$ and $\Delta^{i_1j_1, \, |}_{I_{h_1}}$. Similarly, since $$\mathcal{M}_{k}(\bm \gamma, \bm \gamma'; \bm \tau, \bm h)=\tilde{W}^{ft}(\mathcal{M}_{k-1}(\bm \gamma, \bm \gamma'; \bm \tau, \bm h), \Delta^{i_{k+1}j_{k+1}, \, |}_{I_{h_{k+1}}}; Y_{k}),$$
where $\mathcal{M}_{k-1}(\bm \gamma, \bm \gamma'; \bm \tau, \bm h)$ is immersed via $c^|_{\rho_{k}}\circ sw^{i_{k}j_{k}}_{I_{h_k}}\circ \tilde{E}_0^t$. Hence, $\mathcal{M}_{k}(\bm \gamma, \bm \gamma'; \bm \tau, \bm h)$ is a fiber product of $\R \times\mathcal{M}_{k-1}(\bm \gamma, \bm \gamma'; \bm \tau, \bm h)$ and $\Delta^{i_{k+1}j_{k+1}, \, |}_{I_{h_{k+1}}}$ over $\R \times (\R \times [0,1])^\ell \times \Lambda^{m,2}_{[\kappa]}(Q)$, implying
\begin{equation}\label{eq: orinetation-identity-prop2}
    |\R| \otimes \zeta^n \otimes |\mathcal{M}_{k}(\bm \gamma, \bm \gamma'; \bm h, \bm \tau, \bm \rho)| \cong |\R| \otimes |\mathcal{M}_{k-1}(\bm \gamma, \bm \gamma'; \bm h, \bm \tau, \bm \rho)|.
\end{equation}

Finally, one shows via a similar argument
\begin{equation}\label{eq: orinetation-identity-prop3}
    o_{\bm \gamma'}^{-1} \otimes |\widetilde{\mathcal{M}}_{\ell}(\bm \gamma, \bm \gamma'; \bm Y^\ell_{\bm h, \bm \tau, \bm \rho},\bm h, \bm \tau,  \bm \rho)| \cong |\mathcal{M}_{\ell-1}(\bm \gamma, \bm \gamma'; \bm h, \bm \tau,  \bm \rho )|.
\end{equation}

Combining~\eqref{eq: orinetation-identity-prop1}-\eqref{eq: orinetation-identity-prop3} we obtain
\begin{equation}\label{eq: orientation-identity-MFLS}
    |\widetilde{\mathcal{M}}_{\ell}(\bm \gamma, \bm \gamma'; \bm Y, \bm h, \bm \tau,  \bm \rho)| \otimes o_{\bm \gamma} \cong o_{\bm \gamma'}[(n-2) \ell].
\end{equation}
\end{proof}

From now on we fix a choice of universal regular data $\bm Y=(\bm Y^\ell)_{\ell \ge 1}$ and denote the moduli space $\widetilde{\mathcal{M}}_\ell(\bm \gamma, \bm \gamma'; \bm Y^\ell_{\bm h, \bm \tau, \bm \rho}, \bm h, \bm \tau,  \bm \rho)$ simply as $\widetilde{\mathcal{M}}_\ell(\bm \gamma, \bm \gamma'; \bm Y, \bm h, \bm \tau,  \bm \rho)$ slightly abusing the notation.
Note that there is vertical translation $\mathbb{R}$-action on $\widetilde{\mathcal{M}}_{\ell}(\bm \gamma, \bm \gamma'; \bm Y, \bm h, \bm \tau,  \bm \rho)$ by the simultaneous translation of elements of $\bm s$ as in Subsection~\ref{section: switching map}. 
We denote the quotient by $\mathcal{M}_{\ell}(\bm \gamma, \bm \gamma'; \bm Y, \bm h, \bm \tau,  \bm \rho)$. 
Given $\Gamma\in \widetilde{\mathcal{M}}_{\ell}(\bm \gamma, \bm \gamma'; \bm Y, \bm h, \bm \tau,  \bm \rho)$, we denote its class in $\mathcal{M}_{\ell}(\bm \gamma, \bm \gamma'; \bm Y, \bm h, \bm \tau,  \bm \rho)$ by $[\Gamma]$. Now observe that if $\operatorname{ind}(\bm \gamma, \bm \gamma', \ell)=1$ the isomorphism~\eqref{eq: orientation-identity-MFLS} provides a natural morphism for each $[\bm \Gamma] \in \mathcal{M}_{\ell}(\bm \gamma, \bm \gamma'; \bm Y, \bm h, \bm \tau,  \bm \rho)$
\begin{equation}\label{eq: orientations-map-morse}
    \partial_{[\bm \Gamma]} \colon o_{\bm \gamma}^{\Q} \to o_{\bm \gamma'}^{\Q}[(n-2)\ell],
\end{equation}
since $|\widetilde{\mathcal{M}}_{\ell}(\bm \gamma, \bm \gamma'; \bm Y, \bm h, \bm \tau,  \bm \rho)|=|\R_s|$, i.e., the orientation trivializes and is given by the vertical $s$-coordinate, and we choose trivialization given by $\partial_s$, the \emph{positive pseudo-gradient direction}.

\begin{definition} \label{defn: free multiloop complex}
Let $CM_{0,-*}(\Lambda^1_{[\kappa]}(Q))$ be the $\Q$-module generated by the orientation lines associated with elements of $\mathcal{P}^{\bm L}$. Then the {\em free multiloop complex $CM_{-*}(\Lambda^1_{[\kappa]}(Q))$ of $Q$} is a cochain complex whose underlying module is $CM_{0,-*}(\Lambda^1_{[\kappa]}(Q))\llbracket \hbar\rrbracket$ when $n=2$ and $CM_{0,-*}(\Lambda^1_{[\kappa]}(Q)) \otimes_{\Q}\Q[\hbar]$ when $n>2$, where grading of $\hbar$ is $|\hbar|=2-n$.  The differential is given by
\begin{gather}
\label{eq-morse-diff}
    \partial\colon CM_{-*}(\Lambda^1_{[\kappa]} (Q))\to CM_{-*+1}(\Lambda^1_{[\kappa]} (Q)),\\
    \partial([\bm \gamma]\hbar^j)=\sum_{\substack{\ell \geq0, |\bm \gamma'|=|\bm \gamma|-(n-2)\ell-1, \\ [\bm \Gamma] \in \mathcal{P}(\bm \gamma,\bm \gamma';\ell)}}
    \frac{\partial_{[\bm \Gamma]}([\bm \gamma]\hbar^j)}{\ell!\cdot (\kappa!)^\ell}=\sum \partial_{\ell}([\gamma]\hbar^j),\nonumber
\end{gather}
where $[\bm \gamma]\hbar^j$ is some generator of $o_{\bm \gamma}^{\Q}[(2-n)j]$ for $j \ge 0$. 
\end{definition}
Notice that above we extend the map~\eqref{eq: orientations-map-morse} to any generator $[\bm \gamma]\hbar^{j}$ of $CM_{-*}\bigl(\Lambda^1_{[\kappa]}(Q)\bigr)$ by tensoring with $\zeta^{(2-n)j}$ on the left:
\begin{equation}\label{orientation-map-Morse}
    \partial_{[\bm \Gamma]} \colon   \zeta^{(2-n)j} \otimes o_{\bm \gamma}  \to   \zeta^{(n-2)j} \otimes o_{\bm \gamma'}\bigl[(n-2)\ell\bigr]  \cong o_{\bm \gamma'}\bigl[(n-2)(\ell+j)\bigr] .
\end{equation}

We point out that the sum in~\eqref{eq-morse-diff} is indeed finite for $n>2$ by Proposition~\ref{lemma-morse-transversality}. We devote the following section to showing that this is indeed a cochain complex.

 \subsection{Compactness}
\label{section-morse-compactness}
We introduce the notation $\widetilde{\mathcal{M}}_{\ell}(\bm \gamma, \bm \gamma', \Delta^{\tau_i, \tau_{i+1}, \, |}_{I_{s_i}, I_{s_{i+1}}}; \bm Y, \bm h, \bm \tau, \bm \rho)$ for the space given by the fiber product construction similar to the one in~\eqref{eq: defn-MFLS-parameters} with the only difference that we replace the portion
\begin{equation*}
    \tilde W^{ft}(\Delta_{I_{h_{i-1}}}^{\tau_{i-1}, \, |},\Delta^{\tau_i, \, |}_{I_{h_i}}; Y_{i-1}) \tensor*[_{c^|_{\rho_i}\circ sw^{\tau_i}_{I_{h_i}} \circ \tilde E_-}]{\times}{_{\tilde E_0^s}}\tilde W^{ft}(\Delta^{\tau_i, \, |}_{I_{h_i}}, \Delta_{I_{h_{i+1}}}^{\tau_{i+1}, \, |}; Y_{i})\tensor*[_{c^|_{\rho_{i+1}}\circ sw^{\tau_{i+1}}_{I_{h_{i+1}}} \circ \tilde E_-}]{\times}{_{\tilde E_0^s}}\tilde W^{ft}(\Delta^{\tau_{i+1}, \, |}_{I_{h_{i+1}}}, \Delta_{I_{h_{i+2}}}^{\tau_{i+2},\, |}; Y_{i+1})
\end{equation*}
with
\begin{equation*}
    \tilde W^{ft}(\Delta_{I_{h_{i-1}}}^{\tau_{i-1}, \, |},\Delta^{\tau_i, \tau_{i+1}, \, |}_{I_{s_i}, I_{s_{i+1}}}; Y_{i-1})\tensor*[_{c^|_{\rho_{i+1}}\circ sw^{\tau_{i+1}}_{I_{h_{i+1}}}\circ c^|_{\rho_i}\circ sw^{\tau_i}_{I_{h_i}} \circ \tilde E_-}]{\times}{_{\tilde E_0^s}}\tilde W^{ft}(\Delta^{\tau_i, \tau_{i+1}, \, |}_{I_{s_i}, I_{s_{i+1}}}, \Delta_{I_{h_{i+2}}}^{\tau_{i+2},\, |}; Y_{i+1}).
\end{equation*}

We claim that we may assume that for our choice of data $\bm Y$ any such space $$\widetilde{\mathcal{M}}_{\ell}(\bm \gamma, \bm \gamma', \Delta^{\tau_i, \tau_{i+1}, \, |}_{I_{s_i}, I_{s_{i+1}}}; \bm Y, \bm h, \bm \tau, \bm \rho)$$ is also a smooth manifold by an argument analogous to the one presented in the proof of Proposition~\ref{lemma-morse-transversality}, see also \cite[Lemma C.3]{honda2025morse}. Moreover, we can assume that the same holds for any moduli space obtained via similar replacement of a fiber product as above for a consecutive tuple $\tau_i, \ldots, \tau_{i+j}$ by any element of the stratification of $\overline{\Delta}^{\tau_i, \ldots, \tau_{i+j}, \, |}_A$ discussed in Section~\ref{section: switching map} with $A=\{h_i, \ldots, h_{i+j}\}$. 

\begin{lemma}\label{thm: compactness-for-MFLS} 
Let $\bm Y=(\bm Y^\ell)_{\ell \ge 1}$ be a choice of universal switching data.
\begin{enumerate}
    \item If $\op{ind}(\bm \gamma')-\op{ind}(\bm \gamma)-(n-2)\ell-1=0$, then $\widetilde{\mathcal{M}}_{\ell}(\bm \gamma, \bm \gamma'; \bm Y, \bm h, \bm \tau, \bm \rho)$ consists of a finite number of points.
    \item If $\op{ind}(\bm \gamma')-\op{ind}(\bm \gamma)-(n-2)\ell-1=1$, then $\widetilde{\mathcal{M}}_{\ell}(\bm \gamma, \bm \gamma'; \bm Y, \bm h, \bm \tau, \bm \rho)$ admits a compactification $\overline{\mathcal{M}}_{\ell}(\bm \gamma, \bm \gamma'; \bm Y, \bm h, \bm \tau, \bm \rho)$ whose boundary is covered by the following strata:
    \begin{gather}
        \widetilde{\mathcal{M}}_{\ell'}(\bm \gamma, \bm \gamma''; \bm Y, \bm h_{\ell'}^1, \bm \tau^1_{\ell'}, \bm \rho^1_{\ell'}) \times \widetilde{\mathcal{M}}_{\ell-\ell'}(\bm \gamma'', \bm \gamma'; \bm Y, \bm h_{\ell'}^2, \bm \tau^2_{\ell'}, \bm \rho^2_{\ell'}), \quad 0 \le \ell' \le \ell,\label{eq-compactness-breaking} \\
        \widetilde{\mathcal{M}}_{\ell}(\bm \gamma, \bm \gamma', \Delta^{\tau_i, \tau_{i+1}}_{I_{s_i}, I_{s_{i+1}}}; \bm Y,  \bm h, \bm \tau, \bm \rho), \label{eq-compactness-two-switches} \\
        \widetilde{\mathcal{M}}_{\ell}(\bm \gamma, \bm \gamma'; \bm Y, \bm h, \bm \tau, \bm \rho)|_{\theta_{\ell'}=0}, \quad \widetilde{\mathcal{M}}_{\ell}(\bm \gamma, \bm \gamma'; \bm Y, \bm h, \bm \tau, \bm \rho)|_{\theta_{\ell'}=1},  \, 1 \le \ell' \le \ell \label{eq-compactness-circle}
    \end{gather}
    where $(\bm \tau_{\ell'}^1, \bm \tau^2_{\ell'})=\bm \tau, (\bm \rho_{\ell'}^1, \bm \rho^2_{\ell'})=\bm \rho$ with $|\bm \tau_1|=|\bm \rho^1_{\ell'}|=\ell'$; $(\bm h^1_{\ell'}, \bm h^2_{\ell'})$ is obtained from $\bm h$ as in~\eqref{eq: conf-boundary-decomposition}; and $\bm \gamma''$ is a critical point such that 
    $$\op{ind}(\bm \gamma)-\op{ind}(\bm \gamma'') -(n-2)\ell'-1=\op{ind}(\bm \gamma'')-\op{ind}(\bm \gamma') -(n-2)(\ell-\ell')-1=0.$$
    \end{enumerate}
\end{lemma}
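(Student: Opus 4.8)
\textbf{Proof proposal for Lemma~\ref{thm: compactness-for-MFLS}.}

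The plan is to run the standard Morse-theoretic compactness machinery for flow lines with switchings, as in \cite[Section C]{honda2025morse}, but taking care of the two extra phenomena present in the closed-string setting: the circle parameters $\theta_k \in [0,1]$ can reach the endpoints $0$ or $1$, and the switching map then interacts with the conjugation action via~\eqref{eq: loop-conjugation-relation}. For statement (1), the key inputs are: (a) the Palais--Smale condition~\ref{PG3} for $\mathcal{A}_{\bm L}$ together with the fact that each switching only increases the action by the multiplicative factor $(1+c)$, so that along any sequence in $\widetilde{\mathcal{M}}_{\ell}(\bm \gamma, \bm \gamma'; \bm Y, \bm h, \bm \tau, \bm \rho)$ the actions stay in a compact window $[\mathcal{A}_{\bm L}(\bm \gamma'), (1+c)^{\ell}\mathcal{A}_{\bm L}(\bm \gamma)]$; (b) forward-completeness~\ref{PG4} and the finite Morse index of critical points~\ref{PG2}; and (c) the interior elliptic/ODE estimates giving $C^\infty_{\mathrm{loc}}$-convergence of subsequences of trajectories, which is where the $W^{m,2}$-regularity statement \cite[Theorem A.10]{honda2025morse} enters to guarantee that limiting trajectories stay in the smooth multiloop space. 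With these, a sequence in a zero-dimensional moduli space either converges (after passing to a subsequence and modding out translation at each level) to an element of the same space, or else exhibits breaking, bubbling of a neck length $\lambda_k \to \infty$, collision of switching parameters ($s_{h_k}-s_{h_{k+1}}\to 0$), or a circle parameter hitting $0$ or $1$; in the index-$0$ case all of these degenerations land in moduli spaces of strictly negative virtual dimension by the index formula of Proposition~\ref{lemma-morse-transversality} (note that~\eqref{eq: conf-boundary-decomposition} and~\eqref{eq-switching-commuting-1}--\eqref{eq-switching-commuting-2} show the relevant degenerate strata have one less free parameter), hence are empty by transversality, and finiteness follows.

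For statement (2), the dimension is one, so exactly one of the above degenerations can occur generically, and I would enumerate the boundary strata accordingly. First, gluing: for any choice of intermediate critical point $\bm \gamma''$ with the stated index condition, standard gluing for pseudo-gradient flow lines with switchings (the consistency condition in Definition~\ref{defn-universal-data}, which forces $\bm Y^\ell_{\bm h,\bm\tau,\bm\rho}$ to restrict correctly to $(\bm Y^{\ell'},\bm Y^{\ell-\ell'})$ on the breaking stratum, is precisely what makes the glued trajectory solve the perturbed equation) produces a collar neighborhood of~\eqref{eq-compactness-breaking}; the decomposition $(\bm h^1_{\ell'},\bm h^2_{\ell'})$ of the order datum is dictated by~\eqref{eq: conf-boundary-decomposition}. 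Second, collision of two consecutive switching times $s_{h_i}=s_{h_{i+1}}$: the limit is a trajectory passing through $\overline{\Delta}^{\tau_i,\tau_{i+1},\,|}_{I_{s_i},I_{s_{i+1}}}$, giving the stratum~\eqref{eq-compactness-two-switches}; here the commutativity relations~\eqref{eq : commutativity-switching}--\eqref{eq-switching-commuting-2} and the commutativity requirement~\eqref{eq-perturbation-switching} on $\bm Y$ guarantee that the two ways of ordering the coincident switches agree, so the stratum is well-defined and is a smooth manifold by the supplementary transversality assumption stated just before the lemma. Third, a circle parameter $\theta_{\ell'}\to 0$ or $\theta_{\ell'}\to 1$: this gives~\eqref{eq-compactness-circle}; when $\theta_{\ell'}=1$ the switch becomes a switch at the basepoint, and by~\eqref{eq: loop-conjugation-relation} together with the cyclicity condition~\eqref{eq:cyclic-data} on $\bm Y$ this is identified with a configuration where a transposition $\tau_{\ell'}$ has been conjugated by the relevant $\sigma_{\ell'}$, so the moduli space on the $\theta_{\ell'}=1$ face is again one of the listed smooth manifolds; the $\theta_{\ell'}=0$ face is treated identically (the concatenation degenerates trivially). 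Finally, neck lengths $\lambda_k\to\infty$ are subsumed into the breaking stratum~\eqref{eq-compactness-breaking} via the $\lambda_k$-to-$+\infty$ limits built into the definitions of $\mathfrak{X}_0^i$ and the face maps~\eqref{eq-constructing-background}--\eqref{eq-constructing-background-part2}, so no new stratum appears.

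The topology on the compactification $\overline{\mathcal{M}}_{\ell}$ is the one for which convergence means $C^\infty_{\mathrm{loc}}$-convergence of the trajectory pieces together with convergence of $(\bm s,\bm\theta)$ in the appropriate compactified parameter space (the Deligne--Mumford-type compactification of $\operatorname{Conf}^{\bm h}_\ell(\R)$ described around~\eqref{eq: conf-boundary-decomposition}, together with $[0,1]^\ell$ for the $\theta$'s); one then checks that every sequence subconverges and that the limit lies in one of~\eqref{eq-compactness-breaking}--\eqref{eq-compactness-circle}, and conversely that each such stratum is in the closure, via gluing. I expect the main obstacle to be the bookkeeping around the circle-parameter degenerations~\eqref{eq-compactness-circle}: one must verify carefully that as $\theta_{\ell'}\to 1$ the naive concatenation $\gamma_i'|_{[0,\theta_{\ell'}]}\#\gamma_j'|_{[\theta_{\ell'},1]}$ limits, after the reparametrization $\nu_{[\ell]}$, to exactly the configuration predicted by~\eqref{eq: loop-conjugation-relation}, and that the perturbation data $\bm Y$ restricts compatibly there thanks to~\eqref{eq:cyclic-data} --- this is the genuinely new point compared to the based-multiloop case of \cite{honda2025morse}, and it is where the precise form of the cyclicity condition in Definition~\ref{defn-universal-data} is used. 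The remaining arguments (breaking, collision of switches, neck stretching) are direct adaptations of \cite[Lemma C.3 and the surrounding discussion]{honda2025morse} and \cite[Section 5]{mescher2018}, and I would cite those rather than reproduce the estimates.
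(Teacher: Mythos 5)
Your proposal is correct and follows essentially the same route as the paper: the paper's proof is a two-sentence sketch that declares the argument analogous to the compactness theorem for Morse flow lines with switchings in the HDHF Morse model (\cite[Theorem C.5]{honda2025morse}), with the only genuinely new phenomenon being the stratum~\eqref{eq-compactness-circle} where a switching coordinate $\theta_{\ell'}$ reaches the boundary of $[0,1]$ — exactly the point you isolate and handle via~\eqref{eq: loop-conjugation-relation} and the cyclicity condition~\eqref{eq:cyclic-data}. Your additional detail (action bounds, Palais--Smale, consistency of $\bm Y$ on breaking strata, commutativity at colliding switch times) is a faithful expansion of what the cited reference supplies, not a different argument.
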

\begin{proof}[Sketch]
    The proof of this theorem is analogous to that of \cite[Theorem C.5]{honda2025morse}. The only new different case showing up in the free multiloop setting is given by~\eqref{eq-compactness-circle}, which corresponds to one of the switching coordinates $\theta_\ell'$ approaching the boundary of $[0,1]$, which is allowed in this case.
\end{proof}

\begin{corollary} \label{lemma-morse-cpt-moduli}
   Given $\bm \gamma, \bm \gamma' \in \mathcal{P}^{\bm L}_\kappa$ such that $|\bm \gamma|-|\bm \gamma'|-(n-2)\ell =2$, the space $$\bigcup_{\bm h \in \mathfrak{S}_\ell, \bm \tau \in \binom{[\kappa]}{2}^\ell, \bm \rho \in \mathfrak{S}_\kappa^\ell} \widetilde{\mathcal{M}}_{\ell}(\bm \gamma, \bm \gamma'; \bm Y, \bm h, \bm \tau, \bm \rho)$$ admits a compactification $\overline{\mathcal{M}}_\ell(\bm \gamma, \bm \gamma'; \bm Y)$ with boundary covered only by components of the form~\eqref{eq-compactness-breaking}.
\end{corollary}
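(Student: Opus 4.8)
The corollary is the ``one-dimensional-moduli-space'' analogue of Lemma~\ref{thm: compactness-for-MFLS}(2), with the crucial difference that the combinatorial data $(\bm h, \bm \tau, \bm \rho)$ is no longer fixed but ranges over all of $\mathfrak{S}_\ell \times \binom{[\kappa]}{2}^\ell \times \mathfrak{S}_\kappa^\ell$, and we claim that after this union all the ``fake'' boundary strata~\eqref{eq-compactness-two-switches} and~\eqref{eq-compactness-circle} cancel out, leaving only the genuine breaking strata~\eqref{eq-compactness-breaking}. The first step is to invoke Lemma~\ref{thm: compactness-for-MFLS}(2) applied with $\ell$ replaced by $\ell$ and the index gap equal to $1$: for each fixed $(\bm h, \bm \tau, \bm \rho)$, the space $\widetilde{\mathcal{M}}_{\ell}(\bm \gamma, \bm \gamma'; \bm Y, \bm h, \bm \tau, \bm \rho)$ is a $1$-manifold whose compactification has boundary covered by~\eqref{eq-compactness-breaking}--\eqref{eq-compactness-circle}. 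Taking the union over the finite set of $(\bm h, \bm \tau, \bm \rho)$ produces a candidate compactification $\overline{\mathcal{M}}_\ell(\bm \gamma, \bm \gamma'; \bm Y)$; it remains to show the contributions of~\eqref{eq-compactness-two-switches} and~\eqref{eq-compactness-circle} are internal identifications rather than genuine boundary.

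The second step handles stratum~\eqref{eq-compactness-two-switches}, where two consecutive switching times $s_i, s_{i+1}$ collide. Here I would use the commutativity of the universal switching data, i.e.\ the relation~\eqref{eq-perturbation-switching} together with the switching-map identities~\eqref{eq : commutativity-switching}, \eqref{eq-switching-commuting-1}, \eqref{eq-switching-commuting-2}, \eqref{eq-switching-commuting-2} (and the rules~\eqref{eq: data-commutativity-rule-1}--\eqref{eq: data-commutativity-rule-2}). The point is that a Morse flow line lying in $\widetilde{\mathcal{M}}_{\ell}(\bm \gamma, \bm \gamma', \Delta^{\tau_i, \tau_{i+1}, \, |}_{I_{s_i}, I_{s_{i+1}}}; \bm Y, \bm h, \bm \tau, \bm \rho)$ is simultaneously a limit of flow lines in $\widetilde{\mathcal{M}}_{\ell}(\bm \gamma, \bm \gamma'; \bm Y, \bm h, \bm \tau, \bm \rho)$ (with $s_i > s_{i+1}$) and of flow lines in $\widetilde{\mathcal{M}}_{\ell}(\bm \gamma, \bm \gamma'; \bm Y, \bm h^i, \bm \tau^{i,1}, \bm \rho)$ or $\widetilde{\mathcal{M}}_{\ell}(\bm \gamma, \bm \gamma'; \bm Y, \bm h^i, \bm \tau^{i,2}, \bm \rho)$ (with the opposite inequality $s_i < s_{i+1}$), because on $\{s_i = s_{i+1}\}$ the two-step switching composition is independent of the order in which the two switchings are performed, precisely by the cited switching-map relations, while the perturbation data agrees by~\eqref{eq-perturbation-switching}. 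Consequently each point of~\eqref{eq-compactness-two-switches} is an interior point of the glued-up union, i.e.\ it is where two of the manifolds $\widetilde{\mathcal{M}}_{\ell}(\bm \gamma, \bm \gamma'; \bm Y, \bm h', \bm \tau', \bm \rho)$ are glued along a common codimension-$0$ face, and it is not a boundary point of $\overline{\mathcal{M}}_\ell(\bm \gamma, \bm \gamma'; \bm Y)$.

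The third step handles stratum~\eqref{eq-compactness-circle}, where a switching coordinate $\theta_{\ell'}$ reaches $0$ or $1$. Since $\theta = 0$ and $\theta = 1$ are identified on the circle $[0,1]/(0\sim1)$ that the switching data is built to descend to (cf.~\eqref{eq: family-property5} and the remark that $\nu_{[\ell]}$ descends to $\R^\ell \times T^\ell \times \Lambda^1_{[\kappa]}(Q)$), a flow line with $\theta_{\ell'}=1$ for the combinatorial data $(\bm h, \bm \tau, \bm \rho)$ coincides, via the loop-conjugation relation~\eqref{eq: loop-conjugation-relation} of the switching map and the cyclic invariance~\eqref{eq:cyclic-data} of the universal data, with a flow line having $\theta_{\ell'}=0$ but with $\tau_{\ell'}$ and $\rho_{\ell'}$ replaced by their conjugates $\tau_{\ell'}(\sigma)$, $\rho_{\ell'}(\sigma)$ as in the definition of cyclic data. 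Hence~\eqref{eq-compactness-circle} again records an internal gluing between two pieces of the union over $(\bm h, \bm \tau, \bm \rho)$ rather than a boundary. Combining steps two and three, the only true boundary of $\overline{\mathcal{M}}_\ell(\bm \gamma, \bm \gamma'; \bm Y)$ consists of the broken configurations~\eqref{eq-compactness-breaking}, which is the assertion.

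\textbf{Main obstacle.} The technical heart is a careful bookkeeping argument showing that the identifications in steps two and three really do match up globally — that is, that the union $\bigcup_{(\bm h,\bm\tau,\bm\rho)} \overline{\mathcal{M}}_{\ell}(\bm \gamma, \bm \gamma'; \bm Y, \bm h, \bm \tau, \bm \rho)$, after performing the gluings dictated by~\eqref{eq : commutativity-switching}--\eqref{eq-switching-commuting-2}, \eqref{eq: loop-conjugation-relation} and by the commutativity and cyclicity of $\bm Y$, is a genuine compact topological $1$-manifold with boundary equal to~\eqref{eq-compactness-breaking}. One must check that no stratum is glued to itself in an orientation-reversing way and that the local model near a colliding-switching-times point or a $\theta=0/1$ point is indeed $\R$ (an open interval), not a wedge of more than two arcs; this in turn uses that for generic $\bm Y$ the relevant moduli spaces $\widetilde{\mathcal{M}}_{\ell}(\bm \gamma, \bm \gamma', \Delta^{\tau_i,\tau_{i+1}}_{\cdots}; \bm Y, \bm h, \bm \tau, \bm \rho)$ are themselves smooth (asserted just above the statement of Lemma~\ref{thm: compactness-for-MFLS}) and cut out transversally, so that they appear with multiplicity one. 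This is entirely parallel to the corresponding step in \cite{honda2025morse}, and I would structure the proof by reduction to that reference, only spelling out the new ingredient~\eqref{eq-compactness-circle} arising from the closed-string (circle) nature of the multiloops.
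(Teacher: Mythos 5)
Your proposal is correct and follows essentially the same route as the paper's own sketch: invoke Lemma~\ref{thm: compactness-for-MFLS}(2) for each fixed $(\bm h, \bm \tau, \bm \rho)$, then show the strata~\eqref{eq-compactness-two-switches} become interior points via the commutativity of $\bm Y$ (with the relabeling rules~\eqref{eq: data-commutativity-rule-1}--\eqref{eq: data-commutativity-rule-2}) and the strata~\eqref{eq-compactness-circle} become interior points via the cyclicity~\eqref{eq:cyclic-data} together with~\eqref{eq: loop-conjugation-relation}. The only slight imprecision is your phrase that the two-step switching is ``independent of the order'': as the paper notes, which of $\bm \tau^{i,1}$ or $\bm \tau^{i,2}$ occurs is dictated by whether $\theta_{h_i}$ is greater or smaller than $\theta_{h_{i+1}}$, but since you cite exactly those rules this does not affect the argument.
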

\begin{proof}[Sketch of the proof]
 By Lemma~\ref{thm: compactness-for-MFLS} (2), we need to show that components~\eqref{eq-compactness-two-switches} and~\eqref{eq-compactness-circle} can be identified for different choices of tuples $(\bm h, \bm \tau, \bm \rho)$, and therefore such points may be regarded as interior points of 
 $$\bigcup_{\bm h \in \mathfrak{S}_\ell, \bm \tau \in \binom{[\kappa]}{2}^\ell, \bm \rho \in \mathfrak{S}_\kappa^\ell} \overline{\mathcal{M}}_{\ell}(\bm \gamma, \bm \gamma'; \bm Y, \bm h, \bm \tau, \bm \rho).$$
 First, the above holds for~\eqref{eq-compactness-two-switches} since any point of $\widetilde{\mathcal{M}}_{\ell}(\bm \gamma, \bm \gamma', \Delta^{\tau_i, \tau_{i+1}}_{I_{s_{h_i}}, I_{s_{h_{i+1}}}}; \bm Y,  \bm h, \bm \tau, \bm \rho)$ is also a point of $\widetilde{\mathcal{M}}_{\ell}(\bm \gamma, \bm \gamma', \Delta^{\tau^{i,j}_i, \tau^{i,j}_{i+1}}_{I_{s_{h_i}}, I_{s_{h_{i+1}}}}; \bm Y,  \bm h^i, \bm \tau^{i,j}, \bm \rho)$ for $j=1$ or $j=2$ depending on whether $\theta_{h_i}$ is greater or smaller than $\theta_{h_{i+1}}$. Here, we use the notation of~\eqref{eq-perturbation-switching} and the commutativity of the perturbation data $\bm Y$. We illustrate this in Figure~\ref{fig-degenerate-s}.

 Second, any point of  $\widetilde{\mathcal{M}}_{\ell}(\bm \gamma, \bm \gamma'; \bm Y, \bm h, \bm \tau, \bm \rho)|_{\theta_{\ell'}=1}$ is also a point of $\widetilde{\mathcal{M}}_{\ell}(\bm \gamma, \bm \gamma'; \bm Y, \bm h, \bm \tau_{\ell'}(\sigma), \bm \rho_{\ell'}(\sigma))|_{\theta_{\ell'}=0}$ by cyclicity of $\bm Y$, where $\sigma=\sigma(\bm \gamma)$ and we use the notation of~\eqref{eq:cyclic-data}. Hence, the claim of the corollary follows.
 \end{proof}
    \begin{figure}
       \centering
       \includegraphics[width=17cm]{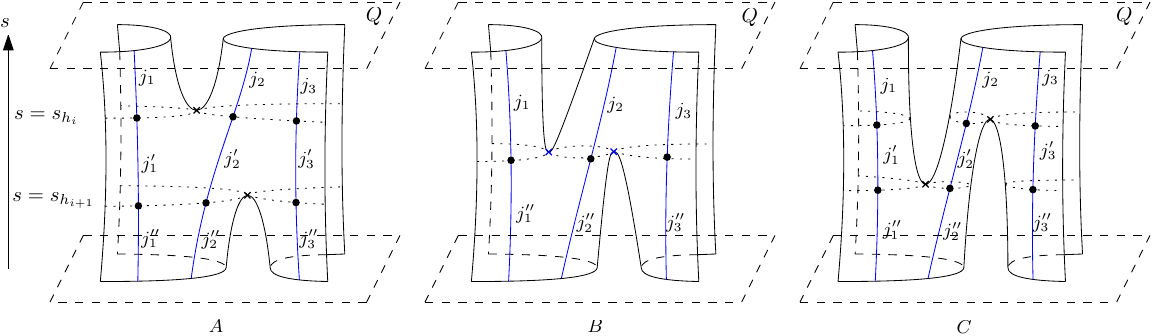}
       \caption{The image of $\bm \Gamma$ viewed as the graph in $Q\times\mathbb{R}_s$. The blue arc labelled by $j$ denotes the image of $\Gamma_i(s)_j(0)$. The left side (A) illustrates an MFLS corresponding to $(s_{h_i}-s_{h_{i+1}})\to 0^+$; the limit is shown by (B), where two switchings occur simultaneously; the curves of family (A) have the same limiting curves in family (C) corresponding to $(s_{h^i_i}-s_{h^i_{i+1}}) \to 0^+$. Here $\tau_i=(j_1j_2)$ and $\tau_{i+1}=(j_2'j_3')$.
       }
       \label{fig-degenerate-s}
    \end{figure}

 As an immediate corollary of Lemma~\ref{lemma-morse-cpt-moduli} we obtain
\begin{proposition}
\label{proposition-d2}
 The graded module $CM_{-*}(\Lambda^1_{[\kappa]}(Q))$ equipped with differential $\partial$ is indeed a cochain complex.
\end{proposition}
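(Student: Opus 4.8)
The plan is to run the argument of Theorem~\ref{theorem-d2} in the Morse setting, with Corollary~\ref{lemma-morse-cpt-moduli} playing the role of Theorem~\ref{theorem-cpt-moduli}. Write $\partial=\sum_{\ell\ge 0}\partial_\ell$ as in \eqref{eq-morse-diff}, where $\partial_\ell$ counts Morse flow lines with $\ell$ switchings and contributes to the coefficient of $\hbar^\ell$ relative to the input. Then $\partial^2=0$ is equivalent to the family of relations
\[ \sum_{\ell_1+\ell_2=\ell}\partial_{\ell_2}\circ\partial_{\ell_1}=0 \qquad (\ell\ge 0), \]
which is an instance of the relations \eqref{series-differential-relation} in Construction~\ref{series-complex}. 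For $n>2$ this is a finite system on each generator (Proposition~\ref{lemma-morse-transversality}), while for $n=2$ it is the coefficientwise statement in the $\hbar$-adic completion; either way it suffices to establish each relation separately, and the case $\ell=0$ is the classical identity $\partial_0^2=0$ for the disjoint union of loop-space Morse complexes, so we may assume $\ell\ge 1$.

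Fix $\bm\gamma,\bm\gamma'\in\mathcal P^{\bm L}_\kappa$ with $|\bm\gamma|-|\bm\gamma'|-(n-2)\ell=2$. By Proposition~\ref{lemma-morse-transversality}(1),(3) and Corollary~\ref{lemma-morse-cpt-moduli}, the space $\overline{\mathcal M}_\ell(\bm\gamma,\bm\gamma';\bm Y)=\bigcup_{\bm h,\bm\tau,\bm\rho}\overline{\mathcal M}_\ell(\bm\gamma,\bm\gamma';\bm Y,\bm h,\bm\tau,\bm\rho)$ is a compact oriented $1$-manifold with boundary, its orientation being the one induced from Proposition~\ref{lemma-morse-transversality}(3) after quotienting by the diagonal $\R$-translation and using the $\partial_s$-trivialization of \eqref{eq: orientations-map-morse}; crucially, its only boundary strata are the two-level broken configurations \eqref{eq-compactness-breaking}, since the strata \eqref{eq-compactness-two-switches} (two simultaneous switchings) and \eqref{eq-compactness-circle} (a switching moment reaching $\partial[0,1]$) were already shown in the proof of Corollary~\ref{lemma-morse-cpt-moduli} to be interior points of the union, using the commutativity and cyclicity of the universal perturbation data $\bm Y$. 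Hence the signed count of $\partial\overline{\mathcal M}_\ell(\bm\gamma,\bm\gamma';\bm Y)$ vanishes, and it remains to identify this count, term by term, with $\sum_{\ell_1+\ell_2=\ell}\langle\partial_{\ell_2}\partial_{\ell_1}[\bm\gamma],[\bm\gamma']\rangle$.

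This identification has two bookkeeping ingredients. First, the weights: a broken configuration $(\bm\Gamma_1,\bm\Gamma_2)$ through an intermediate critical point $\bm\gamma''$, with $\ell_1$ switchings in $\bm\Gamma_1$ and $\ell_2=\ell-\ell_1$ in $\bm\Gamma_2$, occurs as a boundary point of $\overline{\mathcal M}_\ell(\bm\gamma,\bm\gamma';\bm Y,\bm h,\bm\tau,\bm\rho)$ exactly for those $(\bm h,\bm\tau,\bm\rho)$ whose restrictions along the decomposition \eqref{eq: conf-boundary-decomposition} recover the data of the two pieces; since $\bm\tau$ and $\bm\rho$ split bijectively while the order datum $\bm h$ splits $\binom{\ell}{\ell_1}$-to-one, each broken configuration is counted $\binom{\ell}{\ell_1}$ times in $\partial\overline{\mathcal M}_\ell(\bm\gamma,\bm\gamma';\bm Y)$, and $\binom{\ell}{\ell_1}\cdot\tfrac{1}{\ell!(\kappa!)^\ell}=\tfrac{1}{\ell_1!(\kappa!)^{\ell_1}}\cdot\tfrac{1}{\ell_2!(\kappa!)^{\ell_2}}$ is precisely the product of the normalizations of $\partial_{\ell_1}$ and $\partial_{\ell_2}$ in \eqref{eq-morse-diff} — this is the reason for the combinatorial factors in Definition~\ref{defn: free multiloop complex}, and is the closed-string counterpart of the bookkeeping in \cite{honda2025morse}. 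Second, the signs: for each such boundary point one checks that the composed morphism on orientation lines $\partial_{[\bm\Gamma_2]}\circ\partial_{[\bm\Gamma_1]}\colon o^{\Q}_{\bm\gamma}\to o^{\Q}_{\bm\gamma'}[(n-2)\ell]$ agrees with the morphism induced by the outward-pointing normal at $(\bm\Gamma_1,\bm\Gamma_2)$, via the iterated fiber-product presentation \eqref{eq: defn-MFLS-parameters} and the orientation identities \eqref{eq: orinetation-identity-prop1}--\eqref{eq: orientation-identity-MFLS}, exactly as the analogous sign comparison in the proof of Theorem~\ref{theorem-d2} (with the single $\R$-translation acting diagonally on the broken stratum accounted for in the usual way). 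Granting these, the vanishing of the boundary count becomes $\sum_{\ell_1+\ell_2=\ell}\partial_{\ell_2}\partial_{\ell_1}=0$, and summing over $\ell$ gives $\partial^2=0$. The main obstacle is this last sign verification: propagating the conventions of Section~\ref{section-orientation-lines} — in particular the $\zeta^n$-coorientations of the multidiagonals $\Delta^{\tau_k, |}_{I_{h_k}}$ and the $\hbar$-shift factors — through the long chain of fiber products and through the $\R$-quotient; I expect it to require the same care as, but no new idea beyond, the sign discussions in Theorem~\ref{theorem-d2} and Proposition~\ref{lemma-morse-transversality}(3).
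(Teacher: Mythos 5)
Your proposal is correct and takes essentially the same route as the paper's own proof: compactness via Corollary~\ref{lemma-morse-cpt-moduli} (with the strata of simultaneous switchings and of $\theta\in\partial[0,1]$ absorbed into the interior by commutativity and cyclicity of $\bm Y$), identification of the broken boundary with compositions, the $\binom{\ell}{\ell_1}$-fold splitting of the order datum $\bm h$ reconciling the normalizations $\tfrac{1}{\ell!(\kappa!)^\ell}$ with $\tfrac{1}{\ell_1!(\kappa!)^{\ell_1}}\cdot\tfrac{1}{\ell_2!(\kappa!)^{\ell_2}}$, and the sign check on orientation lines done exactly as in Theorem~\ref{theorem-d2} using the outward-pointing translation vector. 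Your weight bookkeeping is if anything stated more explicitly than in the paper; the only item you leave implicit is the well-definedness of $\partial$ itself (finiteness of the relevant critical points via the Palais--Smale/action bound), which the paper dispatches in one sentence.
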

\begin{proof}
Note that conditions \ref{PG1}-\ref{PG5} imply that $\mathcal{P}^{\bm L}_\kappa \cap \{\mathcal{A}_{\bm L} \le a \}$ is finite for any $a \in \R$. Hence by Lemma~\ref{thm: compactness-for-MFLS}(1) the homomorphism $\partial$ given by \eqref{eq-morse-diff} is well-defined. To show that $CM_{-*}(\Lambda^1_{[\kappa]}(Q))$ is indeed a cochain complex it only remains to show that $\partial^2=0$.

    Given $|\bm \gamma|-|\bm \gamma'|-(n-2)\ell=2$, by Corollary~\ref{lemma-morse-cpt-moduli} the boundary of the compactification $\overline{\mathcal{M}}_{\ell}(\bm \gamma,\bm \gamma';\bm Y)$ may be regarded as a finite number of broken Morse flow lines with switches, i.e.,
    \begin{equation}\label{eq: boundary-strata-MFLS}
        \bigsqcup_{\substack{0 \le \ell' \le \ell, \\ |\bm \gamma|-|\bm \gamma''|=|\bm \gamma''|-|\bm \gamma'|=(n-2)\ell+1}}\widetilde{\mathcal{M}}_{\ell'}(\bm \gamma, \bm \gamma''; \bm Y, \bm h_{\ell'}^1, \bm \tau^1_{\ell'}, \bm \rho^1_{\ell'}) \times \widetilde{\mathcal{M}}_{\ell-\ell'}(\bm \gamma'', \bm \gamma'; \bm Y, \bm h_{\ell'}^2, \bm \tau^2_{\ell'}, \bm \rho^2_{\ell'}).
    \end{equation}
    Notice that given pair $(\bm h^1, \bm h^2) \in \mathfrak{S}_{\ell'} \times \mathfrak{S}_{\ell-\ell'
    }$ it corresponds to $\binom{\ell}{\ell'}$ elements $\bm h \in \mathfrak{S}_\ell$ under the restriction map (see description after~\eqref{eq: conf-boundary-decomposition}). Therefore, each broken trajectory in~\eqref{eq: boundary-strata-MFLS} contributes to the boundary of $\overline{\mathcal{M}}_{\ell}(\bm \gamma,\bm \gamma';\bm Y)$ the $\ell!$ times more than it contributes to $\partial_{\ell-\ell'} \circ \partial_{\ell'}$.
    
    To finish the proof, it is enough to compare the composition map $\partial_{[\bm \Gamma_2]} \circ \partial_{[\bm \Gamma_1]}$ and the map induced by boundary orientation for 
    \begin{equation*}
        (\bm \Gamma_1, \bm \Gamma_2) \in \widetilde{\mathcal{M}}_{\ell'}(\bm \gamma, \bm \gamma''; \bm Y, \bm h_{\ell'}^1, \bm \tau^1_{\ell'}, \bm \rho^1_{\ell'}) \times \widetilde{\mathcal{M}}_{\ell-\ell'}(\bm \gamma'', \bm \gamma'; \bm Y, \bm h_{\ell'}^2, \bm \tau^2_{\ell'}, \bm \rho^2_{\ell'}).
    \end{equation*}
    That is,
    \begin{equation}
        |\R \partial_s|^{-1} \otimes\big|\widetilde{\mathcal{M}}_{\ell}(\bm \gamma,\bm \gamma';\bm Y, \bm h, \bm \tau, \bm \rho)\big|^{-1} \otimes o_{\bm \gamma} \cong o_{\bm \gamma'}\bigl[(2-n)\ell\bigr],
    \end{equation}
    \begin{equation}
        |\R \partial_s \bm \Gamma_2|^{-1} \otimes |\R \partial_s \bm \Gamma_1|^{-1} \otimes o_{\bm \gamma} \cong |\R \partial_s \bm \Gamma_2|^{-1} \otimes o_{\bm \gamma''} \bigl[(2-n)\ell' \bigr] \cong o_{\bm \gamma'} \bigl[(2-n)\ell \bigr].
\end{equation} 
Now it remains to notice that $\partial_s \bm \Gamma_1$ with the preferred orientation as in~\eqref{eq: orientations-map-morse} is an outward pointing vector. Therefore, $\partial|_{o_{\bm \gamma}}^2=0$. We leave it to the reader to verify that $\partial|_{o_{\bm \gamma}[(2-n)j]}^2=0$ for any $j>0$.
\end{proof}

We write $CM_*(\Lambda^1_{[\kappa]} (Q);\bm L, \bm X, \bm Y)$ and $HM_*(\Lambda^1_{[\kappa]} (Q);\bm L,\bm X, \bm Y)$ to indicate the specific choice of $\bm L=\{L_1,\ldots,L_\kappa\}$ and pseudo-gradient vector fields $\bm X$ and perturbation data $\bm Y$.
\begin{proposition}
    The homology groups $HM_*(\Lambda^1_{[\kappa]} (Q); \bm L, \bm X, \bm Y)$ do not depend on the choice of $(\bm L, X, \bm Y)$.
\end{proposition}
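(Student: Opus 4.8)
The plan is to prove invariance by a standard continuation-map argument, adapting the techniques used for the Floer-side invariance (Section~\ref{section-linear-hamiltonians}) and the Morse-theoretic machinery of Abbondandolo–Majer and Honda–Tian–Yuan. First I would fix two admissible data sets $(\bm L^0, \bm X^0, \bm Y^0)$ and $(\bm L^1, \bm X^1, \bm Y^1)$ and connect them by a smooth path $(\bm L^s, \bm X^s)_{s\in[0,1]}$ of Lagrangians and pseudo-gradient vector fields, where $\bm L^s$ interpolates the Fenchel-dual data and the $\bm X^s$ are pseudo-gradients for $\mathcal{A}_{\bm L^s}$, constant near the critical points. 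One then builds a \emph{continuation morphism} $\Phi \colon CM_{-*}(\Lambda^1_{[\kappa]}(Q);\bm L^0,\bm X^0,\bm Y^0) \to CM_{-*}(\Lambda^1_{[\kappa]}(Q);\bm L^1,\bm X^1,\bm Y^1)$ by counting index-$0$ \emph{$s$-dependent Morse flow lines with switchings}: these solve $\partial_s\Gamma_i = (X^{\beta(s)} + Y_i)_{\Gamma_i(s)}$ for a monotone cutoff $\beta\colon\R\to[0,1]$ equal to $0$ for $s\gg0$ and $1$ for $s\ll0$, with the same switching conditions at the multidiagonals as in Definition~\ref{def-morse-diff}, and with the same combinatorial weights $\tfrac{1}{\ell!\cdot(\kappa!)^\ell}$. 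The orientation bookkeeping is identical to that in~\eqref{eq: orientation-identity-MFLS}, so $\Phi$ is a well-defined degree-$0$ chain map (using a compactness statement analogous to Lemma~\ref{thm: compactness-for-MFLS}, whose boundary strata now pair an $s$-dependent piece with an ordinary MFLS piece on one side or the other).

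Next I would show $\Phi$ is a quasi-isomorphism by the usual two lemmas. First, for a constant path one gets $\Phi \simeq \mathrm{id}$ by the standard argument: when $\beta$ is constant the moduli space of index-$0$ $s$-dependent flow lines with switchings, after taking the $\R$-quotient, forces $\ell=0$ and consists of constant trajectories at critical points, so $\Phi=\mathrm{id}$ on the nose; for a general homotopy one uses a two-parameter family of cutoffs $\beta_{R}$ and a gluing/compactness argument to build a chain homotopy between $\Phi_{(\bm L^1,\bm L^0)}\circ\Phi_{(\bm L^0,\bm L^1)}$ and the identity, exactly as in \cite[Appendix~A]{abbondandolo2010floer} and the continuation arguments in Section~\ref{section-linear-hamiltonians}. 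Second, independence of the choice of auxiliary perturbation data $\bm Y$ for \emph{fixed} $(\bm L,\bm X)$ is handled the same way, interpolating between two universal regular data sets through a generic path of universal data and counting the resulting index-$0$ objects. The composition property, that $\Phi_{(\bm L^2,\bm L^1)}\circ\Phi_{(\bm L^1,\bm L^0)} = \Phi_{(\bm L^2,\bm L^0)}$ up to chain homotopy, follows from a gluing argument for a three-level interpolation analogous to Lemma~\ref{composition-lemma}; combining these gives that $\Phi$ induces an isomorphism on homology, independent of all choices.

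The main obstacle is the same phenomenon that complicates Lemma~\ref{lemma-morse-cpt-moduli}: in the $s$-dependent setting one must ensure that the \emph{new} degenerations in the closed-string case — switching times $\theta$ hitting the boundary $\{0,1\}$ of the interval, and two switchings colliding — do not produce genuine codimension-$1$ boundary of the index-$1$ continuation moduli spaces. This requires setting up \emph{continuation analogues of the consistent universal perturbation data} $\mathfrak{X}_{\operatorname{sw}}$ (now fibered over both $\operatorname{Conf}_\ell(\R)$ and the homotopy parameter), with commutativity conditions~\eqref{eq: data-commutativity-rule-1}–\eqref{eq: data-commutativity-rule-2} and cyclicity conditions~\eqref{eq:cyclic-data} imposed so that those loci are identified across different tuples $(\bm h,\bm\tau,\bm\rho)$ and hence become interior points, exactly as in the proof of Corollary~\ref{lemma-morse-cpt-moduli}. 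Granting the existence of such consistent regular continuation data — which follows by the same inductive Sard–Smale argument as Proposition~\ref{lemma-morse-transversality}(2) — the rest is routine, and I would only sketch it, referring to \cite{honda2025morse, abbondandolo2010floer, mescher2018} for the analytic details and to Section~\ref{section-linear-hamiltonians} for the parallel Floer-theoretic template.
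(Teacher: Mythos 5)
Your proposal is correct and follows essentially the same route as the paper, which only sketches the argument: it invokes the continuation/chain-homotopy machinery of Abbondandolo--Majer for the $\ell=0$ part and asserts that handling the switchings is analogous. Your write-up simply fleshes out that sketch (continuation maps counting $s$-dependent flow lines with switchings, consistent continuation perturbation data to absorb the $\theta\in\{0,1\}$ and collision degenerations), so there is nothing substantively different to flag.
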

\begin{proof}[Sketch]
According to \cite{abbondandolo2010floer}, changing $\bm L$ or pseudo-gradient fields $\bm X$ induces chain homotopy at the level of standard Morse chain complexes. Hence, if we were considering only pseudo-gradient trajectories with $\ell=0$, the statement would immediately follow. We then claim that showing that changing one of $\bm L$, $\bm X$, or $\bm Y$ alone induces a chain homotopy in the presence of switchings is similar to the above-mentioned line of the argument.
\end{proof}

\section{The isomorphism between the Heegaard Floer symplectic cohomology and the homology of the free multiloop complex}

In this section, we define a chain map from the Floer complex to the Morse complex
\begin{equation*}
    \mathcal{F}\colon SC^*_{\kappa, \mathrm{unsym}}(T^*Q)\to CM_{\kappa n-*}(\Lambda^1_{[\kappa]} Q),
\end{equation*}
by counting rigid elements in a mixed moduli space that combines holomorphic curves and piecewise gradient trajectories. We establish an analogue of the Viterbo isomorphism theorem in the context of Heegaard Floer symplectic cohomology by showing that $\mathcal{F}$ is a quasi-isomorphism. 

In this section, we assume that $Q$ is an orientable closed manifold  with vanishing second \emph{Stiefel-Whitney class} $$w_2(TQ)~\in~H^2(T^*Q; \mathbb{Z}/2).$$
This allows one to fix a spin structure on $Q$ regarded as a Lagrangian submanifold in $T^*Q$. We also note that the approach in this section is parallel to \cite[Section 3]{honda2025morse}.

We point out that we work here with the unsymmetrized version of HFSH introduced in Section~\ref{section-hamiltonian-unsym}, and sometimes we abuse notations and do not specify that we work in this context, as it is understood throughout this section.
\subsection{The half-cylinder Hurwitz space}
We start by introducing the space of branched covers of a half-cylinder in the context of Section~\ref{section-hamiltonian-unsym}.
\begin{definition}
    Let $\sigma, \sigma_0 \in \mathfrak{S}_\kappa$. For given $ \chi>0$ define $\mathcal{H}^{\sigma, \sigma_0, \ge 0}_{\kappa, \chi}$ to be the set of tuples
    $$\left( \pi \colon S \to \R_{\ge 0}\times S^1, \bm p^+, \bm a^+, \bm P^0, \bm a^0, B\right) \text{, where}$$
    \begin{enumerate}
        \item $S$ is a Riemann surface of Euler characteristic $\chi$ with positive punctures $\bm p^+=(p^+_1, \ldots, p^+_{l(\sigma)})$ and boundary components $\bm P^0=(P^0_1, \ldots, P^0_{\ell(\sigma_0)})$. Denote by $\overline{S}$ the compactification of $S$ such that $S= \overline{S} \setminus \bm p^+$.
        
        \item  $\bm a^+= (a_1^+, \ldots, a_\kappa^+)$ is a collection of positive asymptotic markers at positive punctures $p_i^+$  such that for each $p_i^+$ there are exactly $\mu_i(\sigma)$ asymptotic markers 
        $a_{i_1}^+, \ldots, a_{i_{\mu_i}}^+ \in \mathcal{S}^1_{p_i^+}S = (T_{p_i^+}S \setminus \{0\})/\R_{>0}$; these are exactly the indices permuted by $c_i(\sigma)$, and they appear on $\mathcal{S}^1_{p_i^+}$ in the order prescribed by the cycle $c_i(\sigma)$.
        
        Analogously, we choose marked points, which we simply call \emph{markers}, on $\bm P^0$ such that for each boundary component $P^0_i$ there are exactly $\mu_i(\sigma_0)$ markers.

         The permutation on the set of markers induced by walking along $P_i^0$ in the boundary orientation direction is required to be equal to $c_i(\sigma_0)$.
        
        \item $\pi \colon S \longrightarrow \R_{\ge 0} \times S^1$ is a holomorphic branched cover with simple branched points away from $+\infty$ and boundary $\partial S=\bm P^0$. At $+\infty$ the branching profile is given by $\bm \mu(\sigma)$ and for each boundary component $P^0_i$ the restriction $\pi|_{P^0_i}$ is the $\mu_i^0$-cover of $\{0\} \times S^1$. 
       
        \item The map induced by $\pi$ from $\mathcal{S}^1_{p_i^+}S$ to $\mathcal{S}^1_{+ \infty} (\R_{\ge 0} \times S^1)$ sends all asymptotic markers assigned to $p_i^+$ to the asymptotic marker $1 \in \mathcal{S}^1_{+ \infty} (\R_{\ge 0} \times S^1)$. The markers $a_{i}^0$'s are all sent to $(0, 0) \in \R_{\ge 0} \times S^1$ under $\pi$.
        \item $B=(q_1,\ldots,q_b)$, is an ordered collection  of points in $\R_{> 0} \times S^1$ of simple branching, where $b= - \chi$.
    \end{enumerate}
\end{definition}
\begin{figure}
    \centering
    \includegraphics[width=8cm]{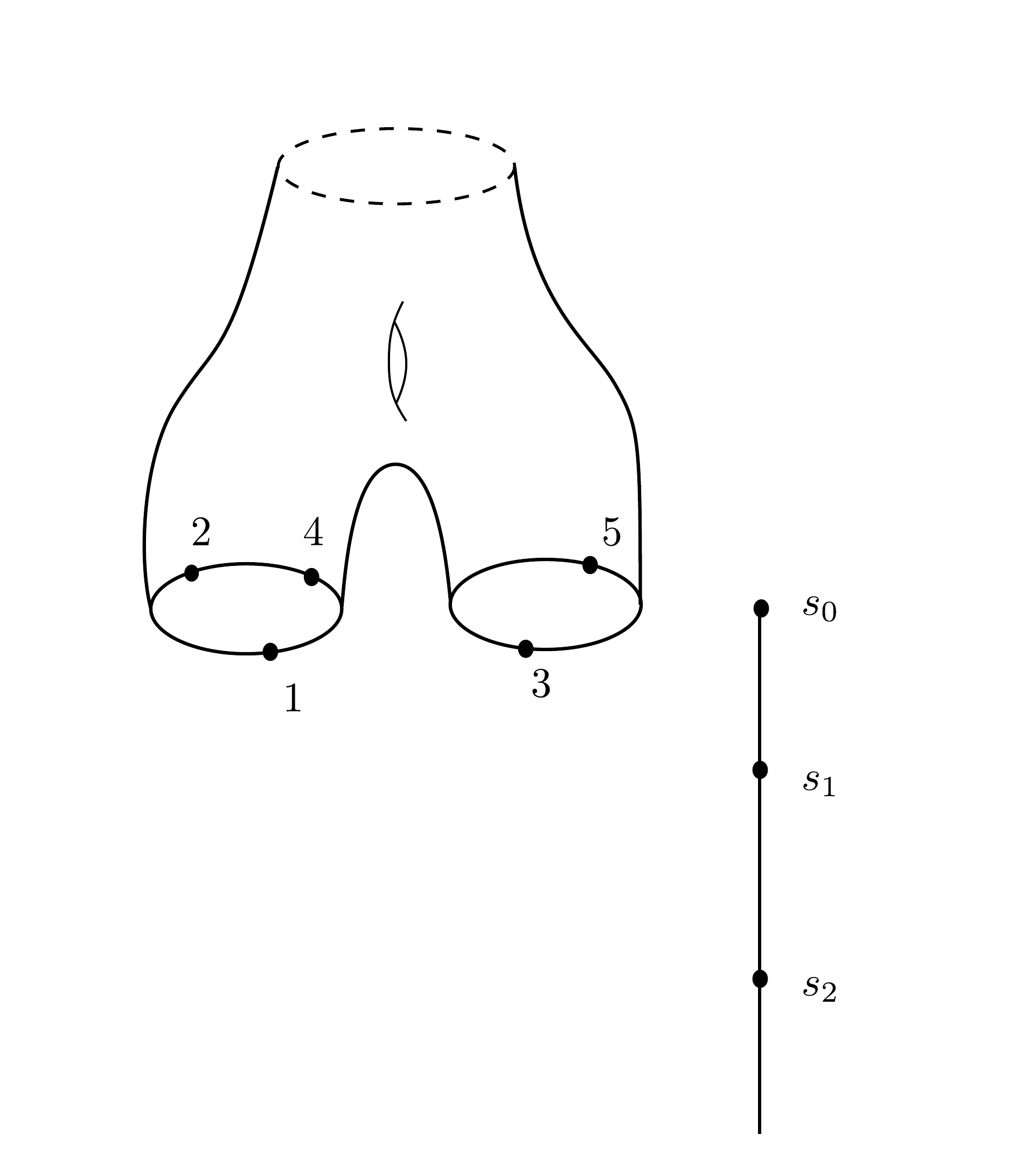}
    \caption{The tuple $((S, \pi), \bm \Gamma)$ above is an element of $\mathcal{T}^{\sigma, \sigma_0, \sigma'}_{-3, \ell}$ for $\kappa=5$ with $\sigma_0=(142)(35)$.}
    \label{fig-mixed-moduli}
\end{figure}

Following the approach of Section~\ref{hurwitz-spaces} we define a portion $\hurwitzhalfsm$ of the compactification $\hurwitzhalfcomp$ (which we will not specify here). 
Namely, an element of $\hurwitzhalfsm$ may have several levels of covers of the cylinder $\R \times S^1$ above the cover of the half cylinder. That is 

\begin{definition}
    The space of \emph{smooth half-cylindrical buildings} $\hurwitzhalfsm$ consists of
    $$ [(\overrightarrow{C}=(C_n,\ldots,C_0), S=S_n\cup \ldots \cup S_0, (\bm v^{n-1}, \ldots, \bm v^0), (\bm m^{n-1}, \ldots, \bm m^0), \pi]$$
    where
    \begin{enumerate}
    \item $\pi|_{S_0} \colon S_0 \to \R_{\ge 0} \times S^1$ is a branched cover of a half cylinder with data of markers on the boundary, but without asymptotic markers. In other words, it comes from an element of ${\mathcal{H}^{\sigma', \sigma_0, \ge 0}_{\kappa, \chi_2}}$ for some $\sigma' \in \mathfrak{S}_\kappa$ by forgetting asymptotic markers;
    \item $\bm m^0$ is a tuple of matching markers corresponding to the nodes $\bm v^0$ connecting $S_0$ and $S_1$ (see (4) of Definition~\ref{hurwitz-spaces} for more details);
        \item $[ (\overrightarrow{C}_{>0}=(C_n, \ldots, C_1),  S_{>0}=S_n\cup \ldots \cup S_1, (\bm v^{n-1}, \ldots, \bm v^1), (\bm m^{n-1}, \ldots, \bm m^1), \pi|_{S_{>0}}]$ comes from an element of $\widehat{\mathcal{H}}^{\sigma, \sigma'}_{\kappa, \chi_1}$ with forgotten asymptotic markers at negative punctures of $C_1$  (see Definition~\ref{hurwitz-spaces}).
        \item It holds that $\chi_1+\chi_2=\chi$.
        \end{enumerate}
\end{definition}

Similarly to $\hurwitzsm$ (see Definition~\ref{def-hurwitz-2-level}) a codimension $1$ boundary of $\hurwitzhalfsm$ can be given as the images under natural embeddings of spaces
\begin{equation}\label{half-cylinder-boundary}
   \bigsqcup_{\substack{\chi_1+\chi_2=\chi, \\ \sigma' \in \mathfrak{S}_\kappa}}\mathcal{H}^{\sigma, \sigma'}_{\kappa, \chi_1} \times_{\bm m} \mathcal{H}^{\sigma', \sigma_0, \ge 0}_{\kappa, \chi_2} .
\end{equation}
There is a natural projection
\begin{equation}
    \overline{\mathcal{H}}^{\sigma, \sigma_0, \ge 0}_{\kappa, \chi} \to \overline{\mathcal{M}}_{0, 1, -\chi+1}
\end{equation}
where the later space is the moduli space of genus $0$ curves with $1$ boundary component and $-\chi+1$ marked points. Notice that on smooth locus (i.e. away from nodal curves in the target) this map is a finite cover.

\subsection{Floer data for the half-cylinder Hurwitz space}
We set up the perturbation data on the half-cylinder Hurwitz spaces following Section~\ref{section-floer-data}. First, we note that there is a wnbc groupoid $\hurwitzhalfbrsm$ built via construction similar to the one given in Section~\ref{branched-manifolds}. 

As in the Section~\ref{section-floer-data} there is a smooth choice of positive cylindrical ends $\{\delta^i_+\}_{i=1}^{i=l(\sigma)}$ and finite cylinders $\delta^j_{i,r}$ over the whole $\hurwitzhalfbrsm$.

    A Floer datum on $(S, \pi) \in (\hurwitzhalfbrsm)^{\vee}_H$ (here $(S, \pi)$ is assumed to be away from the boundary) is a collection $(H_{(S,\pi)}, F_{(S, \pi)}, J_{(S, \pi)})$, where 
  \begin{enumerate}[(FDH1)]
    \item\label{half-floer-data-1} $H_{(S, \pi)} \colon S \to \mathcal{H}(\hat{M})$ is a domain-dependent Hamiltonian function, required to coincide with $H_0$ on cylindrical ends and finite cylinders, and vanishes on $\pi$-preimages of small neighborhoods of branched points and also on $\pi^{-1}([0,1] \times S^1)$;
    \item\label{half-floer-data-2} a domain-dependent function $F_{(S,\pi)} \in C^\infty (\hat{M})$ on $S$ such that conditions given in~\ref{def-floer-datum2} are satisfied (we note that on a cylinder with assigned cycle $c \in \mathfrak{S}_\kappa$ the function $F$ has to coincide with $F_c$ away from the collars) and additionally $F$ vanishes on $\pi$-preimages of $\varepsilon$-neighborhoods of branched points and also on $\pi^{-1}([0,1] \times S^1)$;
    \item \label{half-floer-data-3} a domain-dependent almost complex structure $J_{(S, \pi)}\colon S \to\mathcal{J}_{\hat{M}}$ coinciding with $J_t$ as in~(\ref{fixed-almost-complex}) at each cylindrical end and at each finite cylinder, and coincides with $J^0$ on $\pi$-preimages of small neighborhoods of branched points and also on $\pi^{-1}([0,1] \times S^1)$.
\end{enumerate}

As in the case of Hurwitz spaces, there exists a choice of \emph{consistent Floer data} $(H,F,J)$ on $\hurwitzhalfbrsm$ in the sense of Section~\ref{section-floer-data}. As before we will use the notation $H^{\mathrm{tot}}=H+F$.

\subsection{The mixed moduli space}
By complete analogy with Section~\ref{section: perturbation spaces}, for a strict order $\bm h \in \mathfrak{S}_\ell$ we can define spaces  $\mathfrak{X}^{\le 0}_{+}(k, \bm h)$ and $\mathfrak{X}_{0}^{\le 0, i}(k, \bm h)$ for $i=0, \dots, k$ simply by replacing $\operatorname{Conf}_\ell(\R)$ with $\operatorname{Conf}_{\ell}(\R_{\le 0})$, and the map~\eqref{eq: d-map-conf} with
\begin{gather}
    \mathcal{D}^{\le 0}_{\bm h}\colon \overline{\operatorname{Conf}}^{\bm h}_k(\R_{\le 0}) \to \mathscr{D}_{k}, \label{eq: new d-map-conf} \\
    (s_1, \ldots, s_k) \mapsto (-s_{h_1}, s_{h_1}-s_{h_2}, \dots, s_{h_{k-1}}-s_{h_k}). \nonumber
\end{gather}
Using these building blocks one defines $\mathfrak{X}^{\le 0}_{\operatorname{sw}}(\ell)$ analogous to~\eqref{eqn: sw}-\eqref{eqn: sw1}, with the only difference that one replaces $\mathfrak{X}^{\le 0}_{-}(\ell, \bm h)$ with $\mathfrak{X}^{\le 0, 0}_{0}(\ell, \bm h)$ in~\eqref{eqn: sw}. We leave the details to the reader.

Let us pick a Floer data $(H, F,J)$ on $\hurwitzhalfbrsm$ and an element $\bm Y=(Y_0, \ldots, Y_{\ell-1}, Y_\ell) \in \mathfrak{X}^{\le 0}_{\operatorname{sw}}(\ell)$.

Consider $\bm x\in \mathcal{P}_{\mathrm{unsym}}(T^*Q)$, $\bm \gamma \in \mathcal{P}_\kappa^{\bm L}$ and a permutation $\sigma_0 \in \mathfrak{S}_\kappa$. Let $\bm h$ be a strict order on $[b+\ell]$ regarded as an element of $\mathfrak{S}_{b+\ell}$, where $b=\kappa-\chi$; $\bm \tau$ be an $\ell$-tuple of elements of $\binom{[\kappa]}{2}$, and $\bm \rho$ be an $\ell$-tuple of permutations in $\mathfrak{S}_\kappa$.
We set $\sigma= \sigma(\bm x)$ and $\sigma'=\sigma(\bm \gamma)$.

We define the mixed moduli space $\mathcal{T}^{\sigma_0}_{\chi, \ell}(\bm x,\bm \gamma; H^{\mathrm{tot}}, J, \bm Y, \bm \tau, \bm h, \bm \rho)$ to be the space of tuples
\begin{equation}\label{mixed-moduli-tuples}
    \left((S, \pi \colon S \to \R_{\ge 0} \times S^1, B), u=(\pi, v), \bm \Gamma\right),
\end{equation}
where $u$ is a map
\begin{equation*}
    u\colon S\to \R_{\ge 0} \times S^1 \times T^*Q,
\end{equation*}
and \begin{equation*}
        \bm \Gamma=\left(\bm s, \bm \theta,(\Gamma_0,\ldots,\Gamma_\ell)\right), 
    \end{equation*}
    where 
     such that $(\bm s, \bm \theta) \in \operatorname{Conf}_{\ell}(\R_{\le0} \times [0,1])$, with $\bm s=(s_1, \ldots, s_{\ell})$, such that $(\bm s, \bm \theta)$ in the closure $\overline{\bm h}$ of $\bm h$ regarded as a component of $\operatorname{Conf}_{\ell}(\R \times [0,1])$.
     Set $\bm \lambda= (\lambda_0=-s_{h_1}, \lambda_1=s_{h_1}-s_{h_2}, \ldots, \lambda_{\ell-1}=s_{h_{\ell-1}}-s_{h_\ell})$. The above collection is required to satisfy several conditions, which we list below.
     
(A) First, $(S, \pi) \in\hurwitzhalfbrsm$ the map $u=(\pi, v)$ is required to satisfy
\begin{equation}\label{eq-half-cylinder-moduli}
    \begin{dcases}
        (dv-Y_{(S, \pi)})^{0,1}=0, \text{ with respect to } J_{(S,\pi)};  \\
        \lim_{s \to +\infty } v\circ \delta_i^+(s, \cdot) = x^{c_i(\sigma)}(\cdot) \text{ for }i=1, \ldots,  l(\sigma); \\
        \operatorname{Im}(v|_{\partial S}) \subset Q.
    \end{dcases}
\end{equation}

Here $Y_{(S, \pi)}$ is the $1$-form associated $H^{\mathrm{tot}}_{(S, \pi)}$ as in~\eqref{vector-form}. We point out that as in Lemma~\ref{sft-to-hamiltonian} the map $u$ can be regarded as a pseudo-holomorphic map with respect to the almost complex structure $\underline{J}^{H^{\mathrm{tot}}}_{(S, \pi)}$ defined as in Section~\ref{section-floer-data}. We denote the moduli space of such curves $u$ via $\hurwitzhalfbrsm(\bm x;  H^{\mathrm{tot}},  J)$.

\smallskip
(B) Second, we introduce the \emph{evaluation map}
\begin{gather}
    \mathscr{E}: \hurwitzhalfbrsm(\bm x; H^{\mathrm{tot}}, J) \times (\R_{\le 0} \times [0,1])^\ell\to \Lambda^{1}_{[\kappa]}(Q),\\
    (u, \bm s, \bm \theta)\mapsto \mathscr{E}(u, \bm s, \bm \theta), \nonumber
\end{gather} 
as follows: If $u$ does not have any branch points along $\{0\} \times S^1$, then we let $\mathscr{E}(u, \bm s, \bm \theta)$ be the restriction of $u$ to $\bm \gamma'=\pi^{-1}(\{0\} \times S^1)$, precomposed with $\nu^{\circ b}_{[b+\ell]}$ evaluated at $((B, (\bm s, \overline{\bm \theta})), \bm \gamma')$, regarded as a point of $(\R \times S^1)^{b+\ell} \times \Lambda^1_{[\kappa]}(Q)$ (here $\overline{\bm \theta}$ is the class of $\bm \theta$ in $T^\ell$). Notice that we use markers $a_i^0$ to order strands in $\bm \gamma'$.

If $u$ has branch points along $\{0\} \times S^1$, then let 
$$\bm \gamma' = \lim_{\delta\to 0^+} v|_{\pi^{-1}(\{\delta\} \times [0,1])},$$ 
and we set 
\begin{equation}\label{eq: evaluation map}
\mathscr{E}(u)=(\gamma'_1\circ\nu_{[b+\ell]}, \dots, \gamma'_\kappa \circ \nu_{[b+\ell]}).
\end{equation}

Observe that $\mathscr{E}(u) \in \Omega^{m,2}(M, \bm q)$ for arbitrary $m \ge 1$.

\smallskip
(C) Finally, we require that
    \begin{enumerate}
        \item The maps $$\left\{\begin{array}{l} 
        \Gamma_i \colon [0,\lambda_i]\to \Lambda^{1}_{[\kappa]}(Q) \quad \text{ for }\quad  i=0,\dots,\ell-1;\\
        \Gamma_\ell \colon [0, +\infty) \to \Lambda^{1}_{[\kappa]}(Q)
        \end{array}\right.$$
        are continuously differentiable.
        
        \item $\partial_s\Gamma_i(s)=(X+\bm Y_{\bm s, i}(\lambda_i, s))_{\Gamma_i(s)}$ for $\, i=0,\dots,\ell$.
        
        \item $\Gamma_0(0)=\mathscr{E}(u)$, $\Gamma_\ell(-\infty)=\bm \gamma'$.

        \item \label{item: mix-switch-condition} For $k=1, \dots, \ell$, $(\bm s, \bm \theta, \Gamma_{k-1}(\lambda_{k-1})) \in \Delta^{\tau_k, \, |}_{I_{h_k}}$  and
        $$c^|_{\rho_{k}}\circ sw^{i_kj_k}_{I_{h_k}}(\bm s, \bm \theta, \Gamma_{k-1}(\lambda_{k-1}))=(\bm s, \bm \theta, \Gamma_{k}(0)).$$
 
    \end{enumerate}

 There is a notion of \emph{universal perturbation data for mixed moduli spaces} $(H,  F, J, \bm Y)$ for all tuples $(\kappa, \chi, \ell)$ which can be given by combining the notion of consistency on the Floer side and the notion of universality on the Morse side as in Definition~\ref{defn-universal-data}. It is important to note that such data should be compatible with already chosen data for HFSH and for Morse flow lines with switchings, corresponding to vertical ``breakings''. We are not giving the explicit definition here and leave the details to the reader. Such data is \emph{regular} if all the mixed moduli spaces are wnb groupoids.

\begin{lemma}
\label{lemma-F-transversality}
    There exists a choice of regular universal perturbation data.
    For this data, a tuple of Hamiltonian orbits $\bm x\in \mathcal{P}_{unsym}(T^*Q)$, a multiloop $\bm \gamma\in \mathcal{P}_\kappa^{\bm L}$ the space 
    \begin{equation*}
        \mathcal{T}^{\sigma_0}_{\chi, \ell}(\bm x,\bm \gamma; H^{\mathrm{tot}}, J, \bm Y, \bm \tau, \bm h, \bm \rho )
    \end{equation*}
    is a wnb groupoid of dimension 
    $$\kappa n-|\bm x|-|\bm \gamma|-(n-2)(\ell-\chi).$$
    Moreover, given an orientation of $Q$ and a spin structure on the pull-back of tangent bundle $TQ$ to $T^*Q$ there is an induced orientation of $\mathcal{T}^{\sigma_0}_{\chi, \ell}(\bm x,\bm \gamma; H^{\mathrm{tot}}, J, \bm Y, \bm \tau, \bm h, \bm \rho )$.
\end{lemma}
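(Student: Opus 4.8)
\textbf{Proof strategy for Lemma~\ref{lemma-F-transversality}.} The plan is to run the same fibered-product-plus-Sard--Smale argument that underlies Theorem~\ref{transversality-theorem} and Proposition~\ref{lemma-morse-transversality}, merging the two along the evaluation map $\mathscr{E}$. First I would set up the relevant Banach manifolds: on the holomorphic side, a local Banach orbibundle over a chart of $\hurwitzhalfbrsm$ whose section is the perturbed Cauchy--Riemann operator $(dv - Y_{(S,\pi)})^{0,1}$, with a Lagrangian boundary condition $\operatorname{Im}(v|_{\partial S}) \subset Q$ (so the relevant Sobolev completions are the ones with totally real boundary conditions, as in \cite{abouzaid2011cotangent}); on the Morse side, the iterated fibered products $\tilde W^{u}$, $\tilde W^{ft}_{h_ih_{i+1}}$, $\tilde W^{s}$ exactly as in~\eqref{eq: defn-MFLS-parameters}, but with the very first piece $\tilde W^u(\bm\gamma, \Delta^{\tau_1,|}_{I_{h_1}}; \mathfrak X_-)$ replaced by the space $\tilde W^{ft}(-, \Delta^{\tau_1,|}_{I_{h_1}}; \mathfrak X_0^{\le 0,0})$ of finite-time trajectories starting at $\mathscr{E}(u)$, with the parameter $\lambda_0 = -s_{h_1}$ governed by the modified $\mathcal D$-map~\eqref{eq: new d-map-conf}. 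Then $\mathcal T^{\sigma_0}_{\chi,\ell}(\bm x,\bm\gamma;\dots)$ is the fiber over a fixed choice of perturbation data of the big fibered product
$$\hurwitzhalfbrsm(\bm x; H^{\mathrm{tot}}, J) \times (\R_{\le 0}\times[0,1])^\ell \,\tensor*[_{(\mathscr{E},\,\mathrm{id})}]{\times}{_{\tilde E_0^s}}\, \tilde W^{ft}(-, \Delta^{\tau_1,|}_{I_{h_1}}; \mathfrak X_0^{\le 0,0}) \,\tensor*[]{\times}{}\, \cdots \,\tensor*[]{\times}{}\, \tilde W^s(\Delta^{\tau_\ell j_\ell,|}_{I_{h_\ell}}, \bm\gamma'; \mathfrak X_+).$$

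The argument then proceeds in the order: (1) show each factor is a Banach manifold of class $C^{m-1}$ when restricted to $W^{m,2}$-multiloops (the holomorphic factor by standard elliptic theory with Lagrangian boundary; the Morse factors by the already-cited \cite[Lemma B.5]{honda2025morse} and its $\R_{\le 0}$ variant); (2) show the pairs of maps being fibered over are transverse, which for the Morse--Morse junctions is identical to Proposition~\ref{lemma-morse-transversality} and for the holomorphic--Morse junction reduces to surjectivity of $d\mathscr{E}$ composed with the linearized Morse flow --- here one uses that generic perturbation of the Floer data makes $v$ somewhere injective on every non-cylindrical component (via the unique-continuation argument of Theorem~\ref{transversality-theorem}, using the regions $\Omega_{[S,\pi]}$) and that the additional Morse perturbation space $\mathfrak X_0^{\le 0,0}$ is large enough to absorb the remaining cokernel, exactly as in the iterative step of \cite[Theorem C.4]{honda2025morse}; (3) apply Sard--Smale to the universal moduli space to get a comeagre set of regular data, and intersect over the countably many tuples $(\kappa,\chi,\ell,\bm h,\bm\tau,\bm\rho,\bm x,\bm\gamma)$ to get a single universal regular choice, while ensuring consistency/compatibility with the already-fixed HFSH data and Morse data on vertical breakings by performing the induction on $\chi$ and on $\ell$ in the correct order (Floer data first on lower $\chi$, then Morse data on lower $\ell$, then the mixed data); (4) compute the dimension via the index formula --- the curve $v$ with Lagrangian boundary contributes index $\kappa n - |\bm x|$ plus the contribution $-n\chi$ of the half-cylinder cover (cf.\ the closed-string count $\operatorname{ind}(v)=n\chi+|\bm x'|-|\bm x|$ in Lemma~\ref{index-count} and the Morse count $-(n-2)\ell + |\bm\gamma'|-|\bm\gamma|$ in Proposition~\ref{lemma-morse-transversality}), the Hurwitz factor $\mathcal H^{\sigma,\sigma_0,\ge0}_{\kappa,\chi}$ contributes $-2\chi$ since it maps with finite fibers to $\overline{\mathcal M}_{0,1,-\chi+1}$, and adding the $2\ell$ switching parameters and subtracting the $n\ell$ diagonal conditions gives $\kappa n - |\bm x| - |\bm\gamma| - (n-2)(\ell-\chi)$ after collecting terms; (5) orientations: build the coherent orientation of $\mathcal T^{\sigma_0}_{\chi,\ell}$ by tensoring the orientation of $\hurwitzhalfbrsm(\bm x; H^{\mathrm{tot}}, J)$ --- which exists because a spin structure on $TQ$ pulled back to $T^*Q$ trivializes the obstruction to orienting moduli of discs with Lagrangian boundary, as in \cite{abouzaid2011cotangent, FOOO} --- with the coorientations $|Q|\cong \zeta^n$ of each diagonal $\Delta^{\tau_k,|}_{I_{h_k}}$ and the orientations $|\R\times[0,1]|^\ell$ of the switching parameters, running the same chain of isomorphisms~\eqref{eq: orinetation-identity-prop1}--\eqref{eq: orinetation-identity-prop3} as in Proposition~\ref{lemma-morse-transversality}.

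The main obstacle I expect is step~(2) at the holomorphic--Morse junction: one must show that after perturbing only the Floer data $(H,J)$ in the interior regions $\Omega_{[S,\pi]}$ and the Morse data $\mathfrak X_0^{\le 0,0}$ near $\mathscr{E}(u)$, the combined linearized operator is surjective, and in particular that the evaluation $\mathscr{E}$ --- whose domain-dependence on branch points along $\{0\}\times S^1$ introduces the non-smooth locus $\Delta^{\bm\tau}_A$ --- is a submersion onto the relevant jet of $\Lambda^1_{[\kappa]}(Q)$ transverse to the first diagonal. This is delicate because $\mathscr{E}$ is only $C^{m-1}$ on $\Lambda^{m,2}_{[\kappa]}(Q)$ and the half-cylinder cover may degenerate so that branch points collide with $\{0\}\times S^1$; the resolution is to stratify by the combinatorial type of $\pi|_{\pi^{-1}(\{0\}\times S^1)}$ and treat each stratum separately, exactly as the multidiagonal stratification of $\overline\Delta^{\bm\tau}_A$ is handled in Section~\ref{section: switching map}, and to invoke the boundary-regularity result \cite[Theorem A.10]{honda2025morse} to guarantee that all solutions land in smooth multiloops so that the $C^{m-1}$ structure suffices for Sard--Smale with $m$ taken large. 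Everything else is a routine, if lengthy, adaptation of the cited results, so I would state it as such and refer the reader to \cite{honda2025morse, abouzaid2011cotangent} for the parallel computations rather than reproducing them.
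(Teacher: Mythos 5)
Your proposal follows essentially the same route as the paper's proof: regularity of $\hurwitzhalfbrsm(\bm x; H^{\mathrm{tot}}, J)$ is established as in Theorem~\ref{transversality-theorem} (with Lagrangian boundary as in Abouzaid), the holomorphic–Morse junction is handled by making the evaluation map $\mathscr{E}$ fiberwise an immersion (the paper cites Abbondandolo–Schwarz here, matching your somewhere-injectivity plus Morse-perturbation argument) so the fiber product with the switching trajectory spaces is cut out transversely for generic universal data chosen inductively, the index is computed by gluing $D_u$ to $D_{\bm x}$, and the orientation comes from the spin structure via the same chain of isomorphisms as in Proposition~\ref{lemma-morse-transversality}. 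One bookkeeping correction: the Fredholm index of the half-cylinder curve is $\kappa n-|\bm x|+n\chi$ (not $\kappa n-|\bm x|-n\chi$), and adding the $-2\chi$ from the half-cylinder Hurwitz factor gives $\kappa n-|\bm x|+(n-2)\chi$, which is what feeds into your (correct) final dimension formula.
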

\begin{proof}
First, one can show that $\hurwitzhalfbrsm(\bm x;  H^{\mathrm{tot}}, J)$ is a smooth manifold along the lines of the argument in Theorem~\ref{transversality-theorem} (compare with \cite[Section 3]{abouzaid2011cotangent} for the $\kappa=1$ case). 
For index calculation, we glue the operator $D_u$ associated with a given element of $\hurwitzhalfbrsm(\bm x;  H^{\mathrm{tot}}, J)$ to the operator $D_{\bm x}$ associated
with a tuple of Hamiltonian orbits $\bm x$, obtained by the direct product of operators $D_{x^{c_i(\sigma)}}$ (see~\eqref{loop-operator-unsym}) to obtain an operator $D_u \# D_{\bm x}$ from a surface with no punctures. Then
$$\operatorname{ind}(D_u \# D_{\bm x})=n (\chi+\ell(x))$$
by \cite[Proposition 11.13]{seidel2008fukaya} and 
$$\operatorname{ind}(D_{\bm x})=n(\ell(\bm x)-\kappa)+|\bm x|.$$
Hence $$\operatorname{ind}(D_u)=\kappa n-|\bm x|+n \chi$$
and then one concludes as in Lemma~\ref{index-count} that the dimension of $\hurwitzhalfbrsm(\bm x;  H^{\mathrm{tot}},  J)$ is equal to
\begin{equation}\label{dim-half-moduli}
\kappa n -|\bm x|+(n-2)\chi.    
\end{equation}

Given a spin structure on $TQ$ regarded as a subbundle of $TT^*Q$, following \cite[Proposition 11.13]{seidel2008fukaya} and \cite[Appendix A]{abouzaid2011cotangent}, there are isomorphisms
\begin{equation}\label{eq-halfcyl-orientation}
    \big| \hurwitzhalfbrsm(\bm x;  H^{\mathrm{tot}},  J) \big| \cong o_{\bm x}^{-1} \otimes \big| \hurwitzhalfbrsm \big| \otimes |Q|^{\ell(\sigma_0)} \otimes \zeta^{\kappa n+n(\chi-\ell(\sigma_0)} \cong \zeta^{(n-2)\chi} \otimes o_{\bm x}^{-1} \otimes \zeta^{\kappa n}.
\end{equation}
For the later isomorphism, we use the fixed orientation on $Q$ to get trivialization $|Q| \cong \zeta^{n}$ and the standard orientation on $\hurwitzhalfbrsm$ to get $|\hurwitzhalfbrsm| \cong \zeta^{-2\chi}$; we note that in the second isomorphism, there is a Koszul sign involved that we do not mention explicitly.

 Next, we claim that for generic Floer data the evaluation map $\mathscr{E}$ is an immersion of the space $\hurwitzhalfbrsm(\bm x; H^{\mathrm{tot}}, J)$  when restricted to the fiber above $(\bm s, \bm \theta) \in (\R_{\le 0} \times [0,1])^\ell$ for arbitrary $(\bm s, \bm \theta)$. It is implied by \cite[Theorem 3.8]{abbondandolo2006floer}, and more generally by the discussion in \cite[Section 5]{abbondandolo2010floer}. Finally, one may conclude the proof by applying a similar argument to the proof of Proposition~\ref{lemma-morse-transversality} to show that for a generic choice of $\bm Y$ the Morse flow lines with switchings connecting the image of $\hurwitzhalfbrsm(\bm x; H^{\mathrm{tot}}, J)$ under $\mathscr{E}$ with $\bm \gamma$ constitute a smooth manifold. The main step of such an argument consists of showing that the following fiber product is a Banach manifold
 \begin{equation}
 \hurwitzhalfbrsm(\bm x;  H^{\mathrm{tot}}, J)\tensor*[_{\mathscr{E}}]{\times}{_{\tilde E_0^s}} W_\ell(-, \bm \gamma; \mathfrak{X}_{\operatorname{sw}}^{\le 0}(\ell)),
 \end{equation}
where we denote by $W_\ell(-, \bm \gamma; \mathfrak{X}_{\operatorname{sw}}^{\le 0}(\ell))$ the moduli space of MFLS with $\ell$ switches connecting an arbitrary point of $\Lambda^1_{[\kappa]}(Q)$ to $\bm \gamma$, defined by analogy with~\eqref{eq: defn-MFLS-parameters}. 
 As for the orientation on $\mathcal{T}^{\sigma_0}_{\chi, \ell}(\bm x,\bm \gamma;  H^{\mathrm{tot}},  J, \bm Y, \bm \tau, \bm h, \bm \rho)$, one may argue as in the proof of Proposition~\ref{lemma-morse-transversality} that
\begin{equation}\label{eq-mixed-orientation}
o_{\bm \gamma} \otimes \big| \mathcal{T}^{\sigma_0}_{\chi, \ell}(\bm x,\bm \gamma; H^{\mathrm{tot}}, J, \bm Y, \bm \tau, \bm h, \bm \rho) \big| \cong \zeta^{(2-n)\ell} \otimes \big|  \hurwitzhalf(\bm x; H^{\mathrm{tot}}, J)\big|.
\end{equation}

By combining~\eqref{eq-halfcyl-orientation} and~\eqref{eq-mixed-orientation} one gets an orientation on $\mathcal{T}^{\sigma_0}_{\chi, \ell}(\bm x,\bm \gamma; H^{\mathrm{tot}}, J, \bm Y, \bm \tau, \bm h, \bm \rho)$ induced by a choice of a generator of $o_{\bm x}$.

As in the proof of Proposition~\ref{lemma-morse-transversality} we assert that one can achieve universality of the data $( H, F, J, \bm Y)$ via an inductive argument.
\end{proof}

From now on we pick regular and universal data $( H, F,  J, \bm Y)$ and omit it form notation of mixed moduli spaces.

\begin{lemma}
\label{lemma-F-cpt-finite}
    For regular and universal data, a positive integer $m$, the mixed moduli space
    $$\bigsqcup_{\substack{\ell-\chi=m, \, \sigma_0 \in \mathfrak{S}_\kappa, \\\bm h \in \mathfrak{S}_\ell, \bm \tau \in \binom{[\kappa]}{2}^\ell, \bm \rho \in \mathfrak{S}_\kappa^\ell}}\mathcal{T}^{\sigma_0}_{\chi, \ell}( \bm x,\bm \gamma; \bm \tau, \bm h, \bm \rho)$$
    is empty for all but finitely many $\bm \gamma\in \mathcal{P}_\kappa^{\bm L}$.
\end{lemma}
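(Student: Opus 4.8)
The statement is a finiteness result of the same flavor as Lemma~\ref{lemma-cpt-finite} on the Floer side and Lemma~\ref{thm: compactness-for-MFLS}(1) on the Morse side, and the plan is to combine the mechanisms of those two proofs. The key observation is that an element of $\mathcal{T}^{\sigma_0}_{\chi, \ell}(\bm x,\bm \gamma; \bm \tau, \bm h, \bm \rho)$ carries an action drop that is bounded \emph{below} by the difference of actions of its asymptotics, up to a fixed error. First I would set up the relevant action/energy estimate: for the holomorphic half-cylinder piece $u \in \hurwitzhalfbrsm(\bm x; H^{\mathrm{tot}}, J)$, a standard energy computation (exactly as in \cite[Theorem A.1]{ganatra2012symplectic}, adapted to the half-cylinder with Lagrangian boundary on $Q\subset T^*Q$ so that the boundary term vanishes because $\lambda|_Q=0$) gives that the geometric energy of $u$ is controlled by $\mathcal{A}(\bm x)-\mathcal{A}_{\bm L}(\mathscr{E}(u))$, where $\mathscr{E}(u)$ is the restricted multiloop along $\pi^{-1}(\{0\}\times S^1)$; in particular $\mathcal{A}_{\bm L}(\mathscr{E}(u)) \le \mathcal{A}(\bm x)+C_0$ for a constant $C_0$ depending only on the Floer data, where I use here that $H^{\mathrm{tot}}_{(S,\pi)}$ and $F_{(S,\pi)}$ are uniformly bounded near the boundary by (FDH\ref{half-floer-data-1})--(FDH\ref{half-floer-data-3}) and that the Fenchel duality relating $H^{\mathrm{tot}}_i$ and $L_i$ identifies the Hamiltonian and Lagrangian actions up to bounded error.

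Next I would propagate this bound through the Morse flow lines with switchings. The pseudo-gradient property~\ref{PG1} says $\mathcal{A}_{\bm L}$ is nonincreasing along each $\Gamma_i$, and the switching map satisfies $\mathcal{A}_{\bm L}(\bm\gamma') \le (1+c)\mathcal{A}_{\bm L}(\bm\gamma)$ at each of the $\ell$ switchings (the last item of the Lemma following Definition~\ref{defn: switching-map}), while the perturbation terms $\bm Y \in \mathfrak{X}_{\operatorname{sw}}^{\le 0}(\ell)$ are supported in time intervals of uniformly bounded length and are uniformly bounded in the relevant norm, so they change the action by at most a fixed constant $C_1(\ell)$ per flow segment. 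Chaining these: since $\mathcal{A}_{\bm L}\ge 0$ (it is bounded below by Theorem~\ref{theorem-action+gradient}(2), and after a shift we may take the lower bound to be $0$ on the relevant compact region of loop space, or simply keep a uniform lower bound $-C_2$), we get
\begin{equation*}
    \mathcal{A}_{\bm L}(\bm\gamma) \le (1+c)^\ell\bigl(\mathcal{A}(\bm x)+C_0\bigr) + (\ell+1)C_1 + C_2 \eqqcolon A(\bm x, \ell),
\end{equation*}
which, because we have fixed $\ell = m+\chi$ (recall $\chi>0$ is determined by $m$ only through $\ell-\chi=m$, and in any case there are finitely many $\chi$ with $0<\chi$ and $\ell-\chi=m$ for which the index equation of Lemma~\ref{lemma-F-transversality} can hold), gives a bound on $\mathcal{A}_{\bm L}(\bm\gamma)$ depending only on $\bm x$ and $m$.

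Finally I would invoke the compactness of the sublevel sets of $\mathcal{A}_{\bm L}$: by the Palais--Smale condition~\ref{PG3} together with non-degeneracy of all critical points, the set $\mathcal{P}^{\bm L}_\kappa \cap \{\mathcal{A}_{\bm L} \le A\}$ is finite for every $A$ (this is precisely the finiteness statement already used at the start of the proof of Proposition~\ref{proposition-d2}). Hence only finitely many $\bm\gamma \in \mathcal{P}^{\bm L}_\kappa$ can appear as negative asymptotics, which is the claim; I should also remark that there are only finitely many tuples $(\sigma_0, \bm h, \bm\tau, \bm\rho)$ to begin with once $\kappa$ and $\ell$ are fixed, so the disjoint union is over a finite index set and no further argument is needed there. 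The main obstacle I anticipate is making the half-cylinder energy estimate fully rigorous in the presence of branch points along $\{0\}\times S^1$ and of the domain-dependent, $\pi$-pulled-back Hamiltonian $1$-form $K_{(S,\pi)} = H^{\mathrm{tot}}_{(S,\pi)}\pi^*(dt)$: one must check that the curvature term $R_{K}$ (in the sense of \cite[Section (8e)]{seidel2008fukaya}/\cite{ganatra2012symplectic}) is absolutely bounded and integrates to something controlled, which is where the weak monotonicity $\partial_s F_{(S,\pi)}\le 0$ and the vanishing of $H,F$ near branch points and on $\pi^{-1}([0,1]\times S^1)$ from (FDH\ref{half-floer-data-1})--(FDH\ref{half-floer-data-3}) are used; all of this is routine but needs care, and I would simply cite \cite[Theorem A.1 and Lemma A.1]{ganatra2012symplectic} and \cite[Section 3]{abouzaid2011cotangent} for the template rather than reproduce it.
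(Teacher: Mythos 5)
Your proposal is correct and follows essentially the same route the paper intends: the paper omits the argument, simply citing the Palais--Smale condition and referring to Abbondandolo--Schwarz, and your write-up fleshes out exactly that mechanism (an a priori bound on $\mathcal{A}_{\bm L}(\bm\gamma)$ coming from the energy/action estimate for the half-cylinder piece, propagated through the pseudo-gradient segments and the finitely many switchings, followed by finiteness of $\mathcal{P}^{\bm L}_\kappa\cap\{\mathcal{A}_{\bm L}\le A\}$ from (PG1)--(PG5)). The analytic details you defer to \cite{ganatra2012symplectic} and \cite{abouzaid2011cotangent} are the same ones the paper defers to \cite{abbondandolo2006floer}, so no gap beyond what the paper itself leaves to the references.
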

\begin{proof}
    We omit the proof as it follows from the Palais-Smale condition~\ref{PG4}. See~\cite{abbondandolo2006floer} for details.
\end{proof}

\smallskip
\noindent
\emph{Aside on Kuranishi replacements.}
As in Section~\ref{section-pseudo-holomophic-curves}, for $n\ge 3$, a sequence of elements in the mixed moduli space may converge to a pair with Floer side admitting ghost bubbles. More precisely, a sequence of curves $(u_i,\bm \Gamma_{(i)})$, $i=1,2,\dots$, in $\mathcal{T}^{\sigma_0}_{\chi, \ell}(\bm x,\bm \gamma; \bm \tau, \bm h, \bm \rho)$, after passing to a subsequence, can limit to 
$$(u_\infty= u_{\infty,1}\cup u_{\infty,2}, \bm \Gamma_{(\infty)}),$$ 
where:
\begin{itemize}
\item[(i)] the main part $u_{\infty,1}$ is the union of components that are not ghost bubbles; each component of the $u_{\infty,1}$ is somewhere injective; and 
\item[(ii)] $u_{\infty,2}$ is a union of ghost bubbles, i.e., locally constant maps where the domain is a possibly disconnected compact Riemann surface (possibly with boundary) and $u_{\infty,2}$ maps to points on $\operatorname{Im}(u_{\infty,1})$.
\end{itemize}  
To remedy this we introduce Kuranishi replacements $\mathcal{T}^{\sigma_0, \sharp}_{\chi, \ell}(\bm x,\bm \gamma; \bm \tau, \bm h, \bm \rho)$ satisfying similar requirements to \ref{Kuranishi-replacements1}-\ref{Kuranishi-replacements3} (see \cite[Section 3.2.2]{honda2025morse} for more specifics). As before, we will pass to Kuranishi replacements whenever necessary and drop $\sharp$ from the notation.

\bigskip
Before diving into the discussion of the compactification of mixed moduli spaces of dimension $1$, we introduce two new types of boundary behavior. Let us fix tuples $\bm x\in \mathcal{P}_{\mathrm{unsym}}(T^*Q)$ and $\bm \gamma\in \mathcal{P}^{\bm L}_\kappa$ satisfying for some $\chi \le 0, \ell \ge 0$:
$$\kappa n-|\bm x|-1=|\bm \gamma|+(n-2)(\ell-\chi).$$
\begin{figure}[h]
    \centering
    \subfloat{{\includegraphics[width=15cm]{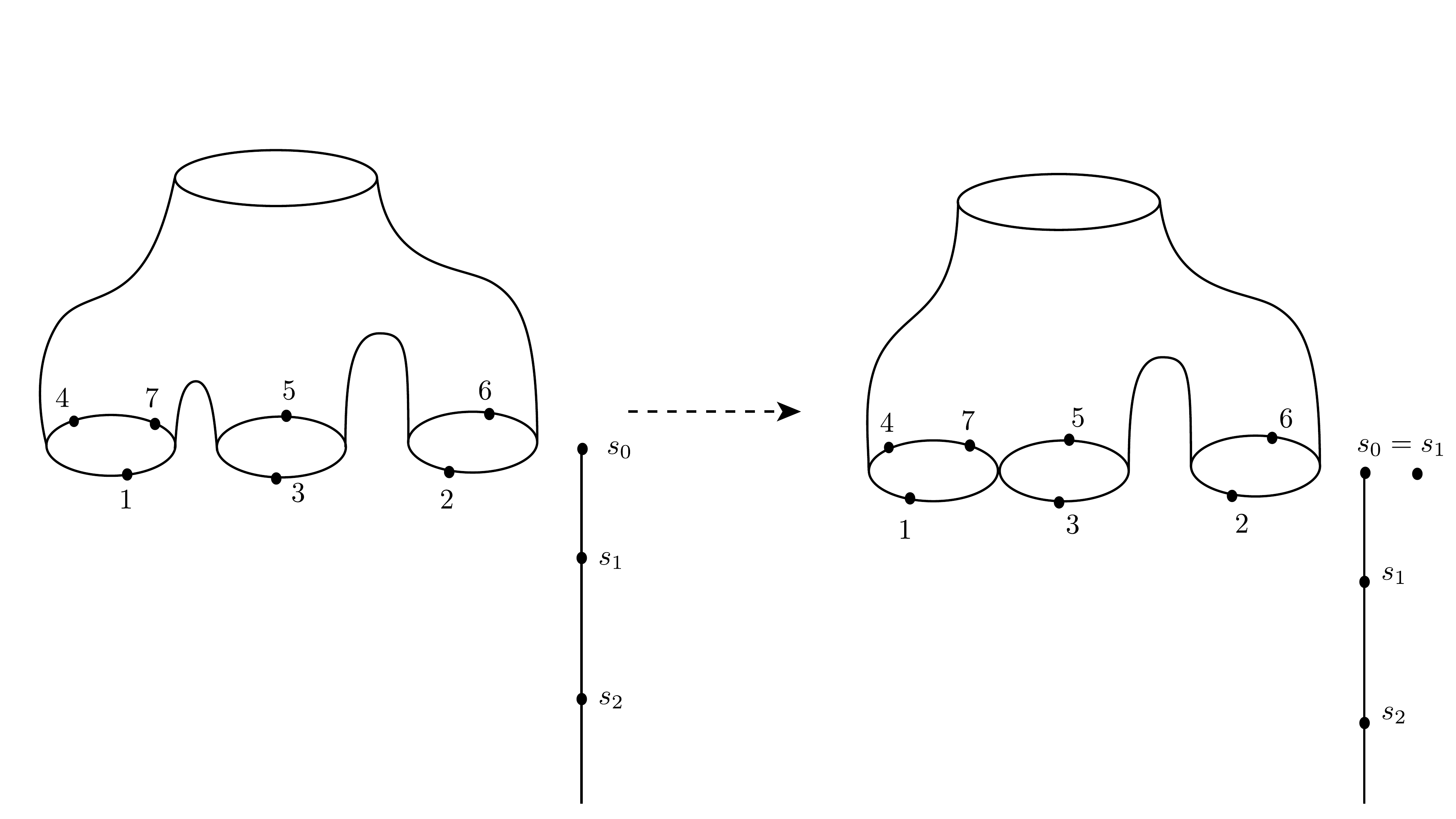} }}
    \caption{Approaching a Type I singularity.}
    \label{fig-typeI}
\end{figure}

\begin{definition}\label{def-typeI}
   \emph{The Type I singular mixed moduli space } $\mathcal{T}^{\sigma_0, \mathrm{I}}_{\chi, \ell}(\bm x,\bm \gamma; \bm \tau, \bm h, \bm \rho)$ is a space of tuples 
    \begin{equation*}
        \left((S, \pi), u, \bm \Gamma\right)
    \end{equation*}
    as in~\eqref{mixed-moduli-tuples} with the following differences: 
    \begin{enumerate}
        \item $(S, \pi)$ represents a point in $\hurwitzhalfcomp$, such that (only) one of the branch points in $B$ has $s$-coordinate equal to $0$, let $\tau'=(ij)$ be such that $\mathscr{E}(u)_i$ and $\mathscr{E}(u)_j$ intersect;
        \item $\bm \Gamma^{\mathrm{I}}=\left((\Gamma_0^{\mathrm{I}}, \ldots, \Gamma_{\ell}^{\mathrm{I}}), \bm s^{\mathrm{I}}=(s_1^{\mathrm{I}}, \ldots, s_{\ell+1}^{\mathrm{I}}), \bm \tau^{\mathrm{I}}=(\tau_1^{\mathrm{I}}, \ldots,\tau_{\ell+1}^{\mathrm{I}}), \bm h^{\mathrm{I}}=(h_1^{\mathrm{I}}, \ldots, h_{\ell+1}^{\mathrm{I}}), \bm \rho^{\mathrm{I}}=(\rho_1^{\mathrm{I}}, \ldots, \rho_{\ell+1}^{\mathrm{I}})\right)$ 
        is a tuple satisfying 
        \begin{itemize}
            \item the order $\bm h^{\mathrm{I}}$ restricted to the least $\ell$ elements coincides with $\bm h$;
            \item $\bm \rho=(\rho_2^{\mathrm{I}}, \ldots, \rho_{\ell+1}^{\mathrm{I}})$ and $\rho_1^{\mathrm{I}} \in \mathfrak{S}_\kappa$ is arbitrary;
            \item $\bm \tau=(\tau_{2}^{\mathrm{I}},\ldots,\tau_{\ell+1}^{\mathrm{I}}))$ and $\tau'={}^{\rho_1}\tau_1^{\mathrm{I}}$;
            \item $\Gamma_0^{\mathrm{I}}$ is a stationary trajectory of length $0$.
        \end{itemize}
    \end{enumerate}
\end{definition}

\begin{remark}\label{rmk-typeI}
    Heuristically, a Type I boundary singularity is a result of a ramification point approaching the boundary and adding a new switching point on the Morse side at $s=0$ (see Figure~\ref{fig-typeI}). The number of switching points of the curves in $\mathcal{T}^{\sigma_0, \mathrm{I}}_{\chi, \ell}(\bm x,\bm \gamma)$ is equal to $\ell+1$. We point out that choices in (2) above allow us to regard this singularity as an element of the boundary of $\mathcal{T}^{\sigma_0, \mathrm{I}}_{\chi+1, \ell+1}(\bm x,\bm \gamma)$ (after forgetting the marker of the boundary branch point), and for given $(\bm \tau, \bm h, \bm \rho)$ there are $(\ell+1)\cdot \kappa!$ ways of doing so in accordance to different choices of data as in (2) (not altering the geometry).
\end{remark}
\begin{definition}\label{def-typeII}
   \emph{The Type II singular mixed moduli space} $\mathcal{T}^{\sigma_0, \mathrm{II}}_{\chi, \ell}(\bm x,\bm \gamma; \bm \tau, \bm h, \bm \rho)$ is the space of tuples 
    \begin{equation*}
        \left((S, \pi), u, \bm \Gamma\right)
    \end{equation*}
    as in~\eqref{mixed-moduli-tuples} with the following differences: 
    \begin{enumerate}
    \item $(S^{\mathrm{II}}, \pi^{\mathrm{II}} \colon S \to \R_{\ge 0} \times S^1)$ is a branched cover with a set of branched points consisting of $|B|=-\chi+1$ points with (only) one of these points having $s$-coordinate equal to $0$;
        
        \item $\left(\bm \Gamma, \bm s=(s_1,  \ldots, s_{\ell}), \bm \tau, \bm \rho \right)$
      is a tuple with conditions that $s_{h_1}=0$.
    \end{enumerate}
\end{definition}
\begin{figure}[h]
    \centering
    \subfloat{{\includegraphics[width=16cm]{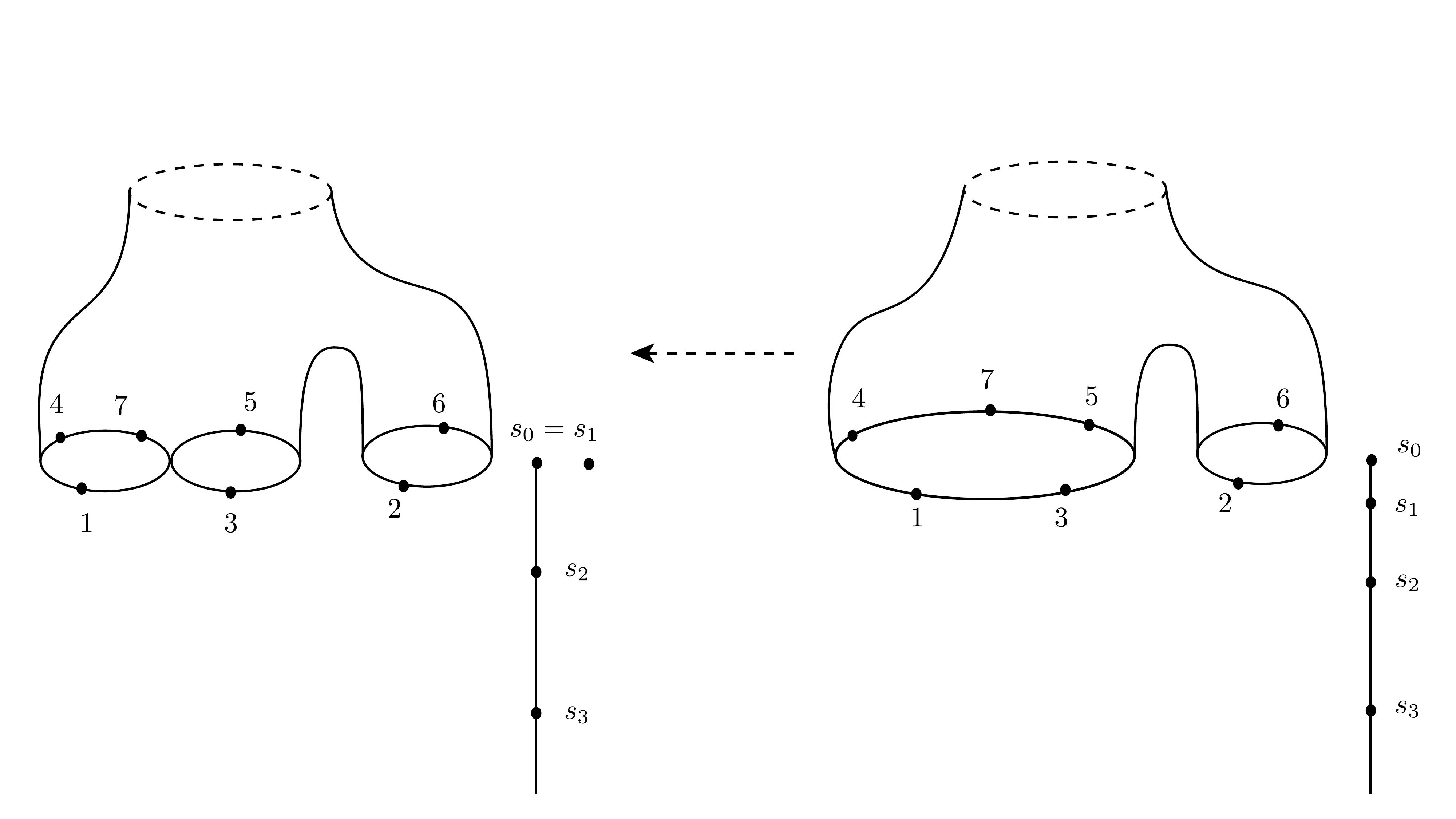} }}
    \caption{Approaching a Type II singularity.}
    \label{fig-typeII}
\end{figure}
\begin{remark}\label{rmk-typeII}
       A Type II boundary singularity is a result of $s_{h_1}$ approaching $0 \in \R_{\le 0}$.  This implies that for the limit curve $u=(\pi, v)$, the component $v$ has a boundary hyperbolic node. We replace the domain of $u$ by adding an additional branched point at the image of the node under $\pi$, and replace $\pi$ with $\pi'$ ramified at this node, see Figure~\ref{fig-typeII} for a schematic picture. The Euler characteristic of curves in $\mathcal{T}^{\sigma_0, \mathrm{II}}_{\chi, \ell}(\bm x,\bm \gamma)$ is equal to $\chi-1$. Note that there are $(-\chi+1)$ ways to choose such a domain $\pi'$ since one has to order the branched points (while preserving the order induced on the ``old" branched points).
\end{remark}

\begin{lemma}
\label{lemma-F-compactness}
    For a generic choice of universal and regular data $(H, F, J, \bm Y)$, a tuple of Hamiltonian orbits $\bm x\in \mathcal{P}_{\mathrm{unsym}}(T^*Q)$ and a multiloop $\bm \gamma\in \mathcal{P}_\kappa^{\bm L}$ satisfying  
    \begin{equation}\label{mixed-moduli-dim-1}
    \kappa n-|\bm x|-|\bm \gamma|-(n-2)(\ell-\chi)= 1,
    \end{equation}
    the wnb groupoid $$\bigcup_{\substack{\bm h \in \mathfrak{S}_\ell, \bm \tau \in \binom{[\kappa]}{2}^\ell, \bm \rho \in \mathfrak{S}_\kappa^\ell}} \ \mathcal{T}^{\sigma_0}_{\chi, \ell}(\bm x,\bm \gamma; \bm h, \bm \tau, \bm \rho)$$ admits a compactification $\overline{\mathcal{T}}^{\sigma_0}_{\chi, \ell}(\bm x,\bm \gamma)$ with boundary points contained in the images of the following spaces under natural inclusions
    \begin{gather}
        \bigsqcup_{\substack{\bm x' \in SC^*_\kappa(T^*Q), \, (n-2)\chi_1+|\bm x'|-|\bm x|=1, 
        \\ \chi_1+\chi_2=\chi, \, \bm h \in \mathfrak{S}_\ell,  \bm \tau \in \binom{[\kappa]}{2}^\ell, \bm \rho \in \mathfrak{S}_\kappa^\ell}} \mathscr{M}^{\chi_1}(\bm x, \bm x') \times \mathcal{T}^{\sigma_0, \op{ind}=0}_{\chi_2, \ell}(\bm x',\bm \gamma; \bm h, \bm \tau, \bm \rho) ; \label{eq-mixed-moduli-boundary1}\\
        \bigsqcup_{\substack{\bm \gamma' \in CM_{\kappa n-*}(\Lambda^1_{[\kappa]}(Q)), 
        \\ \ell_1+\ell_2=\ell, (\bm h_1, \bm h_2) \in \mathfrak{S}_{\ell_1} \times\mathfrak{S}_{\ell_2}, (\bm \tau_1, \bm \tau_2) \in \binom{[\kappa]}{2}^\ell, (\bm \rho_1, \bm \rho_2) \in \mathfrak{S}_\kappa^{\ell_1} \times \mathfrak{S}_\kappa^{\ell_2} }}  \mathcal{T}^{\sigma_0}_{\chi, \ell_1}(\bm x,\bm \gamma'; \bm h_1, \bm \tau_1, \bm \rho_1) \times \widetilde{\mathcal{M}}_{\ell_2}(\bm \gamma', \bm \gamma; \bm h_2, \bm \tau_2, \bm \rho_2) \label{eq-mixed-moduli-boundary2};\\
        \bigsqcup_{\bm h \in \mathfrak{S}_\ell, \bm \tau \in \binom{[\kappa]}{2}^\ell, \bm \rho \in \mathfrak{S}_\kappa^\ell} \mathcal{T}^{\sigma_0, \mathrm{I}}_{\chi, \ell}(\bm x,\bm \gamma; \bm \tau, \bm h, \bm \rho); \\
        \bigsqcup_{\bm h \in \mathfrak{S}_\ell, \bm \tau \in \binom{[\kappa]}{2}^\ell, \bm \rho \in \mathfrak{S}_\kappa^\ell} \mathcal{T}^{\sigma_0, \mathrm{II}}_{\chi, \ell}(\bm x,\bm \gamma; \bm \tau, \bm h, \bm \rho).
    \end{gather}
\end{lemma}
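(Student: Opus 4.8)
The plan is to follow the familiar SFT-plus-Morse compactness argument, adapted to the mixed moduli space. One starts with a sequence $((S_i, \pi_i), u_i, \bm\Gamma_{(i)})$ of elements of $\mathcal{T}^{\sigma_0}_{\chi,\ell}(\bm x, \bm\gamma; \bm h, \bm\tau, \bm\rho)$ of index $1$, and analyzes the possible limits. First I would invoke the a-priori $C^0$-bound on the holomorphic part: by the maximum-principle argument of \cite[Theorem A.1]{ganatra2012symplectic} (already used in the proof of Theorem~\ref{theorem-cpt-moduli}), the image $\pi_{T^*Q}(u_i)$ stays in a fixed compact region, so the SFT compactness theorem of \cite{BEHWZ2003sft} applies, using the adjusted almost complex structure $\underline{J}^{H^{\mathrm{tot}}}$ as in Lemma~\ref{sft-to-hamiltonian} and the compatibility with domain-dependent data from \cite{cielebak-mohnke2007, fabert2009}. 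On the Morse side one uses the Palais--Smale condition~\ref{PG3} together with the action estimate from Lemma (the action along a Morse flow line with switchings is $(1+c)$-controlled) and Lemma~\ref{lemma-F-cpt-finite} to bound the switching trajectories; this is exactly as in \cite[Theorem C.5]{honda2025morse} and Lemma~\ref{thm: compactness-for-MFLS}.

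The core of the argument is then a case analysis on where index/Euler characteristic is lost. As in Theorem~\ref{theorem-cpt-moduli}, sphere bubbling on the target and ghost bubbles are excluded (or absorbed into Kuranishi replacements for $n\ge 3$, cf.\ the aside on Kuranishi replacements and \cite[Section 3.2.2]{honda2025morse}), so the holomorphic part of the limit is a branched-cover building with smooth levels. The genuinely $1$-dimensional boundary then arises from exactly four mechanisms. Breaking off a cylindrical level at $+\infty$ on the Floer side produces a building $u_\infty^{(1)} \cup u_\infty^{(2)}$ whose top level is a curve in $\mathscr{M}^{\chi_1}(\bm x, \bm x')$ of index $1$ and whose bottom level is a mixed moduli curve of index $0$; this gives~\eqref{eq-mixed-moduli-boundary1}, with the multiplicity $N_{\bm\mu(\bm x')}$ bookkeeping coming from the forgetful cover as in Theorem~\ref{theorem-cpt-moduli}. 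Breaking off a Morse trajectory level at $-\infty$ produces~\eqref{eq-mixed-moduli-boundary2}, where $\ell = \ell_1 + \ell_2$ and the split of $\bm h$ into $(\bm h_1, \bm h_2)$ is governed by~\eqref{eq: conf-boundary-decomposition}. A ramification point of $\pi_i$ whose $\R$-coordinate tends to $0$ while simultaneously forcing a new switching along $\{0\}\times S^1$ yields the Type~I singular stratum $\mathcal{T}^{\sigma_0, \mathrm{I}}_{\chi,\ell}$ of Definition~\ref{def-typeI}; here one must check that the evaluation map $\mathscr{E}$ degenerates in the prescribed way, and that the limit is genuinely a length-$0$ stationary trajectory $\Gamma_0^{\mathrm I}$ followed by the remaining $\ell$ switches, as in Figure~\ref{fig-typeI}. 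Finally, the first switching parameter $s_{h_1}$ tending to $0 \in \R_{\le 0}$ forces a boundary hyperbolic node on $v$, which upon adding a branch point at its $\pi$-image produces the Type~II stratum $\mathcal{T}^{\sigma_0, \mathrm{II}}_{\chi,\ell}$ of Definition~\ref{def-typeII}, as in Figure~\ref{fig-typeII}.

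For each of these strata I would verify the expected dimension count via Lemma~\ref{lemma-F-transversality}: the top-level curve in~\eqref{eq-mixed-moduli-boundary1} has $\operatorname{ind} = (n-2)\chi_1 + |\bm x'| - |\bm x| = 1$ forcing the bottom to have index $0$; in~\eqref{eq-mixed-moduli-boundary2} the Morse flow line with switchings has $\operatorname{ind} = -(n-2)\ell_2 + |\bm\gamma| - |\bm\gamma'|$, and compatibility with~\eqref{mixed-moduli-dim-1} pins down the index of the remaining mixed curve; for Type~I and Type~II one uses the dimension formula~\eqref{dim-half-moduli} together with the observations in Remarks~\ref{rmk-typeI} and~\ref{rmk-typeII} that these strata sit inside $\partial\mathcal{T}^{\sigma_0}_{\chi+1,\ell+1}$ and $\partial\mathcal{T}^{\sigma_0}_{\chi-1,\ell}$ respectively. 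Gluing in the reverse direction (following \cite{abouzaid2011cotangent, honda2025morse}) shows each such configuration is actually a boundary point, so no strata are spurious and the list is exhaustive. The main obstacle I expect is the careful treatment of the Type~I and Type~II degenerations: one must show that a ramification point of the cover hitting the boundary circle genuinely corresponds, on the Morse side, to the insertion of a switching at $\theta = 0$, and that the matching-marker / asymptotic-marker bookkeeping is consistent across the resulting branched-cover building; this is where the branched-manifold weighting $\Lambda$ and the forgetful covers $bf_{\mathrm{unsym}}$ must be tracked with care, and it is the analogue of the delicate boundary analysis in \cite[Section 3.2]{honda2025morse} combined with the Hurwitz-space boundary description of Theorem~\ref{hurwitz-level-boundary-neighborhood}. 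The remaining verification — that the orientations induced on each boundary stratum via~\eqref{eq-halfcyl-orientation} and~\eqref{eq-mixed-orientation} agree with the boundary orientation coming from the outward normal — is routine and parallel to the computation in the proof of Theorem~\ref{theorem-d2}, so I would defer it.
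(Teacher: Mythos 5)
Your overall architecture matches the paper's: $C^0$-bounds plus SFT compactness on the holomorphic side, Palais--Smale and Corollary~\ref{lemma-morse-cpt-moduli} on the Morse side, and the identification of the four boundary mechanisms (Floer-side breaking giving~\eqref{eq-mixed-moduli-boundary1}, Morse-side breaking giving~\eqref{eq-mixed-moduli-boundary2}, and the Type~I/Type~II strata, correctly matched to Remarks~\ref{rmk-typeI} and~\ref{rmk-typeII}). The dimension bookkeeping for the first two strata is also as in the paper. (One small slip: in the unsymmetrized setting the weight attached to the stratum~\eqref{eq-mixed-moduli-boundary1} is $\tfrac{1}{\kappa!}$, not $N_{\bm\mu(\bm x')}$, though weights are not part of the statement. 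Also, the reverse gluing you invoke is not needed, since the lemma only claims containment of the boundary in the listed strata.)

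The genuine gap is that you do not justify why the list is exhaustive at the Lagrangian boundary, which is exactly where the paper spends its effort. Your exclusion of ghost bubbles ``as in Theorem~\ref{theorem-cpt-moduli}'' does not transfer: that argument concerns closed ghost components and sphere bubbling in the target with no boundary condition, whereas here the new phenomenon is nodal degeneration \emph{along} $\{0\}\times S^1\times Q$, possibly with ghost components attached at boundary nodes, and a priori with several boundary nodes appearing simultaneously; either would produce boundary strata not on your list. The paper rules this out by a codimension-versus-index count: attaching a null-homologous piece $u_{\infty,2}$ along $k$ boundary nodes raises the index by $k(n-2)$ (via~\eqref{dim-half-moduli}), while each boundary node along the $(n+1)$-dimensional Lagrangian is a codimension $(n+1)-2=n-1$ condition, so the net dimension drop is $k(n-1)-k(n-2)=k$; in an index-$1$ family this forces $k\le 1$, i.e., at most one boundary node. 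It then excludes boundary ghosts outright: such a component would have a single boundary component, cannot be a disk since it must carry at least two branch points, hence has negative index, contradicting the regularity requirements~\ref{Kuranishi-replacements2}--\ref{Kuranishi-replacements3} of the Kuranishi replacements. Only after this step does ``at most one boundary node, no ghosts'' reduce the remaining degenerations to Types~I and~II. Without this computation your claim that ``the genuinely $1$-dimensional boundary arises from exactly four mechanisms'' is an assertion rather than a proof.
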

\begin{proof}[Sketch of the proof]
    As in the proof of Theorem~\ref{theorem-cpt-moduli}, one shows that there is a portion of the boundary corresponding to the vertical stretching on the side of the curve $u$, and we leave it to the reader to fill in the details. Points of this type are described by~\eqref{eq-mixed-moduli-boundary1} and the weight of such a point is equal to $\frac{1}{\kappa!}$.

    The second type of boundary degeneration corresponds to stretching on the Morse side and is treated in Corollary~\ref{lemma-morse-cpt-moduli}. Notice that, as in the proof of this corollary, one can show that points corresponding to two switches ``occurring at the same time" (i.e., the ones described in~\eqref{eq-compactness-two-switches}) can be regarded as interior points.
    
One last case that we have to analyze is the boundary nodal degeneration with a possible occurrence of almost ghost bubbles attached along the boundary. Let us first treat this scenario. Namely, assume that $(u_i, \Psi_i)$ limits to $(u_\infty=u_{\infty, 1} \cup u_{\infty, 2}, \Psi_\infty)$, where $u_{\infty, 2}$ is null-homologous (see \ref{Kuranishi-replacements2}) has domain attached to the domain of $u_{\infty,1}$ along $k$ boundary nodes. We then have
$$\operatorname{ind} u_\infty=\operatorname{ind}u_i+k(n-2)$$
by~\eqref{dim-half-moduli}. On the other hand, the appearance of a boundary node along the Lagrangian $\{0\} \times S^1 \times Q$ is a codimension $(n+1)-2\cdot 1=n-1$ condition since the Lagrangian is $(n+1)$-dimensional and the boundary of the domain is mapped to a real $1$-dimensional curve. In total, we get a dimension drop of
$$k(n-1)-k(n-2)=k.$$
Hence, $u_{\infty,2}$ can be attached only along a boundary node, and it must have only one boundary component. This component can not be a disk, since there have to be at least two ramification points on it. Therefore, $\operatorname{ind}(u_{\infty,2}) <0$ and it violates~\ref{Kuranishi-replacements3} and~\ref{Kuranishi-replacements2}.

    Finally, from the above analysis, it follows that there could be no more than $1$ boundary node. If no ghost bubbles appear, this leaves us with the two singular types of boundary points $\mathcal{T}^{\sigma_0, \mathrm{I}}_{\chi, \ell}(\bm x,\bm \gamma)$ and $\mathcal{T}^{\sigma_0, \mathrm{II}}_{\chi, \ell}(\bm x,\bm \gamma)$.
\end{proof}

\subsection{Gluing analysis.}
In this subsection, we state the appropriate gluing result that we use for the local analysis near Type II boundary singularities. It is an adaptation of the gluing result of \cite{HH25openDM}, whose analysis in turn follows that in \cite{swaminathan2021Rel}, where the interior nodal gluing is shown. Both works are based on the polyfold theory introduced in \cite{HWZ17}, and here we assume familiarity with it. We note that this result was also asserted in \cite{ekholm-shende2022, ES2024bare}.

In what follows, we work with the category $C^\infty/\cdot$ of rel-$C^\infty$ manifolds, and we refer the reader to \cite[Section 2]{swaminathan2021Rel} for a detailed discussion. An object in this category is a pair of topological spaces $(Y, T)$ together with a structure morphism $p_Y \colon Y \to T$. Roughly speaking, one may think of $Y$ as having smooth fibers of fixed dimension.

Consider a pair $(X,L)$ of a symplectic manifold $X$, and a Lagrangian submanifold $L$ in $X$. We fix a tuple $\bm x=(x^1, \ldots, x^l)$ of $l$ simple marked Reeb orbits, and a proper analytic family $\pi_{\mathcal{S}} \colon \mathcal{S} \to \mathcal{W}$ of nodal curves with $l$ punctures and $l'$ ordered boundary components of Euler characteristic $\chi$, where $\mathcal{W}$ is a complex manifold possibly with boundary. 

For each $w \in \mathcal{W}$, we denote by $(S_w, p_1^+(w), \ldots, p_{l(\sigma)}^+(w))$ the fiber curve over $w$. Additionally, we assume a choice of cylindrical ends $\delta^+_{w,i}$ near punctures and of finite cylinders, and
we fix a choice of domain-dependent data of almost complex structures $J_w$ on $X$ on each $S_w$, such that 
 there is an open neighborhood $U \subset \mathcal{S}$ of the locus of the nodes where 
 \begin{equation}\label{eq-data-trivial-near-nodes}
 J_w\equiv J_{\operatorname{fix}},
\end{equation}
for some fixed almost complex structure $J_{\operatorname{fix}}$ on $X$. We denote this data via $J_{\mathcal{W}}$.

To all the above data, we associate a functor \begin{equation}
    \mathfrak{M}\coloneqq \mathfrak{M}(\pi_S,D_{\mathcal{W}}) \colon (C^\infty/\cdot)^{\operatorname{op}} \to \operatorname{Set}
\end{equation}
 by setting $\mathfrak{M}(Y/S)$ to be the set of diagrams

 \[
\begin{tikzcd}
\mathcal{S} & \mathcal{S}_T \arrow[l] \arrow[d] & \mathcal{S}_Y \arrow[l] \arrow[r, "F"] \arrow[d] & X \\
\mathcal{W} & T \arrow[l, "\varphi"] & Y \arrow[l, "p_Y"]  & 
\arrow[from=1-1, to=2-1, "\pi_{\mathcal{S}}"']
\end{tikzcd}
\]
satisfying 
\begin{enumerate}[($\mathfrak{M}1$)]
    \item $\varphi$ is continuous;
    \item both squares are cartesian;
    \item $F \colon \mathcal{S}_Y/ \mathcal{S}_T \to X/*$ is a rel-$C^\infty$ map with $F(\partial S_y) \subset L$ for all $y \in Y$;
    \item \label{eq: functor-dbar-regularity} each $u_y \coloneqq F|_{S_y}$ is a smooth stable map satisfying
    \begin{equation}\label{eq-parametrized-perturbed-dbar}
        (du_y)^{0,1}=0 \text{ with respect to }J_y,
    \end{equation}
    and $u_y$ is required to be \emph{regular}.
    \item  $u_y$ converges to $x^i$ near  the puncture $p_i^+$ with respect to the cylindrical end $\delta^+_{y,i}$.
\end{enumerate}

We denote by $\overline{\mathcal{M}}^{\operatorname{reg}}_{\chi, \bm x, \sigma_0}(\pi_{\mathcal{S}}, D_{\mathcal{W}})$ the space of pairs $(w, u)$ with $w \in \mathcal{W}$, and $u \colon (S_w, \partial S_w) \to (X, L)$ regular and satisfying~\eqref{eq-parametrized-perturbed-dbar}. Considering graphs of such maps in $\mathcal{S} \times X$ we endow it with Gromov-Hausdorff metric as in \cite{swaminathan2021Rel}. It is then naturally a $\mathcal{W}$-space with the obvious projection.
\begin{proposition}\label{thm:boundary-nodal-gluing}
    The functor $\mathfrak{M}$ is representable by rel-$C^\infty$ manifold $\overline{\mathcal{M}}^{\operatorname{reg}}_{\chi, \bm x, \sigma_0}(\pi_{\mathcal{S}}, D_{\mathcal{W}}) / \mathcal{W}$.
\end{proposition}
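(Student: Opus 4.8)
The plan is to reduce Proposition~\ref{thm:boundary-nodal-gluing} to the interior nodal gluing result of \cite{swaminathan2021Rel} and its boundary analogue from \cite{HH25openDM}, via an essentially formal argument once the local analytic model near a boundary node has been identified. First I would observe that, away from the nodal locus, the statement is the parametric version of the standard fact that a transversally cut-out moduli space of regular $J$-holomorphic maps with Lagrangian boundary forms a rel-$C^\infty$ manifold over the base $\mathcal{W}$; this is the open-string, rel-$C^\infty$ incarnation of the gluing/implicit function theorem in polyfold language, and on the complement of $U$ it follows directly from representability of the corresponding functor on the interior stratum. So the content is entirely local near the locus where a boundary node is being smoothed, and there the hypothesis~\eqref{eq-data-trivial-near-nodes} that $J_w \equiv J_{\operatorname{fix}}$ on a neighborhood $U$ of the nodes is exactly what lets one import the local model verbatim.

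The key steps, in order, would be: (1) Set up the $\mathcal{W}$-indexed polyfold bundle whose sections are the $\bar\partial_{J_w}$-operators with Lagrangian boundary conditions and prescribed asymptotics along the chosen cylindrical ends $\delta^+_{w,i}$; here one uses the sc-smooth structures on spaces of maps from nodal curves with boundary, following \cite{HWZ17}, and the rel-$C^\infty$ formalism of \cite[Section 2]{swaminathan2021Rel} to handle the fact that $\mathcal{W}$ may itself have boundary and that we only control smoothness fiberwise. (2) Verify that $(\mathfrak{M}1)$--$(\mathfrak{M}5)$ are precisely the defining conditions of the sc-Fredholm section's solution set, so that $\mathfrak{M}(Y/S)$ is the set of rel-$C^\infty$ maps $Y \to \mathcal{Z}(\bar\partial)$; regularity of each $u_y$ (item~\eqref{eq: functor-dbar-regularity}) gives surjectivity of the linearization, hence the solution set is cut out transversally and carries a rel-$C^\infty$ manifold structure by the implicit function theorem in this category. (3) Handle the node-smoothing coordinate: near a boundary node the gluing parameter lives in a half-disk (or its rel-$C^\infty$ analogue), and one checks that the pre-gluing construction together with Newton iteration produces a rel-$C^\infty$ chart compatible with the one on the interior stratum — this is where \cite{HH25openDM}'s boundary gluing estimates (which refine \cite{swaminathan2021Rel}) are invoked, and where~\eqref{eq-data-trivial-near-nodes} is essential so that the model operator near the node is the standard linear one. (4) Check naturality: the identifications are functorial in $Y/S$, the structure morphism to $\mathcal{W}$ is rel-$C^\infty$, and the Gromov--Hausdorff topology on graphs agrees with the one coming from the polyfold, so the representing object is indeed $\overline{\mathcal{M}}^{\operatorname{reg}}_{\chi, \bm x, \sigma_0}(\pi_{\mathcal{S}}, D_{\mathcal{W}})/\mathcal{W}$.

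I expect the main obstacle to be Step~(3): controlling the boundary-node gluing in the rel-$C^\infty$ category while keeping track of both the Lagrangian boundary condition and the prescribed Reeb asymptotics simultaneously. The interior case in \cite{swaminathan2021Rel} and the closed-string boundary-free Deligne--Mumford gluing in \cite{HH25openDM} provide the templates, but one must check that the exponential decay estimates on the cylindrical ends $\delta^+_{w,i}$ interact correctly with the half-disk gluing profile near the boundary node, and that the resulting charts patch with those over the interior of $\mathcal{W}$ in a rel-$C^\infty$ fashion (not merely continuously). Since the Lagrangian $L$ (in our application $\{0\}\times S^1 \times Q$) is totally real and the almost complex structure is fixed near the node by~\eqref{eq-data-trivial-near-nodes}, the relevant doubling trick reduces the boundary estimate to an interior one, so the obstacle is technical rather than conceptual; nonetheless it is the step where the bulk of the analytic work resides, and I would cite \cite{HH25openDM, swaminathan2021Rel, HWZ17} for the estimates rather than reproduce them. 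The remaining steps are bookkeeping: matching the functor axioms to the section-solution-set description, and verifying that regularity of all fiber maps makes the whole family transversally cut out.
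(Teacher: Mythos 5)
Your proposal is correct and follows essentially the same route as the paper: both arguments delegate the nodal (boundary) gluing analysis to \cite{HH25openDM} (refining \cite{swaminathan2021Rel} and \cite{HWZ17}), using the hypothesis~\eqref{eq-data-trivial-near-nodes} that $J_w\equiv J_{\operatorname{fix}}$ near the nodes to import that local model verbatim, and treat the region away from the nodes via the smooth variation of the data over $\mathcal{W}$ together with regularity of each $u_y$. The only difference is one of emphasis: the paper locates the single new verification in the sc-smoothness and sc-Fredholmness of the $\bar\partial_{J_{\mathcal{W}}}$-section with domain-dependent data (checked locally, near nodes via \cite[Proposition 4.21]{HWZ17} and away from them by smoothness of the data), whereas you frame the boundary-node gluing itself as the main obstacle, but in both cases the heavy analysis is cited rather than reproduced.
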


\begin{proof}[Sketch of proof]
    This theorem is an upgrade of \cite[Theorem A.4]{HH25openDM} to the case of perturbed Cauchy-Riemann operator with domain-dependent data $J_\mathcal{W}$ varying over $\mathcal{W}$. Due to the assumption that~\eqref{eq-data-trivial-near-nodes} in the neighborhood of the nodes in the versal setting (see \cite[Lemma 4.1]{swaminathan2021Rel}) all the local analysis of \cite{HH25openDM}, which is based on \cite[Appendix B]{Pardon2016virtual}, is the same. The only possible issue is the verification of sc-smoothness and sc-Fredholmness of the associated $\overline{\partial}_{J_\mathcal{W}}$-section (which clearly depends on all the data); in the unperturbed case, locally it is given by \cite[Equation (4.5)]{HWZ17}. Notice that sc-smoothness can be verified locally and by~\eqref{eq-data-trivial-near-nodes} in the neighborhood of the nodes, the analysis of \cite[Proposition 4.21]{HWZ17} and \cite[Theorem A.12]{HH25openDM} is applicable. Away from the nodes, this section is smooth since the data is assumed to vary smoothly over $\mathcal{W}$, see the discussion following \cite[Equation (4.16)]{HWZ17} for more details.
\end{proof}

\begin{remark}
    Note that the space $\overline{\mathcal{M}}^{\operatorname{reg}}_{\chi, \bm x, \sigma_0}(\pi_{\mathcal{S}}, D_{\mathcal{W}})$ does not contain SFT-type buildings and only involves stable nodal curves. Nevertheless, this is enough for our applications. We also note that only analysis of hyperbolic boundary nodes is required for our applications.
\end{remark}

\subsection{Chain map}

To define a morphism $\mathcal{F}$
\begin{equation}
    \mathcal{F}\colon SC^*_{\kappa, \mathrm{unsym}}(T^*Q)\to CM_{\kappa n-*}\bigl(\Lambda^1_{[\kappa]} (Q)\bigr),\\
\end{equation}
we first consider the canonical isomorphism
\begin{equation}\label{eq-mixedmoduli-orline}
      o_{\bm \gamma}[(2-n)(\ell-\chi)] \otimes o_{\bm x} \cong \big|\mathcal{T}^{\sigma_0}_{\chi, \ell}(\bm x,\bm \gamma)\big|^{-1} \otimes \zeta^{\kappa n}.
\end{equation}

When the dimension of the mixed moduli space is $0$ the above provides the isomorphism
\begin{equation}
    o_{\bm x} \cong \bigl(o_{\bm \gamma}[(2-n)(\ell-\chi)] \bigr)^{-1} \otimes \zeta^{\kappa n},
\end{equation}
which induces a morphism
\begin{equation}
    f_{[u, \bm \Gamma]} \colon o_{\bm x} \rightarrow \bigl(o_{\bm \gamma}[(2-n)(\ell-\chi)] \bigr)^{-1}[-\kappa n].
\end{equation}

We extend it to $o_{\bm x}[(n-2)j]$ for $j>0$ by simply multiplying the above with $\zeta^{(2-n)j}$ on the left.
Then $\mathcal{F}$ is given by
\begin{equation}\label{eq-F-}
     \mathcal{F}^-([\bm x]\hbar^j)=\sum_{\substack{|\bm \gamma|=\kappa n-|\bm x|-(n-2)(\ell-\chi),\\ \ell\geq0,\chi\leq0, \, \sigma_0 \in \mathfrak{S}_\kappa, \\
    (u, \bm \Gamma) \in \mathcal{T}^{\sigma_0}_{\chi, \ell}(\bm x,\bm \gamma) }} (-1)^{|\bm x|+(n-2)j} \cdot \frac{f_{(u, \bm \Gamma)}([\bm x]\hbar^j)}{(-\chi!)\cdot \ell! \cdot(\kappa!)^\ell}  .
\end{equation}

By Lemma \ref{lemma-F-compactness}, $\mathcal{T}^{\sigma_0}_{\chi, \ell}( \bm x,\bm \gamma)$ is finite, hence the map $\mathcal{F}$ is well-defined.
\begin{theorem}\label{thm-F-chain-map}
    The map $\mathcal{F}$ is a chain map.
\end{theorem}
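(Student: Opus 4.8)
The plan is to prove $\mathcal{F} \circ d = \partial \circ \mathcal{F}$ by analyzing the boundary of the compactified $1$-dimensional mixed moduli spaces provided by Lemma~\ref{lemma-F-compactness}. Fix tuples $\bm x \in \mathcal{P}_{\mathrm{unsym}}(T^*Q)$ and $\bm \gamma \in \mathcal{P}^{\bm L}_\kappa$ with $\kappa n - |\bm x| - |\bm \gamma| - (n-2)(\ell - \chi) = 1$, and consider the weighted branched $1$-manifold $\overline{\mathcal{T}}^{\sigma_0}_{\chi, \ell}(\bm x, \bm \gamma)$. As in the proofs of Theorems~\ref{theorem-d2} and Proposition~\ref{proposition-d2}, one passes to the maximal Hausdorff quotient, regards it as an oriented weighted graph satisfying Kirchhoff's rule at each interior (branching) vertex, and observes that the signed, weighted count of the induced maps on orientation lines at the exterior vertices must vanish. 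The content is then to match the four types of boundary strata enumerated in Lemma~\ref{lemma-F-compactness} with the three terms of $\mathcal{F}\circ d - \partial \circ \mathcal{F}$, keeping careful track of the combinatorial weights and Koszul signs.

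The matching proceeds stratum by stratum. First, the strata~\eqref{eq-mixed-moduli-boundary1} of the form $\mathscr{M}^{\chi_1}(\bm x, \bm x') \times \mathcal{T}^{\sigma_0,\operatorname{ind}=0}_{\chi_2, \ell}(\bm x', \bm \gamma)$ contribute the terms $\mathcal{F}\circ d$; one must check that the weight $\frac{1}{\kappa!}$ assigned in Lemma~\ref{lemma-F-compactness} together with the weight $\frac{1}{N_{\bm \mu(\bm x')}}$ in the differential~\eqref{eq-diff} combine correctly with the $(-\chi!)^{-1}$ and $\ell!^{-1}(\kappa!)^{-\ell}$ normalizations in~\eqref{eq-F-} — this is the analogue of the weight bookkeeping in Theorem~\ref{theorem-d2}. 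Second, the strata~\eqref{eq-mixed-moduli-boundary2} of the form $\mathcal{T}^{\sigma_0}_{\chi, \ell_1}(\bm x, \bm \gamma') \times \widetilde{\mathcal{M}}_{\ell_2}(\bm \gamma', \bm \gamma)$ contribute $\partial \circ \mathcal{F}$; here one uses, as in Corollary~\ref{lemma-morse-cpt-moduli} and Proposition~\ref{proposition-d2}, that a pair $(\bm h_1, \bm h_2) \in \mathfrak{S}_{\ell_1} \times \mathfrak{S}_{\ell_2}$ lifts to $\binom{\ell}{\ell_1}$ elements $\bm h \in \mathfrak{S}_\ell$, so that the $\ell!$ in the denominator of~\eqref{eq-morse-diff} redistributes as $\ell_1! \, \ell_2!$, and similarly for the factors of $\kappa!$. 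The orientation/sign computation is identical to the one sketched at the end of the proof of Proposition~\ref{proposition-d2}: the outgoing pseudo-gradient direction of the first factor is the outward-pointing normal, so the composition $\partial_{[\bm \Gamma_2]} \circ f_{[u, \bm \Gamma_1]}$ equals the map induced by the boundary orientation.

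The crux is to show that the two genuinely new boundary strata, the Type~I and Type~II singular mixed moduli spaces $\mathcal{T}^{\sigma_0, \mathrm{I}}_{\chi, \ell}(\bm x, \bm \gamma)$ and $\mathcal{T}^{\sigma_0, \mathrm{II}}_{\chi, \ell}(\bm x, \bm \gamma)$, cancel \emph{among themselves} and therefore contribute nothing to $\mathcal{F}\circ d - \partial\circ\mathcal{F}$. The mechanism is the one hinted at in Remarks~\ref{rmk-typeI} and~\ref{rmk-typeII}: a Type~I point, by the choices in Definition~\ref{def-typeI}(2) (the extra arbitrary $\rho_1^{\mathrm{I}} \in \mathfrak{S}_\kappa$, the free marker on the boundary branch point, and the constraint $\tau' = {}^{\rho_1}\tau_1^{\mathrm{I}}$), is also a boundary point of a mixed moduli space $\mathcal{T}^{\sigma_0}_{\chi+1, \ell+1}(\bm x, \bm \gamma)$ where the ramification point is now in the interior but its image $s$-coordinate has gone to $0$; and a Type~II point, by Definition~\ref{def-typeII} and Remark~\ref{rmk-typeII}, is a point where the curve $v$ acquires a boundary hyperbolic node, which after adding a branch point at the node becomes a point of $\mathcal{T}^{\sigma_0}_{\chi - 1, \ell}(\bm x, \bm \gamma)$. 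The key claim to establish — this is where the gluing analysis of Proposition~\ref{thm:boundary-nodal-gluing} enters — is that a neighborhood of each Type~II point in $\overline{\mathcal{T}}^{\sigma_0}_{\chi, \ell}(\bm x, \bm \gamma)$ is a half-open arc whose other end is precisely the corresponding Type~I point of the $\chi$-lowered space (with $\chi$ replaced appropriately), so that Type~I and Type~II degenerations are glued into \emph{interior} arcs of the union $\bigsqcup_{\bm h, \bm \tau, \bm \rho}\overline{\mathcal{T}}^{\sigma_0}_{\chi, \ell}(\bm x, \bm \gamma; \bm h, \bm \tau, \bm \rho)$ once one takes the union over all combinatorial types and over $\chi$. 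Concretely: near a Type~II point the branch point that was pushed onto $\{0\}\times S^1$ can be pulled back into $\R_{>0}\times S^1$ (that is the open end of the arc inside a fixed $\overline{\mathcal{T}}^{\sigma_0}_{\chi,\ell}$), while in the other direction the node resolution provided by Proposition~\ref{thm:boundary-nodal-gluing} produces exactly a Type~I configuration for the surface of one lower Euler characteristic; the $(-\chi+1)$ orderings of branch points (Remark~\ref{rmk-typeII}) are matched with the $(\ell+1)\cdot\kappa!$ relabelings (Remark~\ref{rmk-typeI}) against the normalization factors $(-\chi!)^{-1}$, $\ell!^{-1}$, $(\kappa!)^{-\ell}$ in~\eqref{eq-F-}, so the weighted contributions cancel in pairs. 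I expect this last gluing-and-bookkeeping step — in particular verifying that the rel-$C^\infty$ chart of Proposition~\ref{thm:boundary-nodal-gluing} identifies the resolved boundary node with the incoming Type~I stratum compatibly with orientations, and that the combinatorial multiplicities balance exactly — to be the main obstacle; the remaining sign checks are routine Koszul-sign manipulations using the conventions of Section~\ref{section-orientation-lines}, carried out exactly as in the proof of Theorem~\ref{theorem-d2}, and are left to the reader.
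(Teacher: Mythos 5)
Your proposal follows essentially the same route as the paper's proof: one analyzes the boundary of the compactified one-dimensional mixed moduli spaces, matches the two stretching strata~\eqref{eq-mixed-moduli-boundary1} and~\eqref{eq-mixed-moduli-boundary2} with $\mathcal{F}\circ d$ and $\partial\circ\mathcal{F}$ exactly as in Theorem~\ref{theorem-d2} and Proposition~\ref{proposition-d2}, and cancels the Type~I against the Type~II singular strata across mixed moduli spaces of adjacent parameters using the boundary-nodal gluing of Proposition~\ref{thm:boundary-nodal-gluing} together with the multiplicity-versus-normalization bookkeeping, which is precisely the content of~\eqref{typeI-typeII-vanishing} in the paper. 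Just watch the indices in the pairing: a Type~II point of $\mathcal{T}^{\sigma_0,\mathrm{II}}_{\chi,\ell}$ is identified with a Type~I point of the $(\chi-1,\ell-1)$ space (not $(\chi-1,\ell)$), so in the paper's convention the cancellation pairs $\mathcal{T}^{\sigma_0,\mathrm{I}}_{\chi,\ell}$ with $\mathcal{T}^{\sigma_0,\mathrm{II}}_{\chi+1,\ell+1}$ and the counts that balance against the normalizations $\frac{1}{(-\chi)!\,\ell!\,(\kappa!)^{\ell}}$ are $(\ell+1)\cdot\kappa!$ versus $-\chi$.
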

\begin{proof}
 We analyze the codimension 1 boundary of $\mathcal{T}^{\sigma_0}_{\chi, \ell}( \bm x,\bm \gamma)$. A special codimension 1 phenomenon is the nodal degeneration along 
 the zero section $Q$ which, after adding a branched point at the preimage of the node as explained in Remark~\ref{rmk-typeII}, corresponds to a point $((S, \pi_*), u_*, \bm \Gamma_*)$ of the Type II singular space $\mathcal{T}^{\sigma_0, \mathrm{II}}_{\chi+1, \ell+1}( \bm x,\bm \gamma)$,  i.e. it corresponds to a $1$-parameter family with a switching coordinate $s_{h_1}$ approaching $s=0$ level. This point can also be regarded as a point of the Type I singular mixed moduli space $\mathcal{T}^{\sigma_1, \mathrm{I}}_{\chi, \ell}(\bm x,\bm \gamma)$ by adding an additional branch point on the domain (there are $(-\chi+1)$ ways to do this). Let us denote thus obtained tuple by  $((S^{\mathrm{I}}, \pi^{\mathrm{I}}_*), u^{\mathrm{I}}_*, \bm \Gamma^{\mathrm{I}}_*)$.
 
 We give a sketch of the proof that there is a $1$-dimensional family of points in $\mathcal{T}^{\sigma_1}_{\chi, \ell}(\bm x,\bm \gamma)$ converging to such a point. We note that 
 $$\mathscr{E}(u^{\mathrm{I}}_*, \bm s^{\mathrm{I}}, \bm \theta^{\mathrm{I}})= \operatorname{pr}_{\Lambda^1_{[\kappa]}(Q)}(c_{\rho_1}^| \circ sw_{I_{h_1}}^{\tau_1}(\bm s, \bm \theta, \mathscr{E}(u_*, \bm s, \bm \theta))).$$

 Let $V(\pi^{\mathrm{I}}_*)$ denote a small neighborhood of $(S^{\mathrm{I}}, \pi_*^{\mathrm{I}}) \in \overline{\mathscr{H}}^{\sigma, \sigma_0, \ge 0}_{\kappa, \chi+1}$, which we may assume to be diffeomorphic to $D^{-2\chi}_{\ge 0}$, i.e. the intersection of the unit ball in $\R^{-2\chi}$ with the upper half-space.
 Let  $U(u_*)$ be a neighborhood of $u^{\mathrm{I}}_*$ in $\overline{\mathscr{H}}^{\sigma, \sigma_0, \ge 0}_{\kappa, \chi}(\bm x)$ which projects to $V(\pi^{\mathrm{I}}_*)$ under the natural forgetful map.  In the view of Lemma~\ref{sft-to-hamiltonian}, we have that $U(u_*)/V(\pi^{\mathrm{I}}_*)$ is a rel-$C^\infty$ manifold by Proposition~\ref{thm:boundary-nodal-gluing}. The proof of Lemma~\ref{lemma-F-transversality} implies that 
\begin{equation}\label{eq: mixed-space-with-data}
    (U(u_*^{\mathrm{I}}) \times \operatorname{Conf}_{\bm h}(\R_{\le 0}) \times [0,1]^\ell)\tensor*[_{\mathscr{E}}]{\times}{_{\tilde E_0^s}} W_\ell(-, \bm \gamma; \mathfrak{X}_{\operatorname{sw}}^{\le 0}(\ell))
\end{equation}
is a rel-$C^\infty$ Banach manifold over $V(\pi^{\mathrm{I}}_*)$. It also holds that the data $\bm Y$ is a regular value of the natural projection from~\eqref{eq: mixed-space-with-data} to $\mathfrak{X}_{\operatorname{sw}}^{\le 0}(\ell)$. Therefore, by \cite[Corollary C.14]{HH25openDM} the compactification $\overline{\mathcal{T}}^{\sigma_0}_{\chi, \ell}( \bm x,\bm \gamma)$ is locally a rel-$C^\infty$ submanifold of~\eqref{eq: mixed-space-with-data}. Notice that it is a rel-$C^\infty$ manifold over a submanifold (with boundary) of $V(\pi_*^{\mathrm{I}})$.

Conversely, one may also show that for a $1$-dimensional family converging to a point of $\mathcal{T}^{\sigma_0, \mathrm{I}}_{\chi, \ell}(\bm x, \bm \gamma)$ and any $\rho_1^{\mathrm{I}} \in \mathfrak{S}_\kappa$ as in Definition~\ref{def-typeI} there is a continuation via a family of curves in $\mathcal{T}^{\sigma_0'}_{\chi+1, \ell+1}(\bm x, \bm \gamma).$

Together, the above two statements imply that counting points of singular type I moduli spaces $\mathcal{T}^{\sigma_0, \mathrm{I}}_{\chi, \ell}(\bm x, \bm \gamma)$ with weights $(\ell+1)\cdot\kappa!$ and the signs induced by boundary orientation of $\mathcal{T}^{\sigma_0}_{\chi, \ell}(\bm x, \bm \gamma)$ gives the opposite of the sum of points of $\mathcal{T}^{\sigma_0, \mathrm{II}}_{\chi+1, \ell+1}(\bm x, \bm \gamma)$ with signs and weights $(-\chi)$. That is

\begin{equation}\label{typeI-typeII-vanishing}
    (\ell+1)\cdot\kappa! \cdot \sum_{\substack{\sigma_0 \in \mathfrak{S}_\kappa, \\ (u_{\mathrm{I}}, \bm \Gamma_{\mathrm{I}}) \in \mathcal{T}^{\sigma_0, \mathrm{I}}_{\chi, \ell}(\bm x, \bm \gamma)}} \varepsilon^{\mathrm{I}}_{(u_{\mathrm{I}}, \bm \Gamma_{\mathrm{I}})}+(-\chi)\cdot\sum_{\substack{\sigma_0 \in \mathfrak{S}_\kappa, \\ (u_{\mathrm{II}}, \bm \Gamma_{\mathrm{II}}) \in \mathcal{T}^{\sigma_0, \mathrm{II}}_{\chi+1, \ell+1}(\bm x, \bm \gamma)}} \varepsilon^{\mathrm{II}}_{(u_{\mathrm{II}}, \bm \Gamma_{\mathrm{II}})}=0.
\end{equation}

According to Lemma~\ref{lemma-F-compactness}, the remaining codimension-$1$ degenerations correspond to stretching along the $s$-direction, and correspond to composing $\mathcal{F}^-$ with differentials of the Heegaard Floer symplectic cochain complex and of the Morse multiloop complex, see~\eqref{eq-mixed-moduli-boundary1} and~\eqref{eq-mixed-moduli-boundary2}. 
   
   Now, by counting elements of the boundaries of the spaces $\mathcal{T}^{\sigma_0}_{\chi, \ell}(\bm x, \bm \gamma)$ for all $\ell, -\chi \ge 0$ and $\sigma_0 \in \mathfrak{S}_\kappa$ with signs induced by boundary orientations and respective weights $\frac{1}{(-\chi!)\cdot \ell!\cdot(\kappa!)^\ell}$ we obtain
   \begin{equation}\label{eq-chainmorph-reln}
       (-1)^{\kappa n}\partial \circ \mathcal{F}^{-}-(-1)^{\kappa n}\mathcal{F}^- \circ d=0,
   \end{equation}
where vanishing of terms corresponding to points of singular moduli spaces follows from~\eqref{typeI-typeII-vanishing}.
It only remains to show that boundary orientation of $\overline{\mathcal{T}}^{\sigma_0}_{\chi, \ell}(\bm x, \bm \gamma)$ indeed provides the sign as stated above.

Let us consider a pair $$(u, (v, \bm \Gamma)) \in \mathscr{M}^{\chi_1}(\bm x, \bm x') \times \mathcal{T}^{\sigma_0}_{\chi_2, \ell}(\bm x', \bm \gamma) \subset \partial \overline{\mathcal{T}}^{\sigma_0}_{\chi, \ell}(\bm x, \bm \gamma) \text{ with } $$
\begin{equation}\label{eq-boundary-dim1}
|\bm x'|-|\bm x|+(n-2)\chi_1=1, \, \kappa n-|\bm x'|-|\bm \gamma|-(n-2)(\ell-\chi_2)=0 \text{ and }\chi_1+\chi_2=\chi,
\end{equation}
and compare the map $f_{(v, \bm \Gamma)}\circ d_{u}$ with the map $o_{\bm x} \to (o_{\bm \gamma}[(2-n)(\ell-\chi)])^{-1}[-\kappa n]$ induced by boundary orientation at this pair. We notice that under the condition $\kappa n -|\bm x|-|\bm \gamma|-(n-2)(\ell-\chi)=1$ the isomorphism~\eqref{eq-mixedmoduli-orline} is equivalent to 
\begin{equation}\label{eq-mixedmoduli-boundary-orline}
    (-1)^{|\bm \gamma|+(n-2)(\ell-\chi)+1}\big|\mathcal{T}^{\sigma_0}_{\chi, \ell}(\bm x, \bm \gamma) \big| \otimes o_{\bm x} \cong \bigl(o_{\bm \gamma}[(2-n)(\ell-\chi)]\bigr)^{-1} \otimes \zeta^{\kappa n}.
\end{equation}
Recalling~\eqref{eq-hurwitz-orline} we note that composition $f_{(v, \bm \Gamma)}\circ d_{u}$ is given by isomorphisms
\begin{equation}
    |\R \partial_su| \otimes o_{\bm x} \cong o_{\bm x'}[(2-n)\chi_1] \cong \bigl(o_{\bm \gamma}[(2-n)(\ell-\chi)] \bigr) \otimes \zeta^{\kappa n}.
\end{equation}
Clearly, $\partial_su$ is an outward pointing vector on $\partial \overline{\mathcal{T}}^{\sigma_0}_{\chi, \ell}(\bm x, \bm \gamma)$, which implies that $|\R \partial_su| \cong \big|\mathcal{T}^{\sigma_0}_{\chi, \ell}(\bm x, \bm \gamma) \big|$. We also point out that 
\begin{equation}
    |\bm \gamma|+(n-2)(\ell-\chi)+1=\kappa n+1-|\bm x'|-(n-2)\chi_1 ,
\end{equation}
which follows from~\eqref{eq-boundary-dim1}. Now recalling the sign in front of $f_{(v, \bm \Gamma)}$ in~\eqref{eq-F-} we conclude that the map induced by~\eqref{eq-mixedmoduli-boundary-orline} is equal to the restriction of $-(-1)^{\kappa n} \mathcal{F}^- \circ d$.

As for the other type of boundary contributions, we consider
$$((u, \bm \Gamma), [\bm \Gamma]) \in \mathcal{T}^{\sigma_0}_{\chi, \ell_1}(\bm x, \bm \gamma') \times \mathcal{P}(\bm \gamma', \bm \gamma; \ell_2) \text{ with}$$
\begin{equation}\label{eq-boundary-dim2}
    \kappa n-|\bm x|-|\bm \gamma'|-(n-2)(\ell_1-\chi_2)=0, \, |\bm \gamma'|-|\bm \gamma|-(n-2)\ell_2=1, \text{ and }\ell_1+\ell_2=\ell.
\end{equation}

The composition $\partial_{[\bm \Gamma]} \circ f_{(u, \Psi)}$ is then given by the composition
\begin{equation}
    (-|\R \partial_s \bm \Gamma|)\otimes o_{\bm x} \cong (-|\partial_s \bm \Gamma|)\otimes \big(o_{\bm \gamma'}[(2-n)(\ell_1-\chi)] \big)^{-1} [-\kappa n] \cong \big(o_{\bm \gamma'}[(2-n)(\ell-\chi)] \big)^{-1}\otimes [-\kappa n].
\end{equation}

The vector $-\partial_s \bm \Gamma$ is outward pointing, hence $-|\R \partial_s \bm \Gamma| \cong \big|\mathcal{T}^{\sigma_0}_{\chi, \ell}(\bm x, \bm \gamma) \big|$, which means that $\partial_{[\bm \Gamma]} \circ f_{(u, \bm \Gamma)}$ differs from the map induced by boundary orientation only by a term $(-1)^{|\bm \gamma|+(n-2)(\ell-\chi)+1}$. Now, noting that $|\bm \gamma|+(n-2)(\ell-\chi)+1=\kappa n-|\bm x|$, we conclude that boundary induced map coincides with $(-1)^{\kappa n} \partial \circ \mathcal{F}^-$.
\end{proof}
\begin{remark}
    We point out that our proof of Theorem~\ref{thm-F-chain-map} is based on the proof of the analogous statement for HDHF that is worked out in \cite{honda2025morse}.
\end{remark}
\subsection{The isomorphism}

For $\kappa=1$, the complex $SC_{\kappa, \mathrm{unsym}}^*(T^*Q)$ coincides with the symplectic chain complex $SC^*(T^*Q)$ computing the symplectic cohomology of $T^*Q$, whereas the complex $CM^*_\kappa(\Lambda^1_{[\kappa]} Q)$ coincides with the Morse chain complex $CM^*(\Lambda)$. Originally due to Viterbo \cite{viterbo1995generating, viterbo1999functors}, the homology groups of these two chain complexes are known to be isomorphic. In \cite{abouzaid2011cotangent}, Abouzaid gives a refined proof of this fact, accounting for the twisting of the symplectic homology depending on the second Stiefel-Whitney class of $Q$. We claim that the map constructed there (see \cite[Section 3.2]{abouzaid2011cotangent}) coincides with $\mathcal{F}$ on the level of homology. First, we notice that
the morphism $\mathcal{V}$ in \cite{abouzaid2011cotangent} on the chain level has as its target \emph{generic} (in Morse-theoretic sense) singular chains $C_*(\Lambda Q)$ of the loop space. We claim that $\mathcal{F}$ is chain homotopic to a composition of $\mathcal{V}$ with a map $\mathcal{A}_L$ from generic singular chains to Morse complex given by counting rigid flows from singular chains to critical points of the appropriate Morse function (see, e.g. \cite[Section 2]{hutchings-lee1999circle-valued}; we also refer the reader to a similar discussion in the case of based loop space in \cite[Section 5]{abouzaid2012wrapped}). Therefore, we conclude:

\begin{theorem}{\cite[Proposition 1.4]{abouzaid2011cotangent}}
\label{thm-viterbo}
    For $\kappa=1$, $\mathcal{F}$ induces a homology isomorphism.
\end{theorem}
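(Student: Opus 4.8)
The plan is to reduce the $\kappa=1$ case of Theorem~\ref{main-theorem2} to the Abbondandolo--Schwarz--Abouzaid isomorphism by showing that our map $\mathcal{F}$ agrees, up to chain homotopy, with the classical comparison map. First I would observe that when $\kappa=1$ the Hurwitz spaces $\mathcal{H}^{\sigma,\sigma'}_{1,\chi}$ are empty unless $\chi=0$, since a degree-$1$ cover has no branch points and the Riemann--Hurwitz formula forces $b=-\chi=0$; likewise the only nonempty $\ell$-switching moduli spaces are those with $\ell=0$, because switchings require two strands to collide and there is only one strand. Consequently $SC^*_{1,\mathrm{unsym}}(T^*Q)$ has differential $d=d_0$ counting only honest holomorphic cylinders, i.e.\ it is the ordinary symplectic cochain complex $SC^*(T^*Q)$; similarly $CM_{n-*}(\Lambda^1_{[1]}(Q))$ reduces to the ordinary Morse complex $CM_{n-*}(\Lambda^1(Q))$ of the free loop space, with $\hbar$ playing no role. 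Under these identifications the half-cylinder mixed moduli space $\mathcal{T}^{\sigma_0}_{\chi,\ell}(\bm x,\bm\gamma)$ with $\chi=0,\ell=0$ is precisely the moduli space of half-cylinders $u\colon \R_{\geq 0}\times S^1 \to T^*Q$ asymptotic to $\bm x$ at $+\infty$ with boundary on the zero section, whose restriction to $\{0\}\times S^1$ is then flowed down by the negative pseudo-gradient of $\mathcal{A}_L$ to $\bm\gamma$; this is (a Morse-theoretic truncation of) exactly the data Abouzaid uses in \cite[Section 3.2]{abouzaid2011cotangent}.

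The second step is to match $\mathcal{F}$ with Abouzaid's map $\mathcal{V}$. Abouzaid's $\mathcal{V}$ takes values in the complex of (Morse-generic) singular chains $C_*(\Lambda Q)$: to a holomorphic half-cylinder $u$ asymptotic to $x$ he associates the singular chain given by the evaluation of $u$ along $\{0\}\times S^1$ over the moduli parameters, weighted by the orientation data coming from the spin structure on $Q$. Our $\mathcal{F}$ instead composes this evaluation with rigid negative gradient flow lines from the resulting cycle to critical points of $\mathcal{A}_L$; this is exactly the standard chain-level map $\mathcal{A}_L\colon C_*(\Lambda Q)\to CM_*(\Lambda^1(Q))$ (equivalently, the PSS-type map identifying singular and Morse homology) applied after $\mathcal{V}$, as in the circle-valued Morse theory of Hutchings--Lee \cite{hutchings-lee1999circle-valued} and the based-loop discussion of \cite{abouzaid2012wrapped}. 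So I would argue $\mathcal{F} \simeq \mathcal{A}_L\circ \mathcal{V}$ as chain maps, by exhibiting the usual homotopy that comes from the moduli space of configurations interpolating between ``cut the gradient flow at length $R$'' and ``let $R\to\infty$''; compactness of this $1$-parameter family is governed by the same arguments as in Lemma~\ref{lemma-F-compactness} specialized to $\kappa=1$ (where Type~I and Type~II singularities, as well as all the switching-related boundary strata, are absent). Since $\mathcal{A}_L$ is a quasi-isomorphism by standard Morse theory and $\mathcal{V}$ is a quasi-isomorphism by \cite[Proposition 1.4]{abouzaid2011cotangent}, the composite $\mathcal{F}$ induces an isomorphism on homology.

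The main obstacle, and the step requiring the most care, is the precise bookkeeping of orientations and signs so that the identification $\mathcal{F}\simeq \mathcal{A}_L\circ\mathcal{V}$ holds on the nose rather than up to an overall sign or an ambiguity in the spin-structure-induced trivializations. Concretely, one must check that the orientation of $\hurwitzhalf(\bm x;H^{\mathrm{tot}},J)$ fixed in~\eqref{eq-halfcyl-orientation} via \cite[Proposition 11.13]{seidel2008fukaya} and \cite[Appendix A]{abouzaid2011cotangent} is compatible with Abouzaid's conventions, and that the sign $(-1)^{|\bm x|+(n-2)j}$ inserted in~\eqref{eq-F-} (which is forced on us by the general-$\kappa$ chain-map computation in Theorem~\ref{thm-F-chain-map}) is precisely what is needed to make the $\kappa=1$ specialization match $\mathcal{V}$. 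A secondary, more routine point is verifying that the finite-cylinder/Floer-datum choices we make on $\hurwitzhalfbrsm$ can be degenerated to Abouzaid's setup (the Hamiltonian being quadratic at infinity versus linear, and the interpolation region near $\{0\}\times S^1$ where our $H$ and $F$ vanish), which is a continuation-map argument of the type already used in Section~\ref{section-linear-hamiltonians} and in \cite[Appendix 3]{ritter2013}. Beyond these the argument is essentially a specialization of machinery already developed in the preceding sections, so I would keep the exposition brief and cite Abouzaid for the hard analytic and homotopy-theoretic input.
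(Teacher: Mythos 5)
Your proposal follows essentially the same route as the paper: for $\kappa=1$ the Hurwitz, switching, and higher-$\chi$ mixed moduli spaces are empty, so both complexes reduce to the classical ones, and one argues that $\mathcal{F}$ is chain homotopic to the composition of Abouzaid's map $\mathcal{V}$ (with target generic singular chains of $\Lambda Q$) with the standard singular-to-Morse comparison map, then cites \cite[Proposition 1.4]{abouzaid2011cotangent}. The paper's own treatment is in fact terser than yours---it simply asserts this chain homotopy with references to \cite{hutchings-lee1999circle-valued} and \cite{abouzaid2012wrapped}---so your added remarks on sign conventions and the quadratic-versus-linear Hamiltonian continuation are compatible elaborations rather than a different argument.
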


\begin{remark}
    In the context of linear Hamiltonians the map constructed in \cite[Section 4]{abouzaid2015symplectic} is heuristically the same map that we construct for quadratic at infinity Hamiltonians. One may argue that the limit of the maps given there is equal to $\mathcal{F}$. 

    We also point out that the morphism $\mathcal{F}$ is proved to be an isomorphism in \cite{abouzaid2011cotangent} by showing that it is an inverse of the map constructed in \cite{abbondandolo2006floer}, and is known to be an isomorphism. It would be of its own interest to construct an analogue of this morphism for HFSH.
\end{remark}
\begin{lemma}\label{qi-hbar=0}
    The restriction of $\mathcal{F}$ to $\hbar=0$ is an isomorphism.
\end{lemma}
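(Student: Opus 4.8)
\textbf{Proof plan for Lemma~\ref{qi-hbar=0}.} The plan is to reduce the statement to the already-established $\kappa=1$ case (Theorem~\ref{thm-viterbo}) together with a direct-sum decomposition of both chain complexes at $\hbar=0$. Setting $\hbar=0$ kills every term in the differentials $d$ and $\partial$ that comes from a curve with $\chi<0$ or a Morse flow line with $\ell>0$ switchings; what survives is only the ``genus-zero, no-switching'' part. Concretely, $\bigl(SC^*_{\kappa,\mathrm{unsym}}(T^*Q)|_{\hbar=0}, d_0\bigr)$ is the complex generated by tuples $\bm x \in \mathcal{P}_{\mathrm{unsym}}$ with differential counting only elements of $\mathscr{M}^{\chi=0}_{\mathrm{unsym}}(\bm x,\bm x')$, i.e.\ honest (unbranched) $\kappa$-fold covers of the cylinder. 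Since $\chi=0$ forces $\bm\mu(\sigma)=\bm\mu(\sigma')$ and $\hurwitz$ consists of finitely many points corresponding to a disjoint collection of cylinders (Remark~\ref{rmk: chi=0 case}), such a curve is literally a disjoint union of cylinders, one for each cycle of $\sigma(\bm x)$, with the Cauchy-Riemann operator splitting accordingly. The same phenomenon happens on the Morse side: $\bigl(CM_{\kappa n-*}(\Lambda^1_{[\kappa]}(Q))|_{\hbar=0},\partial_0\bigr)$ counts only $\ell=0$ Morse trajectories, which are disjoint unions of gradient trajectories of $\mathcal{A}_{L_c}$ on $\Lambda^1_{\mu}(Q)$, one per cycle.

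First I would make this splitting precise. Decompose $\mathcal{P}_{\mathrm{unsym}} = \bigsqcup_{\sigma\in\mathfrak{S}_\kappa}\mathcal{P}_\sigma$ according to the associated permutation; since $d_0$ and $\partial_0$ preserve $\sigma$ (an unbranched cover / switchless trajectory cannot change the permutation), both complexes at $\hbar=0$ split as direct sums over $\sigma\in\mathfrak{S}_\kappa$. For fixed $\sigma$ with cycle type $\bm\mu(\sigma)=(\mu_1,\dots,\mu_{l(\sigma)})$, the orientation-line conventions of Section~\ref{section-orientation-lines} give $o_{\bm x} = o_{x^{c_1(\sigma)}}\otimes\cdots\otimes o_{x^{c_{l(\sigma)}(\sigma)}}$, so the $\sigma$-summand of $SC^*_{\kappa,\mathrm{unsym}}|_{\hbar=0}$ is identified, as a complex, with the tensor product $\bigotimes_{i=1}^{l(\sigma)} SC^*_{\mu_i}(T^*Q)$ of ``$\mu_i$-periodic'' symplectic cochain complexes, each computing the symplectic cohomology of $T^*Q$ built from $\mu_i$-periodic orbits. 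An analogous identification holds on the Morse side with $\bigotimes_i CM_*(\Lambda^1_{\mu_i}(Q))$. The point is that the $\mu$-periodic symplectic cochain complex of $T^*Q$ computes $SH^*$ of $T^*Q$ regardless of the period normalization (reparametrizing $\R/\mu\Z$ rescales the Hamiltonian), and the same for $\Lambda^1_\mu(Q)\simeq \Lambda^1(Q)$; both are instances of the $\kappa=1$ theory.

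Second I would check that $\mathcal{F}|_{\hbar=0}$ respects this decomposition and restricts, on each tensor factor, to (a map chain-homotopic to) Abouzaid's map $\mathcal{V}$. Setting $\hbar=0$ in~\eqref{eq-F-} retains only the $\chi=0,\ell=0$ terms, i.e.\ counts of rigid elements of $\mathcal{T}^{\sigma_0}_{0,0}(\bm x,\bm\gamma)$, which are half-cylinder problems with no branch points and no switchings; these force $\sigma_0 = \sigma(\bm x)$ and decompose as a disjoint union over cycles of the half-cylinder moduli spaces $\mathcal{H}^{c_i,\ge 0}(x^{c_i})$ appearing in the $\kappa=1$ construction. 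Hence $\mathcal{F}|_{\hbar=0}$ is a tensor product over cycles of the $\kappa=1$ chain map $\mathcal{F}$ of Section~\ref{section-morse-compactness}, which by Theorem~\ref{thm-viterbo} and the discussion preceding it is a quasi-isomorphism (being chain homotopic to $\mathcal{A}_L\circ\mathcal{V}$, the composition of Abouzaid's isomorphism with the quasi-isomorphism from generic singular chains to Morse chains). Since the tensor product of quasi-isomorphisms of complexes of $\Q$-vector spaces is a quasi-isomorphism (Künneth over a field), and a finite direct sum of quasi-isomorphisms is a quasi-isomorphism, the claim follows.

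The main obstacle I anticipate is purely bookkeeping: verifying that the orientation-line identifications $o_{\bm x}\cong\bigotimes_i o_{x^{c_i(\sigma)}}$ are compatible with the signs in the tensor-product differential (Koszul signs from Section~\ref{section-orientation-lines}) and with the sign conventions in the definition~\eqref{eq-F-} of $\mathcal{F}$, so that the decomposition of $\mathcal{F}|_{\hbar=0}$ as a tensor product of $\kappa=1$ maps holds on the nose rather than merely up to sign. A secondary subtlety is making rigorous the claim that the $\chi=0$ Floer differential truly only counts the trivial split cylinders — this is exactly the content of the $\chi=0$ case in the proof of Theorem~\ref{transversality-theorem} (transversality achieved factor-by-factor) and of the height-one part of Theorem~\ref{theorem-cpt-moduli}, so it can be quoted. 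Once these sign and splitting statements are in place, the reduction to Theorem~\ref{thm-viterbo} via Künneth is routine.
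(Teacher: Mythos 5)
Your proposal is correct and follows essentially the same route as the paper: restrict to $\hbar=0$, split both complexes by the permutation type $\sigma$, observe that the mixed moduli spaces (and hence $\mathcal{F}|_{\hbar=0}$) factor as products over the cycles of $\sigma$, and reduce to the $\kappa=1$ case given by Theorem~\ref{thm-viterbo}. The extra points you flag (Künneth over $\Q$, period reparametrization for $\mu$-cycles, Koszul-sign bookkeeping) are exactly the details the paper's short proof leaves implicit, so there is no substantive difference.
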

\begin{proof}
    Recall that with each tuple $\bm x\in \mathcal{P}_{\mathrm{unsym}}(T^*Q)$ (resp. $\bm \gamma\in CM_{-*}(\Lambda^1_{[\kappa]} Q)$) there is an associated permutation $\sigma(\bm x)\in \mathfrak{S}_\kappa$ (resp. $\sigma(\bm \gamma)\in \mathfrak{S}_\kappa$). 
    By definition the restriction of $SC^*_{\kappa, \mathrm{unsym}}(T^*Q)$ to $\hbar=0$ is split according to permutation type:
    \begin{equation*}
        (SC^*_\kappa(T^*Q)|_{\hbar=0}, d_0)=\bigoplus_{\sigma\in S_\kappa} (SC^*_\kappa(T^*Q)|_{\hbar=0,\sigma}, d_0),
    \end{equation*}
    where $SC^*_\kappa(T^*Q)|_{\hbar=0,\sigma}$ is generated by tuples of Hamiltonian orbits with associated permutation equal to $\sigma$.
    Similarly,
    \begin{equation*}
        (CM_{-*}(\Lambda^1_{[\kappa]} Q)|_{\hbar=0}, \partial_0)=\bigoplus_{\sigma\in S_\kappa} (CM_{-*}(\Lambda^1_{[\kappa]} Q)|_{\hbar=0,\sigma}, \partial_0).
    \end{equation*}

    Note that $\mathcal{F}|_{\hbar=0}$ also respects the splitting according to permutation type. 
    It suffices to show
    \begin{equation}
    \label{eq-F-0}
        \mathcal{F}|_{\hbar=0}\colon SC^*_\kappa(T^*Q)|_{\hbar=0,\sigma}\to CM_{\kappa n-*}(\Lambda^1_{[\kappa]} Q)|_{\hbar=0,\sigma}
    \end{equation}
    is an isomorphism. Note that for any generator $[\bm x]\in SC^*_\kappa(T^*Q)|_{\hbar=0,\sigma}$
    with $[\bm x] \in o_{\bm x}^{\Q}$ for some $\bm x \in \mathcal{P}_{\mathrm{unsym}}(T^*Q)$ the count $\mathcal{F}|_{\hbar=0}(\bm x)$ has non-zero entries only for elements in $o_{\bm \gamma}^{\Q}$ with $\bm \gamma$ such that $\sigma(\bm \gamma)=\sigma(\bm x)$. 
    We denote by $\gamma^{c_1(\sigma)}, \ldots, \gamma^{c_{ l(\sigma)}(\sigma)}$ the loops constituting $\bm \gamma$. Then we have the following.
    \begin{equation*}
        \mathcal{T}^{\text{Id}}_{0,0}(\bm x,\bm \gamma)\simeq\prod_{i=1}^{i= l(\sigma)}\mathcal{T}_{0, 0}(x^{c_i(\sigma)},\gamma^{c_i(\sigma)}).
    \end{equation*}
    It then suffices to show (\ref{eq-F-0}) holds when $\sigma$ is a cycle, which is given by Theorem \ref{thm-viterbo}.
\end{proof}

\begin{lemma}\label{qi-nonsense}
    Given cochain complexes $(C^*, d_0)$ and $(D_*, \partial_0)$. 
    Let $F$ be a degree $m$ chain map $$F \colon (\widetilde{C}^*, d) \to (\widetilde{D}_*, \partial)$$
    between associated complexes as in Construction~\ref{series-complex} given by 
    $$F=F_0+F_1\hbar+F_2\hbar^2+\dots.$$
    If $F_0 \colon (C^*,d_0) \to (D_*, \partial_0)$ is a quasi-isomorphism, then $F$ is a quasi-isomorphism as well.
\end{lemma}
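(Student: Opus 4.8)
The plan is to exploit the $\hbar$-adic filtration on both complexes and run a spectral sequence (equivalently, an inductive) comparison argument. Write $\widetilde{C}^* = \bigoplus_j C^{*-(2-n)j}\hbar^j$ (interpreted as a direct product or power series as appropriate for the sign of $n-2$, exactly as in Construction~\ref{series-complex}), and similarly for $\widetilde{D}_*$. Filtering $\widetilde{C}^*$ by powers of $\hbar$, i.e. setting $\mathcal{F}^p\widetilde{C}^* = \hbar^p \widetilde{C}^*$, the differential $d = d_0 + d_1\hbar + \cdots$ is filtration-preserving, and the associated graded complex in filtration degree $p$ is a degree-shifted copy of $(C^*, d_0)$: the induced differential on $\mathrm{gr}^p$ is precisely $d_0$ acting on the $\hbar^p$-summand, since every $d_i$ with $i\ge 1$ strictly increases the $\hbar$-power. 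The same holds for $(\widetilde{D}_*, \partial)$. Since $F$ is filtration-preserving (each $F_i$ multiplies by $\hbar^i$, $i \ge 0$), it induces a morphism of filtered complexes, and the induced map on $\mathrm{gr}^p$ is exactly $F_0$ on the corresponding degree-shifted copy of $(C^*, d_0)$.

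First I would make the convergence of the filtration precise, distinguishing the two cases flagged in Construction~\ref{series-complex}: for $n > 2$ one has $\widetilde{C}^* = C^* \otimes_{\Q}\Q[\hbar]$, so in each fixed total degree only finitely many $\hbar$-powers contribute and the filtration is finite (hence the spectral sequence converges trivially); for $n = 2$ one works with $C^*\llbracket\hbar\rrbracket$ and the filtration is complete and exhaustive, so one invokes the complete convergence theorem for filtered complexes (e.g. the Eilenberg--Moore comparison theorem, or Boardman's criterion for conditionally convergent spectral sequences). In either case, the map of spectral sequences induced by $F$ is an isomorphism on the $E_1$-page (that is the hypothesis that $F_0$ is a quasi-isomorphism, applied degreewise in filtration), hence an isomorphism on $E_\infty$, and therefore $F$ is a quasi-isomorphism by the comparison theorem.

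Alternatively — and this is the version I would probably write out, since it avoids spectral-sequence bookkeeping — one argues directly by the five lemma on the short exact sequences $0 \to \hbar^{p+1}\widetilde{C}^* \to \hbar^p\widetilde{C}^* \to \mathrm{gr}^p \to 0$ and induction on $p$, together with a passage to the limit. Concretely: the quotient $\hbar^p\widetilde{C}^*/\hbar^{p+1}\widetilde{C}^* \cong (C^*,d_0)[\text{shift}]$, so $F$ induces a quasi-isomorphism on these quotients by hypothesis; then a descending induction (for $n=2$, stabilizing in each degree; for $n>2$, an ascending induction terminating after finitely many steps in each degree) combined with the five lemma shows $F$ is a quasi-isomorphism on each $\hbar^p\widetilde{C}^*$, and in particular on $\hbar^0\widetilde{C}^* = \widetilde{C}^*$. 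For $n=2$ the only subtlety is that one must commute homology with the inverse limit over $p$; this is handled by a $\varprojlim^1$ argument, noting that the relevant towers satisfy the Mittag-Leffler condition because the transition maps $H_*(\hbar^{p+1}\widetilde{C}^*) \to H_*(\hbar^p\widetilde{C}^*)$ are, after the inductive step, isomorphisms onto their image in a controlled range of degrees.

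\textbf{Main obstacle.} The genuinely routine homological algebra here is entirely standard; the only place requiring care is the convergence/completeness issue in the $n=2$ case, where $\widetilde{C}^*$ is a power series ring in $\hbar$ rather than a polynomial ring, so one cannot simply say ``the filtration is finite in each degree.'' I expect the bulk of the write-up to consist of correctly invoking a complete-convergence statement (or the $\varprojlim^1$ vanishing) so that $H_*(\varprojlim_p \hbar^p\widetilde{C}^*) \cong \varprojlim_p H_*(\hbar^p\widetilde{C}^*)$ is legitimate; everything else is a direct application of the comparison theorem for filtered complexes. Since the paper elsewhere only ever uses this lemma with $F = \mathcal{F}$ (whose $\hbar^0$-part is the Viterbo-type isomorphism of Theorem~\ref{thm-viterbo} via Lemma~\ref{qi-hbar=0}), it would also be acceptable to state the lemma and give the spectral sequence argument in a couple of lines, referring to a standard reference for the convergence.
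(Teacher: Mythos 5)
Your proposal is correct in substance but takes a different route from the paper. The paper avoids spectral sequences altogether: after shifting degrees so that $F$ has degree $0$, it passes to the mapping cone, observes that $\operatorname{Cone}(F)|_{\hbar=0}=\operatorname{Cone}(F_0)$ is acyclic, and then proves by hand that a complex of the form $(\widetilde{E}^*,d_E)$ with acyclic $\hbar=0$ reduction is acyclic, by solving $d_E w = v$ order by order in $\hbar$ (using that multiplication by $\hbar$ is an injective chain map, so after subtracting $d_E w_0$ one can divide by $\hbar$ and repeat); the infinite series $w=w_0+w_1\hbar+\cdots$ makes sense because Construction~\ref{series-complex} defines $\widetilde{C}^m$ as a degreewise direct product inside $C^*\llbracket\hbar\rrbracket$. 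Your filtration argument is the same mechanism packaged as a comparison theorem: the successive-approximation step in the paper is exactly what makes the complete-convergence (or $\varprojlim^1$) argument work, so the two proofs buy the same thing, with the paper's version being more elementary and self-contained and yours being shorter if one is willing to cite a convergence theorem. One small correction: your claim that for $n>2$ "in each fixed total degree only finitely many $\hbar$-powers contribute" is only true when $C^*$ is bounded above, which is not guaranteed here (the relevant Floer and Morse complexes are unbounded in general); however, since Construction~\ref{series-complex} takes the degreewise direct product in all cases, the filtration is complete regardless of $n$, and your complete-filtration/$\varprojlim^1$ argument — which you wrote out only for $n=2$ — applies uniformly, so this does not create a gap, only a misstatement of which case needs which argument.
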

\begin{proof}
For simplicity, we replace $(D_*, \partial_0)$ with the associated cochain complex $(D_s^*, \partial^0)$ obtained by shifting the degree by $m$. Similarly, we obtain $(\widetilde{D}_s^*, \partial)$. Then $F_s \colon (\widetilde{C}^*, d) \to (\widetilde{D}_s^*, \partial)$ is a chain map of degree $0$. 

    Now, we notice that 
    $$Cone(F_s)|_{\hbar=0}=Cone(F_0).$$
    We recall that a chain map is a quasi-isomorphism if and only if its cone is acyclic. According to the conditions of the lemma, $Cone(F_0)$ is acyclic, and hence $Cone(F_s)|_{\hbar=0}$ is acyclic. It remains to show that it implies that $Cone(F_s)$ is acyclic. From now on, denote this cochain complex via $(E^*, d_E)$.
    Let $$v=v_0+v_1\hbar+v_2\hbar^2+\dots$$ be a cocycle of degree $k$ in $E^k$. Here $|v_i|=k+i(n-2)$. Let us show that there is $w \in E^{k-1}$ such that $d_Ew=v$. First, since $(E^*|_{\hbar=0}, d_E^0)$ is a quasi-isomorphism, there is $w_0 \in E^{k-1}|_{\hbar=0}$ such that 
    \begin{equation}\label{acyclic-series}
        d_E^0 w_0=v_0.
    \end{equation}
    Then we may write
    $$v-d_Ew_0=\tilde{v}_1\hbar+\tilde{v}_2\hbar+\dots.$$
    Clearly, $v-d_E w_0$ is a cocycle, and since multiplication by $\hbar$ is a chain injective map, hence 
    $\tilde{v}=\tilde{v}_1+\tilde{v}_2\hbar+\ldots$
    is a cocycle. Arguing as in~(\ref{acyclic-series}) there is a $w_1 \in E^{k+(n-2)-1}$ satisfying
    $$d_E^0 w_1=\tilde{v}_1.$$
    Clearly, one has 
    $$v-d_E(w_0+w_1\hbar)=\tilde{\tilde{v}}_2\hbar^2+\dots.$$
    Repeating the procedure above one obtains a sequence $w_0, w_1, \ldots,$ such that $$d_E(w_0+w_1\hbar+w_2\hbar^2+\dots)=v.$$ \vskip-.2in\end{proof}
\begin{theorem}\label{thm-f-isomorphism}
    $\mathcal{F}$ induces an isomorphism on homology.
\end{theorem}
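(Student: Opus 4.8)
The plan is to deduce Theorem~\ref{thm-f-isomorphism} from the two lemmas that immediately precede it, namely Lemma~\ref{qi-hbar=0} and Lemma~\ref{qi-nonsense}, combined with Theorem~\ref{thm-F-chain-map}. The map $\mathcal{F}$ is a chain map by Theorem~\ref{thm-F-chain-map}, and by construction (see~\eqref{eq-F-}) it decomposes according to the $\hbar$-grading as $\mathcal{F}=\mathcal{F}_0+\mathcal{F}_1\hbar+\mathcal{F}_2\hbar^2+\dots$, where $\mathcal{F}_j$ counts rigid elements of mixed moduli spaces with $\ell-\chi=j$. Both $SC^*_{\kappa, \mathrm{unsym}}(T^*Q)$ and $CM_{\kappa n-*}(\Lambda^1_{[\kappa]}(Q))$ are instances of Construction~\ref{series-complex}: the former has $\hbar$-degree-$0$ part $(SC^*_{\kappa,\mathrm{unsym}}(T^*Q)|_{\hbar=0}, d_0)$ with $d_0$ counting genus-$0$ (Euler characteristic $0$) curves, and the latter has $\hbar$-degree-$0$ part $(CM_{-*}(\Lambda^1_{[\kappa]}(Q))|_{\hbar=0}, \partial_0)$ with $\partial_0$ counting honest pseudo-gradient trajectories ($\ell=0$). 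Thus $\mathcal{F}$ fits exactly into the setting of Lemma~\ref{qi-nonsense} with $F_0=\mathcal{F}|_{\hbar=0}$.

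First I would record that $\mathcal{F}|_{\hbar=0}=\mathcal{F}_0$ is a chain map $(SC^*_{\kappa,\mathrm{unsym}}(T^*Q)|_{\hbar=0}, d_0)\to (CM_{\kappa n-*}(\Lambda^1_{[\kappa]}(Q))|_{\hbar=0},\partial_0)$; this is the $\hbar=0$ reduction of the chain-map identity~\eqref{eq-chainmorph-reln}, equivalently it follows by restricting the boundary analysis of Lemma~\ref{lemma-F-compactness} to the strata with $\chi_1=\chi_2=0$ and $\ell_1+\ell_2=0$, i.e. to the purely $\ell=\chi=0$ degenerations. Then Lemma~\ref{qi-hbar=0} asserts precisely that this reduced map is an isomorphism (in fact a quasi-isomorphism) of cochain complexes: it splits over the permutation type $\sigma\in\mathfrak{S}_\kappa$, and on each summand reduces to a product over the cycles $c_i(\sigma)$ of the classical Viterbo/Abouzaid map $\mathcal{T}_{0,0}(x^{c_i(\sigma)},\gamma^{c_i(\sigma)})$, which is an isomorphism by Theorem~\ref{thm-viterbo}. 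Hence the hypothesis of Lemma~\ref{qi-nonsense} is verified with $F_0=\mathcal{F}|_{\hbar=0}$ a quasi-isomorphism.

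Second I would apply Lemma~\ref{qi-nonsense} verbatim: since $\mathcal{F}$ is a degree-$(\kappa n)$ chain map of the form $\mathcal{F}_0+\mathcal{F}_1\hbar+\dots$ between the $\hbar$-series complexes $(\widetilde{C}^*,d)=SC^*_{\kappa,\mathrm{unsym}}(T^*Q)$ and $(\widetilde{D}_*,\partial)=CM_{\kappa n-*}(\Lambda^1_{[\kappa]}(Q))$ associated as in Construction~\ref{series-complex} to $(C^*,d_0)$ and $(D_*,\partial_0)$, and since $\mathcal{F}_0=\mathcal{F}|_{\hbar=0}$ is a quasi-isomorphism, Lemma~\ref{qi-nonsense} gives that $\mathcal{F}$ itself is a quasi-isomorphism. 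Passing to cohomology yields $SH^*_{\kappa,\mathrm{unsym}}(T^*Q)\cong HM_{\kappa n-*}(\Lambda^1_{[\kappa]}(Q))$, which is the claim of Theorem~\ref{thm-f-isomorphism} (and, together with Theorem~\ref{thm-F-chain-map}, of Theorem~\ref{main-theorem2}).

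The only genuinely delicate point — and the one I would dwell on — is not the formal cone argument of Lemma~\ref{qi-nonsense} (which is routine homological algebra, as sketched there), but rather making sure the identification $\mathcal{F}|_{\hbar=0}=$ (product over cycles of the $\kappa=1$ map) is legitimate: one must check that for $\bm x,\bm\gamma$ of the same permutation type $\sigma$ the mixed moduli space $\mathcal{T}^{\mathrm{Id}}_{0,0}(\bm x,\bm\gamma)$ with $\ell=\chi=0$ really splits as $\prod_i \mathcal{T}_{0,0}(x^{c_i(\sigma)},\gamma^{c_i(\sigma)})$ compatibly with orientations and weights, so that the isomorphism on each factor coming from Theorem~\ref{thm-viterbo}/Abouzaid assembles to an isomorphism of the product complexes; this is exactly what Lemma~\ref{qi-hbar=0} does, so in practice I would simply invoke it. The remaining subtlety is the comparison in Theorem~\ref{thm-viterbo} itself — that $\mathcal{F}$ for $\kappa=1$ agrees on homology with Abouzaid's map, via a chain homotopy through the composition with $\mathcal{A}_L\circ\mathcal{V}$ of \cite{abouzaid2011cotangent, abouzaid2012wrapped} — but that too is granted by the discussion preceding Theorem~\ref{thm-viterbo}. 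So the proof of Theorem~\ref{thm-f-isomorphism} is genuinely a two-line deduction once Lemmas~\ref{qi-hbar=0} and~\ref{qi-nonsense} are in hand.
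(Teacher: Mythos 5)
Your proposal is correct and follows exactly the paper's own argument: Theorem~\ref{thm-f-isomorphism} is deduced by combining Lemma~\ref{qi-hbar=0} (the $\hbar=0$ restriction of $\mathcal{F}$ is a quasi-isomorphism, via the splitting by permutation type and Theorem~\ref{thm-viterbo}) with the formal algebra of Lemma~\ref{qi-nonsense}, given that $\mathcal{F}$ is a chain map by Theorem~\ref{thm-F-chain-map}. The additional care you take in justifying the product decomposition of $\mathcal{T}^{\mathrm{Id}}_{0,0}(\bm x,\bm\gamma)$ is exactly what Lemma~\ref{qi-hbar=0} supplies, so nothing is missing.
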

\begin{proof}
    This statement follows from Lemmas~\ref{qi-hbar=0} and~\ref{qi-nonsense}.
\end{proof}
\clearpage
\printnomenclature

\printbibliography

\end{document}